\numberwithin{equation}{subsection}
\newtheorem{theorem}{Theorem}[section]
\newtheorem{proposition}[theorem]{Proposition}
\newtheorem{lemma}[theorem]{Lemma}
\newtheorem{corollary}[theorem]{Corollary}
\theoremstyle{definition}
\newtheorem{definition}[theorem]{Definition}
\theoremstyle{remark}
\newtheorem{claim}{Claim}[theorem]
\newtheorem{remark}[theorem]{Remark}
\theoremstyle{conjecture}
\newtheorem{conjecture}{Conjecture}[section]
\DeclareMathOperator{\Gal}{Gal}
\newcommand{\rG}{\mathrm G}
\newcommand{\rP}{\mathrm P}
\newcommand{\rL}{\mathrm L}
\newcommand{\rU}{\mathrm U}
\newcommand{\rT}{\mathrm T}
\newcommand{\rB}{\mathrm B}
\newcommand{\rV}{\mathrm V}
\newcommand{\sA}{\mathscr A}
\newcommand{\X}{\mathfrak X}
\newcommand{\fg}{\mathfrak g}
\newcommand{\ft}{\mathfrak t}
\newcommand{\fu}{\mathfrak u}
\newcommand{\fs}{\mathfrak s}
\newcommand{\bt}{\bar{\mathfrak t}}
\newcommand{\bg}{\bar{\mathfrak g}}
\newcommand{\bu}{\bar{\mathfrak u}}
\newcommand{\bl}{\bar{\mathfrak l}}
\newcommand{\bp}{\bar{\mathfrak p}}
\newcommand{\Res}{\mathrm{Res}}
\newcommand{\res}{\mathrm{res}}
\newcommand{\rv}{\mathrm{v}}
\newcommand{\Lie}{\mathrm{Lie}}
\newcommand{\Irr}{\mathrm{Irr}}
\newcommand{\Ind}{\mathrm{Ind}}
\newcommand{\Ad}{\mathrm{Ad}}
\newcommand{\MP}{\mathrm{MP}}
\newcommand{\DL}{\mathrm{DL}}
\newcommand{\RP}{\mathrm{RP}}
\newcommand{\JH}{\mathrm{JH}}
\newcommand{\Hom}{\mathrm{Hom}}
\newcommand{\IS}{\mathrm{IS}}
\newcommand{\CS}{\mathrm{CS}}
\newcommand{\Id}{\mathrm{Id}}
\newcommand{\GL}{\mathrm{GL}}
\newcommand{\mbm}{\mathbbm}
\newcommand{\Z}{\mathbb Z}
\newcommand{\mbP}{\mathbb P}
\newcommand{\R}{\mathbb R}
\newcommand{\T}{\mathbb T}
\newcommand{\Q}{\mathbb Q}
\newcommand{\U}{\mathbb U}
\newcommand{\mbL}{\mathbb L}
\newcommand{\C}{\mathbb C}
\newcommand{\HH}{\mathcal H}
\newcommand{\F}{\mathbb F}
\newcommand{\A}{\mathcal A}
\newcommand{\mcF}{\mathcal F}
\newcommand{\G}{\mathbb G}
\newcommand{\mf}{\mathfrak}
\newcommand{\ms}{\mathscr}
\newcommand{\M}{\mathcal{M}}
\newcommand{\B}{\mathcal{B}}
\newcommand{\ZZ}{\mathcal{Z}}
\newcommand{\mbk}{\mathbbm k}
\newcommand{\mc}{\mathcal}
\newcommand{\mcC}{\mathcal C}
\newcommand{\blue}{\textcolor{blue}}
\title{A description of the depth-$r$ Bernstein center for rational depths}
\author{Sarbartha Bhattacharya and Tsao-Hsien Chen}
\date{}
\begin{document}

\maketitle
\begin{abstract}
Let $G$ be a split connected reductive  over a non-archimedean local field $k$.
In this paper we give a description of the depth-$r$ Bernstein center of $G(k)$ for rational depths 
as a limit of depth-$r$ standard parahoric Hecke algebras, extending our previous work in the integral depths case in \cite{cb24}. Using this description, we construct maps from the space of stable functions on depth-$r$ Moy-Prasad quotients to the depth-$r$ center, and attach depth-$r$ Deligne-Lusztig parameters to smooth irreducible representations, with the assignment of parameters to irreducible representations shown to be consistent with restricted Langlands parameters for Moy-Prasad types described in \cite{chendebackertsai}. 
As an application, we  give a decomposition of the category of smooth representations into a product of full subcategories
indexed by restricted depth-$r$ Langlands parameters.


\end{abstract}
\tableofcontents
\section{Introduction}
Let $k$ be a non-archimedean local field with ring of integers $\mf O_k$ and residue field $\kappa_k=\F_q$ of characteristic $p$. Let $\varpi \in \mf O_k$ be a uniformizer and $\mf m_k=\langle\varpi \mf O_k\rangle$ be the unique prime ideal in $\mf O_k$. We fix a separable closure $\bar{k}$ of $k$, and let $K=k^u$ be the maximal unramified extension of $k$ and $k^t$ denote the maximal tamely ramified extension of $k$. 
Let $G$ be a connected reductive algebraic group defined over $k$. We assume throughout that $G$ splits over $k$ and fix a $k$-split maximal tori $T$ in $G$. We denote the reduced Bruhat-Tits building of $G$ by $\B(G,k)$. Further, we fix a Haar measure $\mu$ on $G(k)$ and identify $\HH(G)$ and compactly supported smooth functions on $G(k)$, denoted by $C^\infty_c(G)$. Let $R(G)$ denote the category of smooth complex representations of $G(k)$ and $\mathcal{Z}(G)=\mathrm{End}(\mathrm{Id}_{R(G)})$ denote the Bernstein center. There are several equivalent descriptions of the Bernstein center, including one which interprets elements in $\ZZ(G)$ as essentially compact invariant distributions on $G(k)$ (for details check Section \ref{section: Bernstein center}). We denote the the set of (isomorphism classes of) smooth irreducible representations of $G(k)$ by $\Irr(G)$. Moy and Prasad in \cite{MP94} attached an invariant $\rho(\pi)\in \Q_{\geq 0}$ to each $\pi \in \Irr(G)$, called the depth of $\pi$. For any non-negative rational number $r\in\mathbb Q_{\geq 0}$,
let $R(G)_{\leq r}$ (resp, $R(G)_{> r}$) denote the full subcategory of smooth representations whose irreducible subquotients have depth $\leq r $ (resp, depth $>r$). Let $\Irr(G)_r$ denote the set of smooth irreducible representations of $G(k)$ depth $r$ (similarly $\Irr(G)_{\leq r}$, $\Irr(G)_{<r}$ and $\Irr(G)_{>r}$). Results of Bernstein and Moy-Prasad (\cite{Ber, MP94, MP96}) imply that the category $R(G)$ decomposes as a direct sum
 $R(G) = R(G)_{\leq r} \oplus R(G)_{> r}$, and hence the Bernstein center also decomposes as $\ZZ(G) = \ZZ^r(G) \oplus \ZZ^{>r}(G)$. In the first part of our article, we provide a description of the depth-$r$ Bernstein center $\ZZ^r(G)$. \par

\subsection{A description of the center}
 In \cite{BKV}, Bezrukavnikov-Kazhdan-Varshavsky worked with a categorical analogue of the Bernstein center, and constructed an invariant distribution $E_0 \in \ZZ^0(G)$ which is the projector to the depth-zero part of the center using $l$-adic sheaves on loop groups. In our previous work \cite{cb24}, we used some the ideas developed in \cite{BKV} and \cite{chendepthzerostable} in a more classical setting to give a description of the integral depth-$r$ center $\ZZ^r(G)$ for a split simply-connected $p$-adic group $G(k)$ (see \cite[Theorem 2.8, Theorem 3.11]{cb24}). In particular, this provided a positive answer to the conjecture mentioned in the abstract of \cite{chendepthzerostable}.\par 
 In their subsequent work \cite{BKV2}, Bezrukavnikov-Kazhdan-Varshavsky proved an explicit description for the Bernstein projector $ E_r\in \ZZ^r(G)$ to representations of depth $\leq r$ for all rational depths. In order to deal with fractional depths, they replaced the usual simplicial structure on the building $\B(G,k)$ with a refinement by subdividing the facets into smaller parts depending on $m \in \Z_{>0}$ when the depth $r \in \frac{1}{m}\Z_{\geq 0}$ (check Section \ref{section: subdiving facets}). Let us denote the new structure obtained by $\B_m$. The main idea in the construction of the projectors was to use a $G(k)$-equivariant system of idempotents $\delta_{G_{\sigma,r+}}= \mu(G_{\sigma,r+})^{-1}\mbm 1_{G_{\sigma,r+}} \in \HH(G)$ (see Section \ref{section:MoyPrasadfiltrations} for definition of $G_{\sigma,r+}$) for each refined facet $\sigma$ in $\B_m$, and give a formula for the projector as an Euler-Poincar\'e sum of the idempotents (\cite[Theorem 1.6]{BKV2}). Some of the ideas in this work was motivated by the work of \cite{MeyerSolleveld} who used a system of idempotents in $\mathrm{End}_\C(V_\pi)$ for $(\pi,V_\pi) \in R(G)$ to produce $G(k)$-equivariant (co)sheaves, and used that to give resolutions of certain subspaces of $V_\pi$.\par
 Bernstein gave a decomposition of $R(G)$ into indecomposable full subcategories $R(G)_{\mf a}$ where $\mf a =[L,\varrho]_G$ denotes the inertial equivalence class of $(L,\varrho)$ for a $k$-Levi subgroup $L \subset G$ and $\varrho$ an irreducible supercuspidal representation of $L(k)$ (check Section \ref{section:decomposition} for definitions and \cite{roche2009bernstein} for more details). These subcategories are often called Bernstein components. Barbasch-Ciubotaru-Moy in \cite{barbaschciubotarumoy} established Euler-Poincar\'e formulas for projectors to individual depth-zero Bernstein components from a equivariant system of idempotents produced using cuspidal representations of the reductive quotients of parahoric subgroups, giving a decomposition of the depth-zero projector. Moy and Savin in \cite{moysavin} used similar ideas to produce a partial analogue for positive depths. \par

Let $[\B_m]$ denote the set of refined facets obtained by subdividing each open polysimplex ( check Section \ref{section: subdiving facets} for detailed description), and $G_{\sigma,r} $ be the Moy-Prasad filtration subgroups attached to $\sigma \in [\B_m]$ as described in Sections \ref{section:MoyPrasadfiltrations} and \ref{section: subdiving facets}. For each $\sigma \in [\B_m]$, we define 
\[
\M_\sigma^r:=C_c^\infty\left(\frac{G(k)/G_{\sigma,r+}}{G_{\sigma,0}}\right)
\]
to be the algebra (under convolution with respect to $\mu$) of compactly supported smooth functions on $G(k)$ which are $G_{\sigma,r+}$ bi-invariant and ${G_{\sigma,0}}$ conjugation invariant. Note that the idempotents $\delta_{G_{\sigma,r+}}$ used in the construction of projectors in \cite{BKV2} are elements in $\M_\sigma^r$. Let $\mcC$ be a fixed chamber in the apartment $\A_T$ corresponding to the fixed $k$-split maximal torus. \par
We have a partial order on $[\B_m]$ given by $\sigma' \preceq \sigma $ if $\sigma'$ is contained in the closure of $\sigma$, and this gives a partial order on $[\Bar{\mcC}_m]$. For $\sigma',\:\sigma\in [\Bar{\mcC}_m] $ and $\sigma' \preceq \sigma $, we have a map 
\begin{align*}
\phi^r_{\sigma',\sigma}:\M^r_{\sigma'} &\longrightarrow \M^r_{\sigma}\\
f &\longmapsto f*\delta_{G_{\sigma , r+}}  
\end{align*}
Further, for any element $n\in \mc N:=N_G(T)(k)$ such that $n\mcC=\mcC$, if $n\sigma_1=\sigma_1'\preceq \sigma_2$, we add morphisms $\phi^r_{\sigma_1,\sigma_2,n}:\M^r_{\sigma_1} \longrightarrow \M^r_{\sigma_2}$ in the following way
\[
\phi^r_{\sigma_1,\sigma_2,n}:\M^r_{\sigma_1} \xrightarrow[]{\Ad(n)} \M^r_{n\sigma_1}\xrightarrow[]{\phi^r_{n\sigma_1,\sigma_2}}\M^r_{\sigma_2} 
\]
With the above defined maps, we have an inverse system $\{\M^r_{\sigma}\}_{\sigma\in [\Bar{\mcC}_m] }$ and we define $A^r(G)$ to be the inverse limit of the algebras $\M^r_{\sigma}$.
$$ A^r(G) := \lim_{\sigma\in [\Bar{\mcC}_m] } \M^r_{\sigma}$$
We had a similar inverse system in the description of integral depths in \cite{cb24}, and the second set of maps were not required because we were working with simply-connected groups. Our first main result is a  generalization of \cite[Theorem 1.1]{cb24} to fractional depths and split reductive groups, stated and proved in Theorem \ref{thm: isominverselimit}.
\begin{theorem}
    There is an explicit algebra isomorphism $[A^r]: A^r(G) \xrightarrow{\simeq} \ZZ^r(G)$ for a split reductive group $G$ and $r \in \Q_{\geq 0}$.
\end{theorem}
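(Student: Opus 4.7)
The plan is to mimic the strategy developed in \cite{cb24} for integral depths in the simply-connected case, adapting it to the refined building $\B_m$ required for rational depths and introducing the bookkeeping necessary when $G$ is only split reductive. I would first construct the map $[A^r]\colon A^r(G)\to\ZZ^r(G)$ explicitly. Given a compatible system $(f_\sigma)_{\sigma\in[\bar{\mcC}_m]}$ in the inverse limit, I would extend it to a $G(k)$-equivariant system $(f_\tau)_{\tau\in[\B_m]}$ indexed by all refined facets by setting $f_{g\tau}:=\Ad(g)(f_\tau)$ for $g\in G(k)$. Well-definedness of this extension is precisely what the two families of morphisms in the inverse system are designed to encode: the maps $\phi^r_{\sigma',\sigma}$ handle transitions along face relations inside $\bar{\mcC}_m$ via convolution with the idempotents $\delta_{G_{\sigma,r+}}$, while the maps $\phi^r_{\sigma_1,\sigma_2,n}$ account for elements $n\in\mc N$ that stabilize $\mcC$ as a set but permute its refined subfacets. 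The element $[A^r](f)$ is then the essentially compact invariant distribution attached to $(f_\tau)$ by the Euler--Poincar\'e construction of \cite{BKV2, MeyerSolleveld}, formally written as
\[
[A^r](f)=\sum_{\tau\in[\B_m]}(-1)^{\dim\tau}\,f_\tau,
\]
understood as a locally finite sum once convolved with any element of $\HH(G)$.

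Next, I would verify that $[A^r](f)$ really lies in $\ZZ^r(G)$. Essential compactness and $G(k)$-invariance follow from local finiteness of $\B_m$ together with the equivariance of the extended system, as in \cite{cb24}. To check that the distribution annihilates $R(G)_{>r}$, I would exploit that each $f_\sigma\in\M^r_\sigma$ is bi-invariant under $G_{\sigma,r+}$, combined with the acyclicity of the Moy--Prasad cosheaf on $\B_m$ that underlies the BKV2 formula for the projector $E_r$. To see that $[A^r]$ is an algebra homomorphism and a bijection, the key observation is that for any $z\in\ZZ^r(G)$ and any $\sigma\in[\bar{\mcC}_m]$, the convolution $z*\delta_{G_{\sigma,r+}}$ lies in $\M^r_\sigma$ and the family $(z*\delta_{G_{\sigma,r+}})_\sigma$ forms a compatible system. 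This simultaneously produces a candidate inverse and recovers $(f_\sigma)_\sigma$ from $[A^r](f)$; injectivity then follows from the fact that these idempotent convolutions collectively detect the depth-$r$ part, and multiplicativity from a direct comparison of convolutions reduced to each component.

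The main obstacle, and the principal new input beyond \cite{cb24}, will be handling the morphisms $\phi^r_{\sigma_1,\sigma_2,n}$, which are redundant in the simply-connected setting because the stabilizer of $\mcC$ inside $\mc N$ is generated by reflections already accounted for by the face-relation maps. For a general split reductive $G$, there is a nontrivial finite group of elements of $\mc N$ that fix $\mcC$ and permute its refined subfacets, and I expect the delicate step to be proving that the $G(k)$-equivariant extension from $[\bar{\mcC}_m]$ to $[\B_m]$ is consistent with this action, equivalently that $A^r(G)$ is the correct invariant portion of a larger inverse limit defined without these extra morphisms. Once this gluing is established, the remaining arguments adapt those of \cite[Theorems 2.8, 3.11]{cb24}, with the refined building $\B_m$ and the BKV2 equivariant system of idempotents replacing the simplicial structure used in the integral depth case.
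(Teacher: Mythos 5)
Your proposal follows essentially the same route as the paper's proof: extend the compatible system $G(k)$-equivariantly from $[\bar{\mcC}_m]$ to all of $[\B_m]$ (with well-definedness resting precisely on the extra morphisms $\phi^r_{\sigma_1,\sigma_2,n}$ and the decomposition $G_\mcC=\mc N_\mcC G_{\mcC,0}$), show the Euler--Poincar\'e sums stabilize after convolution with any element of $\HH(G)$, deduce membership in $\ZZ^r(G)$ via the BKV2 depth-$r$ projector $[A_{\delta_r}]$, and invert by sending $z\mapsto\{z_\HH(\delta_{G_{\sigma,r+}})\}_{\sigma}$. The only part you gesture at rather than argue is the stabilization itself, which in the paper is the main technical step and is carried out by generalizing the relevant BKV2 lemmas from the idempotent system $\delta_r$ to an arbitrary $h\in A^r(G)$ using the relation $h_\sigma=h_{\sigma'}*\delta_{G_{\sigma,r+}}$; otherwise your outline matches the paper's argument.
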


Given $h=\{ h_\sigma\}_{\sigma\in [\Bar{\mcC}_m]}\in A^r(G)$, we can define $h_{\sigma'}$ for all $\sigma' \in [\B_m]$ and an Euler-Poincar\'e sum $[A^\Sigma_h] = \sum_{\sigma \in \Sigma }(-1)^{\text{dim} \: \sigma} h_\sigma \in \HH(G)$ for each finite convex subcomplex in $\B_m$.  Let $\Theta_m$ denote the set of non-empty finite convex subcomplexes $\Sigma \subset [\B_m]$. The main idea in the proof is to show that for every $f\in \mc H(G)$ and $h\in A^r(G)$, the sequence $\{[A^\Sigma_h] * f\}_{\Sigma \in \Theta_m} $ stabilizes (Theorem \ref{thm stab frac}) and use that to produce an element $[A_h] \in \mathrm{End}_{\HH(G)^{2}}(\HH(G)) \cong \ZZ(G)$. The final step is to show that the map $h \mapsto [A_h]$ gives an algebra isomorphism $A^r(G) \xrightarrow{\simeq} \ZZ^r(G)$. The image of $\delta_r=\{ \delta_{G_{\sigma,r+}}\}_{\sigma\in [\Bar{\mcC}_m]}\in A^r(G)$ is exactly the depth-$r$ projector constructed in \cite{BKV2}. 

\subsection{Stable functions and Deligne-Lusztig parameters}

In \cite{chendebackertsai}, Chen-Debacker-Tsai attached Deligne-Lusztig parameters (see sections \ref{section:positiveDLparameters} and \ref{section:depth0DLparameters} for the definition) to Moy-Prasad types and proved that these parameters are the same for any two Moy-Prasad types contained in a smooth irreducible representation. Similar parameters were studied in \cite{cb24} for split simply-connected groups, where they were called semi-simple part of a minimal $K$-type. Further, they established a connection with the Galois side and showed that these parameters are in bijection to restricted Langlands parameters which are continuous homomorphisms $I_k^r/I^{r+}_k \rightarrow G^\vee$ satisfying certain properties (see Definitions \ref{defn:depth0RPr} and \ref{defn:positiveRPr}), where $I_k^{r}$ denotes the upper numbering filtration of the Weil group $W_k$ of the local field $k$, and $G^\vee$ denotes the complex dual group of $G$. For $r \in \Q_{\geq 0}$, they define a map $\Irr(G)_r\rightarrow \DL_r\xrightarrow{\simeq}\RP_r$ where $\DL_r$ and $\RP_r$ denote the Deligne-Lusztig and restricted Langlands parameters of depth-$r$ respectively. The Deligne-Lusztig parameters decmposes the set $\Irr(G)$ into disjoint sets and conjecturally, these sets are unions of $L$-packets. The main aim of the subsequent sections is to use the description of the center in Section \ref{section:fracdepthdescription} to construct elements in $\ZZ(G)$ which act by the same constant on smooth irreducible representations having the same Deligne-Lusztig(DL) parameter attached to it.  \par

To that end, in section \ref{section:stablefnsonpositivedepth}, we define and study stable functions on positive depth Moy-Prasad quotients. For positive integral depths, they were studied in \cite{cb24}, inspired by similar concepts in \cite{laumon} and \cite{Chenbravermankazhdan}.
 In the case of positive integral depths, the Moy-Prasad filtration quotients at a point in the Bruhat-Tits building are isomorphic to the Lie algebra of the reductive quotient of the parahoric subgroup. The main tool in studying and contructing stable functions in this case was Fourier transforms on finite Lie algebras studied in \cite{Le}. To deal with fractional depths, we develop a theory of Fourier transforms on fractional depth quotients in Section \ref{subsection: Fouriertransform}, extending the results in \cite{Le} and use that to construct stable functions on fractional depth quotients for $r \in \Z_{(p)}\cap \Q_{>0}$. \par
 For the fixed maximal tori $T$, let $\T$ be the reductive quotient of $T(k^t)$, which is an $\F_q$-split $\bar\F_q$-torus canonically identified with the reductive quotient of $T(K)$ since $T$ is $k$-split, and let $\bt= \Lie(\T)$. The space $(\bt^*//W)^F$ where $F$ denotes the (geometric) Frobenius and $W$ the Weyl group of $T$ is the parameter space for depth-$r$ Deligne Lusztig parameters (along with some other data, see Section \ref{section:positiveDLparameters}). In Section \ref{section:stablefnstocenter}, we use the theory of stable functions to construct a elements in the depth-$r$ center via a map described in Theorem \ref{thmtmodWtocenter} whose main statement is the following:
 \begin{theorem}
   There is an algebra homomorphism 
    \begin{equation*}
        \xi^r: \C[(\bt^*//W)^F] \longrightarrow \ZZ^r(G)
    \end{equation*}
 \end{theorem}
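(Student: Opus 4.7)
The plan is to leverage the isomorphism $[A^r]: A^r(G) \xrightarrow{\simeq} \ZZ^r(G)$ from Theorem~\ref{thm: isominverselimit} and to construct, for each $f \in \C[(\bt^*//W)^F]$, a compatible system $h^f = \{h^f_\sigma\}_{\sigma \in [\bar{\mcC}_m]} \in A^r(G)$ using the theory of stable functions developed in Sections~\ref{subsection: Fouriertransform} and \ref{section:stablefnsonpositivedepth}. Composing the resulting assignment with $[A^r]$ will produce $\xi^r$, after which multiplicativity needs to be verified.

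\textbf{Construction of $h^f_\sigma$.} Fix $f \in \C[(\bt^*//W)^F]$ and a facet $\sigma \in [\bar{\mcC}_m]$. The Moy-Prasad quotient $V_\sigma := G_{\sigma,r}/G_{\sigma,r+}$ is a finite-dimensional $\F_q$-vector space equipped with a Frobenius action and the adjoint action of $G_{\sigma,0}$, and its torus part is naturally identified with a Frobenius-stable subspace of $\bt$. Via this identification, $f$ restricts to a $W$-invariant, $F$-equivariant polynomial datum on the torus component of $V_\sigma^*$. The Fourier transform on fractional depth quotients constructed in Section~\ref{subsection: Fouriertransform} then promotes this datum to a stable function $S^f_\sigma$ on $V_\sigma$, and pulling back along $G_{\sigma,r} \twoheadrightarrow V_\sigma$ followed by extension by zero outside $G_{\sigma,r}$ yields an element $h^f_\sigma \in \M^r_\sigma$ that is $G_{\sigma,r+}$-bi-invariant and $G_{\sigma,0}$-conjugation-invariant by construction.

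\textbf{Compatibility and $\mc N$-equivariance.} To show $h^f \in A^r(G)$, two compatibilities must be checked. First, for $\sigma' \preceq \sigma$ in $[\bar{\mcC}_m]$, one needs $\phi^r_{\sigma',\sigma}(h^f_{\sigma'}) = h^f_{\sigma'} * \delta_{G_{\sigma,r+}} = h^f_\sigma$; this reduces to the compatibility of the stable-function construction with the natural maps between the Moy-Prasad quotients $V_{\sigma'}$ and $V_\sigma$, which holds because both $S^f_{\sigma'}$ and $S^f_\sigma$ are built by Fourier transform from the same apartment-side datum determined by $f$. Second, for $n \in \mc N$ stabilizing $\mcC$, compatibility with $\phi^r_{\sigma_1,\sigma_2,n}$ follows from the fact that $f$ is already $W$-invariant, so the apartment-side data is $W$-equivariant and the construction commutes with the $\Ad(n)$-action. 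These together produce a well-defined $\C$-linear map $\C[(\bt^*//W)^F] \to A^r(G)$, and composing with $[A^r]$ gives the candidate $\xi^r$.

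\textbf{Multiplicativity.} Since $[A^r]$ is already an algebra isomorphism, it remains to verify that $f \mapsto h^f$ is multiplicative, which reduces to checking $h^{f_1 f_2}_\sigma = h^{f_1}_\sigma * h^{f_2}_\sigma$ in $\M^r_\sigma$ for each facet $\sigma$. I expect this to be the main obstacle: it amounts to a Plancherel-type identity stating that the Fourier transform on the fractional depth quotient $V_\sigma$ exchanges convolution with pointwise multiplication, compatibly with the restriction from $V_\sigma$ to its torus part, with the $W$-averaging, and with passage to $F$-fixed points. The arguments should assemble from the properties of the Fourier transform on fractional Moy-Prasad quotients established in Section~\ref{subsection: Fouriertransform}, together with the defining invariance and descent properties of stable functions from Section~\ref{section:stablefnsonpositivedepth}; carrying out this assembly carefully in the presence of the Frobenius twist is where the technical work lies.
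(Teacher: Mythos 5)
Your overall architecture coincides with the paper's: for each $\sigma\in[\Bar{\mcC}_m]$ pull $f$ back to a stable function on $G_{\sigma,r}/G_{\sigma,r+}$ via the Fourier transform, view it inside $\M^r_\sigma$, check compatibility with the transition maps $\phi^r_{\sigma',\sigma}$ and $\phi^r_{\sigma_1,\sigma_2,n}$, and then invoke $A^r(G)\cong\ZZ^r(G)$. However, the two steps you treat as routine are exactly where the content lies, and as literally stated your construction does not go through. The missing ingredient is normalization: the paper's map $j_{\sigma,r}$ carries the $\sigma$-dependent factor $(c^\mu_{\sigma,r})^{-1}$ with $c^\mu_{\sigma,r}=\mu(G_{\sigma,r+})\left|\bg_{\sigma,r}^F\right|^{1/2}$. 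This is not cosmetic. First, $\iota_{\sigma,r}$ intertwines the normalized convolution on $C(\bg_{\sigma,r}^F)$ with the Haar convolution on $\M^r_\sigma$ only up to the factor $c^\mu_{\sigma,r}$, so without rescaling your assignment is neither multiplicative nor unital (the unit would go to $c^\mu_{\sigma,r}\delta_{G_{\sigma,r+}}$ rather than $\delta_{G_{\sigma,r+}}$). Second, in fractional depth the Fourier transform commutes with parabolic restriction only up to $c_{\sigma,\tau,r}=\left|\bu_{\sigma,\tau,r}^F\right|^{1/2}\left|\bu_{\sigma,\tau',r}^F\right|^{-1/2}\neq 1$, so the unnormalized family $\{\iota_{\sigma,r}(\rho_{\sigma,r}(t^*_\sigma f))\}_\sigma$ fails the compatibility $h^f_{\sigma'}*\delta_{G_{\sigma,r+}}=h^f_\sigma$ and does not define an element of $A^r(G)$; the repair is precisely the identity $c^\mu_{\sigma,r}\,c_{\sigma,\tau,r}=c^\mu_{\tau,r}$.

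Beyond normalization, the face-compatibility is not a consequence of ``both sides are built from the same apartment-side datum.'' The substantive point is that $h^f_{\sigma'}*\delta_{G_{\sigma,r+}}$, a priori supported on $G_{\sigma',r}$, is in fact supported on the smaller group $G_{\sigma,r}$, where it equals the parabolic restriction of the stable function; this is the vanishing property $\sum_{N\in\bu_{\sigma',\sigma,r}^F}S^f_{\sigma'}(X+N)=0$ for $X\notin\bg_{\sigma,r}^F\oplus\bu_{\sigma',\sigma,r}^F$, which rests on a Jordan-decomposition argument for duals of Lie algebras and on the compatibility of $\chi_{\sigma,r}$ with the Iwahori decomposition over the tame extension $E$. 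By contrast, the multiplicativity you flag as the main obstacle is the formal part once the constants are in place, since the Fourier transform already exchanges convolution and pointwise product and $\rho_{\sigma,r}$ is an algebra map. Finally, note that stability facet-by-facet is governed by $W_\sigma$ through the generalized Levi $\mbL_\sigma$ over $E$, not merely by restricting a $W$-invariant datum to the ``torus part'' of $V_\sigma^*$; your $\Ad(n)$-equivariance remark is in the right direction but should be implemented via $t_{\sigma_1}=t_{n\sigma_1}\circ\Ad^*(n)$ together with the equality $c^\mu_{\sigma_1,r}=c^\mu_{n\sigma_1,r}$.
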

This enables us to attach Deligne-Lusztig parameters to smooth irreducible representations of positive depth and define a map $\Theta_r: \Irr(G)_r\rightarrow \DL_r$ for $r \in \Z_{(p)}\cap \Q_{>0} $, which only takes values in non-trivial parameters, which are denoted by $\DL^\circ_r$. We further show in Proposition \ref{PropMP=DLpi} that the parameters we attach are the same as the ones in \cite{chendebackertsai}, and hence our construction produces elements in the center which act by the same constant on smooth irreducible representations having the same DL parameter. We deal with the depth zero case and show similar results in Section \ref{section: Depthzerocase}, constructing a map $\Theta_0: \Irr(G)_0\rightarrow \DL_0$. Most of the ideas regarding stable functions in this depth zero was developed in \cite{cb24}, and we have generalized them to the reductive case.\par 
With the additional assumption that $p\nmid |W|$, we know that smooth irreducible representations have depth in $\Z_{(p)}\cap \Q_{\geq 0}$. Thus we have a map $\Theta: \Irr(G)\rightarrow \DL^t:=\DL_0\coprod \left(\coprod_{r \in \Z_{(p)}\cap \Q_{> 0}}\mathrm{DL}_r^\circ\right)$ and we can partition $\Irr(G)$ into packets $\Pi(\vartheta),\: \vartheta \in \DL^t$ having the same Deligne-Lusztig parameter. The maps in Theorem \ref{thmtmodWtocenter} and \ref{thmTmodWto0center} enable us to construct projectors to the packets (Proposition \ref{prop:projectorpackets})  and decompose the category $R(G)$ into a product of full subcategories (Theorem \ref{thm:decompositionDLpackets})
    \begin{equation}\label{eqn:decompositionDLpackintro}
        R(G) = \prod_{\vartheta\in \DL^t}R(G)_\vartheta
    \end{equation}
where $R(G)_\vartheta $ is the full subcategory of $R(G)$ consisting of those representations all of whose irreducible subquotients are in $\Pi(\vartheta)$. 
The decomposition in \eqref{eqn:decompositionDLpackintro} then gives a decomposition $R(G)_{\leq r}$ in terms of restricted Langlands parameters 
:
\begin{theorem}
For $r \in\Z_{(p)}\cap \Q_{\geq 0}$, we have a decomposition 
\begin{equation*}
  R(G)_{\leq r}= \bigoplus_{\varphi^r\in \RP_r}R(G)_{\varphi_r}  
\end{equation*}
of $R(G)_{\leq r}$ into a product of full subcategories indexed by restricted
depth-$r$ Langlands parameters $\RP_r$.
\end{theorem}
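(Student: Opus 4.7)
The plan is to derive this decomposition by combining the finer decomposition \eqref{eqn:decompositionDLpackintro} indexed by Deligne--Lusztig parameters with the bijection $\DL_s \simeq \RP_s$ and a reindexing by depth.

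First I would restrict the decomposition $R(G) = \prod_{\vartheta \in \DL^t} R(G)_\vartheta$ to $R(G)_{\leq r}$. By construction the map $\Theta$ sends an irreducible representation to a DL parameter whose depth matches its own (Proposition \ref{PropMP=DLpi} identifies $\Theta$ with the Moy--Prasad datum, which determines depth), so a representation in the packet $\Pi(\vartheta)$ has depth equal to $\mathrm{depth}(\vartheta)$. Consequently $R(G)_\vartheta \subset R(G)_{\leq r}$ iff $\mathrm{depth}(\vartheta) \leq r$, giving
\begin{equation*}
R(G)_{\leq r} = \bigoplus_{\vartheta \in \DL^t,\ \mathrm{depth}(\vartheta) \leq r} R(G)_\vartheta.
\end{equation*}
The passage from the product in \eqref{eqn:decompositionDLpackintro} to a direct sum on this subcategory uses the standard fact that every smooth representation in $R(G)_{\leq r}$ is generated by its vectors fixed by some $G_{\sigma,r+}$, so the projections produced by the central projectors attached to the $\vartheta$'s have bounded support and only finitely many contribute to any fixed generator.

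Next I would reindex by restricted Langlands parameters. The bijection $\DL_s \xrightarrow{\simeq} \RP_s$ is established in \cite{chendebackertsai} for each $s \in \Z_{(p)} \cap \Q_{\geq 0}$. A restricted depth-$r$ Langlands parameter $\varphi^r : I_k^r/I_k^{r+} \to G^\vee$ has a well-defined depth $s \leq r$ equal to the smallest real number such that $\varphi^r$ factors through $I_k^s/I_k^{r+}$, and one obtains a bijection
\begin{equation*}
\RP_r \;\simeq\; \RP_0 \,\sqcup\, \bigsqcup_{s \in \Z_{(p)} \cap (0, r]} \RP_s^\circ,
\end{equation*}
where $\RP_s^\circ$ denotes parameters of depth exactly $s$. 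Combining the depth-by-depth DL--RP bijections produces a bijection between $\{\vartheta \in \DL^t : \mathrm{depth}(\vartheta) \leq r\}$ and $\RP_r$. Setting $R(G)_{\varphi_r} := R(G)_{\vartheta(\varphi_r)}$ for the matching DL parameter and pulling back the decomposition through this bijection yields the desired
\begin{equation*}
R(G)_{\leq r} = \bigoplus_{\varphi^r \in \RP_r} R(G)_{\varphi_r}.
\end{equation*}

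The main obstacle I anticipate is making precise the depth-matching that underlies the first step: one must confirm that for each $\vartheta$ with $\mathrm{depth}(\vartheta) = s$, every irreducible in $R(G)_\vartheta$ has depth \emph{exactly} $s$, not merely $\leq s$, so that the Bernstein-type decomposition actually refines the Moy--Prasad filtration. This is precisely what Proposition \ref{PropMP=DLpi} and the construction of the maps $\Theta_0, \Theta_r$ in Sections \ref{section:stablefnstocenter} and \ref{section: Depthzerocase} deliver. A secondary bookkeeping point is that the hypothesis $p \nmid |W|$ (implicit in using $\Z_{(p)}$) ensures that the depths appearing in $\Irr(G)$ match the depths appearing in $\RP_r$, so that nothing is lost in the reindexing.
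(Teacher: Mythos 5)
Your first step is fine: restricting the product decomposition of Theorem \ref{thm:decompositionDLpackets} to $R(G)_{\leq r}$ gives
$R(G)_{\leq r} = \bigoplus_{\vartheta \in \DL^t,\ \mathrm{depth}(\vartheta) \leq r} R(G)_\vartheta$,
and the reasons you give for this (depth-preservation from Proposition \ref{PropMP=DLpi} and the finiteness produced by the projectors) are the right ones. This is actually a \emph{finer} decomposition than the one the theorem asserts.

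The second step, however, contains a genuine error. You claim a bijection
$\RP_r \;\simeq\; \RP_0 \,\sqcup\, \bigsqcup_{s \in \Z_{(p)} \cap (0, r]} \RP_s^\circ$
by stratifying $\RP_r$ by a notion of ``depth.'' This is not correct, and in fact the stratification you propose is not even well-defined. A restricted depth-$r$ parameter is a continuous homomorphism $\varphi^r : I_k^r/I_k^{r+} \to G^\vee$ satisfying the lifting condition of Definition \ref{defn:positiveRPr}. All elements of $\RP_r$ have the same domain $I_k^r/I_k^{r+}$; for $s<r$ the group $I_k^s/I_k^{r+}$ is \emph{larger} than $I_k^r/I_k^{r+}$ (upper-numbering filtrations are decreasing), so ``$\varphi^r$ factors through $I_k^s/I_k^{r+}$'' does not make sense as a condition defining a smaller depth. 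The only meaningful dichotomy inside $\RP_r$ is between the trivial homomorphism and the nontrivial ones, i.e.\ $\RP_r = \{\bar 0^r\} \sqcup \RP_r^\circ$ (matching $\DL_r = \{\bar 0^r\} \sqcup \DL_r^\circ$). A single element cannot be in bijection with the union $\RP_0 \sqcup \bigsqcup_{0 < s < r} \RP_s^\circ$, so no reindexing of the form you propose can exist.

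What the paper actually does (Propositions \ref{prop:projectorpackets} and \ref{prop:depth<r projector}, and the discussion preceding Corollary \ref{corollary:decomprestricted and <r}) is the opposite of a refinement: the decomposition indexed by $\RP_r$ is \emph{coarser} than your intermediate one. It sets $R(G)_{\varphi^r} = R(G)_{\vartheta^r}$ for each nontrivial $\vartheta^r \in \DL_r^\circ$ (these account for all of $\Irr(G)_r$ by Remark \ref{remark nontrivial and alternativeproof}), and then assigns the \emph{entire} subcategory $R(G)_{<r}$ to the single trivial parameter $\bar 0^r$, using the projector $e_{\bar 0^r} = \xi^r(\mathbbm{1}_{\bar 0})$ which is shown in Proposition \ref{prop:depth<r projector} to cut out precisely $R(G)_{<r}$. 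The corollary's concluding sentence — ``the subcategory corresponding to the trivial depth-$r$ parameter for positive depths contains the representations with all their irreducible subquotients in $\Irr(G)_{<r}$'' — is exactly this point. So the correct passage from your first-step decomposition to the statement is to collapse all of $\{\vartheta : \mathrm{depth}(\vartheta) < r\}$ into $\bar 0^r$, not to reindex them injectively into $\RP_r$.
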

The theorem is restated in 
 Corollary \ref{corollary:decomprestricted and <r}.
In the case of positive depths, the subcategory corresponding to the to the trivial depth-$r$ parameter contains the smooth representations all of whose irreducible subquotients are in $\Irr(G)_{<r}$.

\subsection{A conjecture on stability}
An element in the Bernstein center is called stable if the associated invariant distribution is stable. Let $\ZZ^{st}(G)$ denote the vector subspace of stable elements in the center, and $\ZZ^{st,r}(G)=\ZZ^{st}(G)\cap \ZZ^{r}(G)$. The stable center conjecture asserts that $\ZZ^{st}(G) \subset \ZZ(G)$ is a unital sub-algebra. The stable center and some of its conjectural properties and equivalent interpretations are described briefly Section \ref{section: Bernstein center}. We have the following conjecture about the elements in $\ZZ(G)$ that lie in the image of $\xi^r$ under the assumption that the residue chracteristic is large enough, which we state in detail with some evidence in Section \ref{section:conjectures}. 
\begin{conjecture}
    For $r \in \Z_{(p)}\cap \Q_{\geq 0}$, we have $\mathrm{Im}(\xi^r)\subset \ZZ^{st,r}(G) $.
\end{conjecture}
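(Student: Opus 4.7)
The plan is to reduce the conjecture to known stability results for Fourier transforms on finite reductive Lie algebras, exploiting the explicit Euler--Poincar\'e description of $\ZZ^r(G)$ supplied by Theorem \ref{thm: isominverselimit}. First I would unwind $\xi^r$: for $f \in \C[(\bt^*//W)^F]$, the image $\xi^r(f)$ equals $[A^r]$ applied to an inverse system $h(f) = \{h_\sigma(f)\}_{\sigma \in [\Bar{\mcC}_m]} \in A^r(G)$ whose component at a facet $\sigma$ is the Fourier transform, in the sense of Section \ref{subsection: Fouriertransform}, of the stable class function on the Moy--Prasad quotient $G_{\sigma,r}/G_{\sigma,r+}$ produced from $f$ via the construction of Section \ref{section:stablefnstocenter}. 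Each $h_\sigma(f)$ is therefore stable on its own quotient by design, and the question becomes whether the invariant distribution on $G(k)$ obtained as the limiting Euler--Poincar\'e sum $[A^\Sigma_{h(f)}] = \sum_{\sigma \in \Sigma}(-1)^{\dim \sigma} h_\sigma(f)$, as $\Sigma$ exhausts $\B_m$, is stable.

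Since stability of an invariant distribution on $G(k)$ can be tested on regular semisimple orbital integrals, I would fix a regular semisimple $\gamma \in G(k)$, apply the topological Jordan decomposition $\gamma = \gamma_s \gamma_u$, and analyze the orbital integral of $\xi^r(f)$ at $\gamma$. DeBacker's descent for orbital integrals, together with the support of the $h_\sigma(f)$ under the Moy--Prasad isomorphism, restricts the Euler--Poincar\'e sum to facets in the building of the connected centralizer of $\gamma_s$ and identifies the local contribution at each such facet with an orbital integral of the Fourier transform of a stable function on a finite reductive quotient of that centralizer. At that level, and under the hypothesis $p \gg 0$, stability follows from Waldspurger's theorem on Fourier transforms of stable orbital integrals on reductive Lie algebras, combined with the fractional-depth Fourier theory developed in Section \ref{subsection: Fouriertransform}.

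The main obstacle is the coherence step sandwiched between these two reductions. Each $h_\sigma(f)$ is only $G_{\sigma,0}$-conjugation invariant, and stability at a single facet refers to stable conjugacy inside the Moy--Prasad reductive quotient, not in $G(k)$. Upgrading the collection of local stabilities to stability of the assembled distribution requires matching stable classes across different facets and showing that the alternating Euler--Poincar\'e sum cancels all contributions coming from non-stable twists of a given rational class. The depth-zero and integral-depth cases worked out in \cite{cb24} and \cite{chendepthzerostable} provide a blueprint; my plan is to adapt those coherence arguments using the fractional-depth Fourier theory of Section \ref{subsection: Fouriertransform} together with germ expansions of Euler--Poincar\'e sums in the spirit of \cite{BKV2}. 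I expect this bridging step --- connecting stability in finite reductive quotients to stability in $G(k)$ coherently across the refined building $\B_m$ --- to be where the essential difficulty lies, rather than in the finite-group-theoretic input.
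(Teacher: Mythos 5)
You have not produced a proof, and indeed the statement you are addressing is stated in the paper as a \emph{conjecture}: the paper offers no proof of it, only evidence (stability of the single element $\xi^r(\mathbbm{1})=[A_{\delta_r}]$, i.e.\ the depth-$r$ projector, proved in \cite{BKV2}, and the geometric proof of depth-zero stability in \cite{BKV}) together with the suggestion that the $\ell$-adic/affine-Springer-fiber framework of \cite{BKV} might be adapted to the positive-depth case. Measured against that, your proposal is a reasonable strategy outline, but it leaves the decisive step open, and you say so yourself: the passage from facet-wise ``stability'' to stability of the assembled invariant distribution on $G(k)$ is exactly the content of the conjecture, and ``adapting the coherence arguments of \cite{cb24} and \cite{chendepthzerostable}'' is not available, because neither of those references proves stability of the corresponding elements of the center --- they construct them and relate them to Deligne--Lusztig parameters, which is a different matter.

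Two further points in your sketch would fail as written even before reaching that admitted gap. First, you conflate two different notions of stability: the functions $h_\sigma(f)$ are ``stable'' in the sense of Definition \ref{defnstablefns}, i.e.\ their Fourier transforms factor through $(\bg_{\sigma,r}^F)^*\to(\bt^*//W_\sigma)^F$; this is a GIT/Deligne--Lusztig-type condition on a finite quotient, not the condition that stable orbital integrals of unstable test functions vanish, and Waldspurger's theorem on Fourier transform and stability lives on the $p$-adic Lie algebra $\fg(k)$, not on these finite quotients. Bridging these two notions is again essentially the conjecture itself, not an input you can quote. Second, the descent step is not justified: after DeBacker-style descent along $\gamma=\gamma_s\gamma_u$ the local data live on Moy--Prasad quotients of the connected centralizer $G_{\gamma_s}$, whereas the functions $h_\sigma(f)$ were built from stable functions for $G$; there is no argument that their restrictions or descents are again of the form $\rho_{\sigma',r}(\cdot)$ for the centralizer, and the restriction/induction compatibilities proved in Section \ref{subsection: Fouriertransform} only cover parabolic restriction inside $\mbL_\sigma$, not descent to centralizers of semisimple elements. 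So the proposal should be regarded as a plausible plan with the central analytic/geometric step --- the one the authors themselves defer to a hoped-for extension of \cite{BKV} --- still missing.
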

In \cite{BKV}, Bezrukavnikov-Kazhdan-Varshavsky use the geometry of affine Springer fibers and $\ell$-adic perverse sheaves to 
study the depth-zero stable center conjecture. The results of the paper 
provide a possible  approach to the 
positive depth stable center conjecture  using the geometric framework
developed in \emph{loc. cit.}.

\subsection{Organization and some conventions}
We briefly summarize the main goals of each section. In Section \ref{section: Background and notations}, we give some necessary background about Bruhat-Tits buildings, Moy-Prasad filtrations, the Bernstein center and set up some notations used throughout the article. Section \ref{section:fracdepthdescription} gives a description of the depth-$r$ center for non negative rational depths, generalizing previous results in the integral depth case. In Section \ref{section:stablefnsonpositivedepth}, we define and study stable functions on positive depth Moy-Prasad quotients, using the theory of Fourier transforms on such quotients. In Section \ref{section:stablefnstocenter}, we use the stable functions to construct a map from the space of functions on depth-$r$ Deligne-Lusztig parameters to the depth-$r$ center.  The maps to the depth-$r$ center constructed in the previous section are used to attach Deligne-Lusztig parameters to smooth irreducible representations of positive depth in Section \ref{section: Parameters attached to repns of positive depth}. We also give a brief description of their relation to restricted Langlands parameters. Section \ref{section: Depthzerocase} studies the same results for the depth-zero case. We decompose the category of smooth representations into full subcategories in Section \ref{section:decomposition} using the partition of $\Irr(G)$ into disjoint sets via their Deligne-Lusztig parameters. Finally, in the last section, we have some conjectures about the stability of the elements in the center that we constructed. 
\paragraph{Some conventions:} Throughout the paper, we assume that $p>2$. In some sections, there are additional assumptions on $p$. We use $\mf F$ to denote a (geometric) Frobenius in the local field setting, for example in $W_k$ or in $\Gal(K/k)$, and $F$ to denote the (geometric) Frobenius in $\bar\F_q$-vector spaces or varieties with $\F_q$-structure. We generally always use the geometric Frobenius unless mentioned otherwise

\subsection{Acknowledgement}
The authors thank Connor Bass, Charlotte Chan, Stephen DeBacker and Cheng-Chiang Tsai
 for many useful discussions. T.-H. Chen also thanks the 
 NCTS-National Center for Theoretical Sciences at Taipei
 where parts of this work were done. The research of T.-H. Chen is supported by NSF grant DMS-2143722.

\section{Background and some notations}\label{section: Background and notations}
 
\subsection{Buildings- The split case}
We fix a discrete valuation on $\rv:k \rightarrow \Z \cup \{\infty\}$, which extends uniquely to $\rv:\bar k\rightarrow\Q\cup \{\infty\}$ and $\rv(K) \subseteq \Z \cup \{\infty\}$. Let $S$ be a $k$-split maximal torus in $G$, and $X^*(S)= \Hom_k(S,\G_m)$ and $X_*(S)=\Hom_k(\G_m, S)$ denote the lattice of $k$-rational characters and co-characters of $S$ respectively. Further, let $\Phi(G,S)$ and $\Phi^\vee(G,S)$ denote the root and co-root lattice of $G$ with respect to $S$, which can be identified with the absolute root system since $S$ is $k$-split. Henceforth, we will denote them by $\Phi$ and $\Phi^\vee$. We have a perfect pairing 
\begin{equation}\label{eqn:pairingchar co-char}
  <,>:  X_*(S)\times X^*(S) \rightarrow \Z 
\end{equation}
where $<\lambda, \chi>$ denotes the integer such that $\chi \circ \lambda(s)= s^{<\lambda, \chi>}$. Let $V_1= X_*(S)\otimes_\Z \R$, and we identify $V_1^*$ and $X^*(S)\otimes_\Z \R$. The pairing in \eqref{eqn:pairingchar co-char} canonically extends to $<,>:V_1^*\times V_1\rightarrow \R$. There is a unique group homomorphism $\nu_1: S(k)\rightarrow V_1$ such that 
\begin{equation}\label{eqn:tranlation by torus}
    <\nu_1(s), \chi>= - \rv(\chi(s))
\end{equation}
for all $s\in S(k)$ and $\chi\in X^*(S)$. Let $S_b(k)$ denote the kernel of $\nu_1$. Note that in this case $S_b(k)=S(\mf O_k)$. Let $N=N_G(S)$. $S(k)$ is a normal subgroup of $N(k)$, and we can immediately observe that $S_b(k)$ is also a normal subgroup of $N(k)$ since $\prescript{n}{}{}\chi \in X^*(S)$ for $n \in N(k)$. Thus, we have the exact sequence 
\begin{equation*}
    0 \rightarrow S(k)/S_b(k) \rightarrow N(k)/ S_b(k) \rightarrow N(k)/S(k) \rightarrow 1
\end{equation*}
The group $N(k)/S(k)$ is the Weyl group of the root system $\Phi$ and acts naturally on the vector spaces $V_1$ and $V_1^*$. The first group in the sequence $S(k)/S_b(k)$ is a free abelian group of rank dim $V_1$ (check \cite[ Lemma 1.3 ]{landvogtcompactificationbook}). Let $W_S=N(k)/S(k)$, $\Lambda= S(k)/S_b(k)$ and $\widetilde W_S= N(k)/S_b(k)$. 
Consider the subspace $V_0\subset V_1$ defined by 
\begin{equation*}
    V_0:=\{v \in V_1\:|\: \alpha(v)=0 \:\forall\: \alpha \in \Phi \} \cong X_*(Z(G))\otimes_\Z \R
\end{equation*}
The group $N(k)/S(k)$ acts trivially on $V_0$ and $V_0=0$ if $G$ is semisimple. Let $V= V(G,S,k)$ denote the vector space $V_1/V_0$ and $\nu:S(k)\rightarrow V$ be the composition $S(k) \xrightarrow[]{\nu_1} V_1\twoheadrightarrow V$. Note that $V \cong \Phi^\vee\otimes_\Z \R$ and the canonical group homomorphism $j_1: W_S \rightarrow \GL(V_1)$ induces a group homomorphism $j: W_S\rightarrow \GL(V)$ since the image of $j_1$ acts trivially on $V_0$. \par
Let $A$ be an affine space over $V$ and $\mathrm{Aff}(A)$ denote the group of affine isomorphism $A\rightarrow A$. There is an exact sequence
\begin{equation*}
    0\rightarrow V\rightarrow \mathrm{Aff}(A)\xrightarrow{d}\GL(V)\rightarrow 1 
\end{equation*}
which splits non-canonically (depending upon choice of $x \in A$) and hence $\mathrm{Aff}(A) \cong V \rtimes \GL(V)$ (check \cite[Section 1.2]{KP23}). We have an extension of groups 
\begin{equation*}
    0 \rightarrow \Lambda \rightarrow \widetilde W_S \rightarrow W_S \rightarrow 1
\end{equation*}
which represents a class in $H^2(W_S,\Lambda)$. Using the map $\Lambda \xhookrightarrow{\nu} V$ and functoriality of $H^2(-,W_S)$, we have its image in $H^2(V, W_S)$ given by 
\begin{equation*}
    0 \rightarrow V\rightarrow \widetilde W_S' \rightarrow W_S \rightarrow 1
\end{equation*}
However, $H^2(V, W_S)=0$, and hence the second extension is trivial, i.e., $\widetilde W_S' \cong V \rtimes W_S$. 
Since $\nu(w_1\cdot  l\cdot w_1^{-1})= j(\bar w_1)(\nu(l))$, for $w_1 \in \widetilde W_S$, $l \in \Lambda$ where $\bar w_1$ denotes the image of $w_1$ in $W_S$, we note that the map $j: W_S \rightarrow \GL(V)$ is induced by $W_S \rightarrow GL_{\Z}(\Lambda)$ and hence the map $W_S \rightarrow \GL(V)$ in the semi-direct product is given by $j$. Thus, there an affine space $A$ over $V$ and a map $\tilde f: V \rtimes W_S \rightarrow \mathrm{Aff}(A)$ such that $\tilde f((v,1))$ is translation by $v \in V$ and $d(f(v,w))= j (w)$. This gives a map $f: \widetilde W_S \rightarrow V \rtimes W_S \xrightarrow{f}\mathrm{Aff}(A)$ such that $f(\lambda)$ is translation by $\nu(\lambda)$ for $\lambda \in \Lambda$ and $d(f(w_1))= j(\bar w_1)$ for $w_1 \in \widetilde W_S$, which fits into the commutative diagram 
\begin{equation*}
    \begin{tikzcd}
        0 \arrow{r} &\Lambda\arrow{r}\arrow{d}{\nu} & \widetilde W_S \arrow{r}\arrow{d}{f}& W_S \arrow{r}\arrow{d}{j}&1\\
        0 \arrow{r}  & V\arrow{r} & \mathrm{Aff}(A) \arrow{r} & \GL(V) \arrow{r}& 1
    \end{tikzcd}
\end{equation*}
So, we have an affine space $A$ over $V$ and a map $N(k) \rightarrow \mathrm{Aff}(A)$ extending $\nu:S(k) \rightarrow V$, and by \cite[Proposition 1.8]{landvogtcompactificationbook}
such a pair is unique upto unique isomorphism. Hence we denote the unique map $N(k) \rightarrow \mathrm{Aff}(A)$ by $\nu $ as well. As mentioned in \cite[Section 2.2]{yu2009bruhat} the obstruction to existence of an isomorphism lies in $H^1(W_S,V)$ which is $0$, and the obstruction to its uniqueness lies in $H^0(W_S,V)=V^{W_S}=0$. The affine space $\A(G,S):=\A_S:=A$ along with the group homomorphism $\nu:N(k)\rightarrow \mathrm{Aff}(\A_S)$ is called the (reduced) apartment of $G$ with respect to $S$. \par 
Note that if we were working with $V_1$ instead of $V$, the pair would have been unique, but not upto unique isomorphism. Working with the full vector space $V_1$ gives an affine space often called the extended apartment. The exteneded apartment is canonical in the case of semisimple groups, in which case $V=V_1$. \paragraph{}

We will give a concrete realization of the apartment, and this is the notion we will use most often. The apartment $\A_S$ is an affine space under $V \cong \Phi^\vee\otimes_\Z\R$, and the pair $(\A_S,\nu)$ is unique upto unique isomorphism. Hence, it is enough to give one description of it. \par

For $\alpha \in \Phi$, let $U_\alpha$ denote the root subgroup of $G$ with respect to $S$, and $r_\alpha$ denote the reflection with respect to $\alpha$ in the Weyl group of $\Phi$. A system $(x_\alpha)_{\alpha \in \Phi}$ of $k$-group isomorphisms $\G_a\rightarrow U_\alpha $ is called a $k$-\'epinglage of $G$ with respect to $S$. Two $k$ isomorphisms $x_\alpha:\G_a\rightarrow U_\alpha$
and $x_{-\alpha}: \G_a \rightarrow U_{-\alpha}$ are said to be associated if there is a $k$-group monomorphism $\varepsilon_\alpha : SL_2 \rightarrow G$ such that for $y \in \G_a(k)=k$, the following conditions hold :
\begin{equation*}
    x_\alpha (y)= \varepsilon_\alpha\begin{pmatrix}
        1&y\\
        0&1
    \end{pmatrix} \text{ and } x_{-\alpha}(y)=\varepsilon_\alpha \begin{pmatrix}
        1&0\\
        -y&1
    \end{pmatrix}
\end{equation*}
Note that $\varepsilon_\alpha$ is uniquely determined by this condition and $m_\alpha:= x_\alpha(1)x_{-\alpha}(1)x_\alpha(1)=\varepsilon_\alpha\begin{pmatrix}
        0&1\\
        -1&0
    \end{pmatrix}$ is in the normalizer $N_G(S)(k)$ of the maximal torus $S$. \par
    A system $\{x_\alpha:\G_a \xrightarrow{\simeq} U_\alpha\}_{\alpha\in \Phi}$ of $k$-group isomorphisms is called a $k$-Chevalley system of $G$ (with respect to $S$) if the following proprties are true.
    \begin{itemize}
        \item $x_\alpha$ and $x_{-\alpha}$ are associated for all $\alpha \in \Phi$. 
        \item For $\alpha ,\beta \in \Phi$, there exists $\epsilon_{\alpha ,\beta}\in \{\pm1\}$ such that for all $y \in \G_a(k)$, we have 
        \begin{equation*}
            x_{r_\alpha(\beta)}(y)=m_\alpha \cdot x_\beta(\epsilon_{\alpha,\beta}\cdot y)m_\alpha^{-1}
        \end{equation*}
        As per our convention $\epsilon_{\alpha,\alpha}=1$. 
    \end{itemize}

We fix a Chevalley system $\{x_\alpha:\G_a \xrightarrow{\simeq} U_\alpha\}_{\alpha\in \Phi}$ of $G$ with respect to $S$. We can define a valuation $\varphi_\alpha: U_\alpha(k) \rightarrow \Z \cup \{\infty\}$ of $U_\alpha(k)$ by 
\begin{equation*}
    \varphi_\alpha (u)= \rv(x_\alpha^{-1}(u)) \text{ for } u \in U_\alpha(k)
\end{equation*}
This valuation corresponds to a special point in the apartment $\A(S,k)$ denoted by $x_0$, which is the point fixed by $m_\alpha$ for all $\alpha\in \Phi$. Fixing the point $x_0$ identifies the affine space $\A(S,k)$ with $V$, with the point $x_0+v$ corresponding to the valuation $\tilde \{\varphi_\alpha(u)\}_{\alpha \in \Phi}$ given by $\tilde\varphi_\alpha(u)= \varphi_{\alpha}(u)+\alpha(v)$. The apartment $\A(S,k)$ can be defined as the affine space defined by the set of all such valuations $\tilde \{\varphi_\alpha(u)\}_{\alpha \in \Phi}$. The map $\nu|_{S(k)}$ is already defined as in \eqref{eqn:tranlation by torus}, and it is enough to specify the actions of $W_S$ on $V$ to give $\nu:N(k) \rightarrow \mathrm{Aff}(\A_S)$. Note that $m_\alpha$ maps to the reflections $r_\alpha$ under the isomorphism of $N(k)/S(k)$ with the Weyl group of $\Phi$, and $r_\alpha\in W_S$ has the obvious natural action on $V$, which finishes the definition of the pair $(\A_S,\nu)$.
\begin{remark}
    As mentioned in \cite[Remark 2.1.1]{fintzenmoyprasad}, similar definitions can be made without the associated condition for each root $\alpha$, and we can let $\epsilon_{\alpha,\alpha}\in \{\pm1\}$ such that $m_\alpha= x_\alpha(1)x_{-\alpha}(\epsilon_{\alpha,\alpha})x_\alpha(1)$ is contained in the normalizer of $S$. 
\end{remark}

For $\alpha \in \Phi$, let $\Gamma_\alpha:=\{ \varphi_\alpha(u)\:|\: u \in U_\alpha(k)\setminus\{1\}\}=\Z$. The set of affine roots $\Psi(\A_S)=\Psi(G,S)$ on $\A_S$ consists of affine functions given by 
\begin{equation*}
   \Psi(\A_S)=\{ x \mapsto \alpha(x-x_0)+\gamma\:|\: \alpha \in \Phi,\;\gamma \in  \Gamma_\alpha\}
\end{equation*}
We often denote the affine function $x \mapsto \alpha(x-x_0)+n$ by $\alpha+n$. For affine function on $\A_S$ of the form $\psi = \alpha +l$, $l \in \R$, let $\dot \psi=\nabla \psi:= \alpha $ denote and $H_\psi :=\{ x \in \A_S\;|\: \psi(x)=0\}$. The hyperplanes $\{H_\psi\:|\: \psi \in \Psi(\A_S)\}$ are affine subspaces of codimension $1$, often called a wall and they give $\A_S$ the structure of a poly-simplicial complex. The connected componenets of $\A_S \setminus \cup_{\psi \in \Psi(\A_S)}H_\psi$ are called chambers. Two points $x,y \in \A_S $  are called equivalent if for all affine roots $\psi$, $\psi(x)$ and $\psi(y)$ have the same sign or are both $0$. The equivalence classes are called facets, and two points are in the same facet if they belong to the same (open) poly-simplex. The apartment $\A_S$ can be written as a disjoint union of these open polysimplices. There is a $W_S$-invariant scalar product on $V$, and if we equip $\A_S$ with the metric defined by the scalar product, the $N(k)$ action on $\A_S$ becomes isometrical. \par

Using affine functions of the form $\alpha +l$ for $l \in \R$, we obtain a filtration of the root subgroups $U_\alpha(k) $ for $\alpha \in \Phi$. For $\psi =\alpha +l$ we define 
\begin{equation*}
    U_{\psi}(k)=\{ u \in U_{\dot\psi}(k)\:|\: u=1 \text{ or } \varphi_{\dot\psi}(u)\geq \psi(x_0)\}= \{x_\alpha(y)\:|\: \rv(y)\geq l\} \cup \{1\}
\end{equation*}
often denoted by $U_{\alpha,l}$ as well. We can similarly define $U_{\psi +}$ as 
\begin{equation*}
    U_{\psi +}(k)=\{ u \in U_{\dot\psi}(k)\:|\: u=1 \text{ or } \varphi_{\dot\psi}(u)> \psi(x_0)\}= \{x_\alpha(y)\:|\: \rv(y)> l\} \cup \{1\}
\end{equation*}
For a bounded subset $\Omega \subset \A_S$ let $f_{\Omega}: \Phi\rightarrow \R$ be defined as $f_\Omega(\alpha)=\mathrm{inf}\{ l \in \R\:|\: \alpha(x-x_0+l\geq 0 \:\forall \: x \in \Omega\}$. We define $U_{\alpha,\Omega}(k):= U_{\alpha, f_\Omega(\alpha)}(k)$ for $\alpha \in \Phi$ and $U_\Omega(k):=\langle U_{\alpha,\Omega}(k)\:|\: \alpha \in \Phi \rangle \subset G(k)$. For $\Omega = \{s\}$, we often denote $U_\Omega$ by $U_x$. \paragraph{}
Consider the equivalence relation on $G(k)\times \A_S$ defined by $(g,x)\sim (h,y)$ if there is an element $n \in N(k)$ with $y = \nu(n)(x)$ and $g^{-1}hn\in U_x$. Let $\B(G,k):= (G(k)\times \A_S)/\sim $. The canonical map 
\begin{align*}
    \A_S &\rightarrow \B(G,k)\\
    x &\mapsto [(1,x)]
\end{align*}
where $[(1,x)]$ denotes the equivalence class of $(1,x)$ is injective and hence we can identify $\A_S$ with its image in $\B(G,k)$. We have a $G(k)$-action on $\B(G,k)$ given by 
\begin{align*}
    G(k) \times \B(G,k) & \rightarrow \B(G,k)\\
    (g, [(h,x)]) &\mapsto [(gh,x)]
\end{align*}
The subsets of $\B(G,k)$ of the form $g\A_S$ for $g \in G(k)$ are called apartments. The apartment $g\A_S$ coincides with the apartment $\A(gSg^{-1},k)$. The stabilizer of $\A_S$ in $\B(G,k)$ is $N(k)$, and the map $S \mapsto \A_S$ is a $G(k)$ equivariant bijection between $k$-split maximal tori of $G$ and the set of apartments of $\B(G,k)$. A subset $Y \subset \B(G,k)$ is called a facet (resp. chamber) if there exists $g \in G(k)$ such that $gY \subset \A_S$ is a facet (resp. chamber). This gives the building a polysimplicial structure, and it is a union of these (open) polysimplices. We fix a $W_S$-invariant scalar product on $V$ and consider the induced metric on $\A_S$. There is a unique metric on $\B(G,k)$ which is $G(k)$-invariant and coincides with the one on $\A_S$ (check \cite[Section 13.14]{landvogtcompactificationbook}). The group $G(k)$ acts via isometries on $\B(G,k)$. The $G(k)$-set $\B(G,k)$ along with the polysimplicial structure is called the Bruhat-Tits building of $G$.

\subsection{Moy-Prasad Filtrations}\label{section:MoyPrasadfiltrations}
\paragraph{Filtrations for split Tori and its Lie algebra.}Let $S$ be a split torus, $\mf s = \Lie(S)$ and $r \in \R_{\geq 0}$. We define 
\begin{equation*}
    S(k)_0= \{ s \in S(k)\:|\: \rv(\chi(t))=0\: \forall \: \chi \in X^*_{\bar k}(S)\}
\end{equation*}
In the split case since $\Hom_k(S,\G_m)= \Hom_{\bar k}(S,\G_m)$, we have that $S_b(k)= S(k)_0$. In the general cases, $S(k)_0$ is a finite index subgroup of $S_b(k)$. Henceforth, we will identify them. The torus $S$ has a natural structure over $\mf O_k$, and $S(k)_0= S(\mf O_k)$. For any $r \in \R_{\geq 0}$, we define 
\begin{equation*}
    S(k)_r =\{ s \in S(k)_0\:|\: \rv(\chi(t)-1)\geq r\: \forall\: \chi \in X^*(S)\} 
\end{equation*}
and $S(k)_{r+}= \cup_{r_1>r}S(k)_{r_1}= \{ s \in S(k)_0\:|\: \rv(\chi(t)-1)> r\: \forall\: \chi \in X^*(S)\}$. For the Lie algebra $\mf s$ of $S$, we can similarly define a filtration by $\mf O_k$-modules
\begin{equation*}
    \mf s(k)_r= \{ X \in \mf s(k)\:|\: \rv(d\chi(X))\geq r \:\forall \chi \in X^*(S)\}
\end{equation*}
\paragraph{Filtrations of $G(k)$.} Let $x \in \A_S \subset \B(G,k)$. For $r \in \R_{\geq 0}$, we can define a filtration of the root group $U_\alpha(k)$ depending on $x$ as follows: 
\begin{equation*}
    U_{\alpha}(k)_{x,r}:= x_\alpha (\varpi^{\lceil r-\alpha(x-x_0)\rceil}\mf O_k)
\end{equation*}
We define the Moy-Prasad filtration subgroups $G(k)_{x,r}$ of $G(k)$ as 
\begin{equation*}
    G(k)_{x,r}=\langle S(k)_r, U_{\psi}(k)\:| \psi \in \Psi(\A_S), \; \psi(x)\geq r\rangle = \langle S(k)_r,\; U_\alpha(k)_{x,r}\:|\: \alpha \in \Phi(G,S)\rangle
\end{equation*}
The subgroups $G(k)_{x,0}\subset G(k)$ for $x \in \B(G,k)$ are called parahoric subgroups of $G(k)$ corresponding to $x$. We set $G(k)_{x,r+}= \cup_{r_1>r}G(k)_{x,r_1}$. When the ground field is clear, we will denote $G(k)_{x,r}$ and $G(k)_{x,r+} $ by $G_{x,r}$ and $G_{x,r+}$ respectively.

\paragraph{Filtrations of the Lie algebra and its dual}
Let $\fg = \Lie(G)$, $\mf u_\alpha = \Lie (U_\alpha) $ and $X_\alpha = dx_\alpha(1)$ where $dx_\alpha:\G_a\rightarrow \mf u_\alpha$ is the derivative of $x_\alpha:\G_a\rightarrow U_\alpha $. For $r \in \R_{\geq 0}$, we can define 
filtrations $\mf u_\alpha(k)_{x,r}$ of $\mf u_\alpha(k)$ depending on $x \in \A_S\subset \B(G,k)$ by 
\begin{equation*}
    \mf u_\alpha(k)_{x,r}= \varpi^{\lceil r-\alpha(x-x_0)\rceil} \mf O_k X_\alpha \subset \mf u_\alpha(k)
\end{equation*}
Note that given an affine function $\psi=\alpha +l$, $l \in \R$, we can define a filtration $\mf u_\psi(k)$ of $\mf u_\alpha(k)$ similarly. Then, we can define the Moy-Prasad filtrations of the Lie algebra $\fg(k)$ by 
\begin{equation*}
    \fg(k)_{x,r}= \mf s(k)_r\oplus \left(\bigoplus_{\alpha \in \Phi}\mf u_\alpha(k)_{x,r} \right)= \langle \mf s(k)_r, \mf u_\psi \:|\: \psi \in \Psi(\A_S), \; \psi(x)\geq r\rangle
\end{equation*}
and $\fg(k)_{x,r+}=\cup_{r_1>r}\fg(k)_{x,r+}$. \par
Let $\fg^*=\Hom(\fg,k)$ denote the dual of $\fg$. We also have the Moy-Prasad filtration subspaces of $\fg^*(k)$, defined in the following way :
\begin{equation}
   \fg^*(k)_{x,-r}=\{ X^* \in \fg^*(k)\:|\: X^*(Y) \in \varpi \mf O_k\: \forall \:Y \in \fg_{x,r+}\}
\end{equation}
and $\fg^*(k)_{x,-r+}=\cup_{r_1<r}\fg^*(k)_{x,-r_1} $. When the ground field is clear, we will denote $\fg^*(k)_{x,-r}$ and $\fg^*(k)_{x,-r+}$ by $\fg^*_{x,-r}$ and $\fg^*_{x,-r+}$ respectively. 

 \subsection{The Bernstein center}\label{section: Bernstein center}
We fix a Haar measure $\mu$ of $G(k)$ and let $\HH(G):=(C_c^\infty(G),*)$ be the Hecke algebra of compactly supported smooth functions on $G(k)$ with multiplication given by the convolution product 
\begin{equation}\label{eqn:convolutionwrtmu}
    f*g(x)= \int_{G(k)}f(xy^{-1})g(y)d\mu(y)
\end{equation}
The Bernstein center $\ZZ(G)$ of $G(k)$ is defined as the algebra of endomorphisms of the identity functor $\mathrm{End}(\Id_{R(G)})$ in the category $R(G)$ of smooth representations of a $p$-adic group. There are several equivalent ways of describing $\ZZ(G)$, and we give a brief review of them following \cite{BKV},\cite{haines}. \paragraph{}
A distribution is a $\C$-linear map $D:C_c^\infty(G)\rightarrow \C$. For $f \in C^\infty(G)$, let $\breve f$ denote the function $\breve f(x)=f(x^{-1})$. We define $\breve D(f)= D(\breve f )$ for a distribution $D$ and $f \in C^\infty_c(G)$. The convolution of a distribution with a function $f \in C^\infty_c(G)$ can be defined by 
\begin{equation*}
    (D*f)(g)=\breve D(g\cdot f)
\end{equation*}
where $g\cdot f(x)=f(xg)$. Note that $D*f \in C^\infty(G)$. A distribution $D$ is said to be essentially compact if $D*f \in C^\infty_c(G)$ for all $f \in C^\infty_c(G)$. Let $\prescript{g}{}{}f(x)= f(g^{-1}xg)$. A distribution is said to be $G(k)$-invariant if $D(\prescript{g}{}{}f)=D(f) \: \forall \: f \in C^\infty_c(G),\: g \in G(k)$. The set of essentially compact $G(k)$-invariant distributions is denoted by $\mc D(G)^G_{ec}$ and it is a an associative and commutative $\C$-algebra (check \cite[Corollary 3.1.2]{haines}) with convolution product defined in the following way:
\begin{equation*}
    (D_1*D_2)(f)=\breve D_1(D_2*f)
\end{equation*}
Note that this only works for essentially compact distributions. \paragraph{}

Given $(\pi, V) \in R(G)$, each $z\in \ZZ(G)$ defines an endomorphism $z|_V \in \text{End}_{G( k)}(V)$. In particular, if $(\pi, V) \in \Irr(G)$, each $z\in \ZZ(G)$ defines a function $f_z :Irr(G)  \rightarrow \C$ by Schur's Lemma such that $z|_V=f_z(\pi)\Id_V$. Moreover, the map $z\mapsto f_z$ is an algebra homomorphism $\ZZ(G) \rightarrow Fun(\: \Irr(G), \: \C)$, which is injective. \par
  Any smooth $G( k)$-representation is equivalently a non-degenerate $\HH(G)$-module. Let $(l, \HH(G))$ and $(r,\HH(G))$ denote the smooth $G( k)$-representations induced by left and right translations by $G( k)$ on $\HH(G)$. The action on $G( k)$ by $G( k)^2$, defined by $(g,h)(x)=gxh^{-1}$ gives a $G( k)^2$ action on $\HH(G)$, given by $(g,h)f(x)=l(g)r(h)f(x)= f(g^{-1}xh)$, and hence $\HH(G)^2$-module structure on $\HH(G)$. Note that the actions $l$ and $r$ commute, and the action of $\HH(G)^2$ on $\HH(G)$ is given by $(\alpha, \beta)f= \alpha * f* \breve \beta$, where $\breve\beta(x)=\beta(x^{-1})$. Each $z\in \ZZ(G)$ defines an endomorphism $z_\HH$ of the smooth representation $(l, \HH(G))$, and since the actions $l$ and $r$ commute, the endomorphism $z_\HH$ of the  Hecke algebra $\HH(G)$ commutes with left and right $G( k)$-actions and hence left and right convolutions. For every $(\pi, V) \in R(G)$, $v\in V$ and $h\in \HH(G)$, we have the equality $z_V(h(v))=(z_\HH(h))(v)$. Moreover, the map $z\mapsto z_\HH$ defines an algebra isomorphism $\ZZ(G) \xrightarrow{\sim} \text{End}_{\HH(G)^{2}}(\HH(G))$. \par

An element $z\in \ZZ(G)$ defines an endomorphism $z_{reg}$ of the $G( k)$ representation on $\HH(G)$ given by the conjugation action, and hence gives rise to an $G( k)$-invariant distribution $\nu_z$ such that $\nu_z(f)=z_{reg}(\breve f)(1) $ for all $f\in \HH(G)$. The invariant distribution $\nu_z$ can be characterised by the condition $\nu_z * h = z_\HH(h) \: \forall \: h\in \HH(G)$. Moreover, the map $z \mapsto \nu_z$ gives an isomorphism of $\ZZ(G)\xrightarrow{\simeq}\mc D(G)^G_{ec}$. \par

For a compact open subgroup $K \subset G(k)$, let $\delta_K \in \HH(G)$ be defined as $ \delta_K:= \mu(K)^{-1} \mbm 1_K$, where $\mbm 1_K$ is the characteristic function of $K$. We denote by $\HH(G,K)= \delta_K* \HH(G)*\delta_K$ the convolution algebra of $K$ bi-invariant compactly supported smooth functions on $G(k)$. Let $\ZZ(G,K)= Z(\HH(G,K))$ denote the center of $\HH(G,K)$, and they form a projective system with maps given by 
\begin{align*}
    \ZZ(G,K)& \rightarrow \ZZ(G,K')\\
    z_K &\mapsto z_K* \delta_{K'}
\end{align*}
for $K'\subset K$. An element of $\varprojlim_{K}\ZZ(G,K)$ acts on objects of $R(G)$ in a way that commutes with the $\HH(G)$-action, and we have an isomorphism $\ZZ(G) \xrightarrow{\simeq }\varprojlim_{K}\ZZ(G,K)$, where $K$ runs over all compact open subgroups of $G(k)$. The center $\ZZ(G)$ can also be described as the ring of regular functions $\C[\Omega(G)]$ on the variety of (super)cuspidal supports $\Omega(G)$ (see Section \ref{section:decomposition} and \cite[Section 3.3]{haines}). 

\paragraph{Stable Bernstein center.}  An element $x \in G$ is said to be strongly regular semisimple if the stabilizer $G_x:= \mathrm{Stab}_G(x) \subset G$ is a maximal torus. Let $G^{sr}$ denote the set of strongly regular semisimple elements of $G$. It is an open subvariety of $G$. For $f \in C_c^\infty(G)$ and $x \in G^{sr}(k)$, we can define the normalized orbital integral of $f$ at $x$ by 
\begin{equation*}
    O_x(f)= |D_G(x)|^{1/2} \int_{G(k)/G_x(k)} f(gxg^{-1})d\dot g
\end{equation*}
where $D_G(x)$ is the Weyl discriminant of $x$ in $G(k)$ and $d\dot g$ is the left $G(k)$-invariant Haar measure on $G(k)/G_x(k)$ induced by Haar measures on $G(k)$ and $G_x(k)$. When $x$ is semisimple, $G_x(k)$ is unimodular and hence $G(k)/G_x(k)$ does carry a $G(k)$-invariant measure. Note that $G^{sr}(k)$ is invariant under stable conjugacy, and we can define the stable orbital integral of $f$ at $x$ by 
\begin{equation*}
    O^{st}_x(f):=\sum_{x \sim_{st}x'/\sim}O_{x'}(f)
\end{equation*}
where the sum is over representatives of $G(k)$-conjugacy classes is the stable conjugacy class of $x$. This defines a $G(k)$-invariant distribution $O^{st}_x$, determined uniquely upto a constant. A function $f \in C^\infty_c(G)$ is called unstable if $O_x^{st}(f)=0$ for all $x \in G^{sr}(k)$, and an invariant distribution $D$ is called stable if $D(f)=0$ for every unstable $f \in C^\infty_c(G)$. An element $z \in \ZZ(G)$ in the Bernstein center is called stable if the associated distribution $\nu_z \in \mc D(G)^G_{ec}$ is a stable distribution. We denote by $\ZZ^{st}(G)$ the vector subspace of stable elements in the Bernstein center $\ZZ(G)$. \par The stable center conjecture states that $\ZZ^{st}(G)\subset \ZZ(G)$ is a subalgebra, and there is a decomposition of the set $\Irr(G)$ of irreducible representations of $G(k)$ into packets via characters of $\ZZ^{st}(G)$ , which is slightly coarser than the conjectural decomposition into $L$-packets as per the Local Langlands correspondence (check \cite[Section 3.1.4]{BKV}). \par
An element $z \in \ZZ(G)$ is called very stable if $\nu_z*f$ is unstable for every unstable $f \in C_c^\infty(G)$. We denote the set of very stable elements in $\ZZ(G)$ by $\ZZ^{vst}(G)$, and it is a commutative $\C$-algebra contained in $\ZZ^{st}(G)$. A stronger form of the stable center conjecture suggests that $\ZZ^{vst}(G)$ and $\ZZ^{st}(G)$ coincides (check \cite[Section 1.1]{hansen2025excursion}).

\paragraph{}Let $W_k$ denote the Weil group of the local field $k$, and $WD_k=W_k\ltimes \C$ denote the Weil-Deligne group. Further let $G^\vee $ denote the complex dual group of $G$ and $\prescript{L}{}{}G= G^\vee \ltimes W_k$ denote the $L$-group. The $G^\vee$ conjugacy class of an admissible (in the sense of \cite[Section 5]{haines}) homomorphism $\lambda: W_k \rightarrow \prescript{L}{}{}G$ is called an infinitesimal character.  Let $\Omega(\prescript{L}{}{}G)$ denote the variety of infinitesimal characters as defined in \cite[Section 5.3]{haines}. Under the assumption that Local Langlands correspondence for $G$ and its Levi subgroups is known, and some compatibility of LLC with normalized parabolic induction (check LLC+ as defined in \cite[Definition 5.2.1]{haines}), Haines showed in \cite[Proposition 5.5.1]{haines} that there is a morphism of algebraic varieties $p_1: \Omega(G) \rightarrow \Omega(\prescript{L}{}{}G)$ which is surjective when $G $ is quasi-split. Hence, we get an embedding $\C[\Omega(\prescript{L}{}{}G)] \hookrightarrow\C[\Omega(G)]\cong \ZZ(G)$, and $\C[\Omega(\prescript{L}{}{}G)]$ is defined in \cite[Section 5.3]{haines} as the stable Bernstein center. \par

Under the assumption that $G$ is quasi-split, enough is known about the Local Langlands correspondence of the group and existence of tempered $L$-packets, Varma showed in \cite[Theorem 1.1.5]{varmastable} that $\ZZ^{st}(G)=\ZZ^{vst}(G)$ and hence stable center conjecture is true. Under some additional assumptions (check \cite[Proposition 1.1.7]{varmastable}), it was proved in the same article that all three notions of the stable center are the same, i.e., $\ZZ^{st}(G)=\ZZ^{vst}(G) = p_1^*(\C[\Omega(\prescript{L}{}{}G)])$. 

\paragraph{Relations to LLC:} Assume that the Local Langlands correspondence is known for $G/k$, and let $\Phi(G/k)$ denote the set of Langlands parameters for $G(k)$. If $\phi_\pi: WD_k\rightarrow \prescript{L}{}{}G$ is a Langlands parameter attached to $\pi$ by LLC, then the $G^\vee$ conjugacy class of the restriction $\phi_\pi|_{W_k}=\lambda_\pi:W_k \rightarrow \prescript{L}{}{}G) $ is called the infinitesimal character attached to $\pi$. We define the infinitesimal class $\tilde\Pi(\lambda)$ of $\lambda:W_k \rightarrow \prescript{L}{}{}G$ to be the union of $L$-packets for which the corresponding $L$-parameters restricts to $\lambda$, i.e.,
\begin{equation*}
    \tilde\Pi(\lambda)= \coprod_{\phi|_{W_k=\lambda}}\Pi(\phi)
\end{equation*}
where $ \Pi(\phi)$ denotes the $L$-packet corresponding to $\phi \in \Phi(G/k)$. An element $z \in \ZZ^{st}(G)$ conjecturally acts by the same constant on irreducible representations in the same infinitesimal class, i.e, $f_z(\pi)=f_z(\pi')$ if $\lambda_\pi=\lambda_{\pi'}$. 

 \subsection{The fractional depths and subdividing facets}\label{section: subdiving facets}

 In our earlier work \cite{cb24}, we gave a description of the integral depth center for simply connected groups. In the present article, we extend the result to general reductive $p$-adic groups, and fractional depths. The description of the integral depth center only needed ``data" from the standard parahorics and their integer depth Moy-Prasad filtration subgroups. This is equivalent to fixing an apartment and an alcove(chamber) in it, and using the parahoric subgroups and integer depth filtration subgroups corresponding to the facets (or open polysimplices) in its closure. This is because the parahoric subgroups and their integral depth Moy-Prasad filtration subgroups do not change in the interior of a facet.  However, when we start considering fractional depths, the Moy-Prasad filtration subgroups for fractional depth also changes in the interior of a facet, and how it changes depends upon $m\in \Z_{>0}$ where the depth $r \in  \frac{1}{m}\Z_{>0}$. Hence, in order to deal with fractional depths, we have to subdivide the facts in the reduced Bruhat-Tits building into smaller parts depending on the same $m$.\paragraph{}

 Let $\A=\A_S$ be the apartment corresponding to a $k$-split maximal tori $S$, and for $m\in \Z_{>0}$, let $\Psi_m(\A)$ denote the set of affine functions of the form $\psi + \frac{1}{m}\Z$, $\psi \in \Psi(\A)$. These are affine functions of the form $\Phi + \frac{k}{m}$ for $\Phi \in \Phi(G,S)$ and $\Psi_m(\A)\subset \Psi(\A)$. We can use the co-dimension one affine subspaces $ H_\psi= \{x \in \A\:|\: \psi(x)=0\}$ for $\psi \in \Psi_m(\A)$ to get a refined polysimplicial decomposition of $\A$, and hence of $\B(G,k)$. We denote $\B(G,k)$ (resp. $\A$) with the new polysimplicial decomposition $\B_m$ (resp. $ \A_m$), and we call the facets we get in this case refined facets. Let $[\B]$ denote the set of facets (or open polysimplices) of $\B(G,k)$ and $[\B_m]$ denote the set of refined facets obtained by using $\Psi_m(\A)$. These are obtained by ``subdividing each polysimplex $\sigma \in [\B]$ into $m^{\mathrm{dim}\:\sigma} $ smaller polysimplices". Similarly, we let $[\A]$ (resp. $[\A_m]$) denote the set of facets (resp. refined facets) in $\A$ for an apartment $\A$, and $[\bar \mcC]$ and $[\bar \mcC_m]$ the corresponding set for the closure of a chamber $\mcC \subset \A$. \par
 Let $\sigma \in [\B_m]$, and $x \in \sigma $. For $r \in \frac{1}{m}\Z_{>0}$, we can define $G_{\sigma,r}:=G(k)_{\sigma,r}= G(k)_{x,r}$. Since $r \in \frac{1}{m}\Z_{>0}$ and $\sigma \in [\B_m]$, the definition does not depend on the choice of $x \in \sigma$. The group $G_{\sigma, r+}=\cup_{s>r}G_{\sigma, s}$ is defined in the ususal manner and $G_{\sigma, 0}$ denotes the parahoric subgroup corresponding to $\sigma$. Further, let $G_\sigma= \text{Stab}_{G(k)}(\sigma)$ denote the stabilizer of the (refined) facet. 
\paragraph{Some notations:} We can define a partial order on $[\B_m]$ by defining $\sigma \preceq \tau$ if $\sigma $ is contained in the closure of $\tau$, and we call $\sigma$ a face of $\tau$. A facet of dimension $0$ is called a vertex, and we denote the set of vertices in $\B_m$ by $V(\B_m)$. A subset $\Sigma \subset [\B_m]$ is a subcomplex if $|\Sigma|=\cup_{\sigma \in \Sigma}\sigma \subset \B(G,k)$ is closed. It is convex if $|\Sigma|$ is convex.

\section{Description of fractional depth Bernstein center }\label{section:fracdepthdescription}
\subsection{Stabilization in the fractional depth case}
 
We fix a Haar measure $\mu$ of $G(k)$ and let $\HH(G):=(C_c^\infty(G),*)$ be the Hecke algebra of compactly supported smooth functions on $G(k)$ with multiplication given by the convolution product with respect to $\mu $ as defined in \eqref{eqn:convolutionwrtmu}. Let $T$ be a $k$-split maximal torus which we fix henceforth, and let $\sA:=\A_T$ be the apartment corresponding to $T$. We also fix a fundamental alcove (chamber) $\mcC \subset \mc A_T$. This is equivalent to fixing a Iwahori subgroup. Let $\Bar{\mcC}$ denote it's closure, and $[\Bar{\mcC}_m]$ denote the set of refined facets obtained by subdividing the facets in $\Bar{\mcC}$. Note that $[\Bar{\mcC}_m]$ is finite. For each $\sigma \in [\B_m]$, we define 
\[
\M_\sigma^r:=C_c^\infty\left(\frac{G(k)/G_{\sigma,r+}}{G_{\sigma,0}}\right)
\]
to be the algebra (under convolution with respect to $\mu$) of compactly supported smooth functions on $G(k)$ which are $G_{\sigma,r+}$ bi-invariant and ${G_{\sigma,0}}$ conjugation invariant. \par
We have the partial order on $[\B_m]$ given by $\sigma' \preceq \sigma $ if $\sigma'$ is contained in the closure of $\sigma$, and this gives a partial order on $[\Bar{\mcC}_m]$. For $\sigma',\:\sigma\in [\Bar{\mcC}_m] $ and $\sigma' \preceq \sigma $, we have a map 
\begin{align*}
\phi^r_{\sigma',\sigma}:\M^r_{\sigma'} &\longrightarrow \M^r_{\sigma}\\
f &\longmapsto f*\delta_{G_{\sigma , r+}}  
\end{align*}
Further, for any element $n\in \mc N:=N_G(T)(k)$ such that $n\mcC=\mcC$, if $n\sigma_1=\sigma_1'\preceq \sigma_2$, we add morphisms $\phi^r_{\sigma_1,\sigma_2,n}:\M^r_{\sigma_1} \longrightarrow \M^r_{\sigma_2}$ in the following way
\[
\phi^r_{\sigma_1,\sigma_2,n}:\M^r_{\sigma_1} \xrightarrow[]{\Ad(n)} \M^r_{n\sigma_1}\xrightarrow[]{\phi^r_{n\sigma_1,\sigma_2}}\M^r_{\sigma_2} 
\]

With the above defined maps, we have an inverse system $\{\M^r_{\sigma}\}_{\sigma\in [\Bar{\mcC}_m] }$ and we define $A^r(G)$ to be the inverse limit of the algebras $\M^r_{\sigma}$.
$$ A^r(G) := \lim_{\sigma\in [\Bar{\mcC}_m] } \M^r_{\sigma}$$

Given $h=\{ h_\sigma\}_{\sigma\in [\Bar{\mcC}_m]}\in A^r(G)$, we can define $h_{\sigma'}$ for all $\sigma' \in [\B_m]$. If $\sigma'\in [\Bar{\mcC}_m]$, then it is already defined. Otherwise,there exists a chamber $\mcC' \subset \B$ such that $\sigma' \subset \Bar{\mcC'}$, and $g\in G(k)$ such that $g\mcC=\mcC'$ and $ g\sigma =\sigma'$ for some $\sigma \in [\Bar{\mcC}_m]$. In this case, we define $h_{\sigma'} = \Ad_g(h_\sigma)$. \par
Showing that $h_{\sigma'}$ is well-defined is slightly subtle. If $g_1\mcC=g_2\mcC=\mcC'$ and $g_1 \sigma_1 =g_2\sigma_2 =\sigma'$ for $\sigma_1,\sigma_2 \in [\Bar{\mcC}_m]$ , then $g_2^{-1}g_1\mcC=\mcC$, which means $g_2^{-1}g_1 \in G_\mcC$. Let $\mc N_\Omega = \text{Stab}_{\mc N}(\Omega)$ for any bounded subset $\Omega\subset \A_T$. Using results in Section 7.7 in \cite{KP23}( for example equation $7.7.1,\;7.7.2$ and Proposition $7.7.5$) or Proposition $4.6.28 (\mathrm{ii})$ in \cite{BT84}, we get that $G_\mcC=\mc N_\mcC G_{\mcC,0}$. Therefore, we can write $g_2^{-1}g_1 = n h $ for $\in \mc N_\mcC,\: h\in G_{\mcC,0}$.Our assumptions imply that $n\mcC= \mcC$ and $n\sigma_1=\sigma_2$ since $G_{\mcC,0}$ pointwise stabilizes $\mcC$ and hence $\Bar{\mcC}$.  We are trying to show that $\Ad_{g_2}(h_{\sigma_2})= \Ad_{g_1}(h_{\sigma_1}) \Leftrightarrow \Ad_{g_2^{-1}g_1}(h_{\sigma_1}) = h_{\sigma_2}$. Since $h_{\sigma_i} $ is $G_{\sigma_i,0}$-conjugation invariant and $G_{\sigma_i,0}\supset G_{\mcC,0}$ for $i=1,2$, in order to show that $h_{\sigma'}$ is well-defined, it is enough to show that $\Ad_n(h_{\sigma_1})=h_{\sigma_2}$ for $n \in \mc N_\mcC\subset \mc N$ such that $n\mcC=\mcC$ and $n\sigma_1 =\sigma_2$ for $\sigma_1,\sigma_2  \in [\Bar{\mcC}_m] $. Now, $n\sigma_1 =\sigma_2 \preceq \sigma_2 $, we have a morphism $\phi^r_{\sigma_1,\sigma_2,n}:\M^r_{\sigma_1} \longrightarrow \M^r_{\sigma_2}$ where $\phi^r_{\sigma_1,\sigma_2,n} = \Ad_n$ in this case since $\phi^r_{n\sigma_1,\sigma_2}=\phi^r_{\sigma_2,\sigma_2}= \text{Id}_{\M^r_{\sigma_2}}$. Since $h=\{ h_\sigma\}_{\sigma\in [\Bar{\mcC}_m]}\in A^r(G)$, we have $\phi^r_{\sigma_1,\sigma_2,n}(h_{\sigma_1})=\Ad_n(h_{\sigma_1})= h_{\sigma_2}$, and we are done. Also, defined this way $h_{\sigma'} \in \M_{\sigma'}^r$. \par

For every finite subset $\Sigma \subset [\B_m]$, we can associate an element $[A^\Sigma_h]$ to $h \in A^r(G)$
\begin{equation}\label{averaging defn}
    [A^\Sigma_h] = \sum_{\sigma \in \Sigma }(-1)^{\text{dim} \: \sigma} h_\sigma \in \HH(G)
\end{equation}
Note that $\delta_r= \{\delta_{G_{\sigma,r+}}\}_{\sigma\in [\Bar{\mcC}_m]}\in A^r(G)$, and $[A^\Sigma_{\delta_r}]=E^\Sigma_r$ as defined in \cite{BKV2}.  Let $\Theta_m$ denote the set of non-empty finite convex subcomplexes $\Sigma \subset [\B_m]$. Note that $\Theta_m$ is an inductive system with respect to inclusions. 

\begin{theorem}\label{thm stab frac}
     For every $f\in \mc H(G)$ and $h\in A^r(G)$, the sequence $\{[A^\Sigma_h] * f\}_{\Sigma \in \Theta_m} $ stabilizes, and hence $\lim_{\Sigma \in \Theta_m} \:[A^\Sigma_h] * f $ is well-defined. 
\end{theorem}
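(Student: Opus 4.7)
The theorem generalizes \cite[Theorem 2.8]{cb24} from integral depths (with $G$ simply connected) to fractional depths and general split reductive groups, so I would follow the overall strategy of \emph{loc.\ cit.} together with the projector-only analysis of \cite[Theorem 1.6]{BKV2}. Equivalently, it suffices to exhibit, for each fixed $f \in \HH(G)$ and $h \in A^r(G)$, a $\Sigma_0 \in \Theta_m$ such that $[A^\Sigma_h]*f = [A^{\Sigma_0}_h]*f$ whenever $\Sigma_0 \subseteq \Sigma \in \Theta_m$; by $G(k)$-equivariance of convolution, checking this pointwise reduces to a finite computation for each value of the argument.

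The first step is to reduce to the projector case. Fix a compact open subgroup $K \subseteq G(k)$ under which $f$ is bi-invariant, and let $X \supseteq \mathrm{supp}(f)$ be compact. The inverse-limit relations $\phi^r_{\sigma_0,\sigma}(h_{\sigma_0}) = h_{\sigma_0}*\delta_{G_{\sigma,r+}} = h_\sigma$ for $\sigma_0 \preceq \sigma$ in $[\Bar{\mcC}_m]$, combined with the conjugation morphisms $\phi^r_{\sigma_1,\sigma_2,n}$ and the $G(k)$-translation prescription for the extensions $h_{\sigma'}$ verified in the paragraph preceding the theorem, imply that $h_\sigma = h_{\sigma_0} * \delta_{G_{\sigma,r+}}$ for any $\sigma_0 \preceq \sigma$ in $[\B_m]$ lying in a common chamber closure. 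Consequently $h_\sigma * f = h_{\sigma_0} * (\delta_{G_{\sigma,r+}} * f)$, so the question of stabilization of $h_\sigma * f$ as $\sigma$ varies reduces to how $\delta_{G_{\sigma,r+}}*f$ depends on $\sigma$, which is precisely the content of \cite[Theorem 1.6]{BKV2}.

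With this reduction in hand, I would construct $\Sigma_0$ as any convex subcomplex of $[\B_m]$ large enough that (i) for every $G(k)$-orbit representative $\sigma_0 \in [\Bar{\mcC}_m]$ and every $\sigma \succeq \sigma_0$ outside $\Sigma_0$, the value of $\delta_{G_{\sigma,r+}}*f$ on the (finitely many) double cosets in the support of $h_{\sigma_0}$ has stabilized to its value in the limit supplied by \cite{BKV2}, and (ii) $\Sigma_0$ contains a neighborhood of $X\cdot \Bar{\mcC}_m$ sufficient to trap the support of each $h_{\sigma_0}*f$. Given $\Sigma_0 \subseteq \Sigma$, the difference $[A^\Sigma_h]*f - [A^{\Sigma_0}_h]*f$ is an alternating sum over the refined facets of $\Sigma \setminus \Sigma_0$, and I would group its terms along the stars of boundary $0$-facets in $\Sigma \setminus \Sigma_0$; contractibility of the open star in $\B_m$ gives Euler characteristic $1$, and the identification $h_\tau * f = h_{\sigma_0} * (\delta_{G_{\tau,r+}}*f)$ with a stabilized second factor turns the Euler-Poincaré sum over each star into a telescoping identity that produces $0$.

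The main obstacle is carrying out the Euler-Poincaré cancellation on the \emph{refined} complex $\B_m$, where each original polysimplex is broken into $m^{\dim \sigma}$ smaller refined facets with distinct Moy-Prasad groups $G_{\sigma,r+}$; here one must check that the subdivision preserves contractibility of stars (this is standard, as the subdivision is barycentric-like) and that the relation $h_\tau = h_{\sigma_0} * \delta_{G_{\tau,r+}}$ is compatible with the refinement, which follows from the construction of $A^r(G)$ directly on $[\B_m]$. A secondary technical point is the role of the conjugation morphisms $\phi^r_{\sigma_1,\sigma_2,n}$ for $n \in \mc N_\mcC$, needed because $G$ is not assumed simply connected: these ensure the extensions $h_{\sigma'}$ to $[\B_m]$ are well-defined, as already established, and must be shown compatible with the regrouping of terms across different chamber closures — a purely algebraic check on the inverse system mirroring the well-definedness argument.
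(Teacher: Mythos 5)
Your opening reduction is essentially the paper's key lemma: from $h_\sigma = h_{\sigma'}*\delta_{G_{\sigma,r+}}$ for $\sigma'\preceq\sigma$ (which follows from the inverse-system relations and equivariance, exactly as you argue) one can transport statements about the idempotents $\delta_{G_{\sigma,r+}}$ to statements about $h_\sigma$; combined with \cite[Lemma 4.9]{BKV2} this yields $h_\sigma*\delta_{G_{x,(r+s)+}} = h_{\sigma'}*\delta_{G_{x,(r+s)+}}$ whenever $\sigma\in\Gamma_s(\sigma',x)$, which is the paper's Lemma \ref{lemma stab frac}. However, the way you then try to conclude has a genuine gap. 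First, your condition (i) asks that the individual convolutions $\delta_{G_{\sigma,r+}}*f$ ``stabilize to their value in the limit supplied by \cite{BKV2}''; but \cite[Theorem 1.6]{BKV2} asserts no such thing --- individual terms $\delta_{G_{\sigma,r+}}*f$ (and likewise $h_\sigma*f$) do not converge or vanish for distant $\sigma$, and the stabilization of $E^\Sigma_r*f$ comes entirely from cancellation inside the alternating sum. For the same reason the support-trapping condition (ii) cannot produce a valid $\Sigma_0$: $h_\sigma*f\neq 0$ in general for $\sigma$ far from the support of $f$, so the tail of the sum is not small termwise.

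Second, the cancellation mechanism you propose is internally inconsistent: if you group the facets of $\Sigma\setminus\Sigma_0$ into contractible stars, the signed count $\sum_\tau(-1)^{\dim\tau}$ over each group is the Euler characteristic $1$, so a group on which the convolved value is constant would contribute that common value once --- not $0$. What is needed is a partition of $\Sigma\setminus\Sigma'$ into classes on which (a) the value of $h_\tau*\delta_{G_{x,(r+s)+}}$ is constant and (b) the signed sum of $(-1)^{\dim\tau}$ vanishes. This is exactly what the paper does, following \cite[Prop.\ 4.14(a)]{BKV2}: fix a vertex $x\in V(\B_m)\cap[\bar\mcC_m]$ and $s$ with $f=\delta_{G_{x,(r+s)+}}*f$, take the fibers of the idempotent minimal-face map $m_{x,s}$, use Lemma \ref{lemma stab frac} to get $h_\tau*\delta_{G_{x,(r+s)+}}=h_{m_{x,s}(\tau)}*\delta_{G_{x,(r+s)+}}$ on each fiber, and invoke the vanishing $\sum_{\tau\in\Sigma''_\sigma}(-1)^{\dim\tau}=0$ established in \cite{BKV2}. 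The finite subcomplex $\Sigma'$ is then obtained from the finiteness of $\Upsilon_{x,s}$ (\cite[Lemma 4.4]{BKV2}), not from any support consideration, and one concludes $[A^\Sigma_h]*f=[A^\Sigma_h]*\delta_{G_{x,(r+s)+}}*f=[A^{\Sigma'}_h]*f$ for all $\Sigma\supseteq\Sigma'$. So your first step is sound, but the combinatorial heart of the argument --- which classes you sum over and why their signed sums vanish --- needs to be replaced by the $m_{x,s}$-fiber argument (or an equivalent one with Euler characteristic $0$ on each class), and the dependence on the invariance level $s$ of $f$ must enter explicitly.
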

In order to prove this theorem we need generalizations of some results in \cite{BKV2}.  We  first recall some notations and definitions.
\paragraph{Some notations:} \begin{itemize}
    \item [(i)]Let $\A\subset \B$ an apartment, and $\sigma \in [\A_m]$ a chamber (facet of maximal dimension). We use $\Delta_\A(\sigma)$ to denote the set of all $\psi \in \Psi(\A)$ such that $\psi(\sigma)>0$, and $\psi(\sigma')=0$ for some face $\sigma'\preceq \sigma $ of co-dimension one. We call $\Delta_\A(\sigma)$ the set of simple affine roots relative to $\sigma$.
    \item[(ii)] Given $x \in V(\B_m),\: s \in \R_{\geq 0}$ and $\sigma' \in [\B_m]$, we denote by $\Upsilon_{x,s}$ the set of all chambers $\sigma \in [\B_m]$ such that for every apartment $\A \subset \B$ containing $\sigma$ and $x$ and for every $\psi \in \Delta_{\A}(\sigma)$ we have $\psi(x)\leq s $. 
    \item[(iii)] We denote by $\Gamma_s(\sigma',x) \subseteq [\B_m]$ the subcomplex consisting of all $\sigma \in [\A_m]$ such that for every $\psi \in \Psi_m(\A) $  satisfying $\psi(\sigma')\leq 0$ and $\psi(x)\leq s $,we have $\psi(\sigma)\leq 0$. By definition, $\Gamma_s(\sigma',x)$ is convex.
    \item[(iv)]  From \cite[Lemma 4.10]{BKV2}, there exists a unique minimal face $\sigma'=m_{x,s}(\sigma)$ of $\sigma$ such that $\sigma\in \Gamma_s(\sigma',x)$. This defines an idempotent map $m_{x,s}:[\B_m]\rightarrow[\B_m] $.
\end{itemize}

\begin{lemma}\label{lemma stab frac}
    Let $\sigma,\: \sigma' \in [\B_m]$, $x\in V(\B_m)$ and $r,s \in \frac{1}{m}\Z_{\geq 0} $ such that $\sigma' \preceq \sigma $ and $\sigma \in \Gamma_s(\sigma',x)$. Further, let $h=\{ h_\sigma\}_{\sigma\in [\Bar{\mcC}_m]}\in A^r(G)$.  Then we have the equality 
    \begin{equation}
        h_{\sigma}* \delta_{G_{x,(r+s)+}}=h_{\sigma'}*\delta_{G_{x,(r+s)+}}
    \end{equation}
\end{lemma}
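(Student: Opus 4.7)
The plan is to reduce the desired equality to a compatibility between Moy--Prasad idempotents analogous to a key lemma in \cite{BKV2}.

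First, I would use the inverse system structure defining $A^r(G)$: because $\sigma' \preceq \sigma$, the morphism $\phi^r_{\sigma',\sigma}$ gives $h_\sigma = h_{\sigma'} * \delta_{G_{\sigma,r+}}$. This is immediate for $\sigma',\sigma \in [\bar{\mcC}_m]$, and extends to general $\sigma',\sigma \in [\B_m]$ via the $G(k)$-equivariant assignment $h_{g\tau} = \Ad_g(h_\tau)$, once one observes that conjugation by $g$ intertwines $\delta_{G_{\tau,r+}}$ with $\delta_{G_{g\tau,r+}}$. Combined with the right $G_{\sigma',r+}$-invariance of $h_{\sigma'}$ (so $h_{\sigma'} = h_{\sigma'} * \delta_{G_{\sigma',r+}}$), this gives
\[
h_\sigma - h_{\sigma'} \;=\; h_{\sigma'} * \bigl(\delta_{G_{\sigma,r+}} - \delta_{G_{\sigma',r+}}\bigr),
\]
so it suffices to establish the idempotent identity
\[
\delta_{G_{\sigma,r+}} * \delta_{G_{x,(r+s)+}} \;=\; \delta_{G_{\sigma',r+}} * \delta_{G_{x,(r+s)+}}. \qquad (\ast)
\]

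Next, I would extract $(\ast)$ from the combinatorial hypothesis $\sigma \in \Gamma_s(\sigma',x)$. The inclusion $\sigma' \preceq \sigma$ yields $G_{\sigma',r+} \subseteq G_{\sigma,r+}$, and the ``extra'' affine root subgroup generators $U_\psi \subset G_{\sigma,r+} \setminus G_{\sigma',r+}$ are exactly those with $\psi \in \Psi(\A)$ satisfying $\psi(\sigma) > r$ and $\psi(\sigma') \leq r$. Shifting to $\tilde\psi := \psi - r$, which lies in $\Psi_m(\A)$ since $r \in \tfrac{1}{m}\Z$, and applying the definition of $\Gamma_s(\sigma',x)$, the conditions $\tilde\psi(\sigma') \leq 0$ and $\tilde\psi(\sigma) > 0$ force $\tilde\psi(x) > s$, i.e.\ $\psi(x) > r+s$, so $U_\psi \subseteq G_{x,(r+s)+}$. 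Hence every generator of $G_{\sigma,r+}$ not already in $G_{\sigma',r+}$ belongs to $G_{x,(r+s)+}$. Combined with the standard Iwahori-type factorization of Moy--Prasad groups along affine roots, this will deliver both $G_{\sigma,r+} \cdot G_{x,(r+s)+} = G_{\sigma',r+} \cdot G_{x,(r+s)+}$ and the matching index identity $[G_{\sigma,r+} : G_{\sigma,r+} \cap G_{x,(r+s)+}] = [G_{\sigma',r+} : G_{\sigma',r+} \cap G_{x,(r+s)+}]$. Identity $(\ast)$ then drops out of the formula $\delta_A * \delta_B = \tfrac{\mu(A \cap B)}{\mu(A)\mu(B)}\mbm 1_{AB}$ for compact open subgroups $A,B$.

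The step I expect to be the main obstacle is the Iwahori-type factorization used to upgrade ``$G_{\sigma,r+}$ is generated by $G_{\sigma',r+}$ together with elements of $G_{x,(r+s)+}$'' into an honest product decomposition with a matching volume count, since the group $G_{\sigma,r+}$ is not simply the product of its root-group pieces. The analogous statement for integer depths and the standard polysimplicial structure is carried out in \cite{BKV2}; here the argument has to be run with affine roots in $\Psi_m(\A)$ and refined facets in $[\B_m]$, which should be a matter of careful affine-root bookkeeping in the refined structure rather than a genuinely new idea. Once $(\ast)$ is in hand, convolving on the left by $h_{\sigma'}$ finishes the proof.
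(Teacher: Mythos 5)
Your reduction is exactly the paper's: you use the relation $h_\sigma = h_{\sigma'} * \delta_{G_{\sigma,r+}}$ (extended from $[\bar{\mcC}_m]$ to all of $[\B_m]$ by $G(k)$-equivariance) and then convolve on the right with $\delta_{G_{x,(r+s)+}}$, reducing everything to the idempotent identity you label $(\ast)$. The only point of divergence is that you sketch a proof of $(\ast)$ and flag the Iwahori-type factorization as a potential obstacle, whereas the paper simply cites \cite[Lemma 4.9]{BKV2} for $(\ast)$ — and that lemma is already stated in \cite{BKV2} for the refined complex $[\B_m]$ and rational depths, so the affine-root bookkeeping in the refined structure that you worry about in your last paragraph is precisely the content of the cited result and does not need to be redone here.
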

\begin{proof}
    From Lemma 4.9 in \cite{BKV2},  we have $ \delta_{G_{\sigma, r+}} * \delta_{G_{x,(r+s)+}}=\delta_{G_{\sigma', r+}}*\delta_{G_{x,(r+s)+}}$, and hence our assertion for $h=\delta_r$.  Note that $ \sigma,\: \sigma' \subset \Bar{\mcC'}$ for some chamber $\mcC'\subset \B$. Then, there exists $g \in G(k)$ and $\sigma_1,\: {\sigma'}_1 \in [\bar{\mcC}_m]$, ${\sigma'}_1 \preceq \sigma_1$ such that $g\mcC =\mcC'$, $g{\sigma'}_1 =  \sigma' $ and $g \sigma_1= \sigma $. Since $h=\{ h_\sigma\}_{\sigma\in [\Bar{\mcC}_m]}\in A^r(G)$, we have $h_{\sigma_1}= h_{{\sigma'}_1}* \delta_{G_{\sigma_1, r+}}$. Further, $h_\sigma= \Ad_g(h_{\sigma_1})$, $h_{\sigma'}= \Ad_g(h_{\sigma_1'})$ and $\delta_{G_{\sigma, r+}}= \Ad_g(\delta_{G_{\sigma_1, r+}})$, which gives us that $h_{\sigma}= h_{{\sigma'}}* \delta_{G_{\sigma, r+}}$. Using this fact, we get 
    \begin{align*}
         h_{\sigma}* \delta_{G_{x,(r+s)+}}&= \left(h_{\sigma'}* \delta_{G_{\sigma, r+}}\right)* \delta_{G_{x,(r+s)+}} =h_{\sigma'}* \left(\delta_{G_{\sigma, r+}}* \delta_{G_{x,(r+s)+}}\right)\\
         &=h_{\sigma'}* \left(\delta_{G_{\sigma', r+}}* \delta_{G_{x,(r+s)+}}\right) \:\:\text{ (Using Lemma 4.9, \cite{BKV2})}\\
         &=\left(h_{\sigma'}* \delta_{G_{\sigma', r+}}\right)* \delta_{G_{x,(r+s)+}}=h_{\sigma'}*\delta_{G_{x,(r+s)+}}       
    \end{align*}
    which finishes the proof. 
\end{proof}
Our next proposition is a generalization of Proposition 4.14 (a) in \cite{BKV2}, and the main technical result used in the proof of Theorem \ref{thm stab frac}.
\begin{proposition}\label{prop satb frac}
    Let $x\in V(\B_m)$, $r,s \in \frac{1}{m}\Z_{\geq 0}$ and let $\Sigma,\: \Sigma' \in \Theta_m$ be such that $x\in \Sigma' \subseteq \Sigma$ and $\Upsilon_{x,s} \subseteq \Sigma'$. Let $h=\{ h_\sigma\}_{\sigma\in [\Bar{\mcC}_m]}\in A^r(G)$ and $[A^{\Sigma}_h]$ be as defined in \eqref{averaging defn}. 
    \begin{equation}
        [A^{\Sigma}_h]* \delta_{G_{x,(r+s)+}}=[A^{\Sigma'}_h]*\delta_{G_{x,(r+s)+}}
    \end{equation}
\end{proposition}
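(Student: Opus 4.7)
The plan is to adapt the proof of Proposition 4.14(a) in \cite{BKV2}, exploiting the fact that Lemma \ref{lemma stab frac} is precisely the analogue of \cite[Lemma 4.9]{BKV2} for an arbitrary element $h\in A^r(G)$ in place of the particular system $\delta_r$. First I would apply Lemma \ref{lemma stab frac} termwise: for any $\sigma\in[\B_m]$, item (iv) in the notations above produces the unique minimal face $m_{x,s}(\sigma)\preceq \sigma$ with $\sigma\in \Gamma_s(m_{x,s}(\sigma),x)$, and the lemma gives
\begin{equation*}
h_\sigma * \delta_{G_{x,(r+s)+}} \;=\; h_{m_{x,s}(\sigma)} * \delta_{G_{x,(r+s)+}}.
\end{equation*}
Substituting this into the definition \eqref{averaging defn} and regrouping by the fibers of $m_{x,s}$, for any $\Lambda\in\Theta_m$ one obtains
\begin{equation*}
[A^{\Lambda}_h] * \delta_{G_{x,(r+s)+}} \;=\; \sum_{\tau} c_{\Lambda}(\tau)\, h_\tau * \delta_{G_{x,(r+s)+}}, \qquad c_{\Lambda}(\tau) := \sum_{\sigma\in m_{x,s}^{-1}(\tau)\cap \Lambda} (-1)^{\dim \sigma}.
\end{equation*}

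The statement therefore reduces to the purely combinatorial identity $c_{\Sigma}(\tau)=c_{\Sigma'}(\tau)$ for every $\tau$ in the image of $m_{x,s}$. This identity depends only on the idempotent map $m_{x,s}$ and on the containments $\Upsilon_{x,s}\subseteq \Sigma'\subseteq \Sigma$ of convex subcomplexes; no information about the element $h$ enters. Precisely this identity is established inside the proof of \cite[Proposition 4.14(a)]{BKV2} as the combinatorial core of their $\delta_r$-statement: the argument is an Euler-characteristic/contractibility computation on the fibers $m_{x,s}^{-1}(\tau)\cap \Sigma$ and $m_{x,s}^{-1}(\tau)\cap \Sigma'$, using convexity to compare the two. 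Transcribing that argument concludes the proof.

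The step I expect to require the most care is the combinatorial one, namely isolating from \cite{BKV2} the portion of the argument that is independent of the specific system $\delta_r$ and repackaging it as an identity purely about $m_{x,s}$, $\Sigma$, and $\Sigma'$. Once this is extracted, the reduction via Lemma \ref{lemma stab frac} makes the passage to an arbitrary $h\in A^r(G)$ entirely mechanical, and specializing back to $h=\delta_r$ recovers \cite[Proposition 4.14(a)]{BKV2} as a consistency check.
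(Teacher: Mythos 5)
Your proposal is correct and follows essentially the same route as the paper: apply Lemma \ref{lemma stab frac} termwise to replace each $h_\sigma*\delta_{G_{x,(r+s)+}}$ by $h_{m_{x,s}(\sigma)}*\delta_{G_{x,(r+s)+}}$, and reduce to the vanishing of the alternating sums $\sum_{\sigma\in m_{x,s}^{-1}(\tau)\cap(\Sigma\setminus\Sigma')}(-1)^{\dim\sigma}$, which is exactly the combinatorial fact the paper also extracts from the proof of \cite[Proposition 4.14(a)]{BKV2} (the paper phrases it as $[A^{\Sigma\setminus\Sigma'}_h]*\delta_{G_{x,(r+s)+}}=0$ via the equivalence classes of $m_{x,s}$, which is your identity $c_\Sigma(\tau)=c_{\Sigma'}(\tau)$). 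Your level of reliance on the BKV2 combinatorics matches the paper's, so no gap.
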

\begin{proof}
    Set $\Sigma''=\Sigma \setminus \Sigma'$. Then our assertion reduces to proving $[A^{\Sigma''}_h]*\delta_{G_{x,(r+s)+}}=0$. We can define an equivalence relation on $\Sigma''$ by $\sigma_1 \sim \sigma_2 \Leftrightarrow m_{x,s}(\sigma_1)=m_{x,s}(\sigma_2)$.  Then $\Sigma''$ decomposes as a disjoint union of equivalence classes $\Sigma'' = \bigsqcup {\Sigma''}_\sigma$, where ${\Sigma''}_\sigma\subseteq \Sigma''$ denotes the equivalence class of $\sigma \in \Sigma''$. Since $m_{x,s}(\tau) \preceq \tau$ and $\tau \in \Gamma_s(m_{x,s}(\tau),x)$ by definition, using Lemma \ref{lemma stab frac}, we have $$h_{\tau}* \delta_{G_{x,(r+s)+}}=h_{m_{x,s}(\tau)}*\delta_{G_{x,(r+s)+}}$$
    for every $\tau \in [\B_m]$. Since $m_{x,s}(\tau)=m_{x,s}(\sigma)\: \forall \: \tau \in {\Sigma''}_\sigma$, we have  \[
    [A^{{\Sigma''}_\sigma}_h]*\delta_{G_{x,(r+s)+}}=\left(\sum_{\tau \in {\Sigma''}_\sigma}(-1)^{\text{dim}\: \tau}\right)\left(h_{m_{x,s}(\sigma)}*\delta_{G_{x,(r+s)+}}\right)
    \]
    From the proof of Proposition 4.14 (a), we see that $\sum_{\tau \in {\Sigma''}_\sigma}(-1)^{\text{dim}\: \tau}=0$, which proves our assertion. 
\end{proof}
Using the results stated above, we can complete the proof of Theorem \ref{thm stab frac}. 
\begin{proof}[ Proof of Theorem \ref{thm stab frac}] 
    Let $x\in V(\B_m) \cap [\bar{\mcC}_m]$. Then, for any $f \in \HH(G)$, $\exists s \in \frac{1}{m}\Z_{\geq 0}$ such that $f$ is left $G_{x,(r+s)+}$-invariant, i.e., $f= \delta_{G_{x,(r+s)+}}*f$, and hence we have 
    \[
    [A^\Sigma_h] * f=[A^\Sigma_h]* \delta_{G_{x,(r+s)+}}*f
    \]
    We choose $\Sigma' \in \Theta_m$ such that $x\in \Sigma'$ and $\Upsilon_{x,s} \subseteq \Sigma'$. From proposition \ref{prop satb frac}, we observe that 
    \[
    [A^{\Sigma}_h]* \delta_{G_{x,(r+s)+}}=[A^{\Sigma'}_h]*\delta_{G_{x,(r+s)+}}
    \]
    for large enough $\Sigma \in \Theta_m$ such that $\Sigma' \subseteq \Sigma$, since $\Upsilon_{x,s}$ is finite by \cite[ Lemma 4.4]{BKV2}. Hence for $\Sigma \in \Theta_m$ such that $\Sigma' \subseteq \Sigma$, we have 
   \[
   [A^\Sigma_h] * f=[A^\Sigma_h]* \delta_{G_{x,(r+s)+}}*f=[A^{\Sigma'}_h]*\delta_{G_{x,(r+s)+}}*f=[A^{\Sigma'}_h]*f
   \]
   which proves that $\{[A^\Sigma_h] * f\}_{\Sigma \in \Theta_m} $ stabilizes.
\end{proof}

\subsection{A limit description of the fractional depth center}
Since $\lim_{\Sigma \in \Theta_m} \:[A^\Sigma_h] * f $ is well-defined, we can define 
$[A_h] \in \text{End}_{\HH(G)^{op}}(\HH(G))$ by the formula         
\begin{equation}
    [A_h](f):=\lim_{\Sigma \in \Theta_m} \:[A^\Sigma_h] * f.
\end{equation}
\begin{remark}
    For $h=\delta_r \in A^r(G)$, we have from \cite{BKV2} that $[A_{\delta_r}]$ is the projector to the depth-$r$ part of the Bernstein center.
\end{remark}
\begin{proposition}\label{prop eval frac}
    For every $r\in \frac{1}{m}\Z_{\geq 0}$, $\Sigma \in \Theta_m$, $\sigma \in \Sigma$ and $h \in A^r(G)$, we have 
    \begin{equation}
        [A^\Sigma_h]*\delta_{G_{\sigma,r+}}=h_\sigma. 
    \end{equation}
    \end{proposition}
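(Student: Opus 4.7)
The plan is to expand
\[
[A^\Sigma_h]*\delta_{G_{\sigma,r+}} \;=\; \sum_{\tau\in\Sigma}(-1)^{\dim\tau}\, h_\tau*\delta_{G_{\sigma,r+}}
\]
and to split this sum into contributions from $\bar\sigma\subseteq\Sigma$ (which is contained in $\Sigma$ because $\Sigma$ is a subcomplex containing $\sigma$) and from $\Sigma\setminus\bar\sigma$. For each $\tau\preceq\sigma$, I would transport the pair $(\tau,\sigma)$ to a pair $(\tau_0,\sigma_0)\in[\bar{\mcC}_m]$ using some $g\in G(k)$ with $g\mcC=\mcC'$ (where $\mcC'\subset\B$ is a chamber containing $\sigma$), exactly as in the definition of $h_{\sigma'}$ for $\sigma'\in[\B_m]$ given earlier in this subsection. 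The inverse-system relation $\phi^r_{\tau_0,\sigma_0}(h_{\tau_0})=h_{\sigma_0}$, transported back by $\Ad_g$ using the identity $\Ad_g(\delta_{G_{\sigma_0,r+}})=\delta_{G_{\sigma,r+}}$, then yields
\[
h_\tau * \delta_{G_{\sigma,r+}} \;=\; h_\sigma \qquad \text{for every } \tau\preceq\sigma.
\]
Since $\bar\sigma$ is a closed polysimplex with Euler characteristic one, the contribution from $\tau\preceq\sigma$ is $h_\sigma\cdot\sum_{\tau\preceq\sigma}(-1)^{\dim\tau}=h_\sigma$.

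It then remains to show that $\sum_{\tau\in\Sigma\setminus\bar\sigma}(-1)^{\dim\tau}\, h_\tau*\delta_{G_{\sigma,r+}}=0$. I would establish this by an Euler--Poincar\'e cancellation modeled on the proof of Proposition \ref{prop satb frac}: choose a vertex $x\in V(\B_m)\cap\bar\sigma$ and $s\in\frac{1}{m}\Z_{\geq 0}$ large enough that $G_{x,(r+s)+}\subseteq G_{\sigma,r+}$, so that $\delta_{G_{\sigma,r+}}=\delta_{G_{\sigma,r+}}*\delta_{G_{x,(r+s)+}}$, and then introduce an idempotent retraction $p:\Sigma\setminus\bar\sigma\to\Sigma\setminus\bar\sigma$ analogous to $m_{x,s}$ but adapted to the facet $\sigma$. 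On each fibre of $p$, Lemma \ref{lemma stab frac} (applied through the factor $\delta_{G_{x,(r+s)+}}$) forces $h_\tau*\delta_{G_{\sigma,r+}}$ to be constant, while the alternating sum of dimensions on each fibre vanishes as in the proof of Proposition \ref{prop satb frac}.

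The main technical hurdle is the construction of the correct retraction $p$ on $\Sigma\setminus\bar\sigma$ and the verification of its two required properties: constancy of $h_\tau*\delta_{G_{\sigma,r+}}$ on each fibre, and vanishing of $\sum_{\tau\in p^{-1}(\tau_0)}(-1)^{\dim\tau}$ on each fibre. Lemma \ref{lemma stab frac} is formulated in terms of convolution by $\delta_{G_{x,(r+s)+}}$ for a vertex $x\in V(\B_m)$, whereas we require the effective statement with $\delta_{G_{\sigma,r+}}$ on the right; the inclusion $G_{x,(r+s)+}\subseteq G_{\sigma,r+}$ reduces one to the other, but ensuring that the combinatorial sets $\Gamma_s(\cdot,x)$ and $\Upsilon_{x,s}$ interact cleanly with the decomposition $\Sigma=\bar\sigma\sqcup(\Sigma\setminus\bar\sigma)$ is the delicate combinatorial step that carries the proof.
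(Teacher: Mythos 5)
Your first step is fine: for every face $\tau\preceq\sigma$ one indeed has $h_\tau*\delta_{G_{\sigma,r+}}=h_\sigma$ (this is exactly the identity extracted inside the proof of Lemma \ref{lemma stab frac}), and since the closed polysimplex $\bar\sigma$ is contractible, $\sum_{\tau\preceq\sigma}(-1)^{\dim\tau}=1$, so the faces of $\sigma$ contribute $h_\sigma$. The problem is the second half, which you yourself identify as "the delicate combinatorial step that carries the proof": the vanishing of $\sum_{\tau\in\Sigma\setminus\bar\sigma}(-1)^{\dim\tau}h_\tau*\delta_{G_{\sigma,r+}}$ is never actually established. You postulate an idempotent retraction $p$ of $\Sigma\setminus\bar\sigma$ "adapted to the facet $\sigma$" with two properties (constancy of $h_\tau*\delta_{G_{\sigma,r+}}$ on fibres, vanishing alternating dimension sums on fibres), but no such map is constructed, and none of the cited ingredients provides it: $m_{x,s}$ and the fibre cancellation of Proposition \ref{prop satb frac} are tied to the sets $\Gamma_s(\cdot,x)$ and $\Upsilon_{x,s}$ attached to a \emph{vertex} $x$, and its equivalence classes have no reason to respect your decomposition $\Sigma=\bar\sigma\sqcup(\Sigma\setminus\bar\sigma)$ (in particular $m_{x,s}(\tau)$ for $\tau\notin\bar\sigma$ may well land in $\bar\sigma$). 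Making your scheme work would require a new combinatorial lemma, an analogue of \cite[Lemma 4.10]{BKV2} relative to a facet rather than a vertex, which is neither stated nor proved; so as written the proposal is not a proof.

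The gap is also avoidable: the paper's proof bypasses any new combinatorics by reducing to a vertex. Choose $x\in V(\B_m)$ with $x\preceq\sigma$; then $G_{x,r+}\subseteq G_{\sigma,r+}$, hence $\delta_{G_{x,r+}}*\delta_{G_{\sigma,r+}}=\delta_{G_{\sigma,r+}}$ and $h_x*\delta_{G_{\sigma,r+}}=h_\sigma$, so it suffices to prove $[A^\Sigma_h]*\delta_{G_{x,r+}}=h_x$. This is precisely Proposition \ref{prop satb frac} applied with $s=0$ and $\Sigma'=\{x\}$, which is admissible because $\Upsilon_{x,0}=\emptyset$ by \cite[Lemma 4.4]{BKV2}; it yields $[A^\Sigma_h]*\delta_{G_{x,r+}}=[A^{\{x\}}_h]*\delta_{G_{x,r+}}=h_x*\delta_{G_{x,r+}}=h_x$. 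In other words, the already-proved stabilization proposition does all the cancellation for you once you convolve through the vertex idempotent; your plan essentially attempts to re-prove a harder variant of that proposition from scratch and stops at its hardest point.
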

    \begin{proof}
        Choose $x \in V(\B_m)$ such that $x\preceq \sigma$. Then $G_{x,r+}\subseteq G_{\sigma,r+}$, and we have $\delta_{G_{x,r+}} * \delta_{G_{\sigma,r+}}= \delta_{G_{\sigma,r+}}$ and $h_x* \delta_{G_{\sigma,r+}} = h_\sigma$. Hence, it is enough to show that $[A^\Sigma_h]*\delta_{G_{x,r+}}=h_x$. Since $\Upsilon_{x,0}=\emptyset$ by \cite[Lemma 4.4]{BKV2}, the subcomplex $\Sigma'=\{x\}$ satisfies the assumptions of Proposition \ref{prop satb frac} with $s=0$. Hence, we have 
    \[
    [A^\Sigma_h]*\delta_{G_{x,r+}}= [A^{\{x\}}_h]*\delta_{G_{x,r+}}=h_x*\delta_{G_{x,r+}}=h_x
    \]
    and we are done.
    \end{proof}
\begin{remark}\label{remarkevalfrac}
    Note that the above proposition implies that for $h \in A^r(G)$ and $\forall \:\sigma \in [\B_m]$, we have $[A_h](\delta_{G_{\sigma,r+}})=h_\sigma$. 
\end{remark}
\begin{theorem}\label{thm2 frac}
      For each $h\in A^r(G)$, we have $[A_h]\in \mc Z^r(G)   \subset \mc Z(G) \simeq  \emph{End}_{\HH(G)^{2}}(\HH(G))  $, and the assignment $h\mapsto [A_h]$ defines an algebra map 
        \begin{align*}
            [A^r]\: :\: A^r(G) &\longrightarrow \mc Z^r(G) \\
            h &\longmapsto [A_h] 
        \end{align*} 
\end{theorem}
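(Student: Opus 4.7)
The plan is to verify (A) $[A_h] \in \mathrm{End}_{\HH(G)^2}(\HH(G)) \cong \ZZ(G)$; (B) that $[A_h]$ actually lands in $\ZZ^r(G)$; and (C) that $h \mapsto [A_h]$ is a unital algebra homomorphism. For (A), right-convolution equivariance $[A_h](f * g) = [A_h](f) * g$ is immediate from Theorem \ref{thm stab frac}: for $\Sigma \in \Theta_m$ large enough that stabilization holds, $[A^\Sigma_h] * (f*g) = ([A^\Sigma_h] * f) * g$. The key technical input is $G(k)$-conjugation equivariance $[A_h](\prescript{g}{}{}f) = \prescript{g}{}{}[A_h](f)$. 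This rests on the fact that the extended system $\sigma \mapsto h_\sigma$ on all of $[\B_m]$ is genuinely $G(k)$-equivariant, i.e.\ $h_{g\sigma} = \Ad_g(h_\sigma)$---this is precisely what the extension procedure and the extra morphisms $\phi^r_{\sigma_1,\sigma_2,n}$ were designed to ensure---together with the observation that $\Sigma \mapsto g\Sigma$ is a bijection on $\Theta_m$ preserving convexity and dimension. Combined these give $\prescript{g}{}{}[A^\Sigma_h] = [A^{g\Sigma}_h]$, hence conjugation equivariance after passage to the stabilized limit.

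To obtain left-convolution equivariance, an analogous direct stabilization argument shows that $[A_h]$ also commutes with the right-translation operator $R_g f(x) := f(xg)$. Combining this with conjugation equivariance and the decomposition $\prescript{g}{}{}f = R_g L_g f$ (where $L_g f(x) := f(g^{-1}x)$) yields commutation with left translation, since $[A_h] \circ L_g = [A_h] \circ R_{g^{-1}} \circ \prescript{g}{}{} = R_{g^{-1}} \circ \prescript{g}{}{} \circ [A_h] = L_g \circ [A_h]$. Finally, writing $\alpha * f = \int \alpha(y)\, L_y f\, d\mu(y)$ as a finite sum (since $\alpha$ is locally constant and compactly supported) and pulling $[A_h]$ inside yields $[A_h](\alpha * f) = \alpha * [A_h](f)$, completing (A).

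For (B), I would show $\pi([A_h]) = 0$ on every $V \in R(G)_{>r}$. Each $h_\sigma$ is left $G_{\sigma,r+}$-invariant, so $h_\sigma = \delta_{G_{\sigma,r+}} * h_\sigma$ and consequently $\pi(h_\sigma) = \pi(\delta_{G_{\sigma,r+}})\pi(h_\sigma) = 0$, using that $V^{G_{\sigma,r+}} = 0$ for depth-$>r$ representations. Hence $\pi([A^\Sigma_h]) = 0$ for every $\Sigma \in \Theta_m$, and for any $v = \pi(\delta_U)v$ we get $[A_h] \cdot v = \pi([A^\Sigma_h] * \delta_U)(v) = 0$ by choosing $\Sigma$ large enough. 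For (C), $\C$-linearity is obvious and the identity $\delta_r = \{\delta_{G_{\sigma,r+}}\} \in A^r(G)$ maps to the depth-$r$ projector of \cite{BKV2}, which is the identity of $\ZZ^r(G)$. Multiplicativity $[A_{hh'}] = [A_h] \circ [A_{h'}]$ in $\ZZ^r(G)$ can be verified by comparing actions on every $V \in R(G)_{\leq r}$, which is generated by the subspaces $V^{G_{\sigma,r+}}$ as $\sigma$ ranges over $[\B_m]$. For $v \in V^{G_{\sigma,r+}}$, Remark \ref{remarkevalfrac} gives $[A_{h'}] \cdot v = \pi(h'_\sigma)(v)$, which lies in $V^{G_{\sigma,r+}}$ by bi-$G_{\sigma,r+}$-invariance of $h'_\sigma$; applying $[A_h]$ then yields $\pi(h_\sigma)\pi(h'_\sigma)(v) = \pi(h_\sigma * h'_\sigma)(v) = \pi((hh')_\sigma)(v) = [A_{hh'}] \cdot v$, since multiplication in $A^r(G)$ is componentwise convolution.

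The main obstacle is the $G(k)$-conjugation equivariance step in (A): it is the step that genuinely uses the inverse-system structure of $A^r(G)$ beyond the simply-connected case---in particular the additional morphisms $\phi^r_{\sigma_1,\sigma_2,n}$---and is what promotes $[A_h]$ from a right-$\HH(G)$-module endomorphism to an element of the full Bernstein center. Once conjugation equivariance is in hand, the remaining steps are essentially formal consequences of Remark \ref{remarkevalfrac}, the stabilization Theorem \ref{thm stab frac}, and the bi-invariance properties of the $h_\sigma$'s.
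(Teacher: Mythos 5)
Your proposal is correct, and its skeleton (the stabilization Theorem \ref{thm stab frac} plus the evaluation identity $[A_h](\delta_{G_{\sigma,r+}})=h_\sigma$ of Remark \ref{remarkevalfrac}) is the same as the paper's, but each of the three key verifications is carried out by a genuinely different route. For conjugation equivariance the paper enlarges a given $\Sigma$ to an $\Ad L$-invariant convex subcomplex $\Sigma^L$ for each compact open subgroup $L$, and then invokes functoriality of buildings (the $G(k)$-action factors through $G^{\mathrm{ad}}(k)$) together with the fact that $G^{\mathrm{ad}}(k)$ is generated by compact open subgroups; you instead use the full equivariance $h_{g\sigma}=\Ad_g(h_\sigma)$ of the extended system --- which indeed follows from the paper's well-definedness argument, since for any valid pair $(g_1,\sigma_1)$ defining $h_\sigma$ the pair $(gg_1,\sigma_1)$ is valid for $g\sigma$ --- together with the bijection $\Sigma\mapsto g\Sigma$ of $\Theta_m$; this is more direct and avoids the detour through $G^{\mathrm{ad}}$, at the (mild) cost of noting that the $G(k)$-action preserves the refined structure $[\B_m]$, dimension and convexity, which the paper already uses implicitly when extending $h$. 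Your explicit derivation of left-convolution commutation from right-translation commutation and $\prescript{g}{}{}f=R_gL_gf$ replaces the paper's one-line appeal to the standard equivalence "right-module endomorphism + conjugation-invariant $\Rightarrow$ bimodule endomorphism"; both are fine. For membership in $\ZZ^r(G)$ the paper writes $h=h*\delta_r$ and uses multiplicativity plus the BKV2 fact that $[A_{\delta_r}]$ is the depth-$r$ projector, whereas you show directly that $[A_h]$ kills $R(G)_{>r}$ via $h_\sigma=\delta_{G_{\sigma,r+}}*h_\sigma$, so you need the projector fact only for unitality. Finally, the paper proves multiplicativity at the level of the Hecke algebra, computing $[A_h](A^\Sigma_{h'})=A^\Sigma_{h*h'}$ from linearity, the evaluation identity and the right-module property, while you compare the two central elements on $R(G)_{\leq r}$ (generated by the subspaces $V^{G_{\sigma,r+}}$) and use that both vanish on $R(G)_{>r}$; this is legitimate because elements of $\ZZ(G)$ are determined by their actions on smooth representations, and because the componentwise-convolution description of multiplication in $A^r(G)$ persists for the extended system $(h*h')_{\sigma'}=h_{\sigma'}*h'_{\sigma'}$ since $\Ad_g$ is an algebra automorphism. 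In short: same architecture, but a more building-theoretic equivariance argument and a more representation-theoretic treatment of $\ZZ^r$-membership and multiplicativity than the paper's Hecke-algebra computations.
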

\begin{proof}
    We first show that $[A_h]\in \text{End}_{\HH(G)^{2}}(\HH(G))$, and then prove that the map defined in an algebra homomorphism. For the first part,  it can be easily observed from the definition of $[A_h]$ that $[A_h](f*g)=[A_h](f)*g$ for $h\in A^r(G)$. Since, $[A_h]$ commutes with right convolutions, it is enough to show that it is $G(k)$-conjugation equivariant.\par
    Let $L$ be a compact open subgroup of $ G(k)$. Given any $ l \in L$ and $f \in \HH(G)$, we can choose $\Sigma \in \Theta_m$ large enough such that $[A_h](f) = A^\Sigma_h*f$ and $[A_h](\Ad_l(f)) = A^\Sigma_h*\Ad_l(f)$, since $A^\Sigma_h*f$ stabilizes $\forall \: f \in \HH(G)$.  Then, we can choose the $\Sigma \subseteq \Sigma^L \in \Theta_m$ such that $\Sigma^L$ is Ad $L$-invariant. In that case,$A^{\Sigma^L}_h$ is Ad $L$-invariant and we have 
    \[
    \Ad_l([A_h](f))=\Ad_l(A^{\Sigma^L}_h*f)=\Ad_l(A^{\Sigma^L}_h)*\Ad_l(f)=A^{\Sigma^L}_h*\Ad_l(f)=[A_h](\Ad_l(f)).
    \]
    So, we have shown that $[A_h]$ is Ad $L$-invariant for any compact open subgroup $L \subseteq G(k)$.
    By functoriality of buildings (\cite[Theorem 2.1.8]{Landvogt}), the natural projection $p:G \rightarrow G^{\text{ad}}$ induces a bijection $\B(G)\rightarrow\B(G^{\text{ad}})$ which is compatible with $G(k)$-action on the left hand side and $G^{\text{ad}}(k)$-action on the right hand side.  In particular, this means $G(k)$ acts on $\B(G)$ via $G^{\text{ad}}(k)$, i.e., $g\cdot x = p_k(g)\cdot x$ for $g\in G(k),\: x \in \B(G)=\B(G^{\text{ad}})$. So, it is enough to prove that $[A_h]$ is $G^{\text{ad}}(k)$-conjugation invariant. Since $G^{\text{ad}}(k)$ is generated by compact open subgroups (\cite{BKV2}, 6.1), we have that $[A_h]$ is $G(k)$-conjugation invariant and hence $[A_h]\in \text{End}_{\HH(G)^{2}}(\HH(G))$.\par
    Given $h,h' \in A^r(G)$ and $f \in \HH(G)$, we will show that $[A_{h*h'}](f)=([A_h]\circ [A_{h'}])(f)$.  Choose $\Sigma \in \Theta_m$ large enough such that $[A_{h*h'}](f)=A^\Sigma_{h*h'}*f$ and $[A_{h'}](f)=A^\Sigma_{h'}*f$. Then, $([A_h]\circ [A_{h'}])(f)=[A_h]([A_{h'}](f))=[A_h](A^\Sigma_{h'}*f)=[A_h](A^\Sigma_{h'})*f$, and it is enough to show that $A^\Sigma_{h*h'}=[A_h](A^\Sigma_{h'})$. Now, using the fact that $[A_h]$ is linear and the definitions $A^\Sigma_{h*h'}=\sum_{\sigma \in \Sigma }(-1)^{\text{dim} \: \sigma} h_\sigma *{h'}_\sigma$, $A^\Sigma_{h'}= \sum_{\sigma \in \Sigma }(-1)^{\text{dim} \: \sigma}{h'}_\sigma$, we have 
\begin{align*}
    [A_h](A^\Sigma_{h'})&=\sum_{\sigma \in \Sigma}(-1)^{\text{dim} \: \sigma}[A_h]({h'}_\sigma)=\sum_{\sigma \in \Sigma}(-1)^{\text{dim} \: \sigma}[A_h](\delta_{G_{\sigma, r+}}*{h'}_\sigma)\\
    &=\sum_{\sigma \in \Sigma}(-1)^{\text{dim} \: \sigma}[A_h](\delta_{G_{\sigma, r+}})*{h'}_\sigma=\sum_{\sigma \in \Sigma}(-1)^{\text{dim} \: \sigma}h_\sigma*{h'}_\sigma
\end{align*}
    which shows that $[A_h]$ is an algebra map. Further, for $h\in A^r(G)$, $$[A_h]=[A^r](h)= [A^r](h*\delta_r)= [A_{h*\delta_r}]= [A_h]\circ [A_{\delta_r}]$$
    and hence $[A_h]\in \mc Z^r(G)$ since $[A_{\delta_r}]$ is the depth-$r$ projector.
    
\end{proof}
\begin{theorem}\label{thm: isominverselimit}
    The map $[A^r]: A^r(G) \rightarrow \ZZ^r(G) $ defined in Theorem \ref{thm2 frac} is an algebra isomorphism onto the depth-$r$ Bernstein centre.
\end{theorem}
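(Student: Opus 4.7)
The plan is to upgrade Theorem~\ref{thm2 frac} from an algebra map to an algebra isomorphism by exhibiting an explicit two-sided inverse. Injectivity is immediate from Remark~\ref{remarkevalfrac}: if $[A_h]=0$ then $h_\sigma=[A_h](\delta_{G_{\sigma,r+}})=0$ for every $\sigma\in[\B_m]$, and in particular for every $\sigma\in[\bar{\mcC}_m]$, so $h=0$ in $A^r(G)$.

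For surjectivity, given $z\in \ZZ^r(G)\subset \mathrm{End}_{\HH(G)^{2}}(\HH(G))$, I would define
\[
h_\sigma \;:=\; z(\delta_{G_{\sigma,r+}}), \qquad \sigma\in[\bar{\mcC}_m],
\]
and check that $h=\{h_\sigma\}$ lies in $A^r(G)$. That each $h_\sigma$ lies in $\M^r_\sigma$ rests on two observations: since $z$ commutes with right and left convolution and $\delta_{G_{\sigma,r+}}$ is idempotent, $h_\sigma$ is $G_{\sigma,r+}$-bi-invariant; and since $z$ corresponds to a $G(k)$-invariant distribution while $\delta_{G_{\sigma,r+}}$ is $G_{\sigma,0}$-conjugation invariant ($G_{\sigma,0}$ normalizing $G_{\sigma,r+}$), the image $h_\sigma$ inherits this conjugation invariance. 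For the face compatibility $\phi^r_{\sigma',\sigma}(h_{\sigma'})=h_\sigma$ with $\sigma'\preceq\sigma$ in $[\bar{\mcC}_m]$, the standard Moy-Prasad inclusion $G_{\sigma',r+}\subseteq G_{\sigma,r+}$ (the very inclusion that makes the maps $\phi^r_{\sigma',\sigma}$ land in $\M^r_\sigma$) yields $\delta_{G_{\sigma',r+}}*\delta_{G_{\sigma,r+}}=\delta_{G_{\sigma,r+}}$, so that
\[
h_{\sigma'}*\delta_{G_{\sigma,r+}} \;=\; z(\delta_{G_{\sigma',r+}}*\delta_{G_{\sigma,r+}}) \;=\; z(\delta_{G_{\sigma,r+}}) \;=\; h_\sigma.
\]
The additional Weyl-type morphisms $\phi^r_{\sigma_1,\sigma_2,n}$ for $n\in\mc N$ with $n\mcC=\mcC$ pose no new difficulty: $G(k)$-conjugation equivariance of $z$ gives $\Ad_n(h_{\sigma_1})=z(\Ad_n\delta_{G_{\sigma_1,r+}})=z(\delta_{G_{n\sigma_1,r+}})=h_{n\sigma_1}$, after which the first compatibility applies with $n\sigma_1\preceq\sigma_2$.

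Finally, to show $[A_h]=z$, for any $\Sigma\in\Theta_m$ and $f\in\HH(G)$, linearity of $z$ together with its commutation with right convolution yields
\[
[A^\Sigma_h]*f \;=\; \sum_{\sigma\in\Sigma}(-1)^{\dim\sigma}\, z(\delta_{G_{\sigma,r+}})*f \;=\; z\!\left(\sum_{\sigma\in\Sigma}(-1)^{\dim\sigma}\delta_{G_{\sigma,r+}}\right)*f \;=\; z\bigl([A^\Sigma_{\delta_r}]*f\bigr).
\]
By Theorem~\ref{thm stab frac}, the sequence $[A^\Sigma_{\delta_r}]*f$ eventually stabilizes to $[A_{\delta_r}](f)=E_r(f)$, so $\lim_\Sigma z([A^\Sigma_{\delta_r}]*f)=z(E_r(f))$. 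Since $E_r$ is the multiplicative identity of the summand $\ZZ^r(G)$, we have $z\circ E_r=z$, whence $[A_h](f)=z(E_r(f))=z(f)$ for every $f$, giving $[A^r](h)=z$. The principal subtlety in this strategy is the verification that $\{z(\delta_{G_{\sigma,r+}})\}$ genuinely satisfies the inverse-system relations defining $A^r(G)$; this hinges on the Moy-Prasad containment between nested refined facets and on the $G(k)$-conjugation equivariance intrinsic to elements of the Bernstein center.
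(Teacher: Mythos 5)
Your proposal is correct, and it builds exactly the same candidate inverse as the paper: the paper's proof defines $\Psi^r:\ZZ^r(G)\to A^r(G)$ by $\Psi^r_\sigma(z)=z_\HH(\delta_{G_{\sigma,r+}})$, which is precisely your family $h_\sigma=z(\delta_{G_{\sigma,r+}})$, and both arguments get injectivity of $[A^r]$ from the evaluation identity $[A_h](\delta_{G_{\sigma,r+}})=h_\sigma$ of Remark \ref{remarkevalfrac}. Where you diverge is in closing the loop: the paper never computes $[A^r]\circ\Psi^r$ directly, but instead shows $\Psi^r$ is injective by testing $z$ and $z'$ on irreducible representations of depth $\leq r$ (picking $0\neq v\in V^{G_{\sigma,r+}}$ and using $z_V(v)=z_\HH(\delta_{G_{\sigma,r+}})(v)$), which together with $\Psi^r\circ[A^r]=\mathrm{Id}$ forces the two maps to be mutually inverse; you instead verify $[A^r](h)=z$ head-on via $[A^\Sigma_h]*f=z([A^\Sigma_{\delta_r}]*f)$, stabilization (Theorem \ref{thm stab frac}), and the identity $z\circ[A_{\delta_r}]=z$ for $z\in\ZZ^r(G)$. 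Your route is more computational and avoids invoking $\Irr(G)_{\leq r}$ at this stage, at the cost of leaning a second time on the external fact from \cite{BKV2} that $[A_{\delta_r}]=E_r$ is the depth-$r$ projector (the paper also uses this, but only in Theorem \ref{thm2 frac}). One small point you should make explicit: the displayed computation of $[A^\Sigma_h]*f$ needs $h_{\sigma'}=z(\delta_{G_{\sigma',r+}})$ for \emph{every} $\sigma'\in[\B_m]$, not only for $\sigma'\in[\bar{\mcC}_m]$; since the extension of $h$ to all of $[\B_m]$ is defined by $h_{g\sigma}=\Ad_g(h_\sigma)$, this follows from the same conjugation-equivariance $\Ad_g(z(\delta_{G_{\sigma,r+}}))=z(\delta_{G_{g\sigma,r+}})$ that you already used for the Weyl-type morphisms, so the gap is cosmetic rather than substantive.
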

\begin{proof}
    We prove the assertion in two steps. We first construct a section of this map and then show that it is actually an inverse algebra map. 
    \begin{claim}
         We have an algebra map $\Psi^r \::\: \mc Z^r(G) \longrightarrow A^r(G)$ such that $\Psi^r \circ [A^r] = Id_{A^r(G)}$
    \end{claim}
    For $\sigma \in [\bar{\mcC}_m]$, we define 
    \begin{align*}
        \Psi^r_\sigma \::\: \mc Z^r(G) &\longrightarrow \M^r_\sigma\\
        z &\longmapsto z_\HH(\delta_{G_{\sigma, r+}})
    \end{align*}
    We easily see that $\Psi^r_\sigma$ is well-defined since $G_{\sigma, r+}$ is normal in $G_{\sigma,0}$. Let $\pi^r_\sigma:A^r(G) \rightarrow \M^r_\sigma $ be the canonical projection map. For $\sigma' \preceq \sigma \in [\bar{\mcC}_m]$ we have $\Psi^r_\sigma=  \phi^r_{\sigma',\sigma}\circ\Psi^r_{\sigma'}$. So, there exists a map $\lim_{\sigma \in [\bar{\mcC}_m]}\Psi^r_\sigma =:\Psi_r : \mc Z^r(G) \longrightarrow A^r(G)$ such that $\pi^r_\sigma \circ \Psi_r=\Psi^r_\sigma$ for $\sigma \in [\bar{\mcC}_m]$. Also note that each $\Psi^r_\sigma$ is an algebra map. Given $z,z' \in \ZZ^r(G)$,
    \begin{align*}
        (z\circ z')_\HH(\delta_{G_{\sigma, r+}}) &= z_\HH \circ {z'}_\HH(\delta_{G_{\sigma, r+}})=z_\HH({z'}_\HH(\delta_{G_{\sigma, r+}}*\delta_{G_{\sigma, r+}}))\\
        &= z_\HH(\delta_{G_{\sigma, r+}}*{z'}_\HH(\delta_{G_{\sigma, r+}})) =z_\HH(\delta_{G_{\sigma, r+}})*{z'}_\HH(\delta_{G_{\sigma, r+}})
    \end{align*}
    Hence, $\Psi^r : \ZZ^r(G) \rightarrow A^r(G)$ is an algebra map.  Finally, for $h=\{ h_\sigma\}_{\sigma\in [\Bar{\mcC}_m]}\in A^r(G)$, we have using Remark \ref{remarkevalfrac} that $\Psi^r_\sigma([A_h])= [A_h](\delta_{G_{\sigma,r+}})= h_\sigma$ $\forall \: \sigma \in [\B_m]$. Hence, $\Psi^r \circ [A^r] (h) = \Psi^r([A_h])=h$, which finishes the proof of this claim. \par
    Note that the above claim implies that $\Psi^r$ is surjective and $[A^r]$ is injective as algebra maps. So, if we can show injectivity of $\Psi^r$, we can conclude that $\Psi^r$ and $[A^r]$ are inverse algebra isomorphisms.
    \begin{claim}
        $\Psi^r$ is injective. 
    \end{claim}
     Assume $\Psi^r (z) = \Psi^r (z')$ for $z,\:z' \in \ZZ^r(G)$ and let $(\pi,\; V) \in \Irr(G)_{\leq r}$ .$\exists \:\sigma\in [\Bar{\mcC}_m] $ such that $V^{G_{\sigma, r+}}\ni v \neq \{0\}$. Then $\delta_{G_{\sigma, r+}}(v)=v$. In order to show $z=z'$, it is enough to show $z_V(v) = {z'}_V(v)$. Note that $z_V(v)=z_V(\delta_{G_{\sigma, r+}}(v))=z_\HH(\delta_{G_{\sigma, r+}})(v)$ and the same is true for $z'$. Since $\Psi^r (z) = \Psi^r (z')$, we have 
    \[
    \pi^r_\sigma \circ \Psi^r (z) = \pi^r_\sigma \circ \Psi^r(z') \Rightarrow \Psi^r_\sigma(z)=\Psi^r_\sigma(z')\Rightarrow z_\HH(\delta_{G_{\sigma, r+}})= {z'}_\HH(\delta_{G_{\sigma, r+}})
    \]
    $\forall \;\sigma\in [\Bar{\mcC}_m]$. Hence,
    \[
    z_V(v) = z_\HH(\delta_{G_{\sigma, r+}})(v)= {z'}_\HH(\delta_{G_{\sigma, r+}})(v)= {z'}_V(v)
    \]
    which proves injectivity of $\Psi^r$ and gives us an isomorphism. 
\end{proof}

We have isomorphisms $[A^r] : A^r(G) \longrightarrow \mc Z^r(G)$ for all $r\in \Q_{\geq 0}$. For any $r,\;s\in \Q_{\geq 0}, \:r>s,\: r\in \frac{1}{m}\Z,\; s\in \frac{1}{n}\Z$, we have a map 
\begin{align*}
            e_{r,s} \:: \:A^{r}(G) &\longrightarrow  A^s(G) \\
            \{ h_\sigma\}_{\sigma\in [\Bar{\mcC}_m]} &\longmapsto \{ h'_\tau\}_{\tau\in [\Bar{\mcC}_n]}
        \end{align*} 
    where $ h'_\tau= h_\sigma *\delta_{G_{\tau,s+}}$ for any $\sigma \in [\Bar{\mcC}_m]$ such that $\tau\cap \sigma \neq \emptyset$. Let $A_\tau=\{\sigma \in [\Bar{\mcC}_m] \:|\: \sigma \cap \tau \neq \emptyset\}$ and $A^{max}_\tau=\{\sigma \in A_\tau\:|\: \sigma \text{ of maximal dimension }  \}$. We claim that $h_\sigma *\delta_{G_{\tau,s+}}=h_{\sigma'} *\delta_{G_{\tau,s+}}$ for all $\sigma,\: \sigma'\in A_\tau$ and hence the map $e_{r,s}$ is well-defined. If $\sigma, \sigma'\in  A^{max}_\tau$ with $x\in \sigma \cap \tau,\: x' \in \sigma' \cap \tau $, then without loss of generality we can assume that there exists $\Tilde{\sigma}$ such that $\Tilde{\sigma} \preceq \sigma$ and $\Tilde{\sigma} \preceq \sigma'$, and hence $\Tilde{\sigma} \in A_\tau$. In that case,
    \[
    h_{\sigma}*\delta_{G_{\tau,s+}}=h_{\Tilde{\sigma}}*\delta_{G_{\sigma,r+}}*\delta_{G_{\tau,s+}}=h_{\Tilde{\sigma}}*\delta_{G_{x,r+}}*\delta_{G_{x,s+}}=h_{\Tilde{\sigma}}*\delta_{G_{x,s+}}=h_{\Tilde{\sigma}}*\delta_{G_{\tau,s+}}
    \]
and similarly $h_{\sigma'}*\delta_{G_{\tau,s+}}=h_{\Tilde{\sigma}}*\delta_{G_{\tau,s+}}=h_{\sigma}*\delta_{G_{\tau,s+}}$. Now, if $\sigma \in A_\tau$, there exists $\Tilde{\sigma} \in A^{max}_\tau$ with $\Tilde{x}\in \Tilde{\sigma}\cap \tau $ such that $\sigma \preceq \Tilde{\sigma}$ which gives us
\[
h_{\sigma}*\delta_{G_{\tau,s+}}=h_{\sigma}*\delta_{G_{\Tilde{x},s+}}=h_{\sigma}*\delta_{G_{\Tilde{x},r+}}*\delta_{G_{\Tilde{x},s+}}=h_{\sigma}*\delta_{G_{\Tilde{\sigma},r+}}*\delta_{G_{\Tilde{x},s+}}=h_{\Tilde{\sigma}}*\delta_{G_{\Tilde{x},s+}}=h_{\Tilde{\sigma}}*\delta_{G_{\tau,s+}}
\]
and hence we are done. We define
\begin{equation}
    A(G) := \lim_{r\in \Q_{\geq 0}}A^r(G) = \lim_{r\in \Z_{\geq 0}}A^r(G) 
\end{equation}
where the second limit is taken with respect to the maps $e_{r+1,r}$.\par
Further, note that we have a natural map 
\begin{align*}
            z_{r,s} \:: \:\ZZ^{r}(G) &\longrightarrow  \ZZ^s(G) \\
            z &\longmapsto z \circ [A_{\delta_s}]
        \end{align*} 
        such that $\ZZ(G) = \lim_{r\in \Q_{\geq 0}}\ZZ^r(G)=\lim_{r\in \Z_{\geq 0}}\ZZ^r(G)$. 

        \begin{theorem}\label{main theorem}
            Let $r,s$ be as in the preceding paragraphs. The algebra isomorphisms $[A^r] : A^r(G) \rightarrow \mc Z^r(G)$ fit into the following commutative diagram 
             \[
             \xymatrix{A^{r}(G) \ar[r]^{[A^{r}]}\ar[d]^{e_{r,s}}&\ZZ^{r}(G)\ar[d]^{z_{r,s}}\\
        A^s(G)\ar[r]^{[A^s]}&\ZZ^s(G)}
        \]
         In particular, we have an algebra isomorphism 
        \begin{equation}
            [A]=\lim_{r\in \Z_{\geq 0}}[A^r] : A(G) \longrightarrow \ZZ(G).
        \end{equation}
        \end{theorem}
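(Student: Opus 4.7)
The plan is to verify commutativity of the square for each pair $r>s$, and then deduce the second assertion by passing to inverse limits over the cofinal subset $\Z_{\geq 0}\subset \Q_{\geq 0}$.

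For commutativity, my approach is to apply the section $\Psi^s$ constructed in the proof of Theorem~\ref{thm: isominverselimit}. Since $\Psi^s$ is an inverse to $[A^s]$, and in particular injective, the identity $[A^s]\circ e_{r,s} = z_{r,s}\circ [A^r]$ reduces to the identity $e_{r,s} = \Psi^s\circ z_{r,s}\circ [A^r]$. Fix $h\in A^r(G)$, set $z=[A_h]\in \ZZ^r(G)$, and fix $\tau\in [\bar{\mcC}_n]$. Unwinding the definition of $z_{r,s}$ and using Remark~\ref{remarkevalfrac} applied to $h=\delta_s$ (which gives $[A_{\delta_s}](\delta_{G_{\tau,s+}}) = (\delta_s)_\tau = \delta_{G_{\tau,s+}}$) yields
\[
\Psi^s_\tau\bigl(z_{r,s}(z)\bigr) \;=\; (z\circ [A_{\delta_s}])_{\HH}(\delta_{G_{\tau,s+}}) \;=\; z_{\HH}\bigl([A_{\delta_s}](\delta_{G_{\tau,s+}})\bigr) \;=\; z_{\HH}(\delta_{G_{\tau,s+}}).
\]

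To evaluate the right-hand side, pick any point $y\in \tau$ and let $\sigma\in [\bar{\mcC}_m]$ be the refined facet of $[\B_m]$ containing $y$, so that $\sigma\cap\tau\neq\emptyset$. By definition of the refinement, $G_{y,r+}=G_{\sigma,r+}$ and $G_{y,s+}=G_{\tau,s+}$; since $r>s$ we have $G_{y,r+}\subseteq G_{y,s+}$, hence $\delta_{G_{\sigma,r+}}*\delta_{G_{\tau,s+}}=\delta_{G_{\tau,s+}}$. Combining this with the fact that $[A_h]$ commutes with right convolution (established in the proof of Theorem~\ref{thm2 frac}) and with Remark~\ref{remarkevalfrac},
\[
[A_h](\delta_{G_{\tau,s+}}) \;=\; [A_h](\delta_{G_{\sigma,r+}})*\delta_{G_{\tau,s+}} \;=\; h_\sigma * \delta_{G_{\tau,s+}} \;=\; (e_{r,s}(h))_\tau.
\]
On the other hand, by the first claim in the proof of Theorem~\ref{thm: isominverselimit}, $\Psi^s_\tau([A^s](e_{r,s}(h)))=(e_{r,s}(h))_\tau$. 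This proves the component-wise equality, and since $\Psi^s$ is injective, the original diagram commutes.

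For the limit statement, the commuting squares express $\{[A^r]\}$ as a morphism of inverse systems $\{A^r(G),\, e_{r,s}\}\to \{\ZZ^r(G),\, z_{r,s}\}$. Each component $[A^r]$ is an algebra isomorphism by Theorem~\ref{thm: isominverselimit}, and $\{\Psi^r\}$ gives a compatible family in the opposite direction; the compatibility $\Psi^s\circ z_{r,s}=e_{r,s}\circ \Psi^r$ follows by the same common-point computation (applied to $z\in \ZZ^r$ rather than to $[A_h]$). Passing to the inverse limit over $r\in \Z_{\geq 0}$, which is cofinal in $\Q_{\geq 0}$ for both systems by the paragraph preceding the theorem, therefore yields an algebra isomorphism $[A]:A(G)\xrightarrow{\simeq}\ZZ(G)$.

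The only technical subtlety is the mismatch between the refinements $[\B_m]$ and $[\B_n]$ attached to $r$ and $s$; this is resolved by the observation that both $G_{\sigma,r+}$ and $G_{\tau,s+}$ are determined by any common interior point $y\in \sigma\cap\tau$, which lets us identify the relevant convolutions of Moy--Prasad idempotents at facets of different refinements with convolutions at a single point. Everything else is formal manipulation using the evaluation identities of Remark~\ref{remarkevalfrac} and the structure of the algebra maps from Section~\ref{section:fracdepthdescription}.
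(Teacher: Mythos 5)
Your proof is correct, and it takes a somewhat different route from the paper's. The paper verifies $[A_{e_{r,s}(h)}]=[A_h]\circ[A_{\delta_s}]$ by testing both sides on smooth irreducible representations: for $\pi$ of depth $\leq s$ it evaluates on a vector $v\in V^{G_{\tau,s+}}$ using a common point $x\in\sigma_\tau\cap\tau$ and Remark \ref{remarkevalfrac}, and for depth $>s$ both sides vanish; equality in $\ZZ(G)$ then follows since center elements are determined by their action on $R(G)$. You instead stay entirely inside the Hecke algebra: you apply the section $\Psi^s$ from the proof of Theorem \ref{thm: isominverselimit} to reduce the square to the componentwise identity $\Psi^s_\tau\bigl(z_{r,s}([A_h])\bigr)=(e_{r,s}(h))_\tau$, and you establish this by the absorption $\delta_{G_{\sigma,r+}}*\delta_{G_{\tau,s+}}=\delta_{G_{\tau,s+}}$ at a common point $y\in\sigma\cap\tau$ together with right-convolution equivariance of $[A_h]$ and Remark \ref{remarkevalfrac}. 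The key computational input (common interior point of facets of the two refinements, plus the evaluation identity $[A_h](\delta_{G_{\sigma,r+}})=h_\sigma$) is the same in both arguments, but your reduction via $\Psi^s$ avoids the representation-theoretic evaluation and the case split on depth, at the cost of invoking the injectivity of $\Psi^s$, which is available since Theorem \ref{thm: isominverselimit} has already been proved. Your treatment of the limit statement (a morphism of inverse systems with isomorphism components, over the cofinal index set $\Z_{\geq 0}$) is exactly what the paper leaves implicit with ``the rest follows immediately,'' and your handling of the mismatch of refinements $[\B_m]$ versus $[\B_n]$ — noting that $G_{y,r+}=G_{\sigma,r+}$ and $G_{y,s+}=G_{\tau,s+}$ for the unique refined facets containing $y$ — is the right justification and matches the well-definedness argument for $e_{r,s}$ given in the paper.
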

        \begin{proof}
            Let $\{ h_\sigma\}_{\sigma\in [\Bar{\mcC}_m]}\in A^r(G)$ and $h'=e_{r,s}(h)=\{h_{\sigma_\tau}*\delta_{G_{\tau,s+}}\}_{\tau\in [\Bar{\mcC}_n]}$ where $\sigma_\tau \in A_\tau$. We want to show that $[A_{h'}]=[A_h]\circ [A_{\delta_s}]$. Let $(\pi, V)$ be a smooth irreducible representation of $G(k)$. If the depth of $\pi$ is $\leq s$, then  $\exists \: 0\neq v\in V^{G_{\tau,s+}}$, $\tau \in [\Bar{\mcC}_n]$. In this case,
            \[
            [A_{h'}](v)= [A_{h'}](\delta_{G_{\tau,s+}}(v))=[A_{h'}](\delta_{G_{\tau,s+}})(v)=(h_{\sigma_\tau}*\delta_{G_{\tau,s+}})v=h_{\sigma_\tau}(v).
            \]
            Further, fix $\sigma_\tau \in A_\tau$ and let $x \in \sigma_\tau \cap \tau$. Then $v\in V^{G_{x,s+}}\supset V^{G_{x,r+}}$ since $r>s$, and 
            \[
            [A_h]\circ [A_{\delta_s}](v)=[A_h](v)=[A_h](\delta_{G_{x,r+}}(v))=[A_h](\delta_{G_{x,r+}})v=[A_h](\delta_{G_{\sigma_\tau,r+}})v= h_{\sigma_\tau}(v).
            \]
            So, we have $[A_{h'}]|_V=([A_h]\circ [A_{\delta_r}])|_V$ when the depth of $\pi$ is less than $s$. \par
            When depth of $\pi$ is $>s$, $[A_{h'}]|_V=0=([A_h]\circ [A_{\delta_s}])|_V$. The rest follows immediately.
        \end{proof}
        
\section{Stable functions on positive depth Moy-Prasad quotients}\label{section:stablefnsonpositivedepth}
 For $\sigma \in [\B_m]$, let $\G_\sigma$ be the connected reductive $\bar{\F}_q$- group defined over $\F_q$ with associated Frobenius $F$ such that $\G_\sigma(\Bar\F_q) =G(K)_{\sigma,0}/G(K)_{\sigma,0+}$. We know that $\G_{\sigma}$ is the reductive quotient of the special fibre of the parahoric group scheme corresponding to $\sigma$. Further, let $\bg_\sigma= \text{Lie}(\G_\sigma)$ and $\bg_{\sigma, r}$ be the $\bar{\F}_q$ vector space such that $\bg_{\sigma, r}(\Bar\F_q)=G(K)_{\sigma,r}/G(K)_{\sigma,r+}$ for $r \in \Q_{>0}$. It has a $\F_q$- structure induced by $G_{\sigma,r}/G_{\sigma,r+}\cong \bg_{\sigma,r}(\F_q)$. Note that $ \bg_{\sigma, r}\cong \bg_\sigma$ for $r \in \Z_{>0}$. The action of $G_{\sigma,0}/G_{\sigma,0+}$ on $G_{\sigma,r}/G_{\sigma,r+}$ for $r \in \Q_{>0}$ is the $\F_q$-points of a linear algebraic action of $\G_\sigma$ on $\bg_{\sigma,r}$. For the special case where $r \in \Z_{>0}$, this is isomorphic to the $\F_q$-points of the adjoint action of $\G_\sigma$ on its Lie algebra, and we studied stable functions on these Moy-Prasad quotients at positive integral depth in \cite{cb24}. The goal of this section is to study stable functions more generally allowing fractional depth quotients as well. 
\subsection{Fractional depth quotients}
Fix $m\in \Z_{>0}$ and let $0<r \in \frac{1}{m}\Z_{>0}$. We will define an analogue of Iwahori decomposition following \cite{moysavin}. Let $\sigma \in [\B_m]$ and $\A$ be an apartment containing $\sigma$. We define a set $\Phi_\sigma$
\begin{equation}\label{defnphi_sigma}
    \Phi^m_\sigma=\Phi_\sigma:=\{\nabla\psi\:|\: \psi \in \Psi_m(\A),\: \psi=r \text{ on }\sigma\}
\end{equation}
This is a root-subsystem of $\Phi$, and is independent of $r$ (depends only on $m$). For $\sigma, \; \tau \in [\B_m]$ such that $\sigma \preceq \tau$, $\Phi_\tau\subset \Phi_\sigma$ is a root subsystem. 
Some specific examples of $\Phi_\sigma$:
\begin{itemize}
    \item If the facet $\sigma$ is a hyperspecial point, $\Phi_\sigma=\Phi$.
    \item If $\sigma$ is a chamber, $\Phi_\sigma=\emptyset$
\end{itemize}
Observe that if $m,n \in \Z_{>0}, \; m|n$ and for $\sigma' \in [\B_n]$, there exists unique $\sigma \in [\B_m]$ such that $\sigma' \subset \sigma$ and $\Phi_{\sigma'}^n\supset \Phi_\sigma^m=:\Phi_{\sigma'}^m$ and $\Phi_{\sigma'}^m $ defined this way is well-defined and a root subsystem of $\Phi_{\sigma'}^n$. In particular, $\Phi^1_{\sigma'} \subset \Phi^n_{\sigma'}$ for all $n \in \Z_{>0}$. Further, for $x \in \A\subset \B$, we can define $\Phi^n_x$ the following way-
\begin{equation}
    \Phi_x^n= \{\nabla\psi\:|\: \psi \in \Psi_n(\A),\: \psi(x)=\frac{1}{n}\}
\end{equation}
Defined this way, we have $\Phi_{\sigma'}^n=\cap_{x \in \sigma'}\Phi_x^n$ for $\sigma'\in [\A_n] \subset [\B_n]$. \par
For $\sigma \in [\B_m]$ contained in some apartment $\A$, let $\text{Aff}(\sigma)$ denote the affine space generated by $\sigma$ in $\A$. If $X$ is a real affine space and $Y$ is an affine subspace of $X$, there is a unique affine orthogonal transformation $R_Y$ which reflects points of $X$ across $Y$. 
\begin{definition}
    Let $\sigma \in [\B_m]$. Two refined facets $\tau, \tau' \in [\B_m]$ such that $\sigma \preceq\tau,\: \sigma\preceq \tau'$ are said to be opposite with respect to $\sigma$ (or $\sigma$-opposite) if there exists an apartment $\A$ containing both $\tau $ and $\tau'$ such that in $\A$:
    \begin{itemize}
        \item [(i)] The affine subspaces Aff$(\tau)$ and Aff$(\tau')$ are equal. 
        \item[(ii)] The reflection $R_{\text{Aff}(\sigma)}(\tau)=\tau'$ 
    \end{itemize}
\end{definition}
Let $\tau$ and $\tau'$ be $\sigma$-opposite and $\tau,\;\tau' \subset \A$. We immediately observe that if $\psi \in \Psi_m(\A)$ such that $\psi =r $ on $\tau$, then $\psi=r$ on $\tau'$ and vice-versa. Hence, $\Phi_\tau=\Phi_{\tau'}$. Further, $G_{\tau,r}/G_{\tau,r+}\cong G_{\tau',r}/G_{\tau',r+}$. For $\psi \in \Psi_m(\A)$ such that $\psi=r$ on $\sigma$, there are two possibilities for $\nabla\psi$:
\begin{itemize}
    \item [(i)] $\psi=r$ on $\tau $ and $\tau'$, and hence $\nabla\psi \in \Phi_\tau=\Phi_{\tau'}$.
    \item[(ii)] $\psi$ is non-constant on $\tau$ (and $\tau'$), which presents us with two possibilities. $\psi >r $ on $\tau$ and $\psi<r$ on $\tau'$ and we denote these by $\Phi_{\sigma,\tau}$, and $\psi>r$ on $\tau'$ and $\psi<r$ on $\tau$, which we denote by $\Phi_{\sigma,\tau'}$. This basically gives us a choice of positive roots in the root system $\Phi_{\sigma}$, and $\Phi_{\sigma,\tau}=-\Phi_{\sigma,\tau'}$. Further, $\Phi_\sigma$ is a disjoint union
    \begin{equation}
        \Phi_\sigma = \Phi_\tau\;\sqcup \Phi_{\sigma,\tau}\;\sqcup \Phi_{\sigma,\tau'}
    \end{equation}
\end{itemize}
For  $\psi \in \Psi_m(\A)$, the quotient $\mathscr L_\psi :=U_\psi(K)/U_{\psi +}(K)$ has a natural structure of a $\bar\F_q$-vector space, with $\F_q$-structure given by $\mathscr L_\psi(\F_q)=U_\psi(k)/U_{\psi +}(k)$. So, we have $\mathscr L_\psi = \mathscr L_\psi (\F_q) \otimes_{\F_q}\bar\F_q$. \paragraph{}
For $\alpha \in \Phi \cup \{0\}$, $\sigma \in [\A_m]$ and $0<r \in \frac{1}{m}\Z_{>0} $, we define 
\begin{equation*}
    \psi_{\alpha,\sigma,r}:= \text{ The smallest } \psi \in \Psi_m(\A) \text{ with } \nabla\psi=\alpha \text{ such that } \psi(x)\geq r \:\forall\: x \in \sigma 
\end{equation*}
\begin{equation*}
    \psi_{\alpha,\sigma,r+}:= \text{ The smallest } \psi \in \Psi_m(\A) \text{ with } \nabla\psi=\alpha \text{ such that } \psi(x)> r \:\forall\: x \in \sigma 
\end{equation*}
Let $S$ be a $k$-split maximal torus such that $\A= \A(S,k)$ (cf. Corollary 7.6.9, \cite{KP23}) and let $\fs= \Lie (S)$. Then, from Proposition 13.2.5 in \cite{KP23}, we have 
\begin{equation*}
    G_{\sigma,r}= S(k)_r \times \prod_{\alpha \in \Phi} U_{\psi_{\alpha,\sigma,r}}(k)  
\end{equation*}
and 
\begin{equation*}
    G_{\sigma,r+}= S(k)_{r+} \times \prod_{\alpha \in \Phi }U _{\psi_{\alpha,\sigma,r+}}(k) 
\end{equation*}
Note that for $\alpha \not\in \Phi_\sigma $, $\psi_{\alpha,\sigma,r}=\psi_{\alpha,\sigma,r+}$, and hence we have 
\begin{equation}\label{quotientstructure}
    G_{\sigma,r}/G_{\sigma,r+}= \left(S(k)_r/S(k)_{r+}\right) \times \prod_{\alpha \in \Phi_\sigma } U _{\psi_{\alpha,\sigma,r}}(k)/U _{\psi_{\alpha,\sigma,r}+}(k)
\end{equation}
Using the above-mentioned ideas, we have the following proposition about the structure of the Moy-Prasad filtration quotient $G_{\sigma,r}/G_{\sigma,r+}$, which is similar to Proposition 2.5.4, \cite{moysavin}.

\begin{proposition}\label{Iwahorifracdepthprop}
    Let $\sigma \in [\B_m]$ be a refined facet, and $\tau,\;\tau'\in [\B_m]$ be two refined facets which are $\sigma$-opposite. Then, we have (Iwahori decomposition):
    \begin{equation}\label{Iwahoridecomp}
      G_{\sigma,r}/G_{\sigma,r+} = G_{\tau,r}/G_{\tau,r+}\oplus G_{\tau,r+}/G_{\sigma,r+}\oplus G_{\tau',r+}/G_{\sigma,r+}  
    \end{equation}
    Further, we have a similar decomposition for the Moy-Prasad quotients of the Lie algebra
     \begin{equation}\label{Iwahoridecomplie}
      \fg_{\sigma,r}/\fg_{\sigma,r+} = \fg_{\tau,r}/\fg_{\tau,r+}\oplus \fg_{\tau,r+}/\fg_{\sigma,r+}\oplus \fg_{\tau',r+}/\fg_{\sigma,r+}  
    \end{equation}
    and the Moy-Prasad isomorphism (Theorem 13.5.1, \cite{KP23} )
    \begin{equation}\label{MPisom}
         G_{\sigma,r}/G_{\sigma,r+} \cong \fg_{\sigma,r}/\fg_{\sigma,r+}
    \end{equation}
    induces isomorphisms 
    \begin{equation}\label{iwahoriunipotentpart}
        G_{\tau,r+}/G_{\sigma,r+} \cong \fg_{\tau,r+}/\fg_{\sigma,r+} \:\:\text{  and   } \: \:G_{\tau',r+}/G_{\sigma,r+} \cong \fg_{\tau',r+}/\fg_{\sigma,r+}
    \end{equation}
\end{proposition}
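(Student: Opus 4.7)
The plan is to derive all three assertions from the product formula \eqref{quotientstructure} (Proposition 13.2.5 of \cite{KP23}) combined with the three-fold partition $\Phi_\sigma = \Phi_\tau \sqcup \Phi_{\sigma,\tau} \sqcup \Phi_{\sigma,\tau'}$ established in the paragraphs preceding the statement. Since $r>0$, each MP-quotient on the group side is abelian, so \eqref{quotientstructure} may be read as a direct sum of $\bar\F_q$-vector spaces
\begin{equation*}
G_{\sigma,r}/G_{\sigma,r+} \;\cong\; \bigl(S(k)_r/S(k)_{r+}\bigr) \;\oplus\; \bigoplus_{\alpha\in\Phi_\sigma} U_{\psi_{\alpha,\sigma,r}}(k)/U_{\psi_{\alpha,\sigma,r}+}(k),
\end{equation*}
and the analogous formula for $\tau$ is indexed by $\Phi_\tau \subset \Phi_\sigma$.

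The heart of the argument would be a case analysis comparing $\psi_{\alpha,\sigma,r},\psi_{\alpha,\sigma,r+}$ with $\psi_{\alpha,\tau,r},\psi_{\alpha,\tau,r+}$ for each $\alpha\in\Phi_\sigma$. For $\alpha\in\Phi_\tau$, the unique $\psi\in\Psi_m(\A)$ with $\nabla\psi=\alpha$ and $\psi\equiv r$ on $\tau$ also equals $r$ on the face $\sigma\subset\bar\tau$ by continuity, so $\psi_{\alpha,\sigma,r}=\psi_{\alpha,\tau,r}$ and $\psi_{\alpha,\sigma,r+}=\psi_{\alpha,\tau,r+}$; the $\alpha$-summand of $G_{\sigma,r}/G_{\sigma,r+}$ therefore coincides with the $\alpha$-summand of the embedded $G_{\tau,r}/G_{\tau,r+}$. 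For $\alpha\in\Phi_{\sigma,\tau}$, the $\psi\in\Psi_m(\A)$ with $\nabla\psi=\alpha$ and $\psi\equiv r$ on $\sigma$ satisfies $\psi>r$ on $\tau$, while its predecessor $\psi-\tfrac1m$ drops to $r-\tfrac1m<r$ at points of $\tau$ approaching $\sigma$; this forces $\psi_{\alpha,\tau,r}=\psi_{\alpha,\tau,r+}=\psi_{\alpha,\sigma,r}$, so the $\alpha$-summand contributes $0$ to $G_{\tau,r}/G_{\tau,r+}$ and contributes the full root line $U_{\psi_{\alpha,\sigma,r}}/U_{\psi_{\alpha,\sigma,r}+}$ to the subgroup $G_{\tau,r+}/G_{\sigma,r+}$. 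The case $\alpha\in\Phi_{\sigma,\tau'}$ is strictly symmetric. The torus factor $S(k)_r/S(k)_{r+}$ is independent of the facet and sits inside $G_{\tau,r}/G_{\tau,r+}$. Collecting the three types of contributions assembles the direct sum in \eqref{Iwahoridecomp}.

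For the Lie algebra statement \eqref{Iwahoridecomplie} I would run the identical argument with $\mathfrak u_\psi(k)$ in place of $U_\psi(k)$ throughout, using the analogous product decomposition of $\mathfrak g_{\sigma,r}/\mathfrak g_{\sigma,r+}$, whose root-by-root structure is indexed by the same set $\Phi_\sigma$. For \eqref{MPisom} and \eqref{iwahoriunipotentpart} I would invoke the Moy--Prasad isomorphism of Theorem 13.5.1 in \cite{KP23}, which is constructed root-by-root from the identifications $U_\psi/U_{\psi+}\xrightarrow{\sim}\mathfrak u_\psi/\mathfrak u_{\psi+}$ together with $S(k)_r/S(k)_{r+}\xrightarrow{\sim}\mathfrak s(k)_r/\mathfrak s(k)_{r+}$. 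Being built piece by piece along the same root indexing, it automatically respects the three-fold decomposition and hence restricts to the asserted isomorphisms on the second and third summands.

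The main technical obstacle is the bookkeeping in the case $\alpha\in\Phi_{\sigma,\tau}$: one must verify that no element of $\alpha+\tfrac1m\Z$ lying strictly between $\psi$ and $\psi-\tfrac1m$ could satisfy the required inequality on $\tau$ (there is none, precisely because the lattice step is exactly $\tfrac1m$), and that the constant terms $\psi(x_0)$ line up with the integer-valued valuation $\rv$ so that each quotient $U_{\psi_{\alpha,\sigma,r}}/U_{\psi_{\alpha,\sigma,r}+}$ is a one-dimensional $\bar\F_q$-vector space of the expected type. This is exactly the point where the hypothesis $r\in\tfrac1m\Z_{>0}$ and the definition of the refined complex $[\B_m]$, whose walls are the $\tfrac1m\Z$-level sets of affine roots, combine to make the combinatorics align cleanly.
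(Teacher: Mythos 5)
Your proposal is correct and follows essentially the same route as the paper's proof: both start from the product decomposition \eqref{quotientstructure} of Proposition 13.2.5 in \cite{KP23}, compare the affine roots $\psi_{\alpha,\sigma,r}$, $\psi_{\alpha,\tau,r}$, $\psi_{\alpha,\tau,r+}$, $\psi_{\alpha,\tau',r+}$ via the partition $\Phi_\sigma=\Phi_\tau\sqcup\Phi_{\sigma,\tau}\sqcup\Phi_{\sigma,\tau'}$, and then deduce \eqref{Iwahoridecomplie} and \eqref{iwahoriunipotentpart} from the root-by-root construction of the Moy--Prasad isomorphism in the proof of Theorem 13.5.1 of \cite{KP23}. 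Your extra bookkeeping in the case $\alpha\in\Phi_{\sigma,\tau}$ (that $\psi_{\alpha,\tau,r}=\psi_{\alpha,\tau,r+}=\psi_{\alpha,\sigma,r}$) is exactly the identity the paper records, just spelled out in slightly more detail.
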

\begin{proof}
    From \eqref{quotientstructure}, we see that 
    \[
    G_{\sigma,r}/G_{\sigma,r+}= \left(S(k)_r/S(k)_{r+}\right) \times \prod_{\alpha \in \Phi_\sigma } U _{\psi_{\alpha,\sigma,r}}(k)/U _{\psi_{\alpha,\sigma,r}+}(k).
    \]
For $\alpha \in \Phi_\tau$, we have $\psi_{\alpha,\sigma,r}=\psi_{\alpha,\tau,r}$ and $\psi_{\alpha,\sigma,r+}=\psi_{\alpha,\tau,r+}$. Further, for $\alpha \in \Phi_{\sigma,\tau}$, $\psi_{\alpha,\sigma,r}=\psi_{\alpha,\tau,r+}$ and for $\alpha \in \Phi_{\sigma,\tau'}$, $\psi_{\alpha,\sigma,r}=\psi_{\alpha,\tau',r+}$. This gives \eqref{Iwahoridecomp}. Further, from the proof of Theorem 13.5.1, \cite{KP23}, we note that there exist isomorphisms $U_{\psi_{\alpha,\sigma,r}}(k)/U _{\psi_{\alpha,\sigma,r}+}(k) \xrightarrow{\simeq} \fu_{\psi_{\alpha,\sigma,r}}(k)/\fu_{\psi_{\alpha,\sigma,r}+}(k)$ and $S(k)_r/S(k)_{r+}\xrightarrow{\simeq} \fs(k)_{r}/\fs(k)_{r+} $. This gives the triangular decomposition in \eqref{Iwahoridecomplie} and the isomorphisms in \eqref{iwahoriunipotentpart}. 
\end{proof}

Note that each component of the direct sum in \eqref{Iwahoridecomp} is a $\F_q$-vector space. We have a similar decomposition of $\kappa_K=\bar\F_q$-vector spaces 
\begin{equation}\label{Iwahoriunramified}
    G(K)_{\sigma,r}/G(K)_{\sigma,r+} = G(K)_{\tau,r}/G(K)_{\tau,r+}\oplus G(K)_{\tau,r+}/G(K)_{\sigma,r+}\oplus G(K)_{\tau',r+}/G(K)_{\sigma,r+}  
\end{equation}
Further each of the isomorphisms \eqref{MPisom}, \eqref{iwahoriunipotentpart} are compatible with unramified algebraic extensions of $k$, and hence holds over $K$ as isomorphisms of $\Bar\F_q$-spaces. Let $\bu_{\sigma,\tau,r}$ denote the $\Bar\F_q$-vector space $G(K)_{\tau,r+}/G(K)_{\sigma,r+}\cong \fg(K)_{\tau,r+}/\fg(K)_{\sigma,r+}$, and $\bp_{\sigma,\tau,r}$ denote the $\bar \F_q$-vector space $G(K)_{\tau,r}/G(K)_{\sigma,r+} \cong \bu_{\sigma,\tau,r} \oplus \bg_{\tau,r}$. The $\F_q$-vector space $G_{\tau,r+}/G_{\sigma,r+}$ gives $\bu_{\sigma,\tau,r}$ an $\F_q$-structure, and hence we can denote it by $\bu_{\sigma,\tau,r}^F=\bu_{\sigma,\tau,r}(\F_q)$. Similar statements are also true for $\bp_{\sigma,\tau,r}$ and the other components of the decomposition in \eqref{Iwahoriunramified}, and their corresponding counterparts in \eqref{Iwahoridecomp}.

\subsection{Fourier Transform on positive depth Moy-Prasad quotients}\label{subsection: Fouriertransform}
 Let $\fg=\text{Lie}(G)$ and $\fg^*$ be its dual. Given $x \in \B(G)$, we have the Moy-Prasad filtrations for $\fg(k)$ denoted by $\fg_{x,r}:=\fg(k)_{x,r}$, and that of $\fg^*(k)$ denoted by $\fg^*_{x,-r}:=\fg^*(k)_{x,-r}$. We can define $\fg_{\sigma,r}$ and $\fg^*_{\sigma,-r}$ for $\sigma \in [\B_m]$, $r \in \frac{1}{m}\Z$ similarly to $G_{\sigma,r}$. The $\mf O_k$-module $\fg_{\sigma,r}$ is stable under the adjoint action of $G_{\sigma,r}$, and the action of $G_{\sigma,r+}$ on $\fg_{\sigma,r}/\fg_{\sigma,r+}$ is trivial, which induces an action of $\G_{\sigma}^F$ on $\fg_{\sigma,r}/\fg_{\sigma,r+}$. Similarly, we have an action of $\G_{\sigma}^F$ on $\fg^*_{\sigma,-r}/\fg^*_{\sigma,-r+}$ induced by the co-adjoint action. For $r>0$, the Moy-Prasad isomorphism gives a $\G_\sigma^F$-equivariant isomorphism 
\begin{equation}
    G_{\sigma,r}/G_{\sigma,r+} \cong \fg_{\sigma,r}/\fg_{\sigma,r+}. 
\end{equation}
Further, for $r>0$, the $\F_q$-bilinear map 
\begin{align*}
    \fg^*_{\sigma,-r}/\fg^*_{\sigma,-r+} \times \fg_{\sigma,r}/\fg_{\sigma,r+} &\longrightarrow \F_q\\
    (X,Y)&\longmapsto X(Y) \text{ mod } \varpi \mf O_k
\end{align*}
is a non-degenerate $\G_\sigma^F$-invariant pairing, and gives a $\G_\sigma^F$-equivariant isomorphism between $(\bg_{\sigma,r}^F)^* \cong (\fg_{\sigma,r}/\fg_{\sigma,r+})^*$ and $\fg^*_{\sigma,-r}/\fg^*_{\sigma,-r+}$. \paragraph{}
We develop a theory of Fourier transforms on positive depth Moy-Prasad filtration quotients, mostly following the ideas in \cite{Le} which studies the space of adjoint-invariant functions finite Lie algebras. Our theory is a generalisation, and would reduce to the statements in Section 4 of \cite{Le} if we consider integral depths and use a non-degenerate invariant bilinear form to identify the Lie algebra and its dual. The Fourier transorm in the afore-mentioned paper was used in \cite{cb24} to study stable functions on integral depth quotients. \par
Let $C(\bg_{\sigma,r}^F)$ denote the space of $\G_\sigma^F$-invariant functions on $\bg_{\sigma,r}^F$ equipped with the convolution product
\begin{equation}
    f*f'(X)=\left|\bg_{\sigma,r}^F\right|^{-1/2}\sum_{Y\in \bg_{\sigma,r}^F}f(X-Y)f'(Y).
\end{equation}
and $C\left((\bg_{\sigma,r}^F)^*\right)$ denote the space of  $\G_\sigma^F$-invariant functions on $(\bg_{\sigma,r}^F)^*$ equipped with point-wise multiplication. Note that we can also have a similar convolution product on  $C\left((\bg_{\sigma,r}^F)^*\right)$. For $f,g \in C(\bg_{\sigma,r}^F)$, we can define a positive definite non-singular Hermitian form $(\:,\:)$ on $C(\bg_{\sigma,r}^F)$ by 
\begin{equation*}
    (f,g)= \left|\G_{\sigma}^F\right|^{-1}\sum_{Y\in \bg_{\sigma,r}^F}f(Y)\overline{g(Y)}
\end{equation*}
    We can similarly define a positive definite non-singular Hermitian form on $C\left((\bg_{\sigma,r}^F)^*\right)$. \par
We fix a non-trivial additive character $\Tilde{\psi}:\mathbb {F}_q\to\mathbb C^\times$. For a finite dimensional vector space $V$ over $\F_q$, the choice of a non-trivial character $\Tilde{\psi}$ determines an identification  of the dual space $V^*$ and the Pontryagin dual $\widehat{V}$. 
Denote by $\mathbb C[V]$ (resp. $\mathbb C[V^*]$) the space of complex valued functions on $V$ (resp. $V^*$). We have the Fourier-transform
$\mcF_V:\mathbb C[V]\to \mathbb C[V^*]$
defined by the formula
\begin{equation}\label{FT_V}
\mcF_V(f)(X^*)=|V|^{-1/2}\sum_{Y\in V}\psi(( X^*(Y))f(Y).
\end{equation}
We can define a Fourier Transform $\mcF_{\bg_{\sigma,r}^F}:C(\bg_{\sigma,r}^F) \longrightarrow C\left((\bg_{\sigma,r}^F)^*\right)$ following the idea in \eqref{FT_V}, given by the formula :
\begin{equation}\label{fourierdefnlie}
    \mcF_{\bg_{\sigma,r}^F}(f)(X^*)=\left|\bg_{\sigma,r}^F\right|^{-1/2}\sum_{Y\in \bg_{\sigma,r}^F}\Tilde\psi( X^*(Y))f(Y).
\end{equation}
and a Fourier Transform $ \mcF_{(\bg_{\sigma,r}^F)^*}:C\left((\bg_{\sigma,r}^F)^*\right) \longrightarrow C(\bg_{\sigma,r}^F) $ given by 
\begin{equation}\label{fourierdefndual}
  \mcF_{(\bg_{\sigma,r}^F)^*}(f)(Y)=\left|(\bg_{\sigma,r}^F)^*\right|^{-1/2}\sum_{X^*\in (\bg_{\sigma,r}^F)^*}\Tilde\psi( X^*(Y))f(X^*).
\end{equation}
Let $\sigma\preceq \tau $, $\sigma \preceq \tau'$ such that $\tau$ and $\tau'$ are $\sigma$-opposite. Then from $\eqref{Iwahoridecomp}$ we know 
\begin{equation}\label{Iwahorinotation}
    \bg_{\sigma,r}^F \cong \bu_{\sigma,\tau,r}^F \oplus \bg_{\tau,r}^F \oplus \bu_{\sigma,\tau',r}^F
\end{equation}
as $\F_q$-vector spaces. The projection maps to the subspaces of the Iwahori decomposition give a natural identification of $(\bg_{\tau,r}^F)^*$, $(\bu_{\sigma,\tau,r}^F)^*$ and $(\bu_{\sigma,\tau',r}^F)^*$ as subspaces of $(\bg_{\sigma,r}^F)^*$ and we have 
\begin{equation*}
    (\bg_{\sigma,r}^F)^* \cong (\bu_{\sigma,\tau,r}^F)^* \oplus (\bg_{\tau,r}^F)^* \oplus (\bu_{\sigma,\tau',r}^F)^*
\end{equation*}
For $\sigma\preceq\tau$, we have the normalized parabolic restriction map $\text{Res}^{\bg_{\sigma,r}}_{\bg_{\tau,r}}:C(\bg_{\sigma,r}^F) \rightarrow C(\bg_{\tau,r}^F)$ defined by 
\begin{equation}
    \text{Res}^{\bg_{\sigma,r}}_{\bg_{\tau,r}}(f)(X)=\left|\bu_{\sigma,\tau,r}^F\right|^{-1}\sum_{N \in \bu_{\sigma,\tau,r}^F}f(X+N)
\end{equation}
 and the restriction map for the dual case $\text{Res}^{(\bg_{\sigma,r})^*}_{(\bg_{\tau,r})^*}:C\left((\bg_{\sigma,r}^F)^*\right) \rightarrow C\left((\bg_{\tau,r}^F)^*\right)$ defined by
\begin{equation}
    \Res^{(\bg_{\sigma,r})^*}_{(\bg_{\tau,r})^*}(\tilde f)(X^*)=\left|\bu_{\sigma,\tau',r}^F\right|^{-1}\sum_{N^* \in (\bu_{\sigma,\tau',r}^F)^*}\tilde f(X^*+N^*).
\end{equation} 
 For $\sigma \preceq \tau_1 \preceq \tau_2$, we have transitivity of restriction maps 
 \begin{equation*}
     \text{Res}^{\bg_{\sigma,r}}_{\bg_{\tau_2,r}}= \text{Res}^{\bg_{\tau_1,r}}_{\bg_{\tau_2,r}}\circ \text{Res}^{\bg_{\sigma,r}}_{\bg_{\tau_1,r}} \:\:\text{  and }\:\:\text{Res}^{(\bg_{\sigma,r})^*}_{(\bg_{\tau_2,r})^*}= \text{Res}^{(\bg_{\tau_1,r})^*}_{(\bg_{\tau_2,r})^*}\circ \text{Res}^{(\bg_{\sigma,r})^*}_{(\bg_{\tau_1,r})^*}
 \end{equation*}

 We give some details of the dual case. Let $\tau_1, \tau_1'$ and $\tau_2, \tau_2'$ be $\sigma$-opposite pairs, and $\tau_2, \tau_2''$ be $\tau_1$-opposite. This implies $\tau_1'\preceq \tau_2'$ and $\sigma \preceq \tau_2''$. Using the Iwahori decomposition as in \eqref{Iwahorinotation} and the fact that $\bu_{\sigma,\tau_2',r}^F \cong \bu_{\sigma,\tau_1',r}^F \oplus \bu_{\tau_1',\tau_2',r}^F$, the transitivity of restriction in the dual case would follow if $\bu_{\tau_1',\tau_2',r}^F \cong \bu_{\tau_1,\tau_2'',r}^F$ and hence their duals are isomorphic. We have
\begin{equation*}
 \bu_{\tau_1',\tau_2',r}^F=G_{\tau_2',r+}/{G_{\tau_1',r+}}\cong \prod_{\alpha \in \Phi_{\tau_1',\tau_2'}}  U _{\psi_{\alpha,\tau_2',r+}}(k)/{U _{\psi_{\alpha,\tau_1',r+}}}(k)\cong \prod_{\mathclap{\substack{\alpha \in \Phi_{\tau_1',\tau_2'}\\r-\alpha(\tau_1')\in \Z}}}\mathscr L_{\psi_{\alpha,\tau_2',r+}}(\F_q)
\end{equation*}
and 
\begin{equation*}
\bu_{\tau_1,\tau_2'',r}^F=G_{\tau_2'',r+}/{G_{\tau_1,r+}}\cong \prod_{\alpha \in \Phi_{\tau_1,\tau_2''}}  U _{\psi_{\alpha,\tau_2'',r+}}(k)/{U _{\psi_{\alpha,\tau_1,r+}}}(k)\cong \prod_{\mathclap{\substack{\alpha \in \Phi_{\tau_1,\tau_2''}\\r-\alpha(\tau_1)\in \Z}}}\mathscr L_{\psi_{\alpha,\tau_2'',r+}}(\F_q)
\end{equation*}
We certainly have $\Phi_{\tau_1,\tau_2''}=\Phi_{\tau_1',\tau_2'}$ just from their definitions, and since $\tau_1$ and $\tau_1'$ are $\sigma$-opposite, $r-\alpha(\tau_1')\in \Z\Longleftrightarrow r-\alpha(\tau_1)\in \Z$. Further, for $\alpha \in \Phi_{\tau_1,\tau_2''}$, we have $\psi_{\alpha,\tau_2'',r+}=\psi_{\alpha,\tau_2',r+}$ and hence $\bu_{\tau_1',\tau_2',r}^F \cong \bu_{\tau_1,\tau_2'',r}^F$.\par
Then, for $\tilde f \in C\left( (\bg_{\sigma,r}^F)^*\right) $ and $X^* \in (\bg_{\tau_2,r}^F)^*$
\begin{align*}
    \text{Res}^{(\bg_{\sigma,r})^*}_{(\bg_{\tau_2,r})^*}(\tilde f) (X^*) &= \left|\bu_{\sigma,\tau_2',r}^F\right|^{-1}\tilde f(X^*+N^*)\\
    &= \left|\bu_{\sigma,\tau_1',r}^F \oplus \bu_{\tau_1',\tau_2',r}^F\right|^{-1}\sum_{N^* \in (\bu_{\sigma,\tau_1',r}^F)^* \oplus (\bu_{\tau_1',\tau_2',r}^F)^*}\tilde f(X^*+N^*)\\
    &=\left|\bu_{\sigma,\tau_1',r}^F\right|^{-1}\sum_{N_1^* \in (\bu_{\sigma,\tau_1',r}^F)^*}\left|\bu_{\tau_1,\tau_2'',r}^F\right|^{-1}\sum_{N_2^* \in (\bu_{\tau_1,\tau_2'',r}^F)^*}\tilde f(X^*+N_1^*+N_2^*)\\
    &= \text{Res}^{(\bg_{\sigma,r})^*}_{(\bg_{\tau_1,r})^*}\circ \text{Res}^{(\bg_{\tau_1,r})^*}_{(\bg_{\tau_2,r})^*}(\tilde f)(X^*)
\end{align*}
\par
We can also define an induction map, similar to the Harish-Chandra induction in the Lie algebra case. 
\begin{equation*}
    \Ind^{\bg_{\sigma,r}}_{\bg_{\tau,r}}f(X)= \left|\G_\tau^F\right|^{-1}\left|\bu_{\sigma,\tau,r}^F\right|^{-1}\sum_{{\mathclap{\substack{g \in \G_\sigma^F\\ \prescript{g}{}{} X\in \bp_{\sigma,\tau,r}^F}}}}f^\vee(\prescript{g}{}{} X)
\end{equation*}
where $f^\vee (X+N)= f(X)$ where $X \in \bg^F_{\tau,r}$ and $N\in \bu_{\sigma,\tau,r}^F$. 

\begin{lemma}\label{indandres}
    The maps $ \Res^{\bg_{\sigma,r}}_{\bg_{\tau,r}}$ and $\Ind^{\bg_{\sigma,r}}_{\bg_{\tau,r}}$ are adjoint with respect to the inner products $(\:,\:)_{_{\bg_{\sigma,r}^F}}$ and $(\:,\:)_{_{\bg_{\tau,r}^F}} $ on $C(\bg_{\sigma,r}^F)$ and $C(\bg_{\tau,r}^F)$ respectively.
\end{lemma}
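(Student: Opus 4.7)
The plan is to carry out a direct computation, mirroring the standard Frobenius reciprocity argument used for Harish-Chandra induction and restriction on finite Lie algebras (see \cite{Le}), but now with $\bg_{\sigma,r}^F$ and $\bg_{\tau,r}^F$ in place of finite Lie algebras. Concretely, I would fix $f\in C(\bg_{\sigma,r}^F)$ and $g\in C(\bg_{\tau,r}^F)$ and compare the two inner products
\[
(\Res^{\bg_{\sigma,r}}_{\bg_{\tau,r}} f,\,g)_{\bg_{\tau,r}^F}\qquad\text{and}\qquad (f,\,\Ind^{\bg_{\sigma,r}}_{\bg_{\tau,r}} g)_{\bg_{\sigma,r}^F}
\]
by expanding each side via the defining formulas.

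Expanding the left-hand side gives a double sum over $X\in\bg_{\tau,r}^F$ and $N\in\bu_{\sigma,\tau,r}^F$ of $f(X+N)\overline{g(X)}$, with normalizing factor $|\G_\tau^F|^{-1}|\bu_{\sigma,\tau,r}^F|^{-1}$. Here I would invoke the Iwahori-type decomposition
\[
\bp_{\sigma,\tau,r}^F = \bg_{\tau,r}^F\oplus\bu_{\sigma,\tau,r}^F
\]
provided by Proposition \ref{Iwahorifracdepthprop}, together with the observation that $g^\vee(X+N)=g(X)$, to rewrite this double sum as a single sum over $Z\in \bp_{\sigma,\tau,r}^F$ of $f(Z)\overline{g^\vee(Z)}$.

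For the right-hand side, I would swap the order of summation in $Y\in\bg_{\sigma,r}^F$ and $h\in\G_\sigma^F$ (the labels from the definition of $\Ind$), and then make the change of variables $Z=\prescript{h}{}{}Y$ for each fixed $h$. Since $f$ is $\G_\sigma^F$-invariant, $f(h^{-1}Z)=f(Z)$, so the inner sum over $h$ simply contributes a factor $|\G_\sigma^F|$, which exactly cancels against the $|\G_\sigma^F|^{-1}$ appearing in $(\,,\,)_{\bg_{\sigma,r}^F}$. What remains is precisely the same single sum over $Z\in \bp_{\sigma,\tau,r}^F$ with the same normalizing constant $|\G_\tau^F|^{-1}|\bu_{\sigma,\tau,r}^F|^{-1}$, matching the expression obtained for the left-hand side.

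The argument is essentially bookkeeping, so I do not expect any serious obstacle. The only things to be careful about are (i) confirming that $\bp_{\sigma,\tau,r}^F$ really decomposes as an internal direct sum of $\F_q$-vector spaces (not just $\bar\F_q$-spaces), which is what the Iwahori decomposition in Proposition \ref{Iwahorifracdepthprop} gives us, and (ii) keeping track of the normalizing factors $|\G_\sigma^F|^{-1}$, $|\G_\tau^F|^{-1}$, $|\bu_{\sigma,\tau,r}^F|^{-1}$ so that they balance after the cancellation from the invariance of $f$.
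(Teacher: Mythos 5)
Your proposal is correct and is essentially the same computation as the paper's proof: the paper expands $(f,\Ind^{\bg_{\sigma,r}}_{\bg_{\tau,r}}(g))_{\bg_{\sigma,r}^F}$, swaps the sums, changes variables via $Z=\prescript{h}{}{}Y$, uses the $\G_\sigma^F$-invariance of $f$ to cancel $|\G_\sigma^F|^{-1}$, and recognizes the result as $(\Res^{\bg_{\sigma,r}}_{\bg_{\tau,r}}(f),g)_{\bg_{\tau,r}^F}$, which is exactly your bookkeeping with the two sides merely met in the middle at the common sum over $\bp_{\sigma,\tau,r}^F$. The normalizations and the $\F_q$-rational Iwahori decomposition you flag are handled identically in the paper, so there is no gap.
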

\begin{proof}
    Let $f \in C(\bg_{\sigma,r}^F)$ and $g \in C(\bg_{\tau,r}^F)$. Then, 
    \begin{align*}
        (f,\Ind^{\bg_{\sigma,r}}_{\bg_{\tau,r}}(g))&=\left|\G_{\sigma}^F\right|^{-1}\sum_{Y\in \bg_{\sigma,r}^F}f(Y)\overline{\Ind^{\bg_{\sigma,r}}_{\bg_{\tau,r}}(g)}\\
        &= \left|\G_{\sigma}^F\right|^{-1}\left|\G_\tau^F\right|^{-1}\left|\bu_{\sigma,\tau,r}^F\right|^{-1}\sum_{Y\in \bg_{\sigma,r}^F}f(Y)\sum_{{\mathclap{\substack{h \in \G_\sigma^F\\ \prescript{h}{}{} Y\in \bp_{\sigma,\tau,r}^F}}}}\overline{g^\vee(\prescript{h}{}{} Y)}\\
        &=\left|\G_{\sigma}^F\right|^{-1}\left|\G_\tau^F\right|^{-1}\left|\bu_{\sigma,\tau,r}^F\right|^{-1}\sum_{h \in \G_\sigma^F} \sum_{Y\in \Ad(h^{-1})\bp_{\sigma,\tau,r}^F}f(\prescript{h}{}{} Y)\overline{g^\vee(\prescript{h}{}{} Y)}\\
        &=\left|\G_{\sigma}^F\right|^{-1}\left|\G_\tau^F\right|^{-1}\sum_{h \in \G_\sigma^F} \sum_{Y \in \bg_{\tau,r}^F} \overline{g(Y)}\left(\left|\bu_{\sigma,\tau,r}^F\right|^{-1} \sum_{N\in \bu_{\sigma,\tau,r}^F}f(Y+N)\right)\\
        &= \left(\Res^{\bg_{\sigma,r}}_{\bg_{\tau,r}}(f), g\right)
    \end{align*}
\end{proof}
Similar definition for $\Ind^{(\bg_{\sigma,r})^*}_{(\bg_{\tau,r})^*}$ can be made in the dual case as well, with similar properties. 
\begin{proposition}[Properties of Fourier transform]\label{fourierprop}
    Let  $f,g \in C\left(\bg_{\sigma,r}^F\right)$ and $\tilde f, \tilde g \in C\left((\bg_{\sigma,r}^F)^*\right)$. The Fourier transforms defined in \eqref{fourierdefnlie} and \eqref{fourierdefndual} have the following properties:
    \begin{itemize}
        \item[(i)]  $\left(\mcF_{\bg_{\sigma,r}^F}(f),\mcF_{\bg_{\sigma,r}^F}(g)\right)_{(\bg_{\sigma,r}^F)^*}= \left(f,g\right)_{\bg_{\sigma,r}^F}$. 
        \item [(ii)]  $\mcF_{\bg_{\sigma,r}^F}\circ \mcF_{(\bg_{\sigma,r}^F)^*}(\tilde f) = (\tilde f)^-$ and $ \mcF_{(\bg_{\sigma,r}^F)^*}\circ\mcF_{\bg_{\sigma,r}^F}(f)=f^-$, where $f^-(X)=f(-X)$. 
        \item[(iii)]  $\mcF_{\bg_{\sigma,r}^F}(f*g)= \mcF_{\bg_{\sigma,r}^F}(f)\cdot \mcF_{\bg_{\sigma,r}^F}(g)$ and $\mcF_{(\bg_{\sigma,r}^F)^*} (\tilde f*\tilde g )=\mcF_{(\bg_{\sigma,r}^F)^*} (\tilde f) \cdot \mcF_{(\bg_{\sigma,r}^F)^*}(\tilde g) $.
        \item[(iv)] $\mcF_{\bg_{\sigma,r}^F}(f\cdot g)= \mcF_{\bg_{\sigma,r}^F}(f)* \mcF_{\bg_{\sigma,r}^F}(g)$ and $\mcF_{(\bg_{\sigma,r}^F)^*} (\tilde f\cdot \tilde g )=\mcF_{(\bg_{\sigma,r}^F)^*} (\tilde f) * \mcF_{(\bg_{\sigma,r}^F)^*}(\tilde g)$.
    \end{itemize}
    As a consequence, we see that the Fourier transform $\mcF_{\bg_{\sigma,r}^F}:C(\bg_{\sigma,r}^F) \longrightarrow C\left((\bg_{\sigma,r}^F)^*\right)$ is an algebra isomorphism with inverse given by $\mcF_{(\bg_{\sigma,r}^F)^*}\circ \mcF_{\bg_{\sigma,r}^F}\circ \mcF_{(\bg_{\sigma,r}^F)^*}$, where multiplication on $C\left((\bg_{\sigma,r}^F)^*\right)$ is given by the usual pointwise one. The analogous result holds true for  $\mcF_{(\bg_{\sigma,r}^F)^*}:C\left((\bg_{\sigma,r}^F)^*\right) \longrightarrow C(\bg_{\sigma,r}^F) $.
\end{proposition}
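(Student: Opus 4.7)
The plan is to treat $\bg_{\sigma,r}^F$ as a finite-dimensional $\F_q$-vector space (hence a finite abelian group) and deduce all four properties from the orthogonality relations for the additive characters $Y \mapsto \Tilde\psi(X^*(Y))$. The key identity is
\begin{equation*}
\sum_{X^* \in (\bg_{\sigma,r}^F)^*} \Tilde\psi(X^*(Y)) \;=\; |\bg_{\sigma,r}^F| \cdot \mbm 1_{Y=0},
\end{equation*}
and similarly with the roles of $\bg_{\sigma,r}^F$ and $(\bg_{\sigma,r}^F)^*$ exchanged; this follows from non-degeneracy of the pairing and standard character theory. All four properties of the proposition are formal consequences of this identity, combined with the fact that the sums can be freely reordered because they are finite. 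The $\G_\sigma^F$-invariance of the resulting functions is automatic because the pairing $(X^*,Y) \mapsto X^*(Y)$ is $\G_\sigma^F$-invariant, so $\mcF_{\bg_{\sigma,r}^F}$ and $\mcF_{(\bg_{\sigma,r}^F)^*}$ indeed take invariant functions to invariant functions.

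The order I would carry out the steps is as follows. First I would prove inversion (ii) by a direct calculation:
\begin{equation*}
\mcF_{(\bg_{\sigma,r}^F)^*}\circ \mcF_{\bg_{\sigma,r}^F}(f)(Y) \;=\; |\bg_{\sigma,r}^F|^{-1}\sum_{Z \in \bg_{\sigma,r}^F} f(Z) \sum_{X^* \in (\bg_{\sigma,r}^F)^*} \Tilde\psi(X^*(Y+Z)),
\end{equation*}
and the inner sum collapses to $|\bg_{\sigma,r}^F|\cdot\mbm 1_{Z=-Y}$, yielding $f(-Y)=f^-(Y)$. The other inversion identity is symmetric. Next I would prove the exchange of convolution and multiplication (iii): expanding the definition of $f*g$ and changing the summation variable $Z = X - Y$ gives
\begin{equation*}
\mcF_{\bg_{\sigma,r}^F}(f*g)(X^*) \;=\; |\bg_{\sigma,r}^F|^{-1}\sum_{Y,Z} \Tilde\psi(X^*(Y+Z)) f(Z)g(Y),
\end{equation*}
which factors as $\mcF_{\bg_{\sigma,r}^F}(f)(X^*) \cdot \mcF_{\bg_{\sigma,r}^F}(g)(X^*)$ once the normalization $|\bg_{\sigma,r}^F|^{-1/2}$ is tracked correctly. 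Then (iv) is obtained formally from (ii) and (iii): applying $\mcF_{\bg_{\sigma,r}^F}$ to both sides of a pointwise product and using inversion to move things around turns the pointwise product into a convolution. Finally, Plancherel (i) follows by writing
\begin{equation*}
(\mcF_{\bg_{\sigma,r}^F}(f),\mcF_{\bg_{\sigma,r}^F}(g)) \;=\; |\G_\sigma^F|^{-1}\sum_{X^*} \mcF_{\bg_{\sigma,r}^F}(f)(X^*)\overline{\mcF_{\bg_{\sigma,r}^F}(g)(X^*)},
\end{equation*}
expanding both Fourier transforms, and collapsing the $X^*$-sum via the orthogonality identity; this leaves $|\G_\sigma^F|^{-1}\sum_Y f(Y)\overline{g(Y)} = (f,g)$.

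I do not expect a serious obstacle: these are the standard finite-group Fourier identities, and the only mild care required is bookkeeping of the $|\bg_{\sigma,r}^F|^{-1/2}$ factors (which is why Plancherel comes out with no extra constant) and checking that the chosen identification $(\bg_{\sigma,r}^F)^{**} \cong \bg_{\sigma,r}^F$ via $\Tilde\psi$ is consistent between \eqref{fourierdefnlie} and \eqref{fourierdefndual}. The corollary that $\mcF_{\bg_{\sigma,r}^F}$ is an algebra isomorphism onto $(C((\bg_{\sigma,r}^F)^*),\cdot)$ with the stated inverse is then a direct consequence of (ii) and (iii), and the analogous statement for $\mcF_{(\bg_{\sigma,r}^F)^*}$ is proved by the same argument with the roles of $\bg_{\sigma,r}^F$ and $(\bg_{\sigma,r}^F)^*$ exchanged.
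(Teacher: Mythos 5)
Your proposal is correct and follows essentially the same route as the paper: direct computations with the additive-character orthogonality relation for (i)--(iii), with (iv) deduced formally from inversion and the convolution identity, and the algebra-isomorphism statement as an immediate consequence. The only differences are cosmetic (order of the items and the explicit remark on $\G_\sigma^F$-invariance, which the paper leaves implicit), and your bookkeeping of the $|\bg_{\sigma,r}^F|^{-1/2}$ normalizations and of the $|\G_\sigma^F|^{-1}$ factor in the Hermitian form matches what is needed.
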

\begin{proof}
    Most of these proofs follow from simple calculations using the definitions.
    \begin{itemize}
       \item[(i)]  Follows immediately from calculations.
        \item [(ii)] Let $Y \in \bg_{\sigma,r}^F$. Then, \begin{align*}
            \mcF_{(\bg_{\sigma,r}^F)^*}\circ\mcF_{\bg_{\sigma,r}^F}(f)(Y)& =\left|(\bg_{\sigma,r}^F)^*\right|^{-1/2}\sum_{X^*\in (\bg_{\sigma,r}^F)^*}\Tilde\psi( X^*(Y))\mcF_{\bg_{\sigma,r}^F}(f)(X^*)\\
            &= \left|(\bg_{\sigma,r}^F)^*\right|^{-1/2}\sum_{X^*\in (\bg_{\sigma,r}^F)^*}\Tilde\psi( X^*(Y)) \left(\left|\bg_{\sigma,r}^F\right|^{-1/2}\sum_{Y_1\in \bg_{\sigma,r}^F}\Tilde\psi( X^*(Y_1))f(Y_1)\right)\\
            &= \left|\bg_{\sigma,r}^F\right|^{-1}\sum_{X^*\in (\bg_{\sigma,r}^F)^*}\sum_{Y_1\in \bg_{\sigma,r}^F}\Tilde\psi( X^*(Y+Y_1)) f(Y_1)\\
            &= \sum_{Y_1\in \bg_{\sigma,r}^F}f(Y_1)\left( \left|\bg_{\sigma,r}^F\right|^{-1} \sum_{X^*\in (\bg_{\sigma,r}^F)^*} \Tilde\psi( X^*(Y+Y_1))\right)
        \end{align*}
        We know that the inside sum $ \frac{1}{\left|\bg_{\sigma,r}^F\right|}\sum_{X^*\in (\bg_{\sigma,r}^F)^*} \Tilde\psi( X^*(Y+Y_1))= \begin{cases} 0, & \text{ if } Y \neq -Y_1\\
        1, &\text{ if } Y = -Y_1\\
        \end{cases}$ which gives us $\mcF_{(\bg_{\sigma,r}^F)^*}\circ\mcF_{\bg_{\sigma,r}^F}(f)(Y)= f (-Y)$ and we are done. 
        \item[(iii)] Let $X^* \in (\bg_{\sigma,r}^F)^*$. Then, 
        \begin{align*}
            \mcF_{\bg_{\sigma,r}^F}(f*g)(X^*)&= \left|\bg_{\sigma,r}^F\right|^{-1/2}\sum_{Y\in \bg_{\sigma,r}^F}\Tilde\psi( X^*(Y))f*g(Y)\\
            &= \left|\bg_{\sigma,r}^F\right|^{-1/2}\sum_{Y\in \bg_{\sigma,r}^F}\Tilde\psi( X^*(Y))\left( \left|\bg_{\sigma,r}^F\right|^{-1/2}\sum_{Z\in \bg_{\sigma,r}^F}f(Y-Z)g(Z)\right)\\
            &= \left|\bg_{\sigma,r}^F\right|^{-1} \sum_{Y\in \bg_{\sigma,r}^F}\Tilde\psi( X^*(Y-Z))f(Y-Z)\sum_{Z\in \bg_{\sigma,r}^F}\Tilde\psi( X^*(Z))g(Z)\\
            &=\left(\left|\bg_{\sigma,r}^F\right|^{-1/2}\sum_{Y\in \bg_{\sigma,r}^F}\Tilde\psi( X^*(Y-Z))f(Y-Z)\right)\left(\left|\bg_{\sigma,r}^F\right|^{-1/2}\sum_{Z\in \bg_{\sigma,r}^F}\Tilde\psi( X^*(Z))g(Z)\right)\\
            &=  \mcF_{\bg_{\sigma,r}^F}(f)(X^*)\cdot  \mcF_{\bg_{\sigma,r}^F}(g)(X^*)\\
        \end{align*} 
        \item[(iv)] For any $f,g \in  C\left(\bg_{\sigma,r}^F\right)$, we see using $(i)$ that 
        \begin{equation}\label{fourierpf1}
            \mcF_{(\bg_{\sigma,r}^F)^*}\circ\mcF_{\bg_{\sigma,r}^F}(f*g)= \left(\mcF_{(\bg_{\sigma,r}^F)^*}\circ\mcF_{\bg_{\sigma,r}^F}(f)\right)*\left(\mcF_{(\bg_{\sigma,r}^F)^*}\circ\mcF_{\bg_{\sigma,r}^F}(g)\right).
        \end{equation}
        Further, applying $\mcF_{(\bg_{\sigma,r}^F)^*}\circ \mcF_{\bg_{\sigma,r}^F}\circ \mcF_{(\bg_{\sigma,r}^F)^*}$ to both sides of $(ii)$, we have 
       \[
       \mcF_{(\bg_{\sigma,r}^F)^*}\circ \mcF_{\bg_{\sigma,r}^F}\circ \mcF_{(\bg_{\sigma,r}^F)^*} \circ \mcF_{\bg_{\sigma,r}^F}(f*g)= f*g=\mcF_{(\bg_{\sigma,r}^F)^*}\circ \mcF_{\bg_{\sigma,r}^F}\circ \mcF_{(\bg_{\sigma,r}^F)^*}\left(\mcF_{\bg_{\sigma,r}^F}(f)\cdot \mcF_{\bg_{\sigma,r}^F}(g)\right)
        \]
         Then, applying $\mcF_{(\bg_{\sigma,r}^F)^*} \circ \mcF_{\bg_{\sigma,r}^F}$ to both sides, we have by using \eqref{fourierpf1}
        \begin{align*}
            \mcF_{(\bg_{\sigma,r}^F)^*} \circ \mcF_{\bg_{\sigma,r}^F}(f*g)&=\mcF_{(\bg_{\sigma,r}^F)^*}\left(\mcF_{\bg_{\sigma,r}^F}(f)\cdot \mcF_{\bg_{\sigma,r}^F}(g)\right)\\
            &= \left(\mcF_{(\bg_{\sigma,r}^F)^*}\circ\mcF_{\bg_{\sigma,r}^F}(f)\right)*\left(\mcF_{(\bg_{\sigma,r}^F)^*}\circ\mcF_{\bg_{\sigma,r}^F}(g)\right).
        \end{align*}
         For any $\tilde f, \tilde g \in C\left((\bg_{\sigma,r}^F)^*\right)$, we can choose $f, g \in  C\left(\bg_{\sigma,r}^F\right)$ such that $\mcF_{\bg_{\sigma,r}^F}(f)=\tilde f$ and $\mcF_{\bg_{\sigma,r}^F}(g)=\tilde g$ (follows from $(i)$). Then, using the previous equation, we have $\mcF_{(\bg_{\sigma,r}^F)^*}(\tilde f \cdot \tilde g) = \mcF_{(\bg_{\sigma,r}^F)^*}(\tilde f) * \mcF_{(\bg_{\sigma,r}^F)^*}(\tilde g)$ and we are done. 
    \end{itemize}
    We have given the proofs for one of the results in each of $(ii)$, $(iii)$ and $(iv)$. The others follow from exactly similar calculations. Once we have these, the last statement follows immediately.  
\end{proof}
\begin{proposition}[Compatibility with restriction maps]\label{fourierres}
    Let $\sigma, \tau \in [\B_m]$ such that $\sigma \preceq \tau$ and $\tau',\;\tau $ be $\sigma$-opposite. We have 
    \begin{itemize}
        \item [(i)] $ \left|\bu_{\sigma,\tau',r}^F\right|^{1/2}\mathrm{Res}^{(\bg_{\sigma,r})^*}_{(\bg_{\tau,r})^*} \circ \mcF_{\bg_{\sigma,r}^F} = \left|\bu_{\sigma,\tau,r}^F\right|^{1/2}\mcF_{\bg_{\tau,r}^F} \circ \mathrm{Res}^{\bg_{\sigma,r}}_{\bg_{\tau,r}} $.
        \item[(ii)] $\left|\bu_{\sigma,\tau',r}^F\right|^{1/2}\mcF_{(\bg_{\tau,r}^F)^*}\circ \mathrm{Res}^{(\bg_{\sigma,r})^*}_{(\bg_{\tau,r})^*}= \left|\bu_{\sigma,\tau,r}^F\right|^{1/2}\mathrm{Res}^{\bg_{\sigma,r}}_{\bg_{\tau,r}} \circ \mcF_{(\bg_{\sigma,r}^F)^*}$. 
    \end{itemize}
\end{proposition}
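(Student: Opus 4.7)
The proof is a direct unwinding of the definitions using the Iwahori decomposition \eqref{Iwahoridecomplie} and the dual identifications. For part (i), fix $f \in C(\bg_{\sigma,r}^F)$ and $X^* \in (\bg_{\tau,r}^F)^*$, which we view as a linear form on $\bg_{\sigma,r}^F$ that vanishes on $\bu_{\sigma,\tau,r}^F \oplus \bu_{\sigma,\tau',r}^F$ via the projection identification. Write an element $Y \in \bg_{\sigma,r}^F$ as $Y_1 + Y_2 + Y_3$ with $Y_1 \in \bu_{\sigma,\tau,r}^F$, $Y_2 \in \bg_{\tau,r}^F$, $Y_3 \in \bu_{\sigma,\tau',r}^F$; similarly $N^* \in (\bu_{\sigma,\tau',r}^F)^*$ is extended by zero on $\bu_{\sigma,\tau,r}^F \oplus \bg_{\tau,r}^F$, so $(X^* + N^*)(Y) = X^*(Y_2) + N^*(Y_3)$.

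For the left-hand side, expand the restriction on the dual side and the Fourier transform:
\begin{equation*}
\mathrm{Res}^{(\bg_{\sigma,r})^*}_{(\bg_{\tau,r})^*}\!\bigl(\mcF_{\bg_{\sigma,r}^F}(f)\bigr)(X^*) = \bigl|\bu_{\sigma,\tau',r}^F\bigr|^{-1} \bigl|\bg_{\sigma,r}^F\bigr|^{-1/2} \sum_{N^*, Y_1, Y_2, Y_3} \tilde\psi\bigl(X^*(Y_2) + N^*(Y_3)\bigr) f(Y_1+Y_2+Y_3).
\end{equation*}
Swapping the order of summation and applying character orthogonality to $\sum_{N^*} \tilde\psi(N^*(Y_3)) = |\bu_{\sigma,\tau',r}^F|\cdot \mathbbm{1}_{Y_3 = 0}$, the sum collapses to one over $Z = Y_1 + Y_2 \in \bp_{\sigma,\tau,r}^F$ and reduces to $|\bg_{\sigma,r}^F|^{-1/2} \sum_{Z \in \bp_{\sigma,\tau,r}^F} \tilde\psi(X^*(Z)) f(Z)$. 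For the right-hand side, compute $\mcF_{\bg_{\tau,r}^F}(\mathrm{Res}^{\bg_{\sigma,r}}_{\bg_{\tau,r}}(f))(X^*)$ directly; using that $X^*(N) = 0$ for $N \in \bu_{\sigma,\tau,r}^F$, one obtains $|\bg_{\tau,r}^F|^{-1/2}|\bu_{\sigma,\tau,r}^F|^{-1} \sum_{Z \in \bp_{\sigma,\tau,r}^F} \tilde\psi(X^*(Z)) f(Z)$. Multiplying by $|\bu_{\sigma,\tau',r}^F|^{1/2}$ on the left and $|\bu_{\sigma,\tau,r}^F|^{1/2}$ on the right, both sides equal $|\bu_{\sigma,\tau,r}^F|^{-1/2}|\bg_{\tau,r}^F|^{-1/2}\sum_{Z \in \bp_{\sigma,\tau,r}^F} \tilde\psi(X^*(Z)) f(Z)$, proving (i).

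For part (ii), the quickest route is Fourier inversion: apply $\mcF_{(\bg_{\tau,r}^F)^*}$ to both sides of (i) to convert the $\mcF_{\bg_{\tau,r}^F}$ on the right into the identity (up to $f \mapsto f^-$) via Proposition \ref{fourierprop}(ii), then substitute $f = \mcF_{(\bg_{\sigma,r}^F)^*}(\tilde f)$ and rewrite using $\mcF_{\bg_{\sigma,r}^F}\circ \mcF_{(\bg_{\sigma,r}^F)^*} = (\cdot)^{-}$; the sign change in the argument is absorbed because both $\mathrm{Res}$ maps commute with $f \mapsto f^-$. Alternatively, one can mimic the calculation above verbatim with the roles of $\tau$ and $\tau'$ interchanged and $(\bg_{\tau,r}^F)^*$ played by $\bg_{\tau,r}^F$, exploiting the perfect symmetry of the Iwahori decomposition.

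The proof is essentially bookkeeping, and the only real subtlety is the asymmetric nature of the normalization constants $|\bu_{\sigma,\tau,r}^F|^{1/2}$ versus $|\bu_{\sigma,\tau',r}^F|^{1/2}$. This arises because $\mathrm{Res}^{\bg_{\sigma,r}}_{\bg_{\tau,r}}$ averages over the ``positive'' piece $\bu_{\sigma,\tau,r}^F$ while its dual counterpart averages over $(\bu_{\sigma,\tau',r}^F)^*$, so the normalized Plancherel factors from the two Fourier transforms (of sizes $|\bg_{\sigma,r}^F|^{1/2}$ and $|\bg_{\tau,r}^F|^{1/2}$) do not balance without the compensating unipotent factors; tracking these carefully is the main point to get right.
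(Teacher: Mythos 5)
Your proof of part (i) is correct and is essentially the paper's argument: expand both compositions, collapse the sum over $N^*\in(\bu_{\sigma,\tau',r}^F)^*$ via character orthogonality to restrict the remaining sum to $\bp_{\sigma,\tau,r}^F = \bg_{\tau,r}^F\oplus\bu_{\sigma,\tau,r}^F$, and match the normalization constants using $|\bg_{\sigma,r}^F| = |\bu_{\sigma,\tau,r}^F||\bg_{\tau,r}^F||\bu_{\sigma,\tau',r}^F|$. The only presentational difference is that the paper pushes the left-hand side all the way into the form of the right-hand side, whereas you reduce both to the common normal form $|\bu_{\sigma,\tau,r}^F|^{-1/2}|\bg_{\tau,r}^F|^{-1/2}\sum_{Z\in\bp_{\sigma,\tau,r}^F}\tilde\psi(X^*(Z))f(Z)$ — equivalent bookkeeping. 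The paper does not record a proof of (ii) at all; your Fourier-inversion derivation from (i) is a valid way to supply it. One small gap in your sketch: to strip the two $(\cdot)^-$'s that arise (one from $\mcF_{(\bg_{\tau,r}^F)^*}\circ\mcF_{\bg_{\tau,r}^F}$, one from $\mcF_{\bg_{\sigma,r}^F}\circ\mcF_{(\bg_{\sigma,r}^F)^*}$), you need not only that both $\mathrm{Res}$ maps commute with $f\mapsto f^-$ (which you say) but also that $\mcF_{(\bg_{\tau,r}^F)^*}$ does (which you omit, though it is an equally one-line check from the definition); alternatively, the direct calculation with $\tau$ and $\tau'$ interchanged, which you also mention, avoids this entirely.
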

\begin{proof}
    Let $f \in  C\left(\bg_{\sigma,r}^F\right)$ and $X^* \in (\bg_{\tau,r}^F)^*$. Then, 
    \begin{align*}
        \mathrm{Res}^{(\bg_{\sigma,r})^*}_{(\bg_{\tau,r})^*} \circ \mcF_{\bg_{\sigma,r}^F} (f)(X^*) &= \left|\bu_{\sigma,\tau',r}^F\right|^{-1}\sum_{N^* \in (\bu_{\sigma,\tau',r}^F)^*} \mcF_{\bg_{\sigma,r}^F} (f)(X^*+N^*)\\
        &=\left|\bu_{\sigma,\tau',r}^F\right|^{-1}\sum_{N^* \in (\bu_{\sigma,\tau',r}^F)^*}\left|\bg_{\sigma,r}^F\right|^{-1/2}\sum_{Y\in \bg_{\sigma,r}^F}\Tilde\psi(( X^*+N^*)(Y))f(Y)\\
        &=\left|\bg_{\sigma,r}^F\right|^{-1/2}\left|\bu_{\sigma,\tau',r}^F\right|^{-1}\sum_{Y\in \bg_{\sigma,r}^F}\Tilde\psi(X^*(Y))f(Y)\sum_{N^* \in (\bu_{\sigma,\tau',r}^F)^*}\Tilde\psi(N^*(Y)). 
    \end{align*}
    Now, $\sum_{N^* \in (\bu_{\sigma,\tau',r}^F)^*}\Tilde\psi(N^*(Y))=0$ if $N^*(Y)\neq 0$ since $N^* \mapsto \Tilde\psi(N^*(Y))$ is a character of  $(\bu_{\sigma,\tau',r}^F)^*$. Since $\bg_{\sigma,r}^F \cong \bu_{\sigma,\tau,r}^F \oplus \bg_{\tau,r}^F \oplus \bu_{\sigma,\tau',r}^F$, the sum is only non-zero when $N^*(Y)=0$, i.e., $Y \in \bg_{\tau,r}^F \oplus \bu_{\sigma,\tau,r}^F$. So, it follows that 
    \[
    \sum_{N^* \in (\bu_{\sigma,\tau',r}^F)^*}\Tilde\psi(N^*(Y))= \begin{cases}
        0, & \text{ when  } Y \in \bu_{\sigma,\tau',r}^F\\
        \left|\bu_{\sigma,\tau',r}^F\right|, &\text{ when } Y \not \in \bu_{\sigma,\tau',r}^F
    \end{cases}
    \]
    Using the above, we have 
    \begin{align*}
        \mathrm{Res}^{(\bg_{\sigma,r})^*}_{(\bg_{\tau,r})^*} \circ \mcF_{\bg_{\sigma,r}^F} (f)(X^*) &= \left|\bg_{\sigma,r}^F\right|^{-1/2}\sum_{Y \in \bg_{\tau,r}^F \oplus \bu_{\sigma,\tau,r}^F}\Tilde\psi(X^*(Y))f(Y)\\
        &=\left|\bg_{\sigma,r}^F\right|^{-1/2}\sum_{Y_1 \in \bg_{\tau,r}^F}\Tilde\psi(X^*(Y_1)) \sum_{N\in\bu_{\sigma,\tau,r}^F}f(Y_1+N)\\
        &=\left|\bg_{\sigma,r}^F\right|^{-1/2}\left|\bu_{\sigma,\tau,r}^F\right|\sum_{Y_1 \in \bg_{\tau,r}^F}\Tilde\psi(X^*(Y_1))\mathrm{Res}^{\bg_{\sigma,r}}_{\bg_{\tau,r}}(f)(Y_1)\\
        &=\left|\bu_{\sigma,\tau,r}^F\right|^{1/2}\left|\bu_{\sigma,\tau',r}^F\right|^{-1/2}\mcF_{\bg_{\tau,r}^F} \circ \mathrm{Res}^{\bg_{\sigma,r}}_{\bg_{\tau,r}}(f)(X^*)
    \end{align*}
\end{proof}
\begin{remark}\label{csigmataur}
    Let us define $c_{\sigma,\tau,r} = \left|\bu_{\sigma,\tau,r}^F\right|^{1/2}\left|\bu_{\sigma,\tau',r}^F\right|^{-1/2}$ for $\sigma \preceq\tau,\tau'$ and $\tau$, $\tau'$ being $\sigma$-opposite. Then we can restate the results in Proposition \ref{fourierres} as follows:
     \begin{itemize}
        \item [(i)] $ \mathrm{Res}^{(\bg_{\sigma,r})^*}_{(\bg_{\tau,r})^*} \circ \mcF_{\bg_{\sigma,r}^F} = c_{\sigma,\tau,r} \mcF_{\bg_{\tau,r}^F} \circ \mathrm{Res}^{\bg_{\sigma,r}}_{\bg_{\tau,r}} $.
        \item[(ii)] $\mcF_{(\bg_{\tau,r}^F)^*}\circ \mathrm{Res}^{(\bg_{\sigma,r})^*}_{(\bg_{\tau,r})^*}= c_{\sigma,\tau,r}\mathrm{Res}^{\bg_{\sigma,r}}_{\bg_{\tau,r}} \circ \mcF_{(\bg_{\sigma,r}^F)^*}$. 
    \end{itemize}
    Note that, for $r \in \Z_{>0}$, $\bg_{\sigma,r} \cong \bg_\sigma$ and the Iwahori decomposition of $\bg_{\sigma,r}$ for agrees with the usual triangular decomposition of the Lie algebra $\bg_\sigma$. In this case, the theory of Fourier transform developed here reduces to the case studied in \cite{Le}. Further, for integral depths, $c_{\sigma,\tau,r}=1$ and the restriction and Fourier transform maps commute, as expected. 
\end{remark}
\begin{proposition}[Compatibility with induction]
     Let $\sigma, \tau \in [\B_m]$ such that $\sigma \preceq \tau$ and $\tau',\;\tau $ be $\sigma$-opposite. We have 
     \begin{itemize}
         \item [(i)]$c_{\sigma,\tau,r}\mcF_{\bg_{\sigma,r}^F} \circ \Ind^{\bg_{\sigma,r}}_{\bg_{\tau,r}} = \Ind^{(\bg_{\sigma,r})^*}_{(\bg_{\tau,r})^*} \circ \mcF_{\bg_{\tau,r}^F}$.
         \item[(ii)] $ c_{\sigma,\tau,r} \Ind^{\bg_{\sigma,r}}_{\bg_{\tau,r}}\circ \mcF_{(\bg_{\tau,r}^F)^*}= \Ind^{(\bg_{\sigma,r})^*}_{(\bg_{\tau,r})^*} \circ \mcF_{\bg_{\tau,r}^F}$
     \end{itemize}
\end{proposition}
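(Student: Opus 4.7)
The plan is to deduce both identities from three ingredients already established: the formal adjunction $(f,\Ind g) = (\Res f, g)$ of Lemma \ref{indandres} (and its verbatim analogue on the dual side), Plancherel's identity $(\mcF h, \mcF h') = (h, h')$ recorded in Proposition \ref{fourierprop}(i), and the restriction-compatibility of the Fourier transform from Proposition \ref{fourierres}. Since the Hermitian form is non-degenerate, proving equality of two elements of $C((\bg_{\sigma,r}^F)^*)$ (resp.\ $C(\bg_{\sigma,r}^F)$) reduces to checking that they pair identically against every test function, and the proof becomes a short chain of substitutions.

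For (i), I would fix $g \in C(\bg_{\tau,r}^F)$ and an arbitrary $\tilde h \in C((\bg_{\sigma,r}^F)^*)$, writing $\tilde h = \mcF_{\bg_{\sigma,r}^F}(h)$ for a unique $h$ via Fourier inversion. I then compute
\[
\bigl(\Ind^{(\bg_{\sigma,r})^*}_{(\bg_{\tau,r})^*} \mcF_{\bg_{\tau,r}^F}(g),\; \tilde h\bigr)_{(\bg_{\sigma,r}^F)^*}
\]
in five steps: first apply the dual-side adjunction to push $\Ind$ across as $\Res^{(\bg_{\sigma,r})^*}_{(\bg_{\tau,r})^*}$ acting on $\tilde h$; then invoke Proposition \ref{fourierres}(i) to rewrite $\Res \circ \mcF_{\bg_{\sigma,r}^F}$ as $c_{\sigma,\tau,r}\,\mcF_{\bg_{\tau,r}^F}\circ\Res^{\bg_{\sigma,r}}_{\bg_{\tau,r}}$; apply Plancherel on the $\tau$-side to strip the Fourier transforms; use Lemma \ref{indandres} to convert the remaining $\Res$ into $\Ind^{\bg_{\sigma,r}}_{\bg_{\tau,r}}$ on the $\sigma$-side; and finally Plancherel again to reinsert the Fourier transforms. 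The output is exactly $c_{\sigma,\tau,r}\bigl(\mcF_{\bg_{\sigma,r}^F}(\Ind^{\bg_{\sigma,r}}_{\bg_{\tau,r}} g),\, \tilde h\bigr)$, and non-degeneracy yields (i).

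Identity (ii) is formally dual and admits two routes: either redo the above chain with the roles of $\bg^F$ and $(\bg^F)^*$ interchanged, substituting Proposition \ref{fourierres}(ii) for (i), or deduce it from (i) by pre- and post-composing with the Fourier inversion formula of Proposition \ref{fourierprop}(ii), after the elementary observation that $\Ind^{\bg_{\sigma,r}}_{\bg_{\tau,r}}$ commutes with $f\mapsto f^-$ (immediate from its defining formula, since the substitution $X\mapsto -X$ only reshuffles the summation over $g\in \G_\sigma^F$).

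The one point that deserves verification is the dual-side version of Lemma \ref{indandres}, namely that $\Ind^{(\bg_{\sigma,r})^*}_{(\bg_{\tau,r})^*}$ and $\Res^{(\bg_{\sigma,r})^*}_{(\bg_{\tau,r})^*}$ are adjoint with respect to the Hermitian form on $C((\bg_{\sigma,r}^F)^*)$. This is the main (and only) obstacle, but the proof goes through by the identical computation used for Lemma \ref{indandres}: the Iwahori decomposition \eqref{Iwahoridecomp} has a perfectly parallel form on the dual side via the projection identifications, and the powers of $|\bu_{\sigma,\tau,r}^F|$ and $|\bu_{\sigma,\tau',r}^F|$ that appear (and ultimately account for the factor $c_{\sigma,\tau,r}$) land symmetrically. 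Once this adjunction is in place, both identities follow by the chains of substitutions described above.
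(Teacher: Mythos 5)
Your proposal is correct and takes essentially the same approach as the paper: a five-step chain of inner-product substitutions alternating Lemma~\ref{indandres} (adjunction of $\Res$ and $\Ind$ on each side), Proposition~\ref{fourierprop}(i) (Plancherel), and Proposition~\ref{fourierres} (restriction--Fourier compatibility), then concluding by non-degeneracy of the Hermitian form. The only cosmetic difference is that you start from the right-hand side and push $\Ind$ across via the dual-side adjunction first, whereas the paper starts from $(\mcF_{\bg_{\sigma,r}^F}\circ\Ind(f),g)$ and applies Plancherel first; you also explicitly flag the dual-side analogue of Lemma~\ref{indandres} as the one ingredient needing verification, which the paper simply asserts ``with similar properties'' after Lemma~\ref{indandres}.
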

\begin{proof}
    Let $f \in C(\bg_{\tau,r}^F)$ and $g \in C((\bg_{\sigma,r}^F)^*)$. Then,  
    \begin{align*}
        (\mcF_{\bg_{\sigma,r}^F} \circ \Ind^{\bg_{\sigma,r}}_{\bg_{\tau,r}}(f),g)&=(\Ind^{\bg_{\sigma,r}}_{\bg_{\tau,r}}(f), \mcF_{\bg_{\sigma,r}^F}^{-1}(g))\\
        &=(f, \mathrm{Res}^{\bg_{\sigma,r}}_{\bg_{\tau,r}}\circ \mcF_{\bg_{\sigma,r}^F}^{-1}(g))\\
        &=(f, c_{\sigma,\tau,r}^{-1}\mcF_{\bg_{\tau,r}^F}^{-1}\circ \mathrm{Res}^{(\bg_{\sigma,r})^*}_{(\bg_{\tau,r})^*}(g))\\
        &= (\mcF_{\bg_{\tau,r}^F}(f), c_{\sigma,\tau,r}^{-1}\mathrm{Res}^{(\bg_{\sigma,r})^*}_{(\bg_{\tau,r})^*}(g))\\
        &=(c_{\sigma,\tau,r}^{-1}\Ind^{(\bg_{\sigma,r})^*}_{(\bg_{\tau,r})^*}\circ\mcF_{\bg_{\tau,r}^F}(f), g)
    \end{align*}
    Since, the inner product is non-degenerate and $f$ and $g$ were arbitrary this proves the first statement, and the second one follows similarly. 
\end{proof}

\subsection{Stable functions}\label{subsectionstablefunction}
We define and study the properties of stable functions on Moy-Prasad quotients, extending ideas developed in \cite{cb24} using the theory of Fourier transforms developed in the previous section. Throughout, $r \in \R_{>0}$, and we will consider only $m$ such that $p \nmid m$. Let $\Gamma_k$ denote $\Gal(\bar k/k)$, with Krull topology. We fix an uniformizer $\varpi \in \mf O_{k}$ and an arbitrary $m \in \Z_{>0}$ be such that $p \nmid m$. Let $E$ be the unique tamely totally ramified extension of $K$ of degree $m$. We further fix $\gamma \in \mf O_{E}$ such that $\gamma^m=\varpi$. Then $k(\gamma)=E'$ is a tame totally ramified extension of $k$, and $E=(E')^{u}= K(\gamma)$ . Let $\mf F_\gamma$ denote the unique Frobenius element in $\Gal(E/k)$ such that $\mf F_\gamma(\gamma)=\gamma$. The subgroup $\langle \mf F_\gamma\rangle \subset \Gal(E/k)$ lies inside  $\Gal(E/E')$, and $\mf F_\gamma$ is the topological generator of $\Gal(E/E')$. $\Gal(E/K)$ is a cyclic group of order $m$, generated by $\vartheta$ (say). Further, consider the natural projection map $\Gal(E/k) \longrightarrow \Gal(K/k)$ and we denote the image of $\mf F_\gamma$ by $\mf F$, which is the topological generator of $\Gamma^{ur}:=\Gal(K/k)$.\paragraph{} 

We go back to the setting of the fixed $k$-split maximal torus $T$, and the apartment $\sA=\A_T$ corresponding to it. Let $k^t$ be the maximal tamely ramified extension of $k$, and $\T$ be the $\Bar\F_q$- group defined by $\T(\Bar\F_q)= T(k^t)_0/T(k^t)_{0+}$. $\T$ is the reductive quotient of the special fiber of the connected Neron model of $T$. Since $T$ is $k$-split we have $\T(\Bar\F_q)=T(K)_0/T(K)_{0+} = T(E)_0/T(E)_{0+}$. Further, $\T$  is $\F_q$-split and has a $\F_q$-structure with the group of $\F_q$-rational points given by $\T(\F_q)= T(k)_0/T(k)_{0+}=T(E')_0/T(E')_{0+}$. Let $\bt=\Lie(\T) $ and $\bt^*$ be its dual.Then, $\bt(\Bar\F_q)= \ft(K)_0/\ft(K)_{0+}= \ft(E)_0/\ft(E)_{0+}$ and has a $\F_q$-structure given by $\bt^F=\bt(\F_q)= \ft(k)_0/\ft(k)_{0+}=\ft(E')_0/\ft(E')_{0+}$. There is a canonical isomorphism between $ (\bt^F)^*$ and $\ft^*(k)_0/\ft^*(k)_{0+}= \bt^*(\F_q)$, which gives $\bt^*$ an $\F_q$-structure. \paragraph{}

 We denote again by $\mathrm{v}:E\rightarrow \rv(E):=\frac{1}{m}\Z \cup \{\infty\}$ the extension of the fixed valuation  $\mathrm{v}: K\rightarrow \rv(K):=\Z \cup \{\infty\}$ to $E$. We know that $\B(G,k)= \B(G_{K},K)^{\Gamma}$. Let $x \in \A(T,k)$. There is a $G(K)$-equivariant injection $\B(G_{K},K) \hookrightarrow \B(G_{E},E)$, and we denote the image of $x$ under this map to be $x$ as well. We can define Moy-Prasad filtrations of $G(E)$, $\fg(E)$ and $\fg^*(E)$, using the valuation $\mathrm{v}$ instead of the normalized one ($ m \cdot \rv$). We denote by $(\G_E)_x$ the reductive quotient at $x$, i.e., the $\Bar\F_q$-group such that $(\G_E)_x(\Bar\F_q)= G(E)_{x,0}/G(E)_{x,0+}$. Let $(\bg_E)_x=\Lie ((\G_E)_x)$ and $(\bg_E)_x^*$ be it's dual. Let $(\bg_E)_{x,r}=G(E)_{x,r}/G(E)_{x,r+}\cong \fg(E)_{x,r}/\fg(E)_{x,r+}$ and $(\bg^*_E)_{x,-r}= \fg^*(E)_{x,-r}/\fg^*(E)_{x,(-r)+}$.  Since $x \in \A(T,k)$, $\mf F_\gamma(x)=\mf F(x)=x$, and $\mf F $ preserves $G(K)_{x,0}$ inducing the $\F_q$ structure on $\G_x$ with $\G_x^F=G(k)_{x,0}/G(k)_{x,0+}$. Similarly, $\mf F_\gamma$ induces an $\F_q$- structure on $(\G_E)_x$ and $(\G_E)_x^F=(\G_E)_x(\F_q)=G(E')_{x,0}/G(E')_{x,0+}$. $\mf F_\gamma$ also induces an $\F_q$-structure on $(\bg_E)_{x,r}$ and $(\bg^*_E)_{x,-r}$, with $(\bg_E)_{x,r}(\F_q)= G(E')_{x,r}/G(E')_{x,r+}\cong \fg(E')_{x,r}/\fg(E')_{x,r+}$ and $(\bg^*_E)_{x,-r}(\F_q) =\fg^*(E')_{x,-r}/\fg^*(E')_{x,(-r)+}$. There is a $(\G_E)_x$-equivariant isomorphism
 \begin{equation}
     ((\bg_E)_{x,r})^*\cong (\fg(E)_{x,r}/\fg(E)_{x,r+})^*\cong \fg^*(E)_{x,-r}/\fg^*(E)_{x,(-r)+}
 \end{equation}
 defined over $\F_q$, and we henceforth use $(\bg_E)_{x,r}^*$ to denote it. Similarly, there is a $(\G_E)_x^F$-equivariant isomorphism 
\begin{equation}
    ((\bg_E)^F_{x,r})^*\cong \left(\fg(E')_{x,r}/\fg(E')_{x,r+}\right)^*\cong \fg^*(E')_{x,-r}/\fg^*(E')_{x,(-r)+}
\end{equation}
 and hence $((\bg_E)^F_{x,r})^*$, $((\bg_E)_{x,r}^*)^F$ and $\fg^*(E')_{x,-r}/\fg^*(E')_{x,(-r)+}$ are canonically isomorphic. So, we will use $((\bg_E)^F_{x,r})^*$ to denote them without any ambiguity. \par
 The inclusion $G(K) \xhookrightarrow{\iota}G(E)$ maps $G(K)_{x,r}$ into $G(E)_{x,r}$, and this induces an injection at the level of $\Bar \F_q$ points 
\begin{equation}
    \iota_{E}(\Bar\F_q): \G_x(\Bar\F_q)=G(K)_{x,0}/G(K)_{x,0+} \longrightarrow G(E)_{x,0}/G(E)_{x,0+}=(\G_E)_x(\Bar\F_q)
\end{equation}

which gives a map of algebraic groups $\iota_E: \G_x \longrightarrow(\G_E)_x$. As noted in \cite{fintzenmoyprasad} Section 2.6, this is a closed immersion if $p\neq 2$.
\begin{lemma}\label{lemmaGIT}
    For $p\neq 2$ and every $r \in \R_{>0}$, there is an injection 
    \begin{equation}
        \iota_{E,r}:(\bg_{x,r})^*\hookrightarrow  (\bg_E)_{x,r}^*
    \end{equation}
    such that $\iota_E(\G_x)$ preserves $\iota_{E,r}((\bg_{x,r})^*)$ and we have a commutative diagram 
    \begin{equation}
        \begin{tikzcd}
            \G_x \times (\bg_{x,r})^*\arrow{r} \arrow{d}{\iota_E\times \iota_{E,r}} &(\bg_{x,r})^*\arrow{d}{\iota_{E,r}} \\
             (\G_E)_x\times (\bg_E)_{x,r}^* \arrow{r} &(\bg_E)_{x,r}^*
        \end{tikzcd}
    \end{equation}
    Moreover, the maps in the diagram are compatible with the respective $\F_q$-structures and hence descends to morphisms over $\F_q$. 
\end{lemma}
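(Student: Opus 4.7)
The plan is to construct $\iota_{E,r}$ as the map induced on Moy--Prasad quotients by the natural inclusion $\fg^*(K)\hookrightarrow \fg^*(E)=\fg^*(K)\otimes_K E$, which sends a $K$-linear functional $\lambda:\fg(K)\to K$ to its unique $E$-linear extension $\lambda_E:\fg(E)\to E$. The first step is to verify filtration compatibility. Using the root-space decomposition $\fg(K)_{x,r+}=\mf s(K)_{r+}\oplus\bigoplus_\alpha \mf u_\alpha(K)_{x,r+}$ with $\mf u_\alpha(K)_{x,r+}=\varpi^{n_\alpha}\mf O_K X_\alpha$, where $n_\alpha$ is the least integer strictly greater than $r-\alpha(x-x_0)$, and the analogous $E$-decomposition with exponent $m_\alpha$ of $\gamma$ (the least integer strictly greater than $m(r-\alpha(x-x_0))$), the condition $\lambda\in\fg^*(K)_{x,-r}$ translates via a dual basis to $\rv(\lambda(X_\alpha))\geq 1-n_\alpha$, while $\lambda_E\in\fg^*(E)_{x,-r}$ requires $\rv(\lambda(X_\alpha))\geq (1-m_\alpha)/m$. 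The desired implication thus reduces to the single inequality $m_\alpha\geq m(n_\alpha-1)+1$, which I will establish by a short case split based on whether $r-\alpha(x-x_0)\in\Z$ or $m(r-\alpha(x-x_0))\in\Z$. Applied to both the $-r$ and $-r+$ filtrations, this yields a well-defined map $\iota_{E,r}$ on the quotients.

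For injectivity, the key input is the identity $K\cap\gamma\mf O_E=\varpi\mf O_K$, which holds because $\rv$ takes integer values on $K$. If $\lambda\in\fg^*(K)_{x,-r}$ satisfies $\lambda_E\in\fg^*(E)_{x,-r+}$, then there exists $s<r$ with $\lambda_E(\fg(E)_{x,s+})\subseteq\gamma\mf O_E$. Restricting to $\fg(K)_{x,s+}\subseteq\fg(E)_{x,s+}$, where $\lambda_E$ is $K$-valued and agrees with $\lambda$, we obtain $\lambda(\fg(K)_{x,s+})\subseteq K\cap\gamma\mf O_E=\varpi\mf O_K$, forcing $\lambda\in\fg^*(K)_{x,-r+}$.

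The $\iota_E(\G_x)$-equivariance and the commutative diagram follow from naturality of the coadjoint action: the inclusion $\fg^*(K)\hookrightarrow\fg^*(E)$ intertwines the coadjoint actions of $G(K)_{x,0}$ and $G(E)_{x,0}$ along $G(K)_{x,0}\hookrightarrow G(E)_{x,0}$, since the adjoint action is polynomial and compatible with base change. Passing to the Moy--Prasad quotients yields the equivariance of $\iota_{E,r}$ at the level of $\bar\F_q$-points, which suffices since both actions are algebraic on reduced schemes of finite type. For the $\F_q$-structure compatibility, since $\mf F_\gamma|_K=\mf F$, the natural inclusion already sends $\mf F$-fixed functionals in $\fg^*(k)\subseteq\fg^*(K)$ to $\mf F_\gamma$-fixed functionals in $\fg^*(E')\subseteq\fg^*(E)$, and the induced map on quotients descends to a morphism over $\F_q$.

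The main technical obstacle is the exponent comparison in the first step, requiring careful bookkeeping of ceiling functions relative to the two valuations; however, the inequality $m_\alpha\geq m(n_\alpha-1)+1$ is elementary once the cases are split and degenerates to an equality when $r-\alpha(x-x_0)\in\Z$. The hypothesis $p\neq 2$ is not used in the construction of $\iota_{E,r}$ itself; it is required only so that $\iota_E:\G_x\to(\G_E)_x$ is a closed immersion, making $\iota_E(\G_x)\subseteq(\G_E)_x$ a well-defined closed subgroup against which equivariance is being asserted.
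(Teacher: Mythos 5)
Your proposal is correct and takes essentially the same route as the paper: the same map $\iota_{E,r}$ induced by $\fg^*(K)\hookrightarrow\fg^*(E)$, equivariance by naturality of the co-adjoint action, and Frobenius compatibility reduced to $\mf F_\gamma|_K=\mf F$, with $p\neq 2$ entering only through $\iota_E$ being a closed immersion. The one local difference is injectivity: the paper checks it piecewise via the root-space decomposition (noting the $K$-side graded pieces vanish when $r-\alpha(x)\notin\rv(K)$), whereas you argue directly from $K\cap\gamma\mf O_E=\varpi\mf O_K$ by restricting $\lambda_E$ to $\fg(K)_{x,s+}$ — an equally valid and arguably cleaner verification, and you additionally make explicit the filtration-compatibility (ceiling) inequality that the paper leaves implicit.
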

\begin{proof}
$\iota_{E,r}$  is induced by the inclusion $\fg^*(K) \hookrightarrow \fg^*(K) \otimes_K E \cong \fg^*(E)$. Note that equivalently we can consider it to be induced by the natural projection map of $\Bar\F_q$-vector spaces $(\bg_E)_{x,r} \rightarrow\bg_{x,r}$. So, $\iota_{E,r}$ is well-defined. Further, if $r \in \rv(K) \subset \rv(E)$, $\ft^*(K) \cap\fg^*(K)_{x,-r}/ \fg^*(K)_{x,(-r)+} $ maps injectively into $\ft^*(E) \cap\fg^*(E)_{x,-r}/ \fg^*(E)_{x,(-r)+}$, and for $r \not \in \rv(K)$, $\ft^*(K) \cap\fg^*(K)_{x,-r}/ \fg^*(K)_{x,(-r)+}=\{0\} $. Let $\fg^*_\alpha $ be defined as in \cite{MP94}. Observing that $\fg^*_\alpha $ can be identified with the dual of $\fg_{-\alpha}$, we see that the map $\fg^*_\alpha (K) \cap\fg^*(K)_{x,-r}/ \fg^*(K)_{x,(-r)+} \longrightarrow \fg^*_\alpha (E) \cap\fg^*(E)_{x,-r}/ \fg^*(E)_{x,(-r)+}$ is injective for $r-\alpha(x) \in \rv(K) \subset \rv(E)$, and $\fg^*_\alpha (K) \cap\fg^*(K)_{x,-r}/ \fg^*(K)_{x,(-r)+}=\{0\}$ for $r -\alpha(x) \not \in \rv(K)$. This gives that the map $\iota_{E,r}$ is injective. Once we have this, the diagram commutes essentially because both the top and the bottom horizontal action maps are induced by the co-adjoint action of $G$ on $\fg^*$. \par
    Now, since $\mf F_\gamma(x)=\mf F(x)=x$, the top and the bottom horizontal maps descend to $\F_q$ with the given $\F_q$-structure by unramified descent. So, if we can show that the Frobenius structure is compatible with $\iota_E$ and $\iota_{E,r}$, we are done. It is enough to check these properties at the level of $\Bar\F_q$-points. Let $\mf F'$ and $(\mf F_\gamma)'$ denote the maps $G(K) \rightarrow G(K)$ and $G(E)\rightarrow G(E)$ induced by $\mf F$ and $\mf F_\gamma$ respectively, and $i:K \hookrightarrow E$ denote the inclusion. In order to show that the Frobenius $F$ commutes with $\iota_E(\Bar\F_q)$, it is enough to show that $\iota \circ \mf F'=(\mf F_\gamma)'\circ \iota$. This equivalent to showing $i\circ \mf F=\mf F_\gamma\circ i$, which is true since $\mf F_\gamma|_K=\mf F$. Similarly, the case of $\iota_{E,r}$ also boils down to the exact same statement using the inclusion $\fg^*(K) \hookrightarrow  \fg^*(E)$ instead of $\iota$, and we are done.  
\end{proof}
Consider the map of $\Bar\F_q$-varieties
\begin{equation}\label{inclusiondualtame}
    (\bg_{x,r})^*\hookrightarrow (\bg_E)_{x,r}^* \rightarrow  (\bg_E)_{x,r}^*//(\G_E)_x
\end{equation}
Using Lemma \ref{lemmaGIT}, we see that the map factors through 
\begin{equation}\label{GITdiag}
    \begin{tikzpicture}[baseline=(current  bounding  box.center)]
        \node (A) at (0,0) {$ (\bg_{x,r})^*$};
        \node (B) at (2,0) {$(\bg_E)_{x,r}^* $};
        \node (C) at (4.5,0) {$(\bg_E)_{x,r}^*//(\G_E)_x$};
        \node (E) at (2,-2) {$(\bg_{x,r})^*//\G_x$};
        \draw [right hook->] (A)--(B);
        \draw[->] (B)--(C);
         \draw[->] (A)--(E);
         \draw[->] (E)--(C);
    \end{tikzpicture}
\end{equation}
Further, each of the maps in the above two equations is defined over $\F_q$, and hence we also get a map of their $\F_q$-rational (or $F$-fixed) points
\begin{equation}\label{GITdiagFrob}
    \begin{tikzpicture}[baseline=(current  bounding  box.center)]
        \node (A) at (0,0) {$ (\bg_{x,r}^F)^*$};
        \node (B) at (2.5,0) {$((\bg_E)^F_{x,r})^* $};
        \node (C) at (6,0) {$\left((\bg_E)_{x,r}^*//(\G_E)_x\right)^F$};
        \node (E) at (2.5,-2) {$\left((\bg_{x,r})^*//\G_x\right)^F$};
        \draw [right hook->] (A)--(B);
        \draw[->] (B)--(C);
         \draw[->] (A)--(E);
         \draw[->] (E)--(C);
    \end{tikzpicture}
\end{equation}
\paragraph{}

Let $r \in \frac{1}{m}\Z_{>0} \subset \Z_{(p)}$. Note that for $\sigma \in [\sA_m]$, $G(E)_{x,r}$, $\fg(E)_{x,r}$, $\fg^*(E)_{x,r}$ does not vary for $x \in \sigma $, and hence $(\G_E)_\sigma$, $ (\bg_E)_{\sigma}= \Lie((\G_E)_\sigma)$, $(\bg_E)_{\sigma,r}$, $(\bg_E)^*_{\sigma,r}$ and the other similar objects are well-defined. Also, since $\sigma \subset \sA$, each of these objects have an $\F_q$-structure exactly as described before. Further, for any $\frac{i}{m} \in \frac{1}{m}\Z_{>0}$, there are $(\G_E)_{\sigma}$-equivariant isomorphisms of Moy-Prasad filtration quotients
\begin{equation}\label{isomLiem}
\fg(E)_{\sigma,\frac{i}{m}}/\fg(E)_{\sigma,\frac{i}{m}+} \xlongrightarrow{\times \gamma^{-i}} \fg(E)_{\sigma,0}/\fg(E)_{\sigma,0+} 
\end{equation}
and 
\begin{equation}\label{isomdualLiem}
\fg^*(E)_{\sigma,-\frac{i}{m}}/\fg^*(E)_{\sigma,-\frac{i}{m}+} \xlongrightarrow{\times \gamma^{i}} \fg^*(E)_{\sigma,0}/\fg^*(E)_{\sigma,0+}. 
\end{equation}
The action of $G(E)_{\sigma,0}/G(E)_{\sigma, 0+}$ on $\fg(E)_{\sigma,\frac{i}{m}}/\fg(E)_{\sigma,\frac{i}{m}+}$ (and $\fg^*(E)_{\sigma,-\frac{i}{m}}/\fg^*(E)_{\sigma,-\frac{i}{m}+} $) induced by the adjoint (resp. co-adjoint) action of $G$ on $\fg$ (resp. $\fg^*$) is isomorphic to the adjoint (resp. co-adjoint) action of $(\G_E)_{\sigma}$ on $(\bg_E)_\sigma$ (resp. $(\bg_E)_\sigma^*$). This action is defined over $\F_q$, and the action of $G(E')_{\sigma,0}/G(E')_{\sigma, 0+}$ on $\fg^*(E')_{\sigma,-\frac{i}{m}}/\fg^*(E')_{\sigma,-\frac{i}{m}+} $ is isomorphic to the $\F_q$-points of the co-adjoint action $(\G_E)_{\sigma}$ on $(\bg_E)^*_\sigma$. \par
Since we have fixed $m$ and hence $E$, for the next part of this section we define $\mbL_\sigma:=(\G_E)_{\sigma}$ and $\bl_\sigma = \fg(E)_{\sigma,0}/\fg(E)_{\sigma,0+}\cong (\bg_E)_\sigma=\Lie(\mbL_\sigma)$ in order to simplify the notation. Further, we also fix $\gamma \in \mf O_E$ and hence $E'$. Note that if $m=1$, i.e., $r \in \Z_{>0}$, then $E=K$, and $\mbL_\sigma \cong \G_\sigma $ and $\bl_\sigma \cong \bg_\sigma$. $\mbL_\sigma$ is a connected reductive $\bar\F_q$-group defined over $\F_q$ with root system $\Phi_\sigma$, $W_\sigma= N_{\mbL_\sigma}(\T)/\T$ and $\G_\sigma$ embeds into $\mbL_\sigma$ as a generalized Levi subgroup. Also, for $\sigma \preceq \tau$, $\mbL_\tau \subset \mbL_\sigma$ is an $F$-stable Levi subgroup.  \par

Let $\sigma, \tau, \tau' \in [\sA_m]$ be such that $\sigma \preceq \tau$, $\sigma \preceq \tau' $ and $ \tau$, $\tau'$ are $\sigma$-opposite. We have  \begin{equation*}
    \U_{\sigma,\tau}(\bar\F_q) = G(E)_{\tau,0+}/G(E)_{\sigma,0+} \subset G(E)_{\tau,0}/G(E)_{\sigma,0+} =\mbP_{\sigma, \tau}(\bar\F_q)
\end{equation*}
where $\mbP_{\sigma, \tau}$ is an $F$-stable parabolic subgroup with unipotent radical $ R_u(\mbP_{\sigma,\tau} )=\U_{\sigma,\tau}$ and Levi quotient $\mbL_\tau \cong \mbP_{\sigma,\tau}/\U_{\sigma,\tau}$, and hence a Levi decomposition $\mbP_{\sigma,\tau} = \U_{\sigma,\tau} \rtimes \mbL_\tau$.  $\mbP_{\sigma, \tau}$ and $\U_{\sigma,\tau}$ have $\F_q$-structures given by  $\U_{\sigma,\tau}(\F_q) = G(E')_{\tau,0+}/G(E')_{\sigma,0+} $ and $\mbP_{\sigma, \tau}(\F_q) =G(E')_{\tau,0}/G(E')_{\sigma,0+}$. Note that for $\sigma \preceq \tau$, $\mbL_\tau$ is basically the unique Levi subgroup in $\mbP_{\sigma,\tau}$ containing $\T$, and $\mbP_{\sigma,\tau'} $ is the opposite parabolic subgroup in $\mbL_\sigma$ with unipotent radical $\U_{\sigma,\tau'}$. Since $\tau,\tau'$ are $\sigma$-opposite, we have $\bl_{\tau}\cong \bl_{\tau'}$.  Let $\bu_{\sigma,\tau}=\Lie(\U_{\sigma,\tau})$ and $\bp_{\sigma,\tau}=\Lie(\mbP_{\sigma,\tau})=\bl_\tau \oplus \bu_{\sigma,\tau}$. We have $\bl_\sigma= \bl_\tau \oplus \bu_{\sigma,\tau}\oplus \bu_{\sigma,\tau'}=\bp_{\sigma,\tau}\oplus \bu_{\sigma,\tau'}$.\paragraph{}

For $r \in \frac{1}{m}\Z_{>0}$, we see from \eqref{isomLiem} that there is an isomorphism $(\bg_E)_{\sigma,r} \xlongrightarrow{\eta^\sigma_\gamma}\bl_\sigma$. From the statement and the proof of Proposition \ref{Iwahorifracdepthprop} and remarks following it, we note that there is a decomposition 
\begin{equation*}
    (\bg_E)_{\sigma,r} = (\bu_E)_{\sigma,\tau,r} \oplus (\bg_E)_{\tau,r} \oplus (\bu_E)_{\sigma,\tau',r}
\end{equation*}
where $(\bu_E)_{\sigma,\tau,r}= G(E)_{\tau,r+}/G(E)_{\sigma,r+} \cong \fg(E)_{\tau,r+}/\fg(E)_{\sigma,r+}$. The isomorphism $\eta^\sigma_\gamma$ sends $(\bu_E)_{\sigma,\tau,r} \rightarrow \bu_{\sigma,\tau} $ and $(\bg_E)_{\tau,r}\rightarrow \bl_\tau$ and is compatible with the given $\F_q$-structures. So, $\eta^\sigma_\gamma|_{(\bg_E)_{\tau,r}}=\eta^\tau_\gamma$ and from \eqref{isomdualLiem} we also have a similar isomorphism $(\bg_E)^*_{\sigma,r} \xlongrightarrow{\theta^\sigma_\gamma}\bl^*_\sigma$. 
\begin{remark}
    The $\F_q$-structures and the isomorphisms depend upon the choice of the uniformizer $\gamma \in \mf O_E$, and hence we have used a subscript of $\gamma$ in the notation of the same.
\end{remark}

We have $(\bl_\sigma^F)^*=(\bl_\sigma^*)^F$, and using the co-adjoint action of $\mbL_\sigma $ on $\bl_\sigma^*$ and Theorem 4 in \cite{KW1} ( which works for general connected reductive groups instead of just almost simple ones, check \cite[Section,4.1]{chendebackertsai}), we have a map of $\Bar\F_q$-varieties
\begin{equation*}
    \Bar\chi_\sigma : \bl_\sigma^* \longrightarrow \bl_\sigma^*//\mbL_\sigma\cong \bt^*//W_\sigma
\end{equation*}
and hence a map between their $F$-fixed (or $\F_q$-rational points)
\begin{equation}
    \chi_\sigma: (\bl_\sigma^F)^* \cong (\bl_\sigma^*)^F  \longrightarrow (\bl_\sigma^*//\mbL_\sigma)^F \xlongrightarrow{\simeq}(\bt^*//W_\sigma)^F.
\end{equation}
\begin{remark}
     The isomorphism $\mc O(\bt^*)^{W_{\sigma}} \cong \mc O(\bl_\sigma^*)^{\mbL_\sigma}$ is induced by the inclusion $\bt^* \hookrightarrow\bl_\sigma^*$ which is defined over $\F_q$, and hence $\bt^*//W_\sigma \xrightarrow{\sim}\bl_\sigma^*//\mbL_\sigma$ is defined over $\F_q$ (cf for example \cite{Milnealggps}, Corollary 4.34). Since $\bl^*_\sigma$ and $\mbL_\sigma$ are defined over $\F_q$, $\bl_\sigma^* \longrightarrow \bl_\sigma^*//\mbL_\sigma$ induced by $\mc O(\bl_\sigma^*)^{\mbL_\sigma} \hookrightarrow \mc O(\bl_\sigma^*)$ is also defined over $\F_q$. So, the map $\Bar \chi_\sigma$ is defined over $\F_q$ and we have a map of $\F_q$-rational points $\chi_\sigma$.
\end{remark}

From Lemma \ref{lemmaGIT}, we see that there is a map of $\bar\F_q$ vector spaces $(\bg_{\sigma,r})^* \hookrightarrow (\bg_E)_{\sigma,r}^* \xlongrightarrow{\theta_\gamma^\sigma} \bl_\sigma$ which is compatible with the $\F_q$ structures, and hence a map of $F$-fixed points 
\begin{equation*}
    p^*_{\sigma,r}:(\bg^F_{\sigma,r})^* \hookrightarrow ((\bg_E)^F_{\sigma,r})^* \longrightarrow (\bl_\sigma^F)^*
\end{equation*}
The map $ p^*_{\sigma,r}$ can equivalently be described as being induced by the projection map of $\F_q$-vector spaces $\bl_\sigma^F\xrightarrow{\simeq} (\bg_E)^F_{\sigma,r} \rightarrow \bg^F_{\sigma,r} $. Let $\chi_{\sigma,r}:=\chi_\sigma \circ p_{\sigma,r}^*$. Since the isomorphism $(\bg_E)^*_{\sigma,r} \xlongrightarrow{\theta^\sigma_\gamma}\bl^*_\sigma$ is $(\G_E)_\sigma=\mbL_\sigma$-equivariant and defined over $\F_q$, we have $(\bg_E)_{\sigma,r}^*//\mbL_\sigma \cong \bl^*_\sigma//\mbL_\sigma$ defined over $\F_q$ and using \eqref{GITdiag}, the following diagram commutes.
\begin{equation*}
     \begin{tikzpicture}[baseline=(current  bounding  box.center)]
        \node (A) at (0,0) {$ (\bg_{\sigma,r})^*$};
        \node (B) at (2,0) {$(\bg_E)_{\sigma,r}^* $};
        \node (D) at (4,0) {$\bl_\sigma^*$};
        \node (C) at (6,0) {$\bl_\sigma^*//\mbL_\sigma$};
        \node (E) at (3,-2) {$(\bg_{\sigma,r})^*//\G_\sigma$};
        \draw [right hook->] (A)--(B);
         \draw[->] (B)edge node[above] {$\simeq$}(D);
        \draw[->] (D)--(C);
         \draw[->] (A)--(E);
         \draw[->] (E)--(C);
    \end{tikzpicture}
\end{equation*}

Since all the above maps are defined over $\F_q$, we see using \eqref{GITdiagFrob} that the map $\chi_{\sigma,r}:(\bg^F_{\sigma,r})^* \longrightarrow(\bt^*//W_\sigma)^F$ factors as 
\begin{equation}\label{stableGIT}
    \begin{tikzpicture}[baseline=(current  bounding  box.center)]
        \node (A) at (0,0) {$ (\bg^F_{\sigma,r})^*$};
        \node (B) at (2,0) {$((\bg_E)^F_{\sigma,r})^* $};
        \node (D) at (4,0) {$(\bl_\sigma^F)^*$};
        \node (C) at (6,0) {$(\bl_\sigma^*//\mbL_\sigma)^F $};
        \node (E) at (3,-2) {$((\bg_{\sigma,r})^*//\G_\sigma)^F$};
        \node (F) at (8.8,0) {$(\bt^*//W_\sigma)^F$};
        \draw[->] (A)--(B);
         \draw[->] (B)edge node[above] {$\simeq$}(D);
        \draw[->] (D)--(C);
         \draw[->] (A)--(E);
         \draw[->] (E)--(C);
          \draw[->] (C)edge node[above] {$\simeq$}(F);
    \end{tikzpicture}
\end{equation}

Let $\C[(\bt^*//W_\sigma)^F]$ be the space of complex valued functions on  $(\bt^*//W_\sigma)^F$
with multiplication given by pointwise multiplication of functions.  The function $\mathbbm{1}$ which takes a value of $1$ at every point is the identity element. For any $\sigma \preceq \tau$, $W_\tau \subset W_\sigma$ and we have a canonical map $ \bt^*//W_\tau \longrightarrow\bt^*//W_\sigma$ compatible with 
the Frobenious endomorphism. We denote by 
\[\mathrm{res}^{\sigma}_{\tau}:\mathbb C[(\bt^*//W_\sigma)^F]\to \mathbb C[(\bt^*//W_\tau)^F]\]
the map given by pull back along the natural 
map $t_{\sigma,\tau}: (\bt^*//W_\tau)^F \longrightarrow (\bt^*//W_\sigma)^F$. 
\begin{definition}\label{defnstablefns}
    We define $f \in C(\bg_{\sigma,r}^F)$ to be stable if $\mcF_{\bg_{\sigma,r}^F}(f):(\bg_{\sigma,r}^F)^* \rightarrow \C$ factors through 
    \[
    (\bg_{\sigma,r}^F)^* \xlongrightarrow{\chi_{\sigma,r}} (\bt^*//W_\sigma)^F \longrightarrow \C.
    \]
   We use $C^{st}(\bg_{\sigma,r}^F)$ to denote stable functions on $\bg_{\sigma,r}^F$. Using properties of Fourier transforms, we immediately see that $C^{st}(\bg_{\sigma,r}^F) \subset C(\bg_{\sigma,r}^F)$  forms a subalgebra with unit $|\bg_{\sigma,r}^F|^{1/2}\mathbbm{1}_0$. 
\end{definition}

The following lemma is essential in proving certain properties of stable functions, which we will state after this. 
\begin{lemma}\label{lemmaresstable}
    Let $\rG= \rL \rU$ be a Levi decomposition of a parabolic subgroup $\rP$ of a connected reductive algebraic group $\rG$. Let $\mf P = \mf L\oplus \mf U$ be the corresponding decomposition of the Lie algebra $\mf P= \Lie(\rP)$. For, $X\in \mf L^*$ and $N\in \mf U^*$, we have $(X+N)_s=\Ad^*(v)(X_s)$ for some $v \in \rU$ and $X_s$ defined as in \cite{KW1} Section 3, where $\Ad^*$ denotes the co-adjoint action. 
\end{lemma}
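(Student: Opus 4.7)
The plan is to use the Kottwitz--Weiler characterization of $X_s$ as the semisimple representative of the unique closed coadjoint orbit contained in $\overline{\mathrm{Ad}^*(\rG)\cdot X}$ (see \cite[Section~3]{KW1}), combined with a one-parameter degeneration identifying $(X+N)_s$ with $X_s$ up to coadjoint conjugation. This is the dual analogue of the classical Lie-algebra statement that for $Y\in\mathfrak{L}$ and $M\in\mathfrak{U}$ the Jordan semisimple part satisfies $(Y+M)_s=\mathrm{Ad}(v)Y_s$ for some $v\in\rU$.

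First, I would choose a cocharacter $\lambda:\mathbb{G}_m\to Z(\rL)^\circ$ that is strictly positive on every root appearing in $\mathfrak{U}$. Under the coadjoint action, $\mathrm{Ad}^*(\lambda(t))$ preserves the triangular decomposition of $\mathfrak{g}^*$ with weight zero on $\mathfrak{L}^*$ and strictly negative weights on $\mathfrak{U}^*$, so that
\[
\lim_{t\to\infty}\mathrm{Ad}^*(\lambda(t))(X+N)=X.
\]
Thus $X\in \overline{\mathrm{Ad}^*(\rG)\cdot(X+N)}$, and hence $\overline{\mathrm{Ad}^*(\rG)\cdot X}\subseteq\overline{\mathrm{Ad}^*(\rG)\cdot(X+N)}$. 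Passing to the unique closed coadjoint orbit contained in each closure yields $(X+N)_s \in \mathrm{Ad}^*(\rG)\cdot X_s$; in other words, $(X+N)_s$ and $X_s$ are coadjoint $\rG$-conjugate.

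The second step is to refine this $\rG$-conjugacy to $\rU$-conjugacy. Writing a conjugating element $g\in \rG$ with $\mathrm{Ad}^*(g)X_s=(X+N)_s$ in its Bruhat form $g=u\ell\bar u$ relative to $\rP=\rL\rU$ and its opposite, a Bialynicki-Birula-style analysis along $\lambda$ forces the factors $\bar u$ and $\ell$ to lie in the stabilizer $\rG_{X_s}$: any $\bar u$-movement of $X_s$ would produce a non-zero $\bar{\mathfrak U}^*$-component that is repelled (rather than attracted) by the flow of $\lambda$, contradicting the finiteness of the limit point extracted in Step~1; and the $\ell$-factor then lies in $\rG_{X_s}\cap \rL$ because the $\rL$-orbit of $X_s$ is contained in $\mathfrak{L}^*$. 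Absorbing $\ell\bar u\in \rG_{X_s}$ into $X_s$ leaves $v:=u\in\rU$ with $\mathrm{Ad}^*(v)X_s=(X+N)_s$, as required.

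The main obstacle is precisely this second step — the refinement from $\rG$-conjugacy to $\rU$-conjugacy — since the Bialynicki-Birula analysis must be carried out carefully when the stabilizer $\rG_{X_s}$ has components outside $\rL$, but the essential input is the $\lambda$-attractor structure of $\rP$ and the compatibility of the coadjoint action with the triangular decomposition of $\mathfrak{g}^*$.
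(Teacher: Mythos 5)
Your Step 1 is a genuinely different, more geometric route than the paper's, and its core idea is sound: since $N$ lies in a sum of nontrivial $Z(\rL)^\circ$-weight spaces while $\mf L^*$ has weight zero, a central cocharacter contracts $X+N$ to $X$, so $\overline{\Ad^*(\rG)\cdot X}\subseteq\overline{\Ad^*(\rG)\cdot(X+N)}$, and comparing the unique closed orbits in these closures gives that $(X+N)_s$ and $X_s$ are $\rG$-conjugate. Two caveats: the characterization of $X_s$ you invoke (representative of the unique closed orbit in the orbit closure) is not the definition in \cite{KW1}, Section 3 — there the decomposition is produced from an adapted Borel — so you must bridge the two (contract the nilpotent part by a dominant cocharacter to see $X_s\in\overline{\rG\cdot X}$, and quote that toral elements have closed orbits); and you should note that the $X_s$ of the lemma is the Jordan part of $X$ taken inside $\mf L^*$, so its identification with the closed-orbit representative for the $\rG$-action also needs a line. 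Granting these, your Step 1 already yields what Proposition \ref{stablefuncproperties}(2) actually consumes (conjugacy of semisimple parts, hence equal images in $\bt^*//W_\sigma$), but it is weaker than the lemma as stated.

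Step 2 — the refinement to $v\in\rU$, which is the actual assertion — is where your argument breaks down. First, the factorization $g=u\ell\bar u$ exists only for $g$ in the open cell $\rU\rL\bar{\rU}$, and you give no reason the conjugating element (or some modification of it by $\rG_{X_s}$) can be placed there. Second, the proposed Bialynicki--Birula contradiction is not a valid step: the limit produced in Step 1 concerns $X+N$, not $(X+N)_s$, and the Jordan semisimple part is not continuous along such degenerations, so the existence of $\lim_t\Ad^*(\lambda(t))(X+N)$ puts no constraint on which components of $\mf G^*$ the element $(X+N)_s$ occupies; locating $(X+N)_s$ relative to the decomposition attached to $\rP$ is essentially the content of the lemma and must come from a structural argument, not from the flow. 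Third, the inference that the Levi factor $\ell$ lies in $\rG_{X_s}$ ``because the $\rL$-orbit of $X_s$ is contained in $\mf L^*$'' is a non sequitur: nothing identifies the $\mf L^*$-``shadow'' of $(X+N)_s$ with $X_s$ at that point of the argument. Note also that no amount of massaging $\rG$-conjugacy alone can give $\rU$-conjugacy; one must use the position of $X+N$ inside a $\rP$-stable (or Borel-adapted) subspace of $\mf G^*$. This is exactly what the paper does: it takes a Borel $\rB_\rL=\rT\rV$ of $\rL$ adapted (via \cite{KW1}) to $X$, works with $X+N$ inside $\Lie(\rB)^*$ for $\rB=\rT\rV\rU$, writes $(X+N)_s=\Ad^*(v)Y$ with $Y\in\mf T^*$ and $v$ unipotent in $\rB$, proves $Y=X_s$ by showing $\Ad^*(v)-\Id$ moves $\mf T^*$ into the complementary directions (using \cite{Le}, Proposition 3.7) together with uniqueness of the decomposition, and then obtains $v\in\rU$ by rerunning the argument with $\rL$ replaced by $Z_\rL(X_s)$. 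Unless you can supply a genuine replacement for that structural step, your proposal proves only the $\rG$-conjugacy statement, not the lemma.
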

\begin{proof}
    Let $\mf G= \Lie (\rG)$. We are working with the Jordan decomposition for the dual of the Lie algebra of a connected algebraic group as defined in Section 3 of \cite{KW1}. There exists a Borel $\rB_{\rL}=\rT\rV$ of $\rL$ such that $X_s\in \Lie(\rT)^*=\mf T^*$ and $X_n\in \Lie(\rV)^*$. We consider $\mf L^*$, $\mf P^*$, $\mf T^*$ and other duals embedded into $\mf G^*$ as in \cite{KW1}. Then, $\rB=\rB_\rL\rU=\rT\rV\rU$ is a Borel subgroup of $\rG$, and the Lie algebra $\Lie(\rB)=\Lie(\rT)\oplus\Lie(\rV)\oplus\Lie(\rU)$ and hence $\Lie(\rB)^*=\mf T^*\oplus \Lie(\rV)^*\oplus\mf U^*$. \par
    Now, $X+N\in \Lie(\rB)^*$ and using Jordan decomposition for duals, we see that $(X+N)_s=\Ad^*(v)(Y)$ for $Y \in \mf T^*$ and $v \in \rB$. Also, without loss of generality, we can assume that $b \in \rV\rU$ since $\rT$ acts via identity on $\mf T^*$. Consider the map 
    \begin{equation*}
        \varphi_v:= \Ad^*(v)-Id : \Lie(\rB)^* \longrightarrow \Lie(\rB)^*
    \end{equation*}
    $\mf T^*$ embedds into $\Lie(\rB)^*$ as $\mf T^*=\{a \in \Lie(\rB)^* \:|\: a(x)=0\: \forall \: x \in \Lie(\rV) \oplus \mf U\}$. We claim that $\varphi_v(\mf T^*) \subset  \Lie(\rV)^* \oplus \mf U^*$. Let $a \in \mf T^*$ and $x \in \mf T$. $\varphi_v(a)(x)=a ((\Ad(v^{-1})-Id)(x))$. Using Proposition 3.7 of \cite{Le}, we see that $(\Ad(v^{-1})-Id)|_{\mf T}$ for $v \in \rV\rU$ is a derivative of the map $\mu_{v^{-1}}:\rT \rightarrow \rV \rU$ defined by $\mu_{v^{-1}}(t)= t^{-1}v^{-1}tv$, and hence a map $(\Ad(v^{-1})-Id):\mf T\rightarrow \Lie(\rV)\oplus \mf U$. Thus, $\varphi_v(a)(x)= 0 \: \forall\: x \in \mf T \Rightarrow \varphi_v(a) \in \Lie(\rV)^* \oplus \mf U^*$, which proves our claim. \par
    Using the above claim, we see that $\Ad^*(v)(Y)-Y \in \Lie(\rV)^* \oplus \mf U^*$, and $(X+N)= (X+N)_s + (X+N)_n \in Y + \Lie(\rV)^* \oplus \mf U^*$ since $\rV\rU $ is normal in $\rB$. Further, we also have $X +N = X_s+X_n+ N\in X_s + \Lie(\rV)^* \oplus \mf U^*$. So, $X_s=Y$ by uniqueness of Jordan decomposition. In the above argument, $\rL$ could have been replaced by $Z_\rL(X_s)$, which shows that $v$ can be taken in $\rU$ and finishes our proof. 
\end{proof}

\begin{proposition}[Properties of stable functions]\label{stablefuncproperties}
    \begin{itemize}
        \item [(1)] For any $\varepsilon \in \C[(\bt^*//W_\sigma)^F]$, the function $ f_\varepsilon := \mcF_{(\bg_{\sigma.r}^F)^*}\left(\chi_{\sigma,r}^*(\varepsilon)^-\right)$
        is a stable function on $\bg_{\sigma,r}^F$, and the assignment $\varepsilon\mapsto f_\varepsilon$ gives a surjective algebra morphism
        \begin{equation*}
             \rho_{\sigma,r}:\C[ (\bt^*//W_\sigma)^F]\longrightarrow C^{st}(\bg_{\sigma,r}^F)
        \end{equation*}
   which is an isomorphism when $r \in \Z_{>0}$.

        \item[(2)] For $\sigma \preceq \tau  \in [\sA_m]$ and $f \in C^{st}(\bg_{\sigma,r}^F) $, we have $\Res^{\bg_{\sigma,r}}_{\bg_{\tau,r}}(f) \in C^{st}(\bg_{\tau,r}^F)$, and the following diagram commutes
        \begin{equation}\label{stableresdiag}
            \begin{tikzcd}
            \C[(\bt^*//W_\sigma)^F] \arrow{r}{\rho_{\sigma,r}} \arrow{d}[swap]{(c_{\sigma,\tau,r})^{-1}\res^{\sigma}_{\tau}} & C^{st}(\bg_{\sigma,r}^F) \arrow{d}{\Res^{\bg_{\sigma,r}}_{\bg_{\tau,r}}} \\
             \C[(\bt^*//W_\tau)^F] \arrow{r}{\rho_{\tau,r}} & C^{st}(\bg_{\tau,r}^F)
        \end{tikzcd}
        \end{equation}
        where $c_{\sigma,\tau,r}$ is as defined in Remark \ref{csigmataur}.
        \item[(3)] For any  $f \in C^{st}(\bg_{\sigma,r}^F) $ and $\sigma \preceq \tau  \in [\sA_m]$, we have 
        \begin{equation}\label{vanishingequation}
            \sum_{N\in \bu_{\sigma,\tau,r}^F}f(X+N)=0 \text{   for   } X \not \in \bg_{\tau,r}^F\oplus \bu_{\sigma,\tau,r}^F
        \end{equation}
    \end{itemize}
\end{proposition}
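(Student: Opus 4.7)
The plan is to pull everything back through the Fourier transform: stability is defined by factoring through $\chi_{\sigma,r}$ on the Fourier side, where it interacts well with restriction (Proposition~\ref{fourierres}) and with the Jordan decomposition in parabolic subalgebras (Lemma~\ref{lemmaresstable}). All three parts will rest on these two compatibility statements.

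For (1), I will apply Proposition~\ref{fourierprop}(ii) to $\chi_{\sigma,r}^*(\varepsilon)^-$ to get $\mcF_{\bg_{\sigma,r}^F}(f_\varepsilon) = \chi_{\sigma,r}^*(\varepsilon)$, which shows $f_\varepsilon$ is stable by Definition~\ref{defnstablefns}. Multiplicativity of $\rho_{\sigma,r}$ comes from Proposition~\ref{fourierprop}(iv): $\chi_{\sigma,r}^*$ respects pointwise products and $\mcF_{(\bg_{\sigma,r}^F)^*}$ turns them into convolutions. Surjectivity is tautological: any stable $f$ has $\mcF(f) = \chi_{\sigma,r}^*(\varepsilon)$ for some $\varepsilon$, so $f = f_\varepsilon$ by Fourier inversion. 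For $r \in \Z_{>0}$ one has $m = 1$ and $E = K$, the map $p_{\sigma,r}^*$ becomes an isomorphism, and $\chi_{\sigma,r}$ reduces to the Kostant--Weisfeiler map $\chi_\sigma$, which is surjective; hence $\chi_{\sigma,r}^*$ and therefore $\rho_{\sigma,r}$ are injective.

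For (2), the key claim I will establish is the identity
\[
\chi_{\sigma,r}(X^* + N^*) = t_{\sigma,\tau}\bigl(\chi_{\tau,r}(X^*)\bigr) \qquad \text{for } X^* \in (\bg_{\tau,r}^F)^*,\ N^* \in (\bu_{\sigma,\tau',r}^F)^*.
\]
To prove it, observe that $p_{\sigma,r}^*(X^* + N^*) \in \bl_\tau^* \oplus \bu_{\sigma,\tau'}^* \subset \bl_\sigma^*$ with $\bl_\tau$-component $p_{\tau,r}^*(X^*)$. Since $\mbP_{\sigma,\tau'} = \mbL_\tau \ltimes \U_{\sigma,\tau'}$ is a Levi decomposition inside $\mbL_\sigma$, Lemma~\ref{lemmaresstable} gives $(p_{\sigma,r}^*(X^* + N^*))_s = \Ad^*(v)\bigl(p_{\tau,r}^*(X^*)_s\bigr)$ for some $v \in \U_{\sigma,\tau'}$, so both elements lie in the same $\mbL_\sigma$-orbit and have the same $\chi_\sigma$-image; compatibility of $\chi_\sigma$ with $\chi_\tau$ via the natural map $t_{\sigma,\tau}$ finishes the claim. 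Combining this identity with Proposition~\ref{fourierres}(i) produces
\[
\mcF_{\bg_{\tau,r}^F}\bigl(\Res^{\bg_{\sigma,r}}_{\bg_{\tau,r}}(f_\varepsilon)\bigr) = c_{\sigma,\tau,r}^{-1}\,\Res^{(\bg_{\sigma,r})^*}_{(\bg_{\tau,r})^*}\bigl(\chi_{\sigma,r}^*(\varepsilon)\bigr) = c_{\sigma,\tau,r}^{-1}\,\chi_{\tau,r}^*\bigl(\res^\sigma_\tau(\varepsilon)\bigr),
\]
identifying $\Res(f_\varepsilon)$ with $\rho_{\tau,r}\bigl(c_{\sigma,\tau,r}^{-1}\res^\sigma_\tau(\varepsilon)\bigr)$ and giving commutativity of \eqref{stableresdiag}.

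For (3), set $g(X) := \sum_{N \in \bu_{\sigma,\tau,r}^F} f(X+N)$; the assertion is that $g$ is supported on $W := \bg_{\tau,r}^F \oplus \bu_{\sigma,\tau,r}^F$, equivalently that $\mcF(g)$ is invariant under translation by the annihilator $W^\perp = (\bu_{\sigma,\tau',r}^F)^*$. Using $g = |\bg_{\sigma,r}^F|^{1/2}\bigl(f * \mathbbm{1}_{\bu_{\sigma,\tau,r}^F}\bigr)$ and evaluating the Fourier transform of the characteristic function of a subgroup, a direct calculation yields
\[
\mcF_{\bg_{\sigma,r}^F}(g)(X^*) = |\bu_{\sigma,\tau,r}^F|\cdot\mcF_{\bg_{\sigma,r}^F}(f)(X^*)\cdot\mathbbm{1}_{(\bg_{\tau,r}^F)^* \oplus (\bu_{\sigma,\tau',r}^F)^*}(X^*),
\]
so $\mcF(g)$ is already supported on $(\bg_{\tau,r}^F)^* \oplus (\bu_{\sigma,\tau',r}^F)^*$. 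On this locus, stability of $f$ combined with the identity from (2) gives $\mcF(f)(X^* + N'^*) = \varepsilon\bigl(\chi_{\sigma,r}(X^* + N'^*)\bigr) = \varepsilon\bigl(\chi_{\sigma,r}(X^*)\bigr) = \mcF(f)(X^*)$ for every $N'^* \in (\bu_{\sigma,\tau',r}^F)^*$, which is the required translation invariance. The subtlest step throughout is the careful transfer of Jordan decomposition across the chain $(\bg_{\sigma,r}^F)^* \hookrightarrow ((\bg_E)^F_{\sigma,r})^* \xrightarrow{\theta_\gamma^\sigma}(\bl_\sigma^F)^*$ and the matching of the unipotent components of the Moy--Prasad Iwahori decomposition with the unipotent radicals of $\mbP_{\sigma,\tau}$ and $\mbP_{\sigma,\tau'}$ inside $\mbL_\sigma$; once this bookkeeping is in place, both (2) and (3) reduce to formal Fourier manipulations.
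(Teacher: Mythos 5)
Your proposal follows essentially the same route as the paper on all three parts: (1) is the same Fourier-inversion argument, the key identity you isolate in (2) is exactly the commutativity of the paper's diagram \eqref{commdiagres}, proved the same way by applying Lemma \ref{lemmaresstable} to the opposite parabolic $\mbL_\tau\ltimes\U_{\sigma,\tau'}$ inside $\mbL_\sigma$ and then invoking Proposition \ref{fourierres}, and your (3) is the paper's support-duality argument, just packaged as convolution with $\mathbbm 1_{\bu_{\sigma,\tau,r}^F}$ instead of passing to the quotient $\bg_{\sigma,r}^F/\bu_{\sigma,\tau,r}^F$; these are equivalent computations.

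The one place you under-argue is the isomorphism claim for $r\in\Z_{>0}$: you assert that $\chi_{\sigma,r}$ "reduces to the Kostant--Weisfeiler map $\chi_\sigma$, which is surjective," but what is needed is surjectivity of the induced map on $\F_q$-points $(\bg_{\sigma,r}^F)^*\to(\bt^*//W_\sigma)^F$, and this does not follow formally from surjectivity of $\bg_\sigma^*\to\bg_\sigma^*//\G_\sigma$ over $\bar\F_q$. The paper closes this by noting that the fibre over an $F$-fixed point of the quotient contains a unique closed orbit (the semisimple one, by \cite{KW1}), that this orbit is therefore $F$-stable, and that Lang's theorem then produces an $\F_q$-point in it; you should include this step, since it is the entire content of the injectivity of $\rho_{\sigma,r}$ at integral depth. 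A cosmetic remark on (3): Proposition \ref{fourierprop}(iii) is stated for $\G_\sigma^F$-invariant functions while $\mathbbm 1_{\bu_{\sigma,\tau,r}^F}$ need not be one, so keep your "direct calculation" phrasing (the identity holds for arbitrary functions by the same computation) rather than citing the proposition, and note that the translation invariance you verify for $\mcF_{\bg_{\sigma,r}^F}(f)$ on the annihilator of $\bu_{\sigma,\tau,r}^F$ transfers to $\mcF_{\bg_{\sigma,r}^F}(g)$ because that annihilator is stable under translation by $(\bu_{\sigma,\tau',r}^F)^*$.
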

\begin{proof}
    \begin{itemize}
        \item [(1)]  Let $\varepsilon \in \C[(\bt^*//W_\sigma)^F]$. From equation \eqref{stableGIT}, we see that $\chi_{\sigma,r}^*(\varepsilon)^- \in C((\bg_{\sigma,r}^F)^*)$ and hence $f_\varepsilon=\mcF_{(\bg_{\sigma,r}^F)^*}(\chi_{\sigma,r}^*(\varepsilon)^-) \in  C(\bg_{\sigma,r}^F)$. From properties of Fourier transforms in Proposition \ref{fourierprop} $(ii)$, we have that $\mcF_{(\bg_{\sigma,r}^F)^*}(\chi_{\sigma,r}^*(\varepsilon)^-) \in C^{st}(\bg_{\sigma,r}^F)$. Proposition \ref{fourierprop} $(iv)$ shows that the map $\rho_{\sigma,r}$ is an algebra morphism, with a simple calculation showing $\rho_{\sigma,r}(\mathbbm{1})= |\bg_{\sigma,r}^F|^{1/2}\mathbbm{1}_0$. Let $ f \in C^{st}(\bg_{\sigma,r}^F) $. Then, by  the definition of stable functions, $\mcF_{\bg_{\sigma,r}^F}(f) = \chi_{\sigma,r}^*(\varepsilon) \in C((\bg_{\sigma,r}^F)^*) $ for some $\varepsilon \in \C[(\bt^*//W_\sigma)^F]$. Using \ref{fourierprop} $(ii)$, we see that $$\rho_{\sigma,r}(\varepsilon)=f_{\varepsilon}=\mcF_{(\bg_{\sigma,r}^F)^*}(\chi_{\sigma,r}^*(\varepsilon)^-) = \mcF_{(\bg_{\sigma,r}^F)^*}\circ\mcF_{\bg_{\sigma,r}^F}(f^-)=f$$
        giving surjectivity of $\rho_{\sigma,r}$. \par
        Observe that for integral depths, $m=1$, and hence we have $\mbL_\sigma \cong \G_\sigma$ and $\bg^*_{\sigma,r}= (\bg_E)^*_{\sigma,r} $. In order to show that $\rho_{\sigma,r}$ is an isomorphism in this case, it is enough to show that $\chi_{\sigma,r}: (\bg_{\sigma,r}^F)^* \rightarrow (\bt^*//W_\sigma)^F$ is surjective. With the identifications made above, this is equivalent to showing that the map of $\bar\F_q$-varieties $\bg_{\sigma,r}^* \xrightarrow{\simeq} \bg_{\sigma}^*\rightarrow \bg^*_{\sigma} // \G_\sigma$ is surjective at the level of $\F_q$-points. Note that each map is defined over $\F_q$ and hence we have a map at the level $\bg_{\sigma}^*(\F_q)\rightarrow \bg^*_{\sigma} // \G_\sigma(\F_q)$ of $\F_q$ points.  Let $\phi$ denote the map $\bg_{\sigma}^*(\bar\F_q)\rightarrow \bg^*_{\sigma} // \G_\sigma(\bar\F_q)$, with the $\bar\F_q$ varieties identified with the set of $\bar\F_q$-points and let $y \in \bg^*_{\sigma} // \G_\sigma(\F_q) \subset \bg^*_{\sigma} // \G_\sigma(\bar\F_q)$. The map $\phi$ is surjective, and $\phi^{-1}(y) \subset \bg^*_{\sigma}$ is a union of $\G_\sigma(\bar\F_q)$-orbits. There is a unique closed orbit $O \subset \phi^{-1}(y)$, and by \cite[Theorem 4]{KW1}, it corresponds to a semisimple element. Thus, we have a closed $\bar\F_q$-subvariety $O \subset \bg_\sigma^*$, and $\G_\sigma(\bar\F_q)$ acts transitively on $O(\bar\F_q)$. By Lang's theorem, we know that if $O$ is $F$-stable (i.e., defined over $\F_q$), then $O$ has an $\F_q$-point. Since $y \in  \bg^*_{\sigma} // \G_\sigma(\F_q)$, $\phi^{-1}(y)$ is $F$-stable and hence $O$ is $F$-stable. Thus, $O(\F_q) \neq \emptyset$ which shows surjectivity of the map $\bg_{\sigma}^*(\F_q)\rightarrow \bg^*_{\sigma} // \G_\sigma(\F_q)$, thereby proving our claim.
        
        \item[(2)] Let $\sigma, \tau, \tau' \in [\sA_m]$ such that $\sigma\preceq \tau$, $\sigma\preceq \tau'$ and $\tau,\: \tau'$ are $\sigma$-opposite. We know that $\bl^*_\sigma\cong \bu^*_{\sigma,\tau} \oplus \bl_\tau^* \oplus \bu^*_{\sigma,\tau'}$ Consider the following diagram 
    \begin{equation*}
        \begin{tikzcd}
        \bl_\tau^* \arrow{d}{\bar\chi_\tau} & \bl_\tau^* \oplus \bu^*_{\sigma,\tau'} \arrow{l}[swap]{\bar{pr}_{\sigma,\tau}}\arrow[hookrightarrow]{r}{\bar i_{\sigma,\tau}} & \bl^*_\sigma \arrow{d}{\bar\chi_\sigma} \\
        \bt^* // W_\tau \arrow[rr] && \bt^* // W_\sigma
    \end{tikzcd}
    \end{equation*}
    where ${\bar{pr}_{\sigma,\tau}}$ is the natural projection and $\bar i_{\sigma,\tau}$ is the inclusion. By Lemma \ref{lemmaresstable}, the diagram commutes. Now, $\bg_{\sigma,r}^* \cong \bu_{\sigma,\tau,r}^* \oplus \bg_{\tau,r}^*\oplus \bu_{\sigma,\tau',r}^*$, where we use projection maps onto subspaces to give an identification of $\bu_{\sigma,\tau,r}^*$, $\bg_{\tau,r}^*$ and  $\bu_{\sigma,\tau',r}^*$  as subspaces of $\bg_{\sigma,r}^*$. We also observe that the map $\bg^*_{\sigma,r} \xrightarrow{\iota_{E,r}} (\bg_E)^*_{\sigma,r} \xrightarrow{\theta^\sigma_{\gamma}} \bl_\sigma^*$ maps $\bu_{\sigma,\tau,r}^* \rightarrow \bu_{\sigma,\tau}^*$, $\bu_{\sigma,\tau',r}^* \rightarrow \bu_{\sigma,\tau'}^*$ and $\bg_{\tau,r}^* \rightarrow \bl_\tau^*$ using the fact that $\bg_{\sigma,r} \hookrightarrow (\bg_E)_{\sigma,r}$ maps $\bu_{\sigma,\tau,r} \rightarrow (\bu_E)_{\sigma,\tau,r}$, $\bg_{\tau,r} \rightarrow (\bg_E)_{\tau,r}$ and similar identification of duals. Thus we have the following commutative diagram, 
    \begin{equation*}
      \begin{tikzcd}
        \bg_{\tau,r}^* \arrow{d} & \bg_{\tau,r}^* \oplus \bu^*_{\sigma,\tau',r} \arrow[twoheadrightarrow]{l}\arrow[hookrightarrow]{r} & \bg^*_{\sigma,r} \arrow{d}\\
        \bl_\tau^* &\bl_\tau^* \oplus \bu^*_{\sigma,\tau'} \arrow{l}[swap]{\bar{pr}_{\sigma,\tau}}\arrow[hookrightarrow]{r}{\bar i_{\sigma,\tau}}& \bl^*_\sigma
    \end{tikzcd}  
    \end{equation*}
    with natural projection and inclusion maps. Combining the above two diagrams, we have 
    \begin{equation*}
        \begin{tikzcd}
          \bg_{\tau,r}^* \arrow{d} & \bg_{\tau,r}^* \oplus \bu^*_{\sigma,\tau',r} \arrow[twoheadrightarrow]{l}\arrow[hookrightarrow]{r} & \bg^*_{\sigma,r} \arrow{d}\\
          \bl_\tau^* \arrow{d}{\bar\chi_\tau} && \bl^*_\sigma \arrow{d}{\bar\chi_\sigma}\\
           \bt^* // W_\tau \arrow[rr] && \bt^* // W_\sigma
        \end{tikzcd}
    \end{equation*}
    Further, each of the arrows is compatible with Frobenius endomoprhism, and hence we have a similar commutative diagram for $F$-fixed points. 
 \begin{equation}\label{commdiagres}
        \begin{tikzcd}
          (\bg_{\tau,r}^F)^* \arrow[dd, bend right=40, "\chi_{\tau,r}"']\arrow{d} & (\bg_{\tau,r}^F)^* \oplus (\bu^F_{\sigma,\tau',r})^* \arrow[twoheadrightarrow]{l}[swap]{pr_{\sigma,\tau,r}}\arrow[hookrightarrow]{r}{i_{\sigma,\tau,r}} & (\bg^F_{\sigma,r})^* \arrow{d}\arrow[dd, bend left=40, "\chi_{\sigma,r}"]\\
          (\bl_\tau^F)^*\arrow{d}{\chi_\tau} && (\bl^F_\sigma)^*\arrow{d}[swap]{\chi_\sigma}\\
           (\bt^* // W_\tau)^F\arrow[rr] && (\bt^* // W_\sigma)^F
        \end{tikzcd}
    \end{equation}
   For $\varepsilon \in \C[(\bt^*//W_\sigma)^F] $ and $X^* \in (\bg_{\tau,r}^F)^*$, it follows using \eqref{commdiagres} that 
    \begin{align*}
        \Res^{(\bg_{\sigma,r})^*}_{(\bg_{\tau,r})^*}(\chi_{\sigma,r}^*(\varepsilon)^-)(X^*)&=\left|\bu_{\sigma,\tau',r}^F\right|^{-1}\sum_{N^* \in (\bu_{\sigma,\tau',r}^F)^*} \chi_{\sigma,r}^*(\varepsilon)(-X^*-N^*)\\
        &=\left|\bu_{\sigma,\tau',r}^F\right|^{-1}\sum_{N^* \in (\bu_{\sigma,\tau',r}^F)^*} \chi_{\sigma,r}^*(\varepsilon)(i_{\sigma,\tau,r}(-X^*-N^*))\\
        &=\left|\bu_{\sigma,\tau',r}^F\right|^{-1}\sum_{N^* \in (\bu_{\sigma,\tau',r}^F)^*} (\chi_{\tau,r}\circ pr_{\sigma,\tau,r})^*(\res^\sigma_\tau(\varepsilon))(-X^*-N^*)\\
        &=\left|\bu_{\sigma,\tau',r}^F\right|^{-1}\sum_{N^* \in (\bu_{\sigma,\tau',r}^F)^*}\chi_{\tau,r}^*(\res^\sigma_\tau(\varepsilon))(-X^*)\\
        &=\chi_{\tau,r}^*(\res^\sigma_\tau(\varepsilon))(-X^*)= \chi_{\tau,r}^*(\res^\sigma_\tau(\varepsilon))^-(X^*)
    \end{align*}
    Then, using Proposition \ref{fourierres} and Remark \ref{csigmataur}, we have 
    \begin{align*}
         \Res^{\bg_{\sigma,r}}_{\bg_{\tau,r}}\circ\rho_{\sigma,r}(\varepsilon)&=\Res^{\bg_{\sigma,r}}_{\bg_{\tau,r}}\circ\mcF_{(\bg_{\sigma,r}^F)^*}(\chi_{\sigma,r}^*(\varepsilon)^-)= ( c_{\sigma,\tau,r})^{-1} \mcF_{(\bg_{\tau,r}^F)^*}\circ \mathrm{Res}^{(\bg_{\sigma,r})^*}_{(\bg_{\tau,r})^*}(\chi_{\sigma,r}^*(\varepsilon)^-)\\
         &= ( c_{\sigma,\tau,r})^{-1} \mcF_{(\bg_{\tau,r}^F)^*}(\chi_{\tau,r}^*(\res^\sigma_\tau(\varepsilon))^-)=\rho_{\tau,r}(( c_{\sigma,\tau,r})^{-1}\res^\sigma_\tau(\varepsilon))
    \end{align*}
    which finishes the proof of $(2)$.

    \item[(3)] Consider the following commutative diagram 
    \begin{equation*}
        \begin{tikzcd}
            \bg_{\tau,r}^F\oplus \bu_{\sigma,\tau,r}^F \arrow[hookrightarrow]{r}\arrow[twoheadrightarrow]{d}& \bg_{\sigma,r}^F\arrow[twoheadrightarrow]{d}{P}\\
            \bg_{\tau,r}^F\arrow[hookrightarrow]{r}&\bg_{\sigma,r}^F/\bu_{\sigma,\tau,r}^F\cong \bg_{\tau,r}^F\oplus \bu_{\sigma,\tau',r}^F
        \end{tikzcd}
    \end{equation*}
    with natural inclusions and projections. Given $f \in C^{st}(\bg_{\sigma,r}^F)$, we define $h:\bg_{\sigma,r}^F/\bu_{\sigma,\tau,r}^F\cong \bg_{\tau,r}^F\oplus \bu_{\sigma,\tau',r}^F \longrightarrow \C$ as $h(X)= \sum_{Y \in P^{-1}(X)}f(Y)$. For $X \in \bg_{\tau,r}^F\oplus \bu_{\sigma,\tau',r}^F$, $h(X) = \sum_{N \in\bu_{\sigma,\tau,r}^F} f(X+N)$. If we can show that $h$ is supported on $\bg_{\tau,r}^F$, we are done. Let $V=\bg_{\tau,r}^F\oplus \bu_{\sigma,\tau',r}^F$ and consider the Fourier transform of $h$ defined as in \eqref{FT_V}. Then, for $X^*\in (\bg_{\tau,r}^F)^*\oplus (\bu_{\sigma,\tau',r}^F)^* $ 
    \begin{equation*}
                \mcF_{\bg_{\sigma,r}^F}(f)(X^*)=\left|\bg_{\sigma,r}^F\right|^{-1/2}\sum_{Y\in \bg_{\tau,r}^F\oplus \bu_{\sigma,\tau',r}^F}\Tilde\psi( X^*(Y))h(Y)= \left|\bu_{\sigma,\tau,r}^F\right|^{1/2}\mcF_V(h)(X^*)
    \end{equation*}
     For $f \in C^{st}(\bg_{\sigma,r}^F) $, we have  from \eqref{commdiagres} that $\mcF_{\bg_{\sigma,r}^F}(f)|_{(\bg_{\tau,r}^F)^*\oplus (\bu_{\sigma,\tau',r}^F)^*}$ factors through 
    \begin{equation*}
        (\bg_{\tau,r}^F)^*\oplus (\bu_{\sigma,\tau',r}^F)^*\xlongrightarrow{p_{\sigma,\tau,r}} (\bg_{\tau,r}^F)^* \xlongrightarrow{\chi_{\tau,r}}(\bt^*//W_\tau)^F \longrightarrow (\bt^*//W_\sigma)^F \longrightarrow\C 
    \end{equation*}
    and thus is constant on the fibres of the projection $ (\bg_{\tau,r}^F)^*\oplus (\bu_{\sigma,\tau',r}^F)^*\xlongrightarrow{p_{\sigma,\tau,r}} (\bg_{\tau,r}^F)^*$, and $\tilde h=\mcF_V(h)$ has the same property. In order to show that $h$ is supported on $\bg_{\tau,r}^F$, it is enough to show the same for $\mcF_{V^*}(\tilde h)=h^-$. For $X_1+X_2 \in \bg_{\tau,r}^F\oplus \bu_{\sigma,\tau',r}^F$
    \begin{align*}
        \mcF_{V^*}(\tilde h)(X_1+X_2)&= \sum_{\mathclap{\substack{Y_1^* \in (\bg_{\tau,r}^F)^*\\Y_2^*\in (\bu_{\sigma,\tau',r}^F)^*}}} \Tilde{\psi}(Y_1^*(X_1)+Y_2^*(X_2))\tilde h(Y_1^*+Y_2^*)\\
        &=\sum_{\mathclap{\substack{Y_1^* \in (\bg_{\tau,r}^F)^*\\Y_2^*\in (\bu_{\sigma,\tau',r}^F)^*}}} \Tilde{\psi}(Y_1^*(X_1))\Tilde{\psi}(Y_2^*(X_2))\tilde h(Y_1^*)\\
        &=\sum_{Y_1^* \in (\bg_{\tau,r}^F)^*} \Tilde{\psi}(Y_1^*(X_1))\tilde h(Y_1^*) \sum_{Y_2^*\in (\bu_{\sigma,\tau',r}^F)^*}\Tilde{\psi}(Y_2^*(X_2))
    \end{align*}
Thus, $\mcF_{V^*}(\tilde h)=h^-=0$ if $X_2\neq 0$, and proves that $h$ is supported on $\bg_{\tau,r}^F$. 
    \end{itemize}
\end{proof}

\begin{proposition}
    Let $\sigma \in [\sA_m]$ and $n \in N_G(T)(k)$. Then the adjoint action of $n$ induces an isomorphism 
    \begin{equation*}
         C^{st}(\bg_{\sigma,r}^F) \xlongrightarrow{\simeq}  C^{st}(\bg_{n\sigma,r}^F)  
    \end{equation*}
\end{proposition}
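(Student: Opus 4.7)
The plan is as follows. Since $n \in G(k)$, the adjoint action $\Ad(n)\colon \fg(k) \to \fg(k)$ carries $\fg_{\sigma,r}$ isomorphically onto $\fg_{n\sigma,r}$ and $\fg_{\sigma,r+}$ onto $\fg_{n\sigma,r+}$, and because $n$ is defined over $k$ this descends to an $\F_q$-linear isomorphism $\Ad(n)\colon \bg_{\sigma,r}^F \xrightarrow{\simeq} \bg_{n\sigma,r}^F$ (and symmetrically on duals). Pullback therefore gives a linear isomorphism $\Ad_n\colon C(\bg_{\sigma,r}^F) \xrightarrow{\simeq} C(\bg_{n\sigma,r}^F)$, $\Ad_n(f)(X) := f(\Ad(n^{-1})X)$. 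What must be shown is that both $\Ad_n$ and $\Ad_{n^{-1}}$ send stable functions to stable functions.

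The proof will reduce to intertwining properties with the two building blocks of Definition \ref{defnstablefns}. For the Fourier transform, letting $\Ad^*(n)\colon (\bg_{n\sigma,r}^F)^* \xrightarrow{\simeq} (\bg_{\sigma,r}^F)^*$ denote the inverse transpose of $\Ad(n)$, the defining formula \eqref{fourierdefnlie} together with the fact that $\Ad(n)$ preserves the canonical pairing yields at once
\[
\mcF_{\bg_{n\sigma,r}^F}(\Ad_n f) \;=\; \mcF_{\bg_{\sigma,r}^F}(f) \circ \Ad^*(n).
\]
For the $\chi$-maps, the fact that $n \in N_G(T)(k)$ ensures that $\Ad(n)$ stabilizes $T$ and acts on $\bt^*$ by the image $w_n$ of $n$ in the Weyl group $W = \mc N/T(k)$. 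Conjugation by $n$ induces an $F$-equivariant isomorphism $\mbL_\sigma \xrightarrow{\simeq} \mbL_{n\sigma}$ identifying root subsystems via $\Phi_{n\sigma} = w_n(\Phi_\sigma)$, and hence $W_{n\sigma} = w_n W_\sigma w_n^{-1}$. Therefore $w_n$ descends to an isomorphism $(\bt^*//W_{n\sigma})^F \xrightarrow{\simeq} (\bt^*//W_\sigma)^F$, and functoriality of the GIT construction applied to \eqref{stableGIT}, combined with the compatibility of the uniformizer-dependent isomorphisms $\theta^\sigma_\gamma$ and $\theta^{n\sigma}_\gamma$ under $\Ad(n)$ (automatic, since $\gamma$ is central in $E$), produces a commutative square
\[
\begin{tikzcd}
(\bg_{n\sigma,r}^F)^* \arrow[r,"\Ad^*(n)"] \arrow[d,"\chi_{n\sigma,r}"'] & (\bg_{\sigma,r}^F)^* \arrow[d,"\chi_{\sigma,r}"] \\
(\bt^*//W_{n\sigma})^F \arrow[r,"w_n"'] & (\bt^*//W_\sigma)^F.
\end{tikzcd}
\]

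Granting these two compatibilities, if $f \in C^{st}(\bg_{\sigma,r}^F)$ with $\mcF_{\bg_{\sigma,r}^F}(f) = \chi_{\sigma,r}^*(\varepsilon)$ for some $\varepsilon \in \C[(\bt^*//W_\sigma)^F]$, then
\[
\mcF_{\bg_{n\sigma,r}^F}(\Ad_n f) \;=\; \chi_{\sigma,r}^*(\varepsilon) \circ \Ad^*(n) \;=\; \chi_{n\sigma,r}^*(w_n^*\varepsilon),
\]
showing $\Ad_n f \in C^{st}(\bg_{n\sigma,r}^F)$. Running the same argument with $n^{-1}$ gives a two-sided inverse, completing the proof. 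The main obstacle I anticipate is the careful verification of the commutative square above, specifically tracking how the identifications $\theta^\sigma_\gamma$ and the $\F_q$-structure descended from $E'$ behave under $\Ad(n)$; this is a formal bookkeeping exercise using that $n \in G(k) \subset G(E')$, but it is the only nontrivial step beyond formal manipulation of the Fourier transform.
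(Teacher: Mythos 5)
Your proposal is correct and follows essentially the same route as the paper: both arguments rest on the commutative square intertwining $\chi_{\sigma,r}$ and $\chi_{n\sigma,r}$ via the (co)adjoint action of $n$ (using $W_{n\sigma}=\Ad(n)W_\sigma\Ad(n)^{-1}$ and the fact that multiplication by $\gamma^i$ commutes with $\Ad(n)$), combined with the compatibility of the Fourier transform with pullback along $\Ad(n)$, which the paper carries out as an explicit computation showing $\Ad(n^{-1})^*\circ\rho_{\sigma,r}=\rho_{n\sigma,r}\circ(\Ad^*(n^{-1}))^*$. The only caveat is notational: the map you call the ``inverse transpose'' of $\Ad(n)$ and use in your formulas is in fact the plain transpose $Y^*\mapsto Y^*\circ\Ad(n)$, but since your directions and formulas are used consistently this does not affect the argument.
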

\begin{proof}
   The co-adjoint action of $n$ induces an isomorphism $\bt^*\xlongrightarrow{\Ad^*(n)}\bt^*$ equivariant with respect to $W_\sigma$-action (where $W_\sigma$ acts on the r.h.s via the isomorphism  $W_\sigma \xlongrightarrow{\Ad(n)} W_{n\sigma}$), and thus we have an isomorphism $\bt^*//W_\sigma \xlongrightarrow[\Ad^*(n)]{\simeq}\bt^*//W_{n\sigma}$. Since $nG_{\sigma,r}n^{-1}=G_{n\sigma}$ for all $r \in \R_{\geq 0}$, we immediately see that the following diagram commutes. 
    \begin{equation*}
        \begin{tikzcd}
           (\bg_{\sigma,r}^F)^* \arrow{r}{\chi_{\sigma,r}} \arrow{d}{\Ad^*(n)}&(\bt^*//W_\sigma)^F \arrow{d}{\Ad^*(n)}\\
          (\bg_{n\sigma,r}^F)^* \arrow{r}{\chi_{n\sigma,r}} &(\bt^*//W_{n\sigma})^F
        \end{tikzcd}
    \end{equation*}
    Let  $\varepsilon \in \C[(\bt^*//W_\sigma)^F]$ and $\prescript{n}{}\varepsilon \in \C[(\bt^*//W_{n\sigma})^F]$ such that $\prescript{n}{}\varepsilon(x)=\varepsilon(\Ad^*(n^{-1})x)$. So, $\prescript{n}{}\varepsilon=(\Ad^*(n^{-1}))^*(\varepsilon)$.  Let us similarly use $ \prescript{n^{-1}}{}{}X$ to denote $\Ad(n^{-1})X \in \bg_{\sigma,r}^F$ for $X \in \bg_{n\sigma,r}^F$ etc. Then, 
    \begin{align*}
        \mcF_{(\bg_{n\sigma,r}^F)^*}(\chi_{n\sigma,r}^*(\prescript{n}{}\varepsilon)^-)(X)
        &= \left|\bg_{n\sigma,r}^F\right|^{-1/2}\sum_{Y^*\in (\bg_{n\sigma,r}^F)^*}\Tilde{\psi}(Y^*(X))\prescript{n}{}\varepsilon(\chi_{n\sigma,r}(-Y^*))\\
        &=\left|\bg_{\sigma,r}^F\right|^{-1/2}\sum_{Y^*\in (\bg_{n\sigma,r}^F)^*}\Tilde{\psi}(Y^*(X))\varepsilon(\chi_{\sigma,r}(-\Ad^*(n^{-1})Y^*))\\
        &=\left|\bg_{\sigma,r}^F\right|^{-1/2}\sum_{Y^*\in (\bg_{n\sigma,r}^F)^*}\Tilde{\psi}(\prescript{n^{-1}}{}{}Y^*(\prescript{n^{-1}}{}X))\varepsilon(\chi_{\sigma,r}(-\prescript{n^{-1}}{}Y^*))\\
        &=\left|\bg_{\sigma,r}^F\right|^{-1/2}\sum_{Y_1^* \in (\bg_{\sigma,r}^F)^*}\Tilde{\psi}(Y_1^*(\prescript{n^{-1}}{}X))\varepsilon(\chi_{\sigma,r}(-Y_1^*))\\
        &=\left|\bg_{\sigma,r}^F\right|^{-1/2}\sum_{Y_1^* \in (\bg_{\sigma,r}^F)^*}\Tilde{\psi}(Y_1^*(\prescript{n^{-1}}{}X))\chi_{\sigma,r}^*(\varepsilon)^-(Y_1^*))\\
        &=\mcF_{(\bg_{\sigma,r}^F)^*}(\chi_{\sigma,r}^*(\varepsilon)^-)(\prescript{n^{-1}}{}X)=\Ad(n^{-1})^*(\rho_{\sigma,r}(\varepsilon))(X)
    \end{align*}
     which shows that the following diagram 
    \begin{equation}\label{adjnstable}
        \begin{tikzcd}
            \C[(\bt^*//W_\sigma)^F]\arrow[d,"\wr", "(\Ad^*(n^{-1}))^*"'] \arrow{r}{\rho_{\sigma,r}} & C^{st}(\bg_{\sigma,r}^F)\arrow{d}{\Ad(n^{-1})^*} \\
            \C[(\bt^*//W_{n\sigma})^F]\arrow{r}{\rho_{n\sigma,r}}& C^{st}(\bg_{n\sigma,r}^F)
        \end{tikzcd}
    \end{equation}
    commutes and finishes the proof.
\end{proof}

\section{From Stable functions to positive depth Bernstein center}\label{section:stablefnstocenter}
We will use the theory of stable functions developed in the previous section to construct elements in the depth-$r$ center for $r \in \frac{1}{m}\Z_{>0} \subset \Z_{(p)}\cap \Q_{>0}$, and  attach parameters to smooth irreducible representations, which only depends upon the Moy-Prasad type of the representation. \paragraph{}

Let $W = N_G(T)/T$ be the Weyl group of $G$, and  $\C[(\bt^*//W)^F]$ be the algebra of complex-valued functions on $(\bt^*//W)^F$. We again fix $\varpi \in \mf O_k$, $E$ and $\gamma \in \mf O_E$ with $\gamma^m= \varpi$ as in Section \ref{subsectionstablefunction}. We will construct a map from $\C[(\bt^*//W)^F]$ to the depth-$r$ Bernstein center $\ZZ^r(G)$, using our description of $\ZZ^r(G)$ in Section \ref{section:fracdepthdescription}. Let $\sigma \in [\Bar\mcC_m]$. Note that $W_\sigma = N_{\mbL_\sigma}(\T)/\T$ agrees with the image of $N_{G(E)_{\sigma,0}}(T)$ in $W$ (see \cite{debtotallyram}, Lemma 7.2.1), and hence there is an embedding $W_\sigma \hookrightarrow W$. Thus, we have a map 
$$t_{\sigma} : (\bt^*//W_\sigma)^F \longrightarrow (\bt^*//W)^F$$
We have a natural inclusion map 
\begin{equation*}
    \iota_{\sigma,r}:C(\bg_{\sigma,r}^F) \rightarrow C_c^\infty\left(\frac{G_{\sigma,r}/G_{\sigma,r+}}{G_{\sigma,0}}\right)\rightarrow C_c^\infty\left(\frac{G(k)/G_{\sigma,r+}}{G_{\sigma,0}}\right)=\M^r_\sigma
\end{equation*}
sending $ C(\bg_{\sigma,r}^F) \ni f: \bg_{\sigma,r}^F \rightarrow\C$ to $\iota_{\sigma,r}( f) \in \M^r_\sigma$ supported in $G_{\sigma,r} \subset G(k)$ given by 
\begin{equation*}
    \iota_{\sigma,r}( f) :G_{\sigma,r}\rightarrow G_{\sigma,r}/G_{\sigma,r+} \xrightarrow{\simeq} \bg_{\sigma,r}^F \xrightarrow{f}\C
\end{equation*}
We define $c^\mu_{\sigma,r} $ to be the constant $\mu(G_{\sigma,r+})\left|\bg_{\sigma,r}^F\right|^{1/2}$. Consider the composed map 
\begin{equation*}
    j_{\sigma,r}: \C[(\bt^*//W)^F] \xlongrightarrow{(c^\mu_{\sigma,r})^{-1}t^*_{\sigma}} \C[(\bt^*//W_\sigma)^F]\xlongrightarrow{\rho_{\sigma,r}} C^{st}(\bg_{\sigma,r}^F) \xlongrightarrow{\iota_{\sigma,r}}\M^r_\sigma
\end{equation*}
Observe that the map $j_{\sigma,r}$ is an algebra homomorphism sending the unit $\mathbbm{1} \in \C[(\bt^*//W)^F]$ to $\delta_{G_{\sigma,r+}}\in \M^r_\sigma$. 
\begin{remark}
    Note that some of the the individual maps in the definition of $j_{\sigma,r}$ are just vector space maps, although the composition is an algebra morphism. 
\end{remark}
Our main idea is to show that the following diagrams commute and use that to construct a map into $\ZZ^r(G)$, using the limit description. Let $\sigma,\tau \in [\bar\mcC_m]$, $\sigma \preceq \tau$. Then, we have  
\begin{equation*}
    \begin{tikzcd}
        \C[(\bt^*//W)^F]\arrow{r}{j_{\sigma,r}} \arrow{rd}{j_{\tau,r}} & \M^r_\sigma\arrow{d}{\phi^r_{\sigma,\tau}}\\
        & \M^r_\tau
    \end{tikzcd}
\end{equation*}
Further, let $n \in N_G(T)(k)$ such that $n\mcC=\mcC$ and $\sigma_1, \sigma_2 \in [\Bar{\mcC}_m]$ such that $n\sigma_1\preceq \sigma_2$. Then
\begin{equation*}
    \begin{tikzcd}
        \C[(\bt^*//W)^F]\arrow{r}{j_{\sigma_1,r}} \arrow{rd}{j_{\sigma_2,r}} & \M^r_{\sigma_1}\arrow{d}{\phi^r_{\sigma_1,\sigma_2,n}}\\
        & \M^r_{\sigma_2}
    \end{tikzcd}
\end{equation*}
We do that in several steps using a series of lemmas. 
\begin{lemma}
 Let $\sigma, \tau \in [\Bar \mcC_m]  $ such that $\sigma\preceq \tau $. Then $c^\mu_{\sigma,r} \cdot c_{\sigma,\tau,r}=c^\mu_{ \tau,r}$  
\end{lemma}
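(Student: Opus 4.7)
The plan is to unpack both sides using the Iwahori decomposition from Proposition \ref{Iwahorifracdepthprop} together with the multiplicativity of the Haar measure under inclusions of compact open subgroups. The key identity that drives everything is that, for $\sigma \preceq \tau$ in $[\bar\mcC_m]$, one has $G_{\sigma,r+}\subseteq G_{\tau,r+}$, and therefore
\[
\mu(G_{\tau,r+}) = [G_{\tau,r+}:G_{\sigma,r+}]\cdot \mu(G_{\sigma,r+}) = |\bu_{\sigma,\tau,r}^F|\cdot \mu(G_{\sigma,r+}),
\]
where the second equality uses that $\bu_{\sigma,\tau,r}^F = G_{\tau,r+}/G_{\sigma,r+}$ by definition.

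Next, I would pick $\tau' \in [\bar\mcC_m]$ so that $\tau$ and $\tau'$ are $\sigma$-opposite (which is allowed since $c_{\sigma,\tau,r}$ implicitly depends on such a choice). The Iwahori decomposition \eqref{Iwahoridecomp}, after passing to cardinalities of $\F_q$-points, gives
\[
|\bg_{\sigma,r}^F| = |\bg_{\tau,r}^F|\cdot |\bu_{\sigma,\tau,r}^F|\cdot |\bu_{\sigma,\tau',r}^F|.
\]
Substituting this into $|\bg_{\sigma,r}^F|^{1/2}$ and combining with $c_{\sigma,\tau,r} = |\bu_{\sigma,\tau,r}^F|^{1/2}|\bu_{\sigma,\tau',r}^F|^{-1/2}$ from Remark \ref{csigmataur}, the factors of $|\bu_{\sigma,\tau',r}^F|^{\pm 1/2}$ cancel and the product collapses to $|\bg_{\tau,r}^F|^{1/2}\cdot |\bu_{\sigma,\tau,r}^F|$.

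Putting the two steps together:
\[
c^\mu_{\sigma,r}\cdot c_{\sigma,\tau,r} = \mu(G_{\sigma,r+})\cdot|\bu_{\sigma,\tau,r}^F|\cdot |\bg_{\tau,r}^F|^{1/2} = \mu(G_{\tau,r+})\cdot |\bg_{\tau,r}^F|^{1/2} = c^\mu_{\tau,r},
\]
which is the desired equality. There is essentially no serious obstacle here; the only subtlety worth checking is that $c_{\sigma,\tau,r}$ is independent of the auxiliary choice of a $\sigma$-opposite $\tau'$, but since the final answer $c^\mu_{\tau,r}/c^\mu_{\sigma,r}$ depends only on $\sigma$ and $\tau$, this independence is forced by the computation itself.
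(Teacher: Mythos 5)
Your proof is correct and follows essentially the same route as the paper's: both rely on the index formula $\mu(G_{\tau,r+})=|\bu_{\sigma,\tau,r}^F|\cdot\mu(G_{\sigma,r+})$ together with the cardinality version of the Iwahori decomposition $|\bg_{\sigma,r}^F|=|\bg_{\tau,r}^F|\cdot|\bu_{\sigma,\tau,r}^F|\cdot|\bu_{\sigma,\tau',r}^F|$, differing only in that the paper computes the ratio $c^\mu_{\tau,r}/c^\mu_{\sigma,r}$ directly while you expand the product forward. Your closing remark about the independence of $c_{\sigma,\tau,r}$ from the auxiliary choice of $\tau'$ is a harmless extra observation the paper omits.
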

\begin{proof}
    \[
    \frac{c^\mu_{ \tau,r}}{c^\mu_{\sigma,r}}=\frac{\mu(G_{\tau,r+})}{\mu(G_{\sigma,r+})}\cdot \frac{\left|\bg_{\tau,r}^F\right|^{1/2}}{\left|\bg_{\sigma,r}^F\right|^{1/2}}=\frac{\left|\bu_{\sigma,\tau,r}^F\right|\cdot \left|\bg_{\tau,r}^F\right|^{1/2}}{\left|\bu_{\sigma,\tau,r}^F\right|^{1/2}\cdot \left|\bg_{\tau,r}^F\right|^{1/2}\cdot \left|\bu_{\sigma,\tau',r}^F\right|^{1/2}}=c_{\sigma,\tau,r}
    \]
\end{proof}

\begin{lemma}\label{lemmatmodWtostable}
    For $\sigma \in [\bar\mcC_m]$, we have a map 
    \[
    i_{\sigma,r}: \C[(\bt^*//W)^F] \longrightarrow C^{st}(\bg_{\sigma,r}^F)
    \]
    such that for $\sigma \preceq\tau \in [\bar\mcC_m]$, we have the following commutative diagram. 
    \begin{equation}\label{tmodWtostable}
        \begin{tikzcd}
             \C[(\bt^*//W)^F] \arrow{r}{i_{\sigma,r}}\arrow{rd}{i_{\tau,r}}& C^{st}(\bg_{\sigma,r}^F)\arrow{d}{\Res^{\bg_{\sigma,r}}_{\bg_{\tau,r}}}\\
             &C^{st}(\bg_{\tau,r}^F)
        \end{tikzcd}
    \end{equation}
\end{lemma}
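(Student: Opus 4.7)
The plan is to take the obvious candidate for $i_{\sigma,r}$ suggested by the factorization of $j_{\sigma,r}$ defined just above, namely
\[
i_{\sigma,r} := \rho_{\sigma,r} \circ (c^\mu_{\sigma,r})^{-1} t_\sigma^*,
\]
and then verify the compatibility \eqref{tmodWtostable} by stitching together two pieces of data that are already in place: the restriction compatibility of $\rho_{\sigma,r}$ proved in Proposition \ref{stablefuncproperties}(2), and the multiplicativity of the normalizing constants $c^\mu_{\sigma,r}$ just established.

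First I would record the one purely combinatorial fact needed: the inclusions $W_\tau \subset W_\sigma \subset W$ give a factorization $t_\tau = t_\sigma \circ t_{\sigma,\tau}$ of maps on $(\bt^*//-)^F$, and dualizing this on functions yields $\res^{\sigma}_{\tau} \circ t_\sigma^* = t_\tau^*$, since by definition $\res^{\sigma}_{\tau} = t_{\sigma,\tau}^*$.

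Then the compatibility is a direct chase. Starting from the composition in \eqref{tmodWtostable} and applying \eqref{stableresdiag} followed by the scalar identity $c^\mu_{\sigma,r}\cdot c_{\sigma,\tau,r}=c^\mu_{\tau,r}$ and the $t_\tau^*$-factorization above, we compute
\begin{align*}
\Res^{\bg_{\sigma,r}}_{\bg_{\tau,r}} \circ i_{\sigma,r}
  &= \Res^{\bg_{\sigma,r}}_{\bg_{\tau,r}} \circ \rho_{\sigma,r} \circ (c^\mu_{\sigma,r})^{-1} t_\sigma^* \\
  &= \rho_{\tau,r} \circ (c_{\sigma,\tau,r})^{-1}\res^{\sigma}_{\tau} \circ (c^\mu_{\sigma,r})^{-1} t_\sigma^* \\
  &= (c^\mu_{\tau,r})^{-1}\, \rho_{\tau,r} \circ t_\tau^* \;=\; i_{\tau,r}.
\end{align*}

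There is no serious obstacle here; the substantive content has already been extracted into Proposition \ref{stablefuncproperties}(2) (the $c_{\sigma,\tau,r}$-twisted intertwining of $\rho$ with restriction) and the preceding lemma (the matching twist on the Haar-measure normalizations), so the proof is essentially a bookkeeping check that these two twists cancel exactly. I would write the argument as the three-line display above, preceded by the definition of $i_{\sigma,r}$ and the brief remark that $t_\sigma^*$ factors through $t_\tau^*$ via $\res^{\sigma}_{\tau}$.
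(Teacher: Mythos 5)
Your proposal is correct and follows essentially the same route as the paper: the paper defines $i_{\sigma,r}=\rho_{\sigma,r}\circ(c^\mu_{\sigma,r})^{-1}t^*_{\sigma}$, checks the triangle of pullbacks $\res^\sigma_\tau\circ t^*_\sigma=t^*_\tau$ together with the scalar identity $c^\mu_{\sigma,r}c_{\sigma,\tau,r}=c^\mu_{\tau,r}$, and then pastes this with the diagram \eqref{stableresdiag} from Proposition \ref{stablefuncproperties}(2). Your three-line computation is just the same diagram chase written out in equations, so there is nothing to add.
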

\begin{proof}
For $\sigma \in [\bar\mcC_m]$, define $i_{\sigma,r} :\C[(\bt^*//W)^F] \longrightarrow C^{st}(\bg_{\sigma,r}^F)$ as the composition 
\begin{equation*}
     \C[(\bt^*//W)^F] \xlongrightarrow{(c^\mu_{\sigma,r})^{-1}t^*_{\sigma}} \C[(\bt^*//W_\sigma)^F]\xlongrightarrow{\rho_{\sigma,r}} C^{st}(\bg_{\sigma,r}^F)
\end{equation*}
For $\sigma \preceq \tau \in [\bar \mcC_m]$, it immediately follows from the previous lemma that the following diagram commutes.
\begin{equation*}
    \begin{tikzcd}
      \C[(\bt^*//W)^F] \arrow{r}{(c^\mu_{\sigma,r})^{-1}t^*_{\sigma}} \arrow{rd}[swap]{(c^\mu_{\tau,r})^{-1}t^*_{\tau}}  & \C[(\bt^*//W_\sigma)^F]\arrow{d}{(c_{\sigma,\tau,r})^{-1}\res^\sigma_\tau}\\
     & \C[(\bt^*//W_\tau)^F]
    \end{tikzcd}
\end{equation*}
Combining the above diagram and \eqref{stableresdiag}, we get 
\begin{equation*}
    \begin{tikzcd}
        \C[(\bt^*//W)^F] \arrow{r}{(c^\mu_{\sigma,r})^{-1}t^*_{\sigma}} \arrow{rd}[swap]{(c^\mu_{\tau,r})^{-1}t^*_{\tau}}  & \C[(\bt^*//W_\sigma)^F]\arrow{d}{(c_{\sigma,\tau,r})^{-1}\res^\sigma_\tau} \arrow{r}{\rho_{\sigma,r}} & C^{st}(\bg_{\sigma,r}^F) \arrow{d}{\Res^{\bg_{\sigma,r}}_{\bg_{\tau,r}}} \\
        &\C[(\bt^*//W_\tau)^F] \arrow{r}{\rho_{\tau,r}} & C^{st}(\bg_{\tau,r}^F)
    \end{tikzcd}
\end{equation*}
which proves the lemma since $i_{\sigma,r}= \rho_{\sigma,r}\circ(c^\mu_{\sigma,r})^{-1}t^*_{\sigma}$. 
\end{proof}

\begin{lemma}
    For $\sigma \preceq \tau \in [\bar\mcC_m]$, the map $\iota_{\sigma,r}:C^{st}(\bg_{\sigma,r}^F) \rightarrow \M^r_\sigma$ fits into the following commutative diagram. 
    \begin{equation}\label{stabletoMrsigma}
        \begin{tikzcd}
          C^{st}(\bg_{\sigma,r}^F) \arrow{r}{\iota_{\sigma,r}}  \arrow{d}[swap]{\Res^{\bg_{\sigma,r}}_{\bg_{\tau,r}}}&\M^r_\sigma\arrow{d}{\phi^r_{\sigma,\tau}}\\
          C^{st}(\bg_{\tau,r}^F) \arrow{r}{\iota_{\tau,r}}&\M^r_\tau
        \end{tikzcd}
    \end{equation}
\end{lemma}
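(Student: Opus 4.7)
\textbf{Proof proposal for the commutativity of \eqref{stabletoMrsigma}.}
The plan is to evaluate both sides at an arbitrary $g \in G(k)$, unwinding the convolution definition of $\phi^r_{\sigma,\tau}$ and comparing with the explicit description of $\iota_{\tau,r} \circ \Res^{\bg_{\sigma,r}}_{\bg_{\tau,r}}$. The key input will be the vanishing property of stable functions, namely Proposition~\ref{stablefuncproperties}(3); the rest is an accounting of Haar measures against the Iwahori decomposition from Proposition~\ref{Iwahorifracdepthprop}.

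Fix $f \in C^{st}(\bg_{\sigma,r}^F)$. By definition,
\[
(\iota_{\sigma,r}(f) * \delta_{G_{\tau,r+}})(g) = \mu(G_{\tau,r+})^{-1}\int_{G_{\tau,r+}} \iota_{\sigma,r}(f)(gy^{-1})\, d\mu(y).
\]
Since $\sigma\preceq \tau$ the Iwahori decomposition gives the chain $G_{\sigma,r+}\subset G_{\tau,r+}\subset G_{\sigma,r}$, so the integrand vanishes unless $g\in G_{\sigma,r}$. For $g\in G_{\sigma,r}$, the integrand depends only on the class of $gy^{-1}$ in $G_{\sigma,r}/G_{\sigma,r+}\cong \bg^F_{\sigma,r}$, and as $y$ ranges over $G_{\tau,r+}$ the class $\overline{gy^{-1}}$ ranges over $\bar g + \bu^F_{\sigma,\tau,r}$ (using $G_{\tau,r+}/G_{\sigma,r+}\cong \bu^F_{\sigma,\tau,r}$ and the fact that the Moy-Prasad isomorphism converts inversion into negation). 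The integral therefore collapses to a sum, and using $\mu(G_{\tau,r+})/\mu(G_{\sigma,r+})=|\bu^F_{\sigma,\tau,r}|$ I expect to obtain
\[
(\iota_{\sigma,r}(f) * \delta_{G_{\tau,r+}})(g) \;=\; |\bu^F_{\sigma,\tau,r}|^{-1}\!\!\sum_{N\in \bu^F_{\sigma,\tau,r}} f(\bar g + N).
\]

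Fixing a $\sigma$-opposite facet $\tau'$, write $\bar g = X + N_1 + N_2$ according to the decomposition $\bg^F_{\sigma,r}=\bg^F_{\tau,r}\oplus \bu^F_{\sigma,\tau,r}\oplus \bu^F_{\sigma,\tau',r}$. After the change of variable $N\mapsto N-N_1$ in the inner sum, the above expression becomes $|\bu^F_{\sigma,\tau,r}|^{-1}\sum_N f(X+N_2+N)$. Now there are two cases. If $g\in G_{\tau,r}$, then $N_2=0$ and the expression equals $\Res^{\bg_{\sigma,r}}_{\bg_{\tau,r}}(f)(X)$, where $X$ is precisely the image of $g$ in $G_{\tau,r}/G_{\tau,r+}\cong \bg^F_{\tau,r}$; this matches $\iota_{\tau,r}(\Res(f))(g)$ by definition. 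If $g\in G_{\sigma,r}\setminus G_{\tau,r}$, then $N_2\neq 0$ so $X+N_2\notin \bg^F_{\tau,r}\oplus \bu^F_{\sigma,\tau,r}$, and Proposition~\ref{stablefuncproperties}(3) forces $\sum_N f(X+N_2+N)=0$; this agrees with $\iota_{\tau,r}(\Res(f))(g)=0$, since $\iota_{\tau,r}(\Res(f))$ is supported on $G_{\tau,r}$. Finally, if $g\notin G_{\sigma,r}$, both sides vanish as noted above.

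The substance of the argument is entirely concentrated in the stability hypothesis: without Proposition~\ref{stablefuncproperties}(3), the convolution $\iota_{\sigma,r}(f)*\delta_{G_{\tau,r+}}$ would generally have support spilling over into the cosets of $G_{\tau,r}$ indexed by $\bu^F_{\sigma,\tau',r}$, and the identification with $\iota_{\tau,r}\circ \Res$ would fail. The main bookkeeping obstacle is making sure that the Haar measure factor $\mu(G_{\tau,r+})^{-1}\mu(G_{\sigma,r+})$ produces exactly the normalising constant $|\bu^F_{\sigma,\tau,r}|^{-1}$ appearing in the definition of $\Res^{\bg_{\sigma,r}}_{\bg_{\tau,r}}$; this is immediate from the index formula but must be tracked carefully.
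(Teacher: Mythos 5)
Your proposal is correct and follows essentially the same route as the paper's proof: unwind the convolution $\iota_{\sigma,r}(f)*\delta_{G_{\tau,r+}}$, convert the Haar integral into the normalized sum $\left|\bu_{\sigma,\tau,r}^F\right|^{-1}\sum_{N\in\bu_{\sigma,\tau,r}^F}f(\bar g+N)$, and invoke the vanishing property of stable functions (Proposition \ref{stablefuncproperties}(3)) to identify the result with $\iota_{\tau,r}\circ\Res^{\bg_{\sigma,r}}_{\bg_{\tau,r}}(f)$ on $G_{\tau,r}$ and with $0$ elsewhere. The measure bookkeeping and the case analysis via the Iwahori decomposition match the paper's argument, so no changes are needed.
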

\begin{proof}
For $f\in  C^{st}(\bg_{\sigma,r}^F)$, we immediately see that $\phi^r_{\sigma,\tau}\circ\iota_{\sigma,r}(f)= \iota_{\sigma,r}(f)*\delta_{G_{\tau,r+}}$ is supported inside $G_{\sigma,r+}$. Let $x \in G_{\sigma,r+}$ with image $\bar x \in \bg_{\sigma,r}^F $. Then,
\begin{align*}
     \iota_{\sigma,r}(f)*\delta_{G_{\tau,r+}}(x)&= \frac{1}{\mu(G_{\tau,r+})}\int_{G(k)}\iota_{\sigma,r}(f)(xy^{-1})\delta_{G_{\tau,r+}}(y)d\mu(y)=\frac{1}{\mu(G_{\tau,r+})}\int_{G_{\tau,r+}}\iota_{\sigma,r}(f)(xy^{-1})d\mu(y)\\
     &=\frac{\mu(\delta_{G_{\sigma,r+}})}{\mu(G_{\tau,r+})}\sum_{\bar u \in G_{\tau,r+}/G_{\sigma,r+}} \int_{G_{\sigma,r+}}\iota_{\sigma,r}(f)(xuz)d\mu(z)  = \left|\bu_{\sigma,\tau,r}^F\right|^{-1}\sum_{N\in \bu_{\sigma,\tau,r}^F}f(\bar x+N)
\end{align*}
Since $f\in  C^{st}(\bg_{\sigma,r}^F)$, we know from \ref{vanishingequation} that 
\begin{equation*}
    \left|\bu_{\sigma,\tau,r}^F\right|^{-1}\sum_{N\in \bu_{\sigma,\tau,r}^F}f(\bar x+N)=\begin{cases}
        0, & \text{if } \bar x \not\in \bg_{\tau,r}^F\oplus \bu_{\sigma,\tau,r}^F\\
        \Res^{\bg_{\sigma,r}}_{\bg_{\tau,r}}(f)(\bar x), & \text{if } \bar x \in \bg_{\tau,r}^F\oplus \bu_{\sigma,\tau,r}^F
    \end{cases}
\end{equation*}
Thus, we see that $\iota_{\sigma,r}(f)*\delta_{G_{\tau,r+}}(x)$ is supported on $G_{\tau,r}\subset G_{\sigma,r}$, and for $x \in G_{\tau,r}$ with image $\bar x \in \bg_{\tau,r}^F$
\begin{equation*}
    \phi^r_{\sigma,\tau}\circ\iota_{\sigma,r}(f)(x)= \Res^{\bg_{\sigma,r}}_{\bg_{\tau,r}}(f)(\bar x)= \iota_{\tau,r}\circ \Res^{\bg_{\sigma,r}}_{\bg_{\tau,r}}(f)(x)
\end{equation*}
which proves the commutativity of the diagram. 
\end{proof}
\begin{remark}
    Note that it was essential to consider $C^{st}(\bg_{\sigma,r}^F)$ instead of $C(\bg_{\sigma,r}^F)$, since the vanishing property \eqref{vanishingequation} was instrumental to the proof.
\end{remark}

\begin{lemma}\label{lemmaadjointtmodWtoMrsigma}
    Let $\sigma_1,\sigma_2\in [\Bar{\mcC}_m]$ and $n \in N_G(T)(k)$ such that $n\mcC=\mcC$ and $ n \sigma_1 \preceq \sigma_2$. Then, we have a commutative diagram
    \begin{equation*}
    \begin{tikzcd}
        \C[(\bt^*//W)^F]\arrow{r}{j_{\sigma_1,r}} \arrow{rd}{j_{\sigma_2,r}} & \M^r_{\sigma_1}\arrow{d}{\phi^r_{\sigma_1,\sigma_2,n}}\\
        & \M^r_{\sigma_2}
    \end{tikzcd}
\end{equation*}
\end{lemma}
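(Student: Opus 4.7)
The plan is to factor $\phi^r_{\sigma_1,\sigma_2,n} = \phi^r_{n\sigma_1,\sigma_2}\circ\Ad(n)$ (by its very definition) and reduce to two separate commutativities:
\begin{enumerate}
\item $\Ad(n)\circ j_{\sigma_1,r} = j_{n\sigma_1,r}$, i.e.\ the $j_{\sigma,r}$'s are compatible with the $\Ad(n)$-translation among apartments inside $\bar\mcC$; and
\item $\phi^r_{n\sigma_1,\sigma_2}\circ j_{n\sigma_1,r} = j_{\sigma_2,r}$, which is the previous lemma applied to $n\sigma_1\preceq\sigma_2$ in $[\bar\mcC_m]$ (note that $n\sigma_1\in[\bar\mcC_m]$ because $n\mcC=\mcC$).
\end{enumerate}

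For step (1), I would decompose $j_{\sigma,r} = \iota_{\sigma,r}\circ\rho_{\sigma,r}\circ (c^\mu_{\sigma,r})^{-1}t_\sigma^*$ and chase $\Ad(n)$ through each factor. The innermost piece is the equality
\[
t_\sigma \;=\; t_{n\sigma}\circ \Ad^*(n^{-1}),
\]
which holds because $n\in N_G(T)(k)$ maps to an element of $W$, conjugation by which acts trivially on $\bt^*//W$; the induced map $\Ad(n)\colon W_\sigma\xrightarrow{\sim} W_{n\sigma}$ dualises accordingly. Pulling back to functions and using that $c^\mu_{\sigma,r}=c^\mu_{n\sigma,r}$ (since $G_{n\sigma,r+}=nG_{\sigma,r+}n^{-1}$ and $\bg_{n\sigma,r}^F\cong\bg_{\sigma,r}^F$) gives
\[
(c^\mu_{\sigma,r})^{-1}t_\sigma^* \;=\; (\Ad^*(n^{-1}))^*\circ (c^\mu_{n\sigma,r})^{-1}t_{n\sigma}^*.
\]
The middle piece is diagram \eqref{adjnstable}, which says $\Ad(n^{-1})^*\circ \rho_{\sigma,r} = \rho_{n\sigma,r}\circ (\Ad^*(n^{-1}))^*$. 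Finally for the outer piece, for $f\in C^{st}(\bg_{\sigma_1,r}^F)$ and $x\in G_{n\sigma_1,r}$ with image $\bar x\in\bg_{n\sigma_1,r}^F$, one computes directly
\[
(\Ad(n)(\iota_{\sigma_1,r}(f)))(x) \;=\; \iota_{\sigma_1,r}(f)(n^{-1}xn) \;=\; f(\Ad(n^{-1})\bar x) \;=\; \iota_{n\sigma_1,r}(\Ad(n^{-1})^*f)(x),
\]
so $\Ad(n)\circ\iota_{\sigma_1,r} = \iota_{n\sigma_1,r}\circ\Ad(n^{-1})^*$. Composing the three intertwining relations yields $\Ad(n)\circ j_{\sigma_1,r}=j_{n\sigma_1,r}$.

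With step (1) in hand, the desired identity is immediate:
\[
\phi^r_{\sigma_1,\sigma_2,n}\circ j_{\sigma_1,r} \;=\; \phi^r_{n\sigma_1,\sigma_2}\circ \Ad(n)\circ j_{\sigma_1,r} \;=\; \phi^r_{n\sigma_1,\sigma_2}\circ j_{n\sigma_1,r} \;=\; j_{\sigma_2,r}.
\]
The main technical point is step (1); everything else is bookkeeping. Within step (1), the only non-formal input is the identity $t_{n\sigma}\circ\Ad^*(n^{-1})=t_\sigma$, whose only substance is that conjugation by $n$ realises an element of $W$ on $\bt^*$ and therefore descends to the identity on the categorical quotient $\bt^*//W$. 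Everything else (invariance of Haar measure under inner automorphisms, $\F_q$-structure compatibility, and the Iwahori-type identifications $\bg_{n\sigma,r}^F\cong\bg_{\sigma,r}^F$) is standard.
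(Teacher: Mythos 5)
Your proof is correct and follows essentially the same route as the paper: both factor $\phi^r_{\sigma_1,\sigma_2,n}=\phi^r_{n\sigma_1,\sigma_2}\circ\Ad(n)$, use $c^\mu_{\sigma_1,r}=c^\mu_{n\sigma_1,r}$, the diagram \eqref{adjnstable}, and the $\Ad(n)$-compatibility of $\iota_{\sigma,r}$ to show $j$ intertwines the translation by $n$, and then invoke the already-established face-relation commutativity for $n\sigma_1\preceq\sigma_2$ (Lemma \ref{lemmatmodWtostable} combined with \eqref{stabletoMrsigma}); the only difference is that you assemble the $\Ad(n)$-equivariance at the level of the full composite $j$ before composing, whereas the paper stacks the corresponding diagrams for $i$ and $\iota$ separately. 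Note only the small direction slip: with the paper's conventions the quotient-map identity should read $t_{n\sigma}=t_\sigma\circ\Ad^*(n^{-1})$ (equivalently $t_\sigma=t_{n\sigma}\circ\Ad^*(n)$), the substance being, as you say, that $n$ acts through $W$ and hence trivially on $\bt^*//W$.
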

\begin{proof}
    Since $c^\mu_{\sigma_1,r}=c^\mu_{n\sigma_1,r}$, using \eqref{adjnstable} , we have 
    \begin{equation*}
         \begin{tikzcd}
        \C[(\bt^*//W)^F] \arrow{r}{(c^\mu_{\sigma_1,r})^{-1}t^*_{\sigma_1}} \arrow[dr,"(c^\mu_{n\sigma_1,r})^{-1}t^*_{n\sigma_1}"'] & \C[(\bt^*//W_{\sigma_1})^F]\arrow{d}{\wr} \arrow{r}{\rho_{\sigma_1,r}} & C^{st}(\bg_{\sigma_1,r}^F) \arrow{d}{\wr} \\
        &\C[(\bt^*//W_{n\sigma_1})^F] \arrow{r}{\rho_{n\sigma_1,r}} & C^{st}(\bg_{n\sigma_1,r}^F)
    \end{tikzcd}
    \end{equation*}
Then, applying  Lemma \ref{lemmatmodWtostable} to $n\sigma_1\preceq \sigma_2$ and combining with the above diagram, we have the following.
\begin{equation*}
    \begin{tikzcd}
    &  C^{st}(\bg_{\sigma_1,r}^F) \arrow{d}{\wr} \\
     \C[(\bt^*//W)^F] \arrow{ur}{i_{\sigma_1,r}} \arrow[r,"i_{n\sigma_1,r}" ]  \arrow[rd, "i_{\sigma_2,r}" ] &  C^{st}(\bg_{n\sigma_1,r}^F) \arrow{d}{\Res^{\bg_{n\sigma_1,r}}_{\bg_{\sigma_2,r}}} \\
     & C^{st}(\bg_{\sigma_2,r}^F)
     \end{tikzcd}
\end{equation*}
Further, the adjoint action of $n$ gives us an isomorphism $\Ad(n^{-1})^*:\M^r_{\sigma_1} \xrightarrow{\simeq}\M^r_{n\sigma_1}$ and hence a commutative diagram 
   \begin{equation*}
        \begin{tikzcd}
         C^{st}(\bg_{\sigma_1,r}^F) \arrow{r}{\iota_{\sigma_1,r}} \arrow{d}{\wr} &\M^r_{\sigma_1}\arrow{d}{\wr} \\
          C^{st}(\bg_{n\sigma_1,r}^F) \arrow{r}{\iota_{n\sigma_1,r}}  \arrow{d}[swap]{\Res^{\bg_{n\sigma_1,r}}_{\bg_{\sigma_2,r}}}&\M^r_{n\sigma_1}\arrow{d}{\phi^r_{n\sigma_1,\sigma_2}}\\
          C^{st}(\bg_{\sigma_2,r}^F) \arrow{r}{\iota_{\sigma_2,r}}&\M^r_{\sigma_2}
        \end{tikzcd}
    \end{equation*}
    Since $j_{\sigma,r}= \iota_{\sigma,r}\circ i_{\sigma,r} $, combining the above two diagrams proves the lemma.  
\end{proof}
If $(\pi,V)$ is a smooth representation of $G(k)$ depth $r$, we know from \cite{MP94} that $\exists \: x \in \B(G,k)$ such that $V^{G_{x,r+}}\neq 0$. Since the action of $G(k)$ on chambers is transitive and $r \in \frac{1}{m}\Z_{>0}$, without loss of generality we can assume that $x \in \Bar\mcC$ and $\exists \: \sigma \in [\bar\mcC_m]$ such that $V^{G_{\sigma,r+}}\neq 0$.
\begin{theorem}\label{thmtmodWtocenter}
    There is an algebra homomorphism 
    \begin{equation*}
        \xi^r: \C[(\bt^*//W)^F] \longrightarrow \ZZ^r(G)
    \end{equation*}
    such that for any representation $(\pi,V)$ of depth $r$ and $0\neq v \in V^{G_{\sigma,r+}} $, $\sigma \in [\bar\mcC_m]$, we have 
    \begin{equation}
         \xi^r(\lambda)(v) = \mu(G_{\sigma,r+})\sum_{X\in \bg_{\sigma,r}^F}i_{\sigma,r}(\lambda)(X)\pi^{G_{\sigma,r+}}(X)v
    \end{equation}
    where $i_{\sigma,r}: \C[(\bt^*//W)^F] \rightarrow C^{st}(\bg_{\sigma,r}^F)$ is the map defined in Lemma \ref{lemmatmodWtostable} and $\pi^{G_{\sigma,r+}}$ denotes the natural representation of $\bg_{\sigma,r}^F$ on $V^{G_{\sigma,r+}}$. If $(\pi, V)$ is a smooth irreducible representation of depth $<r$ and $0 \neq v\in   V^{G_{\sigma,r}}$, then $\xi^r(\lambda)(v)= \lambda(\bar0)v$. 
\end{theorem}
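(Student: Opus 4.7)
The plan is to invoke the isomorphism $[A^r]:A^r(G)\xrightarrow{\simeq}\ZZ^r(G)$ from Theorem \ref{thm: isominverselimit} and show that the assignment $\lambda\mapsto \{j_{\sigma,r}(\lambda)\}_{\sigma\in[\bar\mcC_m]}$ defines an algebra homomorphism into $A^r(G)$. The construction of $j_{\sigma,r}$ and the preparatory lemmas from this section have already been set up exactly so that this will go through.

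First I will check that $h(\lambda):=\{j_{\sigma,r}(\lambda)\}_{\sigma\in[\bar\mcC_m]}$ lies in $A^r(G)$. Commutativity of \eqref{tmodWtostable} from Lemma \ref{lemmatmodWtostable} combined with \eqref{stabletoMrsigma} yields $\phi^r_{\sigma,\tau}\circ j_{\sigma,r}=j_{\tau,r}$ whenever $\sigma\preceq\tau\in[\bar\mcC_m]$, and Lemma \ref{lemmaadjointtmodWtoMrsigma} handles the compatibility with the morphisms $\phi^r_{\sigma_1,\sigma_2,n}$ coming from elements $n\in\mc N$ stabilising $\mcC$. Hence $h(\lambda)\in A^r(G)$ is well-defined. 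Each $j_{\sigma,r}$ is an algebra homomorphism sending $\mathbbm 1$ to $\delta_{G_{\sigma,r+}}$, so $\lambda\mapsto h(\lambda)$ is an algebra map $\C[(\bt^*//W)^F]\to A^r(G)$ carrying $\mathbbm 1$ to $\delta_r$. Composing with the algebra isomorphism $[A^r]$ gives the desired $\xi^r:=[A^r]\circ h$ into $\ZZ^r(G)$.

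For the action formula, let $(\pi,V)$ have depth $r$ and let $0\neq v\in V^{G_{\sigma,r+}}$ with $\sigma\in[\bar\mcC_m]$. By Remark \ref{remarkevalfrac} applied to $h(\lambda)$,
\begin{equation*}
\xi^r(\lambda)(v)=[A_{h(\lambda)}](\delta_{G_{\sigma,r+}})\cdot v = j_{\sigma,r}(\lambda)\cdot v.
\end{equation*}
Since $j_{\sigma,r}(\lambda)=\iota_{\sigma,r}(i_{\sigma,r}(\lambda))$ is supported on $G_{\sigma,r}$ and constant on $G_{\sigma,r+}$-cosets, unwinding the convolution with $\pi(v)$ against the Haar measure $\mu$ produces exactly the sum over cosets $G_{\sigma,r}/G_{\sigma,r+}\cong \bg_{\sigma,r}^F$ with the asserted factor $\mu(G_{\sigma,r+})$. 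For the depth-$<r$ case, $v\in V^{G_{\sigma,r}}$ is fixed by all of $G_{\sigma,r}$, so the same computation gives $\xi^r(\lambda)(v)=\mu(G_{\sigma,r+})\bigl(\sum_{X\in\bg_{\sigma,r}^F}i_{\sigma,r}(\lambda)(X)\bigr)v$. Unwinding $i_{\sigma,r}(\lambda)=\mcF_{(\bg_{\sigma,r}^F)^*}(\chi_{\sigma,r}^*((c^\mu_{\sigma,r})^{-1}t_\sigma^*\lambda)^-)$ and summing over $X$, the orthogonality $\sum_X\tilde\psi(X^*(X))=|\bg_{\sigma,r}^F|\cdot\mathbbm 1_{X^*=0}$ isolates the $X^*=0$ term, giving $\sum_X i_{\sigma,r}(\lambda)(X)=|\bg_{\sigma,r}^F|^{1/2}(c^\mu_{\sigma,r})^{-1}\lambda(\bar 0)$. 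Since $c^\mu_{\sigma,r}=\mu(G_{\sigma,r+})|\bg_{\sigma,r}^F|^{1/2}$, the numerical factors cancel and we obtain $\xi^r(\lambda)(v)=\lambda(\bar 0)\,v$.

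The only non-routine point is the inverse-system compatibility of the $j_{\sigma,r}$, but this has been engineered into the earlier lemmas: the non-trivial normalising constants $c_{\sigma,\tau,r}$ appearing in the Fourier/restriction compatibility of Proposition \ref{fourierres} are cancelled precisely by the ratio $c^\mu_{\sigma,r}/c^\mu_{\tau,r}$ built into the definition of $j_{\sigma,r}$. Thus the proof reduces to assembling the pieces and performing the two Fourier-theoretic bookkeeping computations indicated above.
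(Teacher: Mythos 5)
Your proposal is correct and follows essentially the same route as the paper: assemble the $j_{\sigma,r}$ into a compatible system via Lemmas \ref{lemmatmodWtostable}, \eqref{stabletoMrsigma} and \ref{lemmaadjointtmodWtoMrsigma}, pass through the inverse limit $A^r(G)\cong\ZZ^r(G)$, and evaluate via Remark \ref{remarkevalfrac}. The only cosmetic difference is that you compute $\sum_X i_{\sigma,r}(\lambda)(X)$ by direct character orthogonality, while the paper phrases the identical calculation through the Hermitian form and Plancherel (Proposition \ref{fourierprop}(i)).
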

\begin{proof}
     Let $\sigma,\tau \in [\bar\mcC_m]$ such that $\sigma \preceq \tau$.  Combining \eqref{tmodWtostable} and \eqref{stabletoMrsigma}, we see that  the following diagram commutes. 
    \begin{equation*}
    \begin{tikzcd}
        \C[(\bt^*//W)^F]\arrow{r}{j_{\sigma,r}} \arrow{rd}{j_{\tau,r}} & \M^r_\sigma\arrow{d}{\phi^r_{\sigma,\tau}}\\
        & \M^r_\tau
    \end{tikzcd}
\end{equation*}
   Let $\sigma_1,\sigma_2\in [\Bar{\mcC}_m]$ and $n \in N_G(T)(k)$ such that $n\mcC=\mcC$ and $ n \sigma_1 \preceq \sigma_2$. Then, Lemma \ref{lemmaadjointtmodWtoMrsigma} gives us
    \begin{equation*}
    \begin{tikzcd}
        \C[(\bt^*//W)^F]\arrow{r}{j_{\sigma_1,r}} \arrow{rd}{j_{\sigma_2,r}} & \M^r_{\sigma_1}\arrow{d}{\phi^r_{\sigma_1,\sigma_2,n}}\\
        & \M^r_{\sigma_2}
    \end{tikzcd}
\end{equation*}
Thus, we have a map from $\C[(\bt^*//W)^F]$ to the inverse system $\{\M^r_\sigma\}_{\sigma \in [\bar\mcC_m]}$, and hence a map $\C[(\bt^*//W)^F] \rightarrow   \lim_{\sigma\in [\Bar{\mcC}_m] } \M^r_{\sigma}=A^r(G)\cong \ZZ^r(G)$. We define this map to be $\xi^r$. Then, 
\begin{align*}
    \xi^r(\lambda)(v)&=\xi^r(\lambda)(\delta_{G_{\sigma,r+}}(v))= \xi^r(\lambda)(\delta_{G_{\sigma,r+}})(v)=j_{\sigma,r}(\lambda)(v)=\int_{G(k)}j_{\sigma,r}(\lambda)(x)\pi(x)v\;d\mu(x)\\
    &= \int_{G_{\sigma,r}}(\iota_{\sigma,r}\circ i_{\sigma,r}(\lambda))(x)\pi(x)v\;d\mu(x)= \mu(G_{\sigma,r+})\sum_{\bar x\in \bg_{\sigma,r}^F}i_{\sigma,r}(\lambda)(\bar x)\pi^{G_{\sigma,r+}}(\bar x )v
\end{align*}
where $\bar x$ is the image of $x \in G_{\sigma,r}$ under the projection $G_{\sigma,r}\rightarrow G_{\sigma,r}/G_{\sigma,r+}\cong \bg_{\sigma,r}^F$. Now, if $(\pi,V)$ has depth $<r$, following the same steps, we have 
\begin{equation*}
    \xi^r(\lambda)(v)=\mu(G_{\sigma,r+})\sum_{\bar x\in \bg_{\sigma,r}^F}i_{\sigma,r}(\lambda)(\bar x)v
\end{equation*}
Let $\mathbbm 1 \in C(\bg_{\sigma,r}^F)$ denote the function which takes the value $1$ at all points. Then,
\begin{align*}
    \mu(G_{\sigma,r+})\sum_{\bar x\in \bg_{\sigma,r}^F}i_{\sigma,r}(\lambda)(\bar x)&= \mu(G_{\sigma,r+})|\G_\sigma^F|\left(i_{\sigma,r}(\lambda), \mathbbm{1}\right)=\mu(G_{\sigma,r+})|\G_\sigma^F|\left(\mcF_{\bg_{\sigma,r}}(i_{\sigma,r}(\lambda)), \mcF_{\bg_{\sigma,r}}(\mathbbm{1})\right)\\
    &=\mu(G_{\sigma,r+})|\G_\sigma^F|\left( (c^{\mu}_{\sigma,r})^{-1}\chi_{\sigma,r}^*(t^*_\sigma(\lambda)), |\bg_{\sigma,r}^F|^{1/2}\mathbbm 1_0\right)=|\G_\sigma^F|\left(\chi_{\sigma,r}^*(t^*_\sigma(\lambda)),\mathbbm 1_0\right)\\
    &=\chi_{\sigma,r}^*(t^*_\sigma(\lambda))(0)=\lambda(t_\sigma\circ\chi_{\sigma,r}(0))=\lambda(\bar 0).
\end{align*}
\end{proof}
\begin{remark}\label{remarkFrobstructure}
    Observe that if $r =\frac{i}{m}$, the map $\xi^r$ depends on the choice of $\gamma^i \in \mf O_E$, since $\rho_{\sigma,r}$ depends on $\gamma^i$. Recall the diagram \ref{stableGIT} which forms the basis of our construction.
    \begin{equation*}
    \begin{tikzpicture}[baseline=(current  bounding  box.center)]
        \node (A) at (0,0) {$ (\bg^F_{\sigma,r})^*$};
        \node (B) at (2,0) {$((\bg_E)^F_{\sigma,r})^* $};
        \node (D) at (4,0) {$(\bl_\sigma^F)^*$};
        \node (C) at (6,0) {$(\bl_\sigma^*//\mbL_\sigma)^F $};
        \node (E) at (3,-2) {$((\bg_{\sigma,r})^*//\G_\sigma)^F$};
        \node (F) at (8.8,0) {$(\bt^*//W_\sigma)^F$};
        \draw[->] (A)--(B);
         \draw[->] (B)edge node[above] {$\simeq$}(D);
        \draw[->] (D)--(C);
         \draw[->] (A)--(E);
         \draw[->] (E)--(C);
          \draw[->] (C)edge node[above] {$\simeq$}(F);
    \end{tikzpicture}
\end{equation*}
Note that $\bg_{\sigma,r}^*$, $\G_\sigma$ and $\bt^*//W_\sigma$ have natural $\F_q$-structure since $T$ splits over $k$. The $\F_q$-structure on $(\bg_E)_{\sigma,r}^*$, $\bl_{\sigma}^*$, $\mbL_\sigma$ and $\bl_\sigma^*//\mbL_\sigma$ depends on the choice of $\gamma \in \mf O_E$, while the isomorphism $(\bg_E)_{\sigma,r}^*\xrightarrow{\simeq} \bl_\sigma^*$ depends only on the choice of $\gamma^i$. However, if we have a different $\gamma'\in \mf O_E$ with $(\gamma')^i=\gamma^i$, the diagram still holds true for the different $\F_q$-structure on $(\bg_E)_{\sigma,r}^*$, $\bl_{\sigma}^*$, $\mbL_\sigma$ and $\bl_\sigma^*//\mbL_\sigma$ and the same $\F_q$-structure $\bg_{\sigma,r}^*$, $\G_\sigma$ and $\bt^*//W_\sigma$. Also, the isomorphism $(\bg_E)_{\sigma,r}^*\xrightarrow{\simeq} \bl_\sigma^*$ still remains the same and is defined over $\F_q$, and we have a map of Frobenius fixed points, with the Frobenius now attached to the new $\F_q$-structure. Thus, the map $\xi^r$ depends only on $\gamma^i$, and we don't need to ``remember" the $\F_q$-structure induced by $\gamma$. More generally, the isomorphism and hence $\xi^r$ only depends on a choice of  $\nu \in k^t$ such that $\rv(\nu)=r$. 
\end{remark}

\section{Parameters attached to smooth irreducible representations of positive depth}\label{section: Parameters attached to repns of positive depth}
We use the maps $\xi^r$ constructed in the previous section to attach parameters to smooth irreducible representations of depth $r$. We briefly describe restricted Langlands parameters attached to Moy-Prasad types, following \cite{chendebackertsai}, and show that the parameters we attached to smooth irreducible representations are same as the ones described in the afore-mentioned work.

\subsection{Depth-r Deligne Lusztig parameters}\label{section:positiveDLparameters}

\begin{definition}
    For $c \in \bar\F_q$, we denote by $j_c: \bt^*//W \xlongrightarrow{\sim}\bt^*//W$ the isomorphism induced by $\bt^*\xrightarrow{\times c} \bt^*$.  The depth-$r$ Deligne-Lusztig parameters of $G(k)$  are defined to be the set 
\begin{equation*}
    \text{DL}_r:=\{(\nu,\theta_{\nu})\:|\: \nu \in k^t, \:\rv( \nu)=r , \: \theta_{\nu}\in (\bt^*//W)(\bar\F_q)\}/\sim 
\end{equation*}
where $(\nu_1,\theta_{\nu_1}) \sim (\nu_2,\theta_{\nu_2})$ if $\theta_{\nu_1} = j_c(\theta_{\nu_2})$ for $c = \nu_1/\nu_2 + \mf m_{k^t} \in \kappa_{k^t}=\bar\F_q$.
\end{definition}
Let $(\pi, V)$ be a smooth irreducible representation of depth $r>0$, where $r =\frac{i}{m}\in  \Z_{(p)}\cap \Q_{>0}$. Let $\nu =\gamma^i$, where $\gamma \in \mf O_E$ is as described earlier. Then, $z \in \ZZ^r(G)$ acts on $\pi$ via a constant, and composing $\xi^r$ and the evaluation map $\ZZ^r(G) \rightarrow \text{End}(\pi)=\C$, we obtain a map 
\begin{equation*}
     \C[(\bt^*//W)^F] \xrightarrow{\xi^r}\ZZ^r(G) \rightarrow \text{End}(\pi)=\C
\end{equation*}
and hence an element of $(\bt^*//W)^F$, say $\tilde\theta_\nu(\pi)$ given by $\xi^r(\lambda)v= \lambda(\tilde\theta_\nu(\pi))v$ for each $\lambda \in \C[(\bt^*//W)^F]$. Note that $\tilde\theta_\nu$ gives a map $\Irr(G)_r \rightarrow (\bt^*//W)^F$ which depends on the choice of $\nu$. \par
Thus, to each smooth irreducible representation of depth $r \in  \Z_{(p)}\cap \Q_{>0}$, we attach can attach a depth-$r$ Deligne-Lusztig parameter $\Theta_r(\pi) \in \mathrm{DL}_r$ given by the equivalence class of $(\nu,\tilde\theta_\nu(\pi))$,with $\nu$ chosen as described earlier and we have a map $\Theta_r: \Irr (G)_r\rightarrow \DL_r$ defined by this assignment. Further, we observe from our construction that $\tilde\theta_\nu(\pi) \in (\bt^*//W)(\F_q) \subset (\bt^*//W)(\bar\F_q)$, which happens in this case because our group is $k$-split.

\begin{remark}
    When we attach a depth-$r$ Deligne-Lusztig parameter to $\pi\in \Irr(G)_r$, the element $\nu \in k^t$ is not an arbitrary element in $k^t$ with $\rv(\nu)=r$. We have chosen it such that it lies in a finite tamely ramified extension $E'$ with $e(E'/k)=m$. Such an extension is not unique, and more generally it lies in a finite tamely ramified Galois extension $M$ of $k$ such that $e(M/k)\cdot r \in \Z$. 
\end{remark}

\subsection{Deligne-Lusztig parameters attached to Moy-Prasad types}
Recall the definition of a Moy-Prasad type of depth $r$ for $r \in \Q_{>0}$.
\begin{definition}
    For $r \in \Q_{>0}$, a Moy-Prasad type of depth $r$  for $G(k)$ is a pair $(x, \X)$ where $x \in \B(G,k)$ and $\X \in \fg^*(k)_{x,-r}/\fg^*(k)_{x,-r+} \cong (\bg_{x,r}^F)^*$. Let $\mathrm{MP}(r)$ denote the set of Moy-Prasad types of depth $r$.
\end{definition}
A pair  $(x, \X) \in \mathrm{MP}(r)$ is called non-degenerate if the coset $\X = X^* + \fg^*(k)_{x,-r+}$ representing $\X$ does not contain any nilpotent elements (Check \cite{chendebackertsai} Section 4.2.1 or \cite{MP94} Section 3.5 for the definition of nilpotent). Two Moy-Prasad types of positive depth $(x,\X)$ and $(y,\mf Y)$ are said to be associates if they have the same depth $r$ and 
\begin{equation*}
    \Ad^*(G(k))(X^* + \fg^*(k)_{x,-r+}) \cap \Ad^*(G(k))(Y^* + \fg^*(k)_{x,-r+}) \neq \emptyset
\end{equation*}
where $X^* + \fg^*(k)_{x,-r+}$ (resp. $Y^* + \fg^*(k)_{x,-r+}$) is the coset realizing $\X$ (resp. $\mf Y$).  
\begin{remark}
    A Moy-Prasad type of depth $r$ is essentially the same as an unrefined Minimal $K$-type of depth $r$, as defined in \cite{MP94}, Section 5. For $r>0$, the character $\chi$ in the definition of a minimal $K$-type can be identified with an element of $\X\in \fg^*(k)_{x,-r}/\fg^*(k)_{x,-r+} \cong (\bg_{x,r}^F)^*$ using the fixed additive character $\tilde \psi :\F_q\rightarrow \C^\times$. The notions of non-degenerate and associates are also exactly the same.  
\end{remark}
 A smooth irreducible representation $(\pi,V)$ of depth $r$ is said to contain $(x,\X)\in \mathrm{MP}(r)$ as a Moy-Prasad type if the natural representation $\pi^{G_{x,r+}}$ of $G_{x,r}/G_{x,r+} \cong \fg(k)_{x,r}/\fg(k)_{x,r+}$ on $V^{G_{x,r+}}$ contains $\tilde\psi\circ \X$ as a one-dimensional sub-representation. By \cite[Theorem 5.2]{MP94}, we know that any smooth irreducible representation contains a non-degenerate Moy-Prasad type of the same depth. Further, any two Moy-Prasad types contained in $\pi$ are associates of each other. \par
We describe how to attach Deligne-Lusztig parameters to Moy-Prasad types, restricting to the case where $G$ is split over tamely ramified extension. Just for the next few paragraphs, assume that $G$ is not necessarily $k$-split, but split over a tamely ramified extension and let $T$ be a maximal $k$ torus such that the maximal $k$-split subtorus in $T$ is a maximal $k$-split torus, and the maximal $K$-split subtorus in $T$ is a maximal $K$-split torus. We keep the same condition on the depth $r$, with $r \in \frac{1}{m}\Z_{>0} \subset \Z_{(p)}$. Let $(x,\X)\in \mathrm{MP}(r)$ be a Moy-Prasad type of depth $r$, and $M$ be a finite Galois extension of $k$ such that $r\cdot e (M/k)\in \Z$ and $G$ splits over $M$. Let $\nu \in M$ be such that $\rv(\nu) =r$. The pair $(\nu, M)$ is $(x, \X)$-adapted as per Definition 22 in \cite[Section 4.1]{chendebackertsai}. For $x \in \B(G,k)$, choose $h\in G(k)$ such that $hx\in \sA=\A_T$ and let $W^M_{hx}:= N_{(\G_{M^{u}})_{hx}}(\T)/\T$. Consider the following commutative diagram : 

\begin{equation}\label{DLMoyPrasad}
    \begin{tikzpicture}[baseline=(current bounding box.center)]
        \node (A) at (0,0) {$ \fg^*(k)_{x,-r}/\fg^*(k)_{x,-r+}$};
        \node (B) at (0,-2) {$\fg^*(k)_{hx,-r}/\fg^*(k)_{hx,-r+}$};
        \node (B1) at (0,-4) {$\fg^*(M)_{hx,-r}/\fg^*(M)_{hx,-r+}$};
        \node (C) at (0,-6) {$\fg^*(M^{u})_{hx,-r}/\fg^*(M^u)_{hx,-r+}$};
        \node (D) at (0,-8) {$\ft^*(M^u)_{-r}/\ft^*(M^u)_{-r+}$};
        \node (E) at (6,-6) {$\fg^*(M^{u})_{hx,0}/\fg^*(M^u)_{hx,0+}$};
        \node (F) at (6,-8) {$\ft^*(M^u)_{0}/\ft^*(M^u)_{0+}=\bt^*$};
        \node (G) at (12,-6) {$(\bg_{M^u})^*_{hx,0}//(\G_{M^u})_{hx,0}$};
        \node (H) at (12,-8) {$\bt^*//W^{M}_{hx}$};
        \node (I) at (12, -10) {$\bt^*//W$};
        \draw[->] (A) edge node[above,rotate=270] {$\Ad^*(h)$}(B);
        \draw [right hook->] (B)--(B1);
        \draw [right hook->] (B1)--(C);
        \draw [right hook->] (D)--(C);
        \draw [right hook->] (F)--(E);
        \draw[->] (C) edge node[above] {$\times \nu$} (E);
        \draw[->] (D) edge node[above] {$\times \nu$} (F);
        \draw[->] (E)--(G);
        \draw[->] (F)--(H);
        \draw[->] (H) edge node[above,rotate=90] {$\sim$} (G);
        \draw[->] (H)--(I);
    \end{tikzpicture}
\end{equation}
where the $\bar\F_q$-varieties are identified with the set of $\bar\F_q$-points, and the isomorphism is also an isomorphism of $\bar\F_q$-varieties, as encountered in earlier sections. Let $i_{M,h,\nu,x} : \fg^*(k)_{x,-r}/\fg^*(k)_{x,-r+} \rightarrow \bt^*//W (\bar\F_q)$ be the composition (through the inverse of the isomorphism). Lemmas 23 and 24 in \cite{chendebackertsai} show that the map is independent of the choice of $h$ and and $M$, and hence we can denote it by $i_{\nu,x}$. The depth-$r$ Deligne-Lusztig parameter attached to $(x,\X)$ is defined to be the equivalence class of $(\nu,i_{\nu,x}(\X))$, and denoted by $\iota_x(\X)$. 
\par

Let $\Ad^*(h) \X$ be denoted by $\prescript{h}{}{}\X$, and note that if $hx\in \sA$, $i_{\nu,x}(\X)= i_{\nu,hx}(\prescript{h}{}{}\X)$ and hence $\iota_x(\X) = \iota_{hx}(\prescript{h}{}{}\X)$, since the first step of the construction is not needed and all the other steps are exactly same. Further, since  $i_{\nu,x}$ does not depend on choice of $h$, we can assume without loss of generality that $hx \in \bar\mcC$.\paragraph{}

Let $\mathscr K$ be the splitting field of $G$. It is a tamely ramified Galois extension of $k$ and let $e(\ms K/k)=n$ be its ramification degree.  Let $m_1=gcd(n,m)$, $n=n'm_1$ and $m=m_1m_2$. Let $\ms E_{m_2}$ be a totally tamely ramified extension of $\ms K$ and $\ms K_f$ be the unique unramified extension of $\ms K$ of degree $f$, such that $\ms K_f= \ms K(\zeta_{m_2})$. The number $f$ is the smallest number such that $|\kappa_{\ms K}|^f\equiv 1(\mathrm{ mod }\:m_2)$, since $gcd(m_2,p)=1$ (Check for example \cite[Chapter II, Prop. 7.12]{neukirchant}). Since the construction of $i_{\nu,x}(\X)$ does not depend on the choice of the Galois extension $M$, we can let $M= \ms E_{m_2}\ms K_f$. Note that it is a Galois extension of $\ms K$ and hence $k$, and satisfies the required properties since $e(M/k)= nm_2=n'm$. Also observe that $M^u= \ms E_{m_2}^u=:\ms M$ (say). We have a tower of local fields $k\subset \ms K \subset \ms E_{m_2} \subset M$, each tamely ramified over $k$. Let $\varpi_1\in \mf O_{\ms K}$ be an uniformizer and choose $\gamma_1 \in \mf O_{\ms E_{m_2}}$ such that $\gamma_1^{m_2}=\varpi_1$ and $\ms E_{m_2}=\ms K(\gamma_1)$. Note that $\rv(\gamma_1)=1/nm_2$ and $\gamma_1$ is also an uniformizer of $\mf O_M$.  \par

Let's follow the maps in the construction of $\iota_x(\X)$, using ideas in Section \ref{subsectionstablefunction}. We can assume $x \in \sA$. Since $T$ splits over $\ms K$, we can identify $\T$ with the reductive quotient of $T(\ms M)$. Further, observe that $\T$ is $\kappa_{\ms K}$-split and has a natural $\kappa_{\ms K}$-structure given by $\T(\kappa_{\ms K})= T(\ms K)_0/T(\ms K)_{0+}$. Similar statements holds true for $\bt$ and $\bt^*$. Let $\mf F$ denote the Frobenius element in $\Gal(\ms K^u/\ms K)$, and $\mf F_{\gamma_1}$ be the unique Frobenius element in $\Gal(\ms M/\ms K)$ such that $\mf F_{\gamma_1}(\gamma_1)=\gamma_1$. Then $\mf F_{\gamma_1}$ is the topological generator of $\Gal (\ms M/\ms E_{m_2})$, and similar to the paragraphs just before Lemma \ref{stableGIT} and its proof, we can give a $\kappa_{\ms K}$-structure to $(\bg_{\ms M})^*_{x,r}$ using $\mf F_{\gamma_1}$, with $(\bg_{\ms M})^*_{x,r}(\kappa_{\ms K})= \fg^*(\ms E_{m_2})_{x,-r}/\fg^*(\ms E_{m_2})_{x,-r+}$, since $\kappa_{\ms K}=\kappa_{\ms E_{m_2}}=\kappa_{M}$.\par
Choose $\nu= \gamma_1^{in'}$. If we look at the morphisms in \eqref{DLMoyPrasad}, note that $(\bg_{\ms M})^*_{x,r} \xlongrightarrow{\gamma_1^{in'}} (\bg_{\ms M})^*_{x,0}$ is defined over $\kappa_{\ms K}$ since $\gamma_1 \in \ms E_{m_2}$ and we have a map of $\kappa_{\ms K}$ points 
$$\fg^*(\ms E_{m_2})_{x,-r}/\fg^*(\ms E_{m_2})_{x,-r+}\xlongrightarrow{\gamma_1^{in'}}\fg^*(\ms E_{m_2})_{x,0}/\fg^*(\ms E_{m_2})_{x,0+}=(\bg_{\ms M})^*_{x,0}(\kappa_{\ms K}).$$
Similarly, the map $\bt_r^* \rightarrow\bt^*$ is also defined over $\kappa_{\ms K}$, and we have a map of $\kappa_{\ms K}$-points with $\ft^*(\ms E_{m_2})_{0}/\ft^*(\ms E_{m_2})_{0+}=\bt^*(\kappa_{\ms K})$. The isomorphism $\bt^*//W^{\ms M}_{x} \rightarrow(\bg_{\ms M})^*_{x,0}//(\G_{\ms M})_{x,0}$ is induced by $\bt^* \rightarrow (\bg_{\ms M})^*_{x,0}$, and is defined over $\kappa_{\ms K}$ since $\T$ is $\ms K$-split. Using the above facts, we see that the maps between $\bar \F_q$ varieties and vector spaces in \eqref{DLMoyPrasad} are defined over $\kappa_{\ms K}$, and we have a diagram of $\kappa_{\ms K}$-points. 

\begin{equation}\label{diagFqpointsDLpar}
    \begin{tikzpicture}[baseline=(current bounding box.center)]
        \node (A) at (0,0) {$ \fg^*(k)_{x,-r}/\fg^*(k)_{x,-r+}$};
        \node (B) at (0,-2) {$ \fg^*(\ms K)_{x,-r}/\fg^*(\ms K)_{x,-r+}$};
        \node (B1) at (0,-4) {$ \fg^*(\ms E_{m_2})_{x,-r}/\fg^*(\ms E_{m_2})_{x,-r+}$};
        \node (C) at (0,-6) {$\ft^*(\ms E_{m_2})_{-r}/\ft^*(\ms E_{m_2})_{-r+}$};
        \node (D) at (6,-4) {$(\bg_{\ms M})^*_{x,0}(\kappa_{\ms K})$};
        \node (E) at (6,-6) {$\bt^*(\kappa_{\ms K})$};
        \node (G) at (12,-4) {$(\bg_{\ms M})^*_{x,0}//(\G_{\ms M})_{hx,0}(\kappa_{\ms K})$};
        \node (H) at (12,-6) {$\bt^*//W^{M}_{x}(\kappa_{\ms K})$};
        \node (I) at (12, -8) {$\bt^*//W(\kappa_{\ms K})$};
        \draw [right hook->] (A)--(B);
        \draw [right hook->] (B)--(B1);
        \draw [right hook->] (C)--(B1);
        \draw [right hook->] (E)--(D);
        \draw[->] (B1) edge node[above] {$\times \nu$} (D);
        \draw[->] (C) edge node[above] {$\times \nu$} (E);
        \draw[->] (D)--(G);
        \draw[->] (E)--(H);
        \draw[->] (H) edge node[above,rotate=90] {$\sim$} (G);
        \draw[->] (H)--(I);
    \end{tikzpicture}
\end{equation}

From the commutative diagram \eqref{diagFqpointsDLpar}, we can conclude that $i_{\nu, x}(\X)$ lies in $\bt^*//W(\kappa_{\ms K})$, where $\ms K$ is the splitting field of $G$. Similar to Remark \ref{remarkFrobstructure}, we have a natural $\kappa_{\ms K}$-structure on $\bt^*$ and $\bt^*//W$, while the $\kappa_{\ms K}$-structure on $(\bg_{\ms M})^*_{x,0}$ depends on choice of $\gamma_1$. However, if we choose some other $\gamma_1'\neq \gamma_1$, we will have $(\nu=\gamma_1^{in'}, i_{\nu,x}(\X)) \sim (\nu'=(\gamma'_1)^{in'}, i_{\nu',x}(\X))$. \paragraph{}

Now, if we return to the setting where $G$ is $k$-split, we have $k= \ms K$ and $\kappa_{\ms K}=\F_q$. So, $i_{\nu,x}(\X)\in \bt^*//W(\F_q)$, similar to the case of $\theta_{\nu}(\pi)$ for a smooth irreducible representation $\pi$. In fact, as shown in Proposition \ref{PropMP=DLpi}, if $(x,\X)$ is a Moy-Prasad type of depth $r$ contained in $\pi\in \Irr(G)_r$, then $\Theta_r(\pi)=\iota_x(\X)$. Further, since $r =i/m$, we can replace $x$ with $\sigma \in [\bar \mcC_m]$ such that $x \in \sigma$ (WLOG, we can assume $x \in \bar \mcC)$, and we observe that $i_{\nu,x}(\X)= t_\sigma\circ \chi_{\sigma,r}(\X)$. 

\begin{proposition}\label{PropMP=DLpi}
    Let $(\pi, V)$ be a smooth irreducible representation of depth $r \in \Z_{(p)}\cap \Q_{>0}$, and let $(x, \X)$ be a Moy-Prasad type contained in $(\pi,V)$. Then $\iota_x(\X)=\Theta_r(\pi)$. 
\end{proposition}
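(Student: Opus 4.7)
The plan is to unwind both sides of the claimed equality explicitly using the construction of $\xi^r$ from Theorem \ref{thmtmodWtocenter} and the description of $\iota_x(\X)$ from the diagram \eqref{diagFqpointsDLpar}. First, since $i_{\nu,x}$ is independent of the choice of $h$ used to translate $x$ into $\sA$, I may assume without loss of generality that $x \in \bar\mcC$, and pick the unique $\sigma \in [\bar\mcC_m]$ with $x \in \sigma$. Then the Moy-Prasad filtrations stabilize on $\sigma$, so $G_{\sigma,r+} = G_{x,r+}$ and $\bg_{\sigma,r}^F = \bg_{x,r}^F$. Because $(x,\X)$ is contained in $\pi$, there exists $0 \neq v \in V^{G_{x,r+}} = V^{G_{\sigma,r+}}$ on which the quotient $\bg_{\sigma,r}^F$ acts via the character $X \mapsto \tilde\psi(\X(X))$.

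Next, I will apply Theorem \ref{thmtmodWtocenter} to compute $\xi^r(\lambda)(v)$ for $\lambda \in \mathbb C[(\bt^*//W)^F]$. Substituting the definition $i_{\sigma,r}(\lambda) = \rho_{\sigma,r}((c^\mu_{\sigma,r})^{-1} t_\sigma^*(\lambda)) = (c^\mu_{\sigma,r})^{-1}\mcF_{(\bg_{\sigma,r}^F)^*}(\chi_{\sigma,r}^*(t_\sigma^*\lambda)^-)$ and expanding the Fourier transform yields
\begin{equation*}
\xi^r(\lambda)(v) = \frac{\mu(G_{\sigma,r+})}{c^\mu_{\sigma,r} |\bg_{\sigma,r}^F|^{1/2}} \sum_{X^* \in (\bg_{\sigma,r}^F)^*} \lambda\bigl(t_\sigma \chi_{\sigma,r}(-X^*)\bigr) \sum_{X \in \bg_{\sigma,r}^F} \tilde\psi\bigl((X^* + \X)(X)\bigr)\, v.
\end{equation*}
The inner sum is just character orthogonality on the finite abelian group $\bg_{\sigma,r}^F$, so it equals $|\bg_{\sigma,r}^F|$ if $X^* = -\X$ and $0$ otherwise. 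Recalling $c^\mu_{\sigma,r} = \mu(G_{\sigma,r+})|\bg_{\sigma,r}^F|^{1/2}$, all the constants cancel exactly and the sum collapses to
\begin{equation*}
\xi^r(\lambda)(v) = \lambda\bigl(t_\sigma \chi_{\sigma,r}(\X)\bigr)\, v.
\end{equation*}
By the definition of $\tilde\theta_\nu(\pi)$ as the point of $(\bt^*//W)^F$ through which $\xi^r$ factors on $\pi$, this forces $\tilde\theta_\nu(\pi) = t_\sigma \chi_{\sigma,r}(\X)$.

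Finally, I need to identify this with the construction of $i_{\nu,x}(\X)$ in diagram \eqref{diagFqpointsDLpar}. In the $k$-split setting we are in, the splitting field is $\ms K = k$ so $\kappa_{\ms K} = \F_q$, $n = n' = 1$, $m_2 = m$ and we may take $\nu = \gamma^i$ with $\gamma \in \mf O_E$ the chosen uniformizer used throughout Section \ref{section:stablefnstocenter}. Tracing through the vertical maps of \eqref{diagFqpointsDLpar}: multiplication by $\nu = \gamma^i$ identifies $\fg^*(E')_{x,-r}/\fg^*(E')_{x,-r+}$ with $\bl_\sigma^*(\F_q)$, and the quotient map to $\bl_\sigma^*//\mbL_\sigma \cong \bt^*//W_\sigma$ followed by $t_\sigma$ is by construction the composition $\chi_{\sigma,r}$ followed by $t_\sigma$, exactly as pointed out in the paragraph preceding the proposition. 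Hence $i_{\nu,x}(\X) = t_\sigma \chi_{\sigma,r}(\X) = \tilde\theta_\nu(\pi)$, and passing to equivalence classes gives $\iota_x(\X) = \Theta_r(\pi)$.

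The only genuinely subtle point is verifying that the two appearances of $\nu$ (one in the definition of $\Theta_r(\pi)$, one in the definition of $\iota_x(\X)$) refer to compatible choices: both constructions depend only on the class of $\nu$ modulo the equivalence relation defining $\DL_r$, which is exactly the flexibility built into Remark \ref{remarkFrobstructure} and the independence of $i_{\nu,x}$ from the choice of $M$ and $h$. Once this compatibility is recorded, the rest of the argument is the character-orthogonality computation above.
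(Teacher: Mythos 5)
Your proposal is correct and follows essentially the same route as the paper: reduce to $x\in\bar\mcC$ with a compatible choice of $\nu$, evaluate $\xi^r(\lambda)$ on a vector transforming by $\tilde\psi\circ\X$, collapse the sum to $\lambda(t_\sigma\circ\chi_{\sigma,r}(\X))$, and invoke the identification $i_{\nu,x}(\X)=t_\sigma\circ\chi_{\sigma,r}(\X)$ noted before the proposition. The only cosmetic difference is that you expand the Fourier transform and use character orthogonality directly, whereas the paper cites the Plancherel property of Proposition \ref{fourierprop}$(i)$ together with $\mcF_{\bg_{\sigma,r}^F}(\tilde\psi\circ(-\X))=|\bg_{\sigma,r}^F|^{1/2}\mathbbm 1_{\X}$ — the same computation.
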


\begin{proof}

Let $\iota_x(\X)= \overline{(\nu', i_{\nu',x}(\X)}$ and $\Theta_r(\pi)=\overline{(\nu, \tilde\theta_\nu(\pi))}$.  Without loss of generality, we can assume that $x \in \Bar \mcC$ and $\nu'=\nu$. Let $r = \frac{i}{m}$ with $p \nmid m$. Then, we can pick $\sigma \in [\Bar{\mcC}_m]$ such that $x \in \sigma $ and $G_{x,r+}=G_{\sigma,r+}$. Pick $v \in V^{G_{\sigma,r+}}$ such that $\pi^{G_{\sigma,r+}}(X)v = \Tilde{\psi}(\X(X))v $ for $ X\in \bg_{\sigma,r}^F$, where $\pi^{G_{\sigma,r+}}$ is as defined in Theorem \ref{thmtmodWtocenter}. For any $\lambda \in \C[(\bt^*//W)^F]$, we have 
 \begin{equation*}
     \lambda(\tilde\theta_\nu(\pi))v = \xi^r(\lambda)v = \mu(G_{\sigma,r+})\sum_{X\in \bg_{\sigma,r}^F}i_{\sigma,r}(\lambda)(X)\pi^{G_{\sigma,r+}}(X)v= \mu(G_{\sigma,r+})\sum_{X\in \bg_{\sigma,r}^F}i_{\sigma,r}(\lambda)(X)\Tilde{\psi}(\X(X))v 
 \end{equation*}
Using Proposition \ref{fourierprop} $(i)$, we see that 
   \begin{equation*}
       \mu(G_{\sigma,r+})\sum_{X\in \bg_{\sigma,r}^F}i_{\sigma,r}(\lambda)(X)\Tilde{\psi}(\X(X))= \mu(G_{\sigma,r+})\sum_{X^*\in (\bg_{\sigma,r}^F)^*}\mcF_{\bg_{\sigma,r}^F}(i_{\sigma,r}(\lambda))(X^*)\mcF_{\bg_{\sigma,r}^F}(\Tilde{\psi}\circ(-\X))(X^*)
   \end{equation*}
From a simple calculation, it follows that 
   \begin{equation*}
       \mcF_{\bg_{\sigma,r}^F}(\Tilde{\psi}\circ(-\X))= \left|\bg_{\sigma,r}^F\right|^{1/2}\mathbbm{1}_{\X}
   \end{equation*}
   and using definition of $i_{\sigma,r}$, we have 
   \begin{equation*}
       \mcF_{\bg_{\sigma,r}^F}(i_{\sigma,r}(\lambda))= (c^{\mu}_{\sigma,r})^{-1}\mcF_{\bg_{\sigma,r}^F}(\rho_{\sigma,r}\circ t^*_\sigma(\lambda))=  (c^{\mu}_{\sigma,r})^{-1}\chi_{\sigma,r}^*\circ t^*_\sigma(\lambda)
   \end{equation*}
   Using the above results, we note that for any $\lambda \in \C[(\bt^*//W)^F]$
   \begin{align*}
       \lambda(\tilde\theta_\nu(\pi))&=\mu(G_{\sigma,r+})\sum_{X^*\in (\bg_{\sigma,r}^F)^*}\mcF_{\bg_{\sigma,r}^F}(i_{\sigma,r}(\lambda))(X^*)\mcF_{\bg_{\sigma,r}^F}(\Tilde{\psi}\circ(-\X))(X^*) \\
       &=\mu(G_{\sigma,r+})\cdot \left|\bg_{\sigma,r}^F\right|^{1/2}\cdot (c^{\mu}_{\sigma,r})^{-1}\sum_{X^*\in (\bg_{\sigma,r}^F)^*}\chi_{\sigma,r}^*\circ t^*_\sigma(\lambda)(X^*)\mathbbm{1}_{\X}(X^*)\\
       &=\chi_{\sigma,r}^*\circ t^*_\sigma(\lambda)(\X)= \lambda(i_{\nu,x}(\X))
   \end{align*}
   and hence $\Theta_r(\pi)= \iota_x(\X)$. 
\end{proof}

\begin{remark}\label{remark nontrivial and alternativeproof}
   Two Moy-Prasad types are defined to be stable associates if they have the same Deligne-Lusztig parameter attached to it. In \cite[Lemma 36]{chendebackertsai}, it is proved that two Moy-Prasad types which are associates of each other are stable associates. Since Moy-Prasad types contained in a smooth irreducible representation are associates of each other, this essentially attaches a Deligne-Lusztig parameter $\iota_\pi$ to $\pi$. The previous proposition gives an alternative proof of the fact that any two Moy-Prasad types contained in a smooth irreducible representation have the same Deligne-Lusztig parameter attached to it, and it is shown to be equal to the parameter $\Theta_r(\pi)$ attached to $\pi\in \Irr(G)_r$ in Section \ref{section:positiveDLparameters}. Further, \cite[Lemma 33]{chendebackertsai} shows that if the depth $r\in \Z_{(p)} \cap \Q_{>0}$, $\iota_x(\X)$ is non-zero if and only if $(x,\X)$ is non-degenerate. Since every smooth irreducible representation contains a non-degenerate Moy-Prasad type, we have $\iota_\pi=\Theta_r(\pi)$ and $\Theta_r(\pi) $ is non-trivial for any $\pi \in \Irr(G)_r$ with $ r \in \Z_{(p)}\cap \Q_{>0}$. 
   Let $\bar 0$ denote the image of $0$ in $(\bt^*//W)^F$ and $\DL_r^\circ$ denote the subset of $\DL_r$ containing non-trivial depth-$r$ Deligne-Lusztig parameters, i.e., $\overline{(\nu_r,\theta_{\nu_r})}$ such that $\theta_{\nu_r}\in (\bt^*//W)^F \setminus \{\bar 0\}$. Then, $\Theta_r$ is basically a map $\Theta_r: \Irr(G)_r\rightarrow \DL_r^\circ$ for $ r \in \Z_{(p)}\cap \Q_{>0}$.
\end{remark}

\subsection{Restricted depth-$r$ parameters}
 Let $G^\vee$ denote the dual connected reductive group over $\C$, and $T^\vee$ the dual complex torus. Let $W_k$ denote the absolute Weil group of $k$ ( see \cite[Section 6.3]{ME09}), and $I_k=\Gal(\bar k/K)$ denote the Inertia subgroup of the Weil group. We have the usual upper numbering filtration of the Galois group $\Gamma_k=\Gal(\bar k/k)$ denoted by $\Gamma^r_k$ (see \cite [Section 3 of Chapter IV] {serrelocal}). For infinite extensions, $\Gamma_k^r$ is defined as the inverse limit $\varprojlim \Gal(L/k)$ for finite Galois extensions of $k$. Similarly, we have the upper numbering filtrations of the Weil group, denoted $\{I_k^r\;|\: r\geq 0\}$, where $I_k^0=I_k$ is the inertia subgroup and $I_k^{0+}$ is the wild inertia subgroup. Let $I_k^{r+}$ denote the closure of $\cup_{s>r}I_k^s$. The group $I^r_k$ has the subspace topology from $I_k$, and we equip $I_k^r/I_k^{r+}$ with the quotient topology. 
 \begin{definition}\label{defn:positiveRPr}
     Let $r \in \Q_{>0}$. A continuous homomorphism $\varphi:I_k^r/I_k^{r+} \rightarrow G^\vee $, is a tame restricted depth-$r$ parameter if there exists a maximal torus $T^\vee \subset G^\vee $ and a continuous homomorphism $\tilde \varphi :I_k^{0+} \rightarrow T^\vee $, trivial on $I_k^{r+}$ such that $\tilde \varphi |_{I_k^r} \equiv \varphi$. Let $\mathrm{RP}_r$ denote the set of $G^\vee$-conjugacy classes of tame restricted depth-$r$ parameters.
 \end{definition}
Let $\pi\in \Irr(G)_r$ such that $\Theta_r(\pi)= \overline{(\nu,\tilde\theta_\nu(\pi))} \in \mathrm{DL}_r$ for $r \in \Z_{(p)}\cap \Q_{>0}$. Let $M$ denote a finite tamely ramified Galois extension of $k$ such that $r\cdot e(M/k) \in \Z$. Given the fixed additive character $\tilde \psi :\kappa_k=\F_q\rightarrow \C^{\times}$, we have an additive character $\tilde \psi_M : \kappa_M\rightarrow \C^{\times}$ given by $\tilde \psi_M (x) = \tilde \psi (\mathrm {Tr}_{\kappa_M/\F_q}(x))$. The map $\mathrm{p}: \bt^* \rightarrow \bt^*//W$ is surjective at the level of $\bar\F_q$-points. For $\tilde\theta_\nu(\pi)\in \bt^*//W(\F_q)$, there exists a finite extension $\mbk$ over $k$ and $X \in \bt^*(\mbk)$ such that $X$ maps to $\tilde\theta_\nu(\pi)$ under the map $\mathrm p$. Thus, possibly replacing $M$ with a finite unramified extension of itself, we have $X \in \ft^*(M)_{-r}/ \ft^*(M)_{-r+} \cong  \bt^*(\kappa_M)$ such that $X$ maps to $\tilde\theta_\nu(\pi)$ under the map 
 \[
\ft^*(M)_{-r}/ \ft^*(M)_{-r+} \xrightarrow{\simeq}\bt^*(\kappa_M) \rightarrow(\bt^*//W)(\kappa_M) 
 \]
Given such an $X$, we can define an additive character on $\ft(M)_r/\ft(M)_{r+}$ in the following way-
\begin{align*}
   \ft(M)_r/\ft(M)_{r+}& \longrightarrow\C^{\times} \\
   Y\:\:&\longmapsto \Tilde{\psi}_M(X(Y))
\end{align*}
Using the Moy-Prasad isomorphism $T(M)_r/T(M)_{r+} \cong \ft(M)_r/\ft(M)_{r+}$, we can pull it back to a character of $T(M)_r/T(M)_{r+}$ and hence a character of $T(M)_r$ denoted by $\chi_{X,M}$ via pullback along the quotient. Let $\psi_1,\psi_2$ are two characters of $T(M)$ whose restriction to $T(M)_r$ gives $\chi_{X,M}$, and denote their associated Langlands parameters via Local Langlands for Tori (see \cite[Section7.5]{Yutori}) by $\varphi_i:W_M\rightarrow T^\vee$, $i=1,2$ and they clearly have depth $\leq r$. Since $\psi_1^{-1}\psi_2$ has depth less than $r$, by depth preservation for tamely ramified tori (see \cite [Section 7.10]{Yutori}) $\varphi_1^{-1}\varphi_2$ will also have depth less than $r$, and hence $\varphi_1$ and $\varphi_2$ have the same restriction to $I_M^r/I_M^{r+}$. Further, since $r>0$ and $M/k$ is tamely ramified, we have that $I_M^r= I_M^r\cap \Gal(\bar k/M)^r=I^r_M\cap \Gal(\bar k/ M^t)= I^r_M \cap \Gal(\bar k/k^t)=I_k^r$. Hence, to $\overline{(\nu,\tilde\theta_\nu(\pi))}$, we can attach a continuous homomorphism $\varphi^T_{X,M}:I_k^r/I_k^{r+} \rightarrow T^\vee$. We have an embedding $T^\vee \hookrightarrow G^\vee$ determined upto $G^\vee$ conjugation, and composing with that, $\varphi_{X,M}$ gives a tame restricted depth-$r$ parameter.\paragraph{}

Observe that the process of attaching restricted depth-$r$ parameter $\varphi^T_{X,M}$ to $\Theta_r(\pi)$ is exactly the same as in \cite{chendebackertsai}, combining the steps mentioned in Section 3.2 and the proof of Lemma 40. Thus, $\varphi^T_{X,M}$ is the same as $\phi^T_{X,M}$ as defined in \cite[Section 5.1]{chendebackertsai} and using \cite[Corollary 19]{chendebackertsai}, we can conclude that $\varphi^T_{X,M}$ is independent of the choice of $M$. Hence, we can denote it by $\varphi^T_X$. Further, the $G^\vee$-conjugacy class of $\varphi_X^T$ depends only on $\Theta_r(\pi)$ (check \cite[Lemma 41]{chendebackertsai}, and hence gives an unique element $\varphi_{\iota_\pi}\in \mathrm{RP}_r$. There is in fact a bijection between $\mathrm{DL}_r$ and $\mathrm{RP}_r$, as shown in \cite[Lemma 45]{chendebackertsai}. Thus for $r \in \Q_{>0}\cap \Z_{(p)}$, we have a map 
\begin{equation*}
    \Irr(G)_r \xrightarrow{\Theta_r} \DL_r\xrightarrow{\simeq} \mathrm{RP}_r
\end{equation*}
and each element in the image of this map is non-trivial when $\Irr(G)_r \neq \emptyset$. \par

 \section{The depth zero case}\label{section: Depthzerocase}
In this section, we will construct stable functions on the depth zero Moy-Prasad filtraion quotient, and use that to construct elements in the depth zero center. We also define and attach depth-zero Deligne Lusztig parameters to smooth irreducible representations of depth zero using the constructed elements. This is basically a generalization of the results in \cite{cb24} to the reductive group case, and we use the complex dual torus instead of the $\bar\F_q$-dual.  Many of the proofs here can be directly lifted from \cite{cb24}, and we will refer to it as and when it seems fit. 
\subsection{Stable functions on the depth-zero quotient}
For $\sigma \in [\sA]$, let $C(\G_\sigma^F)$ denote the space of class functions (conjugation invariant functions) on the depth-zero quotient $\G_\sigma^F=G_{\sigma,0}/G_{\sigma,0+}$, equipped with the convolution product 
\begin{equation*}
    f*g(x)= \sum_{y \in \G_\sigma^F}f(xy^{-1})g(y)
\end{equation*}
For $f,g \in C(\G_\sigma^F)$, we have the standard inner product on $ C(\G_\sigma^F)$ given by 
\begin{equation*}
    (f,g)= |\G_\sigma^F|^{-1}\sum_{x\in \G_\sigma^F}f(x)\overline{g(x)}.
\end{equation*}
Note that in the depth-zero case, $\G_\sigma \cong \mbL_\sigma$ and for $\sigma \preceq\tau \in [\sA]$, $\mbP_{\sigma,\tau} \subset \G_\sigma$ as defined in Section \ref{subsectionstablefunction} is an $F$-stable parabolic subgroup with Levi decomposition $\mbP_{\sigma,\tau} \cong \U_{\sigma,\tau}\rtimes \G_\tau$, where $\G_\tau\subset \G_\sigma$ is an $F$-stable Levi subgroup. We have the parabolic restriction map $\res_{\G_\tau}^{\G_\sigma}: C(\G_\sigma^F)\rightarrow C(\G_\tau^F)$
\begin{equation*}
    \res_{\G_\tau}^{\G_\sigma}(f)(l)= \sum_{u \in \U_{\sigma,\tau}^F}f(lu)
\end{equation*}
We fix an isomorphism 
\begin{equation}\label{isom:F_qto Qmod Z}
    \bar\F_q^\times \simeq (\Q/\Z)_{p'}
\end{equation}
where $(\Q/\Z)_{p'}$ denotes the subgroup of elements in $\Q/\Z$ of order prime to $p$. Let $G_\sigma^\vee$ denote the complex dual of $\G_\sigma$ and $T^\vee = X^*(\T)\otimes \C^\times \cong X^*(T)\otimes \C^\times $ denote the complex dual of $\T$. Identifying $X^*(\T)$ and $X^*(T)$, it can be thought of as the complex dual of $T$ as well. Let $[q]$ denote the morphism $x \mapsto x ^q$ on $G_\sigma^\vee $. The set of semisimple conjugacy classes in $G_\sigma^\vee $ stable under $[q]$ are in bijection with $(T^\vee //W_\sigma)^{[q]}$. We have a surjective map 
\begin{equation*}
    \mc L_\sigma: \Irr(\G_\sigma^F)\rightarrow (T^\vee //W_\sigma)^{[q]}
\end{equation*}
where $\Irr(\G_\sigma^F) $ is the set of isomorphism classes of irreducible complex representations of the finite group $\G_\sigma^F$. This is essentially the map described in \cite[Section 5]{DL76}, but using the complex dual as described in \cite[Section 16]{lusztigtwelve} instead of the $\bar\F_q$-dual as in the original work. This depends on the chosen isomorphism \ref{isom:F_qto Qmod Z}. As mentioned in \cite{lusztigtwelve}, using the complex dual is more canonical since it involves choosing only one ismorphism as opposed to two as in \cite{DL76}. This decomposes $\Irr(\G_\sigma^F)$ into packets given by $\mc L_\sigma^{-1}(\theta)$ for $\theta \in (T^\vee //W_\sigma)^{[q]}$.
For each $f\in C(\G^F_\sigma)$ and $(\pi,V) \in \Irr(\G_\sigma^F)$, Schur's lemma implies that we have a function $\Irr(\G_\sigma^F) \rightarrow \C$ such that 
\begin{equation}\label{gammafndefn}
    \sum_{g \in \G_\sigma^F}f(g)\pi(g)= \gamma_f(\pi)\mathrm{Id}_V
\end{equation}
This gives a bijection between $C(\G^F_\sigma)$ and gamma functions $\gamma:\Irr(\G_\sigma^F) \rightarrow \C$ with inverse given by $ \gamma \mapsto f_\gamma$ such that
\begin{equation}\label{gammafntof}
    f_\gamma(x) :=|\G_\sigma^F|^{-1}\sum_{\pi \in \Irr(\G_\sigma^F)}\gamma(\pi)\chi_\pi(1)\overline{\chi_\pi(x)},
\end{equation}
where $\chi_\pi$ denotes the character of $\pi$. 

\begin{definition}
    A function $f\in C(\G^F_\sigma)$ is defined to be stable if $\gamma_f:\Irr(\G_\sigma^F) \rightarrow \C$ factors through $\Irr(\G_\sigma^F) \rightarrow (T^\vee //W_\sigma)^{[q]} \rightarrow\C$, i.e., $\gamma_f$ is constant on the packets $\mc L_\sigma^{-1}(\theta)$ and hence can be viewed as functions on the set $(T^\vee //W_\sigma)^{[q]}$. We denote the space of stable functions by $C^{st}(\G^F_\sigma)$. 
\end{definition}

Let $\C[(T^\vee //W_\sigma)^{[q]}]$ denote the space of complex functions on the set $(T^\vee //W_\sigma)^{[q]}$, with multiplictaion given by the ususal pointwise multiplication of functions. For $\sigma\preceq \tau \in [\sA]$, $W_\tau \subset W_\sigma$ and we have a canonical map $T^\vee//W_\tau \rightarrow T^\vee //W_\sigma$ compatible with $[q]$. We denote the map given by pullback along the natural map  $(T^\vee //W_\tau)^{[q]} \rightarrow (T^\vee //W_\sigma)^{[q]}$ by 
\begin{equation*}
    \Res^\sigma_\tau : \C[(T^\vee //W_\sigma)^{[q]}] \rightarrow \C[(T^\vee //W_\tau)^{[q]}]
\end{equation*}
\begin{proposition}[Properties of stable functions]
    \begin{itemize}
        \item [(1)] For $\sigma \in [\sA]$, there is an algebra isomorphism 
        \begin{equation*}
            \rho_{\sigma,0}: \C[(T^\vee //W_\sigma)^{[q]}]\xlongrightarrow[]{\simeq} C^{st}(\G^F_\sigma)
        \end{equation*}
        which sends the characteristic function $\mathbbm 1_\theta $ of $\theta \in \C[(T^\vee //W_\sigma)^{[q]}] $ to the idempotent projector $f_\theta \in C^{st}(\G^F_\sigma)$ for the packet $\mc L_\sigma^{-1}(\theta)$ with $\gamma_{f_\theta}(\pi)=1$ if $\mc L_\sigma(\pi)=1$, and $\gamma_{f_\theta}(\pi)=0$ otherwise. 
        \item[(2)] For $\sigma \preceq \tau \in [\sA]$ and $f  \in C^{st}(\G^F_\sigma)$, we have $\res^{\G_\sigma}_{\G_\tau} \in  \in C^{st}(\G^F_\tau)$, and the following diagram commutes. 
        \begin{equation}\label{restrictionstabledepth0}
            \begin{tikzcd}
             \C[(T^\vee //W_\sigma)^{[q]}] \arrow{r}{\rho_{\sigma,0}} \arrow{d}[swap]{\Res^{\sigma}_{\tau}} & C^{st}(\G^F_\sigma) \arrow{d}{\res^{\G_\sigma}_{\G_\tau}} \\
             \C[(T^\vee //W_\tau)^{[q]}]  \arrow{r}{\rho_{\tau,0}} & C^{st}(\G^F_\tau)  
            \end{tikzcd}
        \end{equation}
        \item[(3)] For any $f \in C^{st}(\G^F_\sigma)$ and $\sigma \preceq \tau \in [\sA]$, we have 
        \begin{equation*}
            \sum_{u \in \U_{\sigma,\tau}^F}f(xu)= 0 \text{   for   } x \not \in \mbP_{\sigma,\tau}^F
        \end{equation*}
    \end{itemize}
\end{proposition}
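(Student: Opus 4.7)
The plan is to prove the three parts in order, relying on the bijection between conjugation-invariant functions and gamma functions from \eqref{gammafndefn}--\eqref{gammafntof}, together with the classical compatibility of the Deligne-Lusztig semisimple parametrization $\mathcal{L}_\sigma$ with Harish-Chandra restriction.

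For part (1), I first check that $f\mapsto\gamma_f$ is an algebra isomorphism from $(C(\G_\sigma^F),\ast)$ to $(\mathrm{Fun}(\Irr(\G_\sigma^F),\C),\cdot)$. Substituting $x=zy$ in the defining convolution yields
\[ \sum_x(f_1\ast f_2)(x)\pi(x)=\Bigl(\sum_z f_1(z)\pi(z)\Bigr)\Bigl(\sum_y f_2(y)\pi(y)\Bigr), \]
i.e.\ $\gamma_{f_1\ast f_2}=\gamma_{f_1}\gamma_{f_2}$, while bijectivity of $f\mapsto\gamma_f$ is the content of \eqref{gammafntof}. By definition $C^{st}(\G_\sigma^F)$ corresponds to those gamma functions that factor through $\mathcal{L}_\sigma$; since $\mathcal{L}_\sigma$ is surjective, pullback along $\mathcal{L}_\sigma$ produces the desired algebra isomorphism $\rho_{\sigma,0}$. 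The formula $\rho_{\sigma,0}(\mathbbm{1}_\theta)=f_\theta$ is then immediate from \eqref{gammafntof}.

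For part (2), the key input is the classical compatibility (see e.g.\ Digne-Michel): if $\pi'\in\Irr(\G_\tau^F)$ appears as a constituent of $\res^{\G_\sigma}_{\G_\tau}(\chi_\pi)$, then $\mathcal{L}_\sigma(\pi)$ equals the image of $\mathcal{L}_\tau(\pi')$ under the natural map $(T^\vee//W_\tau)^{[q]}\to(T^\vee//W_\sigma)^{[q]}$ whose pullback defines $\Res^\sigma_\tau$. Using Frobenius reciprocity for the adjoint pair of Harish-Chandra induction/restriction, one computes $\gamma_{\res^{\G_\sigma}_{\G_\tau}(f)}(\pi')$ as a weighted sum of $\gamma_f$-values over constituents of $R^{\G_\sigma}_{\G_\tau}(\chi_{\pi'})$. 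Since these constituents share a common semisimple parameter, if $\gamma_f=\epsilon\circ\mathcal{L}_\sigma$ then $\gamma_{\res^{\G_\sigma}_{\G_\tau}(f)}$ factors as $(\Res^\sigma_\tau(\epsilon))\circ\mathcal{L}_\tau$, which is precisely the commutativity of diagram \eqref{restrictionstabledepth0}.

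For part (3), by linearity I reduce to $f=f_\theta$ for a single $\theta\in(T^\vee//W_\sigma)^{[q]}$. Set $h(x):=\sum_{u\in\U_{\sigma,\tau}^F}f_\theta(xu)$; it is right $\U_{\sigma,\tau}^F$-invariant and, because $\G_\tau^F$ normalizes $\U_{\sigma,\tau}^F$ and $f_\theta$ is a class function on $\G_\sigma^F$, also $\mbP_{\sigma,\tau}^F$-conjugation invariant (a direct change-of-variable $u\mapsto gug^{-1}$ for $g\in\mbP_{\sigma,\tau}^F$). On $\mbP_{\sigma,\tau}^F$, $h$ agrees with the inflation of $\res^{\G_\sigma}_{\G_\tau}(f_\theta)$, so (3) amounts to the vanishing $h\equiv 0$ on $\G_\sigma^F\setminus\mbP_{\sigma,\tau}^F$. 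This is the main obstacle, being the finite-group analogue of Proposition~\ref{stablefuncproperties}(3), which was proved in the positive-depth setting via Fourier transform. Here I plan to replace Fourier transform with character theory: using the identity
\[ \sum_{u\in\U_{\sigma,\tau}^F}\chi_\pi(xu)=|\U_{\sigma,\tau}^F|\,\mathrm{tr}\bigl(\pi(x)\,\pi(e_{\U_{\sigma,\tau}^F})\bigr) \]
for the averaging idempotent $e_{\U_{\sigma,\tau}^F}$, together with the expansion $f_\theta=|\G_\sigma^F|^{-1}\sum_{\pi\in\mathcal{L}_\sigma^{-1}(\theta)}\dim(\pi)\,\overline{\chi_\pi}$, the problem reduces to verifying a cancellation across the packet $\mathcal{L}_\sigma^{-1}(\theta)$. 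I would use the Bruhat decomposition $\G_\sigma^F=\bigsqcup_w\mbP_{\sigma,\tau}^F w\mbP_{\sigma,\tau}^F$: for $x$ in a cell with $w\neq 1$, Mackey's formula rewrites the packet-summed trace on $V_\pi^{\U_{\sigma,\tau}^F}$ in terms of Harish-Chandra restrictions to strictly smaller Levi subgroups of $\G_\sigma$ attached to $w$-twists, and these contributions vanish after summing across the packet because $\theta$ lies outside the image of the corresponding semisimple-parameter map.
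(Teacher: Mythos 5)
Parts (1) and (2) of your proposal are sound. For (1) you are doing exactly what the paper does (the paper's proof of (1) is the same observation: $f\mapsto\gamma_f$ is a bijection by \eqref{gammafntof}, $\gamma_{f*g}=\gamma_f\cdot\gamma_g$, and stability means factoring through the surjection $\mc L_\sigma$). For (2) the paper simply cites \cite[Prop.\ 4.2.2(4)]{laumon}, whereas you re-derive it from the compatibility of Lusztig's series parametrization with Harish--Chandra induction/restriction plus adjunction; this is a legitimate self-contained alternative, with the one caveat that you must actually check the scalar: with the paper's \emph{unnormalized} $\res^{\G_\sigma}_{\G_\tau}$ one gets $\gamma_{\res f}(\pi')\dim\pi'=\frac{|\mbP_{\sigma,\tau}^F|}{|\G_\sigma^F|}\sum_\pi \langle R^{\G_\sigma}_{\G_\tau}\chi_{\pi'},\chi_\pi\rangle\,\gamma_f(\pi)\dim\pi$, and the constant collapses to $1$ precisely because $\dim R^{\G_\sigma}_{\G_\tau}(\pi')=[\G_\sigma^F:\mbP_{\sigma,\tau}^F]\dim\pi'$; without this verification the diagram would only commute up to a scalar.

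Part (3) has a genuine gap. The paper disposes of it by citing \cite[Theorem 4.2]{cb24}; your replacement plan does not amount to a proof. First, the step in which "Mackey's formula rewrites the packet-summed trace on $V_\pi^{\U_{\sigma,\tau}^F}$" at elements of a Bruhat cell $\mbP_{\sigma,\tau}^F w \mbP_{\sigma,\tau}^F$ is not a defined manipulation: Mackey's formula computes compositions ${}^*R\circ R$ of Harish--Chandra functors (equivalently the bimodule structure of $e_{\U}\C[\G_\sigma^F]e_{\U}$), but $\mathrm{tr}\bigl(\pi(x)\pi(e_{\U_{\sigma,\tau}^F})\bigr)$ for a fixed $x$ outside $\mbP_{\sigma,\tau}^F$ is the trace of the compression $e_{\U}xe_{\U}$ acting on $V_\pi^{\U^F}$, and this operator does \emph{not} act through the Levi $\G_\tau^F$, so knowing the Harish--Chandra restriction of $\pi$ (even packet-by-packet) does not determine it; you would need intertwining-operator data that your sketch never produces. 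Second, and decisively, the proposed cancellation criterion --- "the contributions vanish because $\theta$ lies outside the image of the corresponding semisimple-parameter map" --- cannot be the mechanism: for any Levi $M$ of $\G_\sigma$ containing $\T$ the natural map $(T^\vee//W_M)^{[q]}\to(T^\vee//W_\sigma)^{[q]}$ is surjective, so \emph{no} $\theta$ lies outside its image; e.g.\ for $\theta$ the trivial class the packet $\mc L_\sigma^{-1}(\theta)$ consists of the unipotent representations and the vanishing $\sum_{u\in\U_{\sigma,\tau}^F}f_\theta(xu)=0$ for $x\notin\mbP_{\sigma,\tau}^F$ must still hold, while your reason for it evaporates. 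So the heart of (3) --- the finite-group analogue of the Fourier-transform argument you used in positive depth --- is left unproved; to complete it you would need a genuinely different input (for instance the fact that the series projector $f_\theta$ is a uniform function together with restriction formulas for Deligne--Lusztig characters, or simply the cited result of \cite{cb24}).
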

\begin{proof}These are basically restatments of the results in \cite[Section 4.1]{cb24} in the current setting. 
    \begin{itemize}
        \item [(1)] The proof follows from the definition of stable functions, \eqref{gammafntof} and the fact that $\gamma_{f*g}(\pi)=\gamma_f(\pi)\cdot \gamma_g(\pi)$. 
        \item[(2)] This is proved in \cite[Proposition 4.2.2(4)]{laumon}. 
        \item[(3)] This is \cite[Theorem 4.2]{cb24}. 
    \end{itemize}
\end{proof}

\begin{proposition}
    Let $\sigma \in [\sA]$ and $n \in N_G(T)(k)$. The adjoint action of $n$ induces an isomorphism 
    \begin{equation*}
        C^{st}(\G^F_\sigma) \xlongrightarrow{\simeq} C^{st}(\G^F_{n\sigma})
    \end{equation*}
\end{proposition}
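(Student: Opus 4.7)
The plan is to argue exactly as in the positive depth case (Proposition right before this one), replacing the Fourier-transform/Chevalley map picture with its depth-zero analogue coming from the Deligne–Lusztig correspondence. First I would observe that, since $n \in N_G(T)(k)$, conjugation by $n$ preserves $T$ and maps $\sigma$ to $n\sigma$ in the same apartment $\sA$. Therefore $nG(K)_{\sigma,0}n^{-1} = G(K)_{n\sigma,0}$ and $nG(K)_{\sigma,0+}n^{-1} = G(K)_{n\sigma,0+}$, which induces an isomorphism of connected reductive $\bar\F_q$-groups
\[
\Ad(n): \G_\sigma \xlongrightarrow{\sim} \G_{n\sigma}
\]
carrying $\T \subset \G_\sigma$ to $\T \subset \G_{n\sigma}$ and identifying $W_\sigma$ with $W_{n\sigma}$ inside $W$. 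Because $n \in G(k)$, the isomorphism is defined over $\F_q$ and commutes with Frobenius. Pullback along $\Ad(n^{-1})$ then gives an algebra isomorphism
\[
\Ad(n^{-1})^*: C(\G_\sigma^F) \xlongrightarrow{\sim} C(\G_{n\sigma}^F), \qquad f \longmapsto \big(x \mapsto f(n^{-1}xn)\big),
\]
for the convolution products, and it sends characters of irreducible representations to characters of irreducible representations via the pullback bijection $\Ad(n^{-1})^*: \Irr(\G_\sigma^F)\xrightarrow{\sim}\Irr(\G_{n\sigma}^F)$.

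Next I would check that $\Ad(n^{-1})^*$ respects the Deligne–Lusztig decomposition. The coadjoint action of $n$ on $X^*(T)$, hence on $T^\vee$, is $W_\sigma$-equivariant with respect to the identification $W_\sigma \simeq W_{n\sigma}$, so it descends to a $[q]$-equivariant isomorphism $\Ad(n)^*: T^\vee//W_\sigma \xrightarrow{\sim}T^\vee//W_{n\sigma}$, giving in turn an algebra isomorphism $\C[(T^\vee//W_\sigma)^{[q]}] \xrightarrow{\sim}\C[(T^\vee//W_{n\sigma})^{[q]}]$ by pullback. Functoriality of the Deligne–Lusztig correspondence (the bijection between semisimple conjugacy classes in the complex dual and rational series is natural with respect to $\F_q$-isomorphisms of groups) produces a commutative square
\[
\begin{tikzcd}
\Irr(\G_\sigma^F) \arrow{r}{\mathcal L_\sigma} \arrow{d}[swap]{\Ad(n^{-1})^*} & (T^\vee//W_\sigma)^{[q]} \arrow{d}{\Ad(n)^*} \\
\Irr(\G_{n\sigma}^F) \arrow{r}{\mathcal L_{n\sigma}} & (T^\vee//W_{n\sigma})^{[q]}.
\end{tikzcd}
\]
Combining this with \eqref{gammafndefn} shows that $\gamma_{\Ad(n^{-1})^*(f)}(\pi) = \gamma_f(\Ad(n^{-1})^{*-1}\pi)$, so $\Ad(n^{-1})^*$ carries $C^{st}(\G_\sigma^F)$ into $C^{st}(\G_{n\sigma}^F)$ and fits into a commutative diagram
\[
\begin{tikzcd}
\C[(T^\vee//W_\sigma)^{[q]}] \arrow{r}{\rho_{\sigma,0}} \arrow[d,"\wr","\Ad(n)^*"'] & C^{st}(\G_\sigma^F) \arrow{d}{\Ad(n^{-1})^*} \\
\C[(T^\vee//W_{n\sigma})^{[q]}] \arrow{r}{\rho_{n\sigma,0}} & C^{st}(\G_{n\sigma}^F).
\end{tikzcd}
\]
The same argument applied to $n^{-1}$ supplies a two-sided inverse, so $\Ad(n^{-1})^*$ restricts to the desired isomorphism.

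The only nontrivial point is the naturality of $\mathcal L_\sigma$ with respect to the $F$-equivariant isomorphism $\Ad(n)$; this follows because the parametrization $\mathcal L_\sigma$ is constructed purely from the pair $(\G_\sigma, F)$ together with the inclusion $\T \hookrightarrow \G_\sigma$ (and the fixed identification \eqref{isom:F_qto Qmod Z}), all of which are transported by $\Ad(n)$ to the corresponding data for $\G_{n\sigma}$. Once this functoriality is recorded, the rest of the argument is formal.
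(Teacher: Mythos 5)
Your proposal is correct and follows essentially the same route as the paper: both arguments transport the data $(\T\subset\G_\sigma, F, W_\sigma)$ by $\Ad(n)$, obtain the $[q]$-compatible isomorphism $T^\vee//W_\sigma \xrightarrow{\sim} T^\vee//W_{n\sigma}$ and the commutative square intertwining $\mc L_\sigma$ with $\mc L_{n\sigma}$, and then conclude via the definition of stable functions and the isomorphisms $\rho_{\sigma,0}$, $\rho_{n\sigma,0}$. Your extra remarks (the explicit inverse from $n^{-1}$ and the justification of naturality of $\mc L_\sigma$) only spell out points the paper leaves implicit.
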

\begin{proof}
    The adjoint action of $n$ induces an isomorphism $\T \xrightarrow{\Ad(n)}\T$, and hence an action on $X^*(\T)$. Thus we get an action of $n$ on $T^\vee=X^*(\T)\otimes \C^\times$ induced by the adjoint action, which gives an isomorphism $ T^\vee \rightarrow T^\vee$ which is equivariant with respect $W_\sigma$ action (with $W_\sigma$ acting on the r.h.s via the isomorphism $W_\sigma \xrightarrow{\Ad(n)} W_{n\sigma}$. This gives an isomorphism $T^\vee // W_\sigma \xrightarrow[\Ad(n)]{\simeq}T^\vee//W_{n\sigma}$ compatible with $[q]$. Further, the adjoint action of $n$ induces an isomorphism $\Irr(\G_\sigma^F) \xrightarrow[\Ad(n)]{\simeq} \Irr(\G_{n\sigma}^F)$, and hence the following diagram with the natural maps commutes. 
    \begin{equation*}
        \begin{tikzcd}
            \Irr(\G_\sigma^F)\arrow{r}{\mc L_\sigma} \arrow{d}{\wr}&(T^\vee // W_\sigma)^{[q]}\arrow{d}{\wr}\\
            \Irr(\G_{n\sigma}^F) \arrow{r}{\mc L_{n\sigma}}& (T^\vee // W_{n\sigma})^{[q]}
        \end{tikzcd}
    \end{equation*}
    Then, the definition of stable functions and part $(1)$ of the previous proposition give us the following commutative diagram
     \begin{equation}\label{adjnstabledepth0}
        \begin{tikzcd}
            \C[(T^\vee // W_\sigma)^{[q]}]\arrow[d,"\wr", "(\Ad(n^{-1}))^*"'] \arrow{r}{\rho_{\sigma,0}} & C^{st}(\G^F_\sigma)\arrow{d}{\Ad(n^{-1})^*} \\
            \C[(T^\vee // W_{n\sigma})^{[q]}]\arrow{r}{\rho_{n\sigma,0}}& C^{st}(\G^F_{n\sigma})
        \end{tikzcd}
    \end{equation}
    and hence the isomorphism $C^{st}(\G^F_\sigma) \xrightarrow{\simeq} C^{st}(\G^F_{n\sigma})$.
\end{proof}
\subsection{From stable functions to depth-zero Bernstein center}
Let $\sigma \in [\bar \mcC]$. Using the embedding $W_\sigma \hookrightarrow W$ as in Section \ref{section:stablefnstocenter}, we have a map 
\begin{equation*}
    t_{\sigma,0}:(T^\vee//W_\sigma)^{[q]}\longrightarrow (T^\vee//W)^{[q]}.
\end{equation*}
We have a natural inclusion map 
\begin{equation*}
    \iota_{\sigma,0}: C(\G_\sigma^F)\rightarrow C_c^\infty\left(\frac{G_{\sigma,0}/G_{\sigma,0+}}{G_{\sigma,0}}\right)\rightarrow C_c^\infty\left(\frac{G(k)/G_{\sigma,0+}}{G_{\sigma,0}}\right)=\M^0_\sigma
\end{equation*}
sending $ C(\G_\sigma^F) \ni f: \G_\sigma^F \rightarrow\C$ to $\iota_{\sigma,r}( f) \in \M^0_\sigma$ supported in $G_{\sigma,0} \subset G(k)$ given by 
\begin{equation*}
    \iota_{\sigma,0}( f) :G_{\sigma,0}\rightarrow G_{\sigma,0}/G_{\sigma,0+} \xrightarrow{\simeq} \G_\sigma^F \xlongrightarrow{f}\C
\end{equation*}
Note that $\tilde\iota _{\sigma,0}:=\mu(G_{\sigma,0+})^{-1}\iota_{\sigma,0}$ is an algebra morphism sending the identity $\mathbbm 1_e \in  C(\G_\sigma^F)$ to $\delta_{G_{\sigma,0+}}\in \M_\sigma^0$. Consider the composed map 
\begin{equation*}
   j_{\sigma,0}: \C[(T^\vee//W)^{[q]}] \xrightarrow{t^*_{\sigma,0}}\C[(T^\vee//W_\sigma)^{[q]}]\xrightarrow{\rho_{\sigma,0}} C^{st}(\G_\sigma^F)\xlongrightarrow{\mu(G_{\sigma,0+})^{-1}\iota_{\sigma,0}} \M_\sigma^0
\end{equation*}
We will follow steps similar to Section \ref{section:stablefnstocenter} to construct a map $\C[(T^\vee//W)^{[q]}] \rightarrow \ZZ^0(G)$.

\begin{lemma}\label{lemTmodWtostable}
    For $\sigma \in [\bar\mcC]$, we have a map 
    \[
    i_{\sigma,0}:\C[(T^\vee//W)^{[q]}] \longrightarrow C^{st}(\G_\sigma^F)
    \]
    such that for $\sigma \preceq\tau \in [\bar\mcC_m]$, we have the following commutative diagram. 
    \begin{equation}\label{TmodWtostable}
        \begin{tikzcd}
             \C[(T^\vee//W)^{[q]}] \arrow{r}{i_{\sigma,0}}\arrow{rd}{i_{\tau,0}}& C^{st}(\G_\sigma^F)\arrow{d}{\res^{\G_{\sigma}}_{\G_{\tau}}}\\
             &C^{st}(\G_\tau^F)
        \end{tikzcd}
    \end{equation}
\end{lemma}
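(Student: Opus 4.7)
The plan is to mimic the construction in Lemma \ref{lemmatmodWtostable} from the positive depth case, with the simplification that no analogue of the constants $c^\mu_{\sigma,r}$ is needed at depth zero (since the integral-depth case had $c_{\sigma,\tau,r} = 1$, and indeed the depth-zero parabolic restriction on class functions is defined without a normalizing factor for the unipotent radical here). I would define
\[
i_{\sigma,0} := \rho_{\sigma,0} \circ t^*_{\sigma,0} : \C[(T^\vee//W)^{[q]}] \longrightarrow C^{st}(\G_\sigma^F),
\]
where $t^*_{\sigma,0}$ is pullback along $t_{\sigma,0} : (T^\vee//W_\sigma)^{[q]} \to (T^\vee//W)^{[q]}$ and $\rho_{\sigma,0}$ is the isomorphism of the preceding proposition.

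The verification of commutativity of the diagram reduces to two independent steps. First, for $\sigma \preceq \tau$, the inclusions of Weyl groups $W_\tau \subset W_\sigma \subset W$ and the induced maps on quotients give a commutative triangle
\[
(T^\vee//W_\tau)^{[q]} \longrightarrow (T^\vee//W_\sigma)^{[q]} \longrightarrow (T^\vee//W)^{[q]},
\]
and passing to complex-valued functions produces the identity $\Res^\sigma_\tau \circ t^*_{\sigma,0} = t^*_{\tau,0}$. Second, the diagram \eqref{restrictionstabledepth0} established in the previous proposition says that $\rho_{\tau,0}\circ \Res^\sigma_\tau = \res^{\G_\sigma}_{\G_\tau}\circ \rho_{\sigma,0}$. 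Concatenating these two squares yields
\[
\res^{\G_\sigma}_{\G_\tau}\circ i_{\sigma,0} = \res^{\G_\sigma}_{\G_\tau}\circ \rho_{\sigma,0} \circ t^*_{\sigma,0} = \rho_{\tau,0}\circ \Res^\sigma_\tau \circ t^*_{\sigma,0} = \rho_{\tau,0}\circ t^*_{\tau,0} = i_{\tau,0},
\]
which is the desired commutativity.

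Since both ingredients have already been set up (the functoriality of GIT quotients under $W_\tau\subset W_\sigma \subset W$ is purely formal, and the restriction compatibility of $\rho$ was recorded in \eqref{restrictionstabledepth0}), there is no genuine obstacle here; the only point that warrants checking is that both squares are compatible with the Frobenius $[q]$ and the $F$-structure respectively, which is immediate since all morphisms $\bt^*\to\bt^*//W_\sigma$, $W_\tau\hookrightarrow W_\sigma$, and the Deligne–Lusztig map $\mc L_\sigma$ are defined over $\F_q$ (equivalently, compatible with $[q]$ on the complex dual side). This concludes the proof outline.
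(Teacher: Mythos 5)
Your proposal is correct and coincides with the paper's own argument: the paper likewise defines $i_{\sigma,0}=\rho_{\sigma,0}\circ t^*_{\sigma,0}$, notes that $\Res^\sigma_\tau\circ t^*_{\sigma,0}=t^*_{\tau,0}$ follows immediately from $W_\tau\subset W_\sigma\subset W$, and concludes by combining this triangle with \eqref{restrictionstabledepth0}. Your observation that no analogue of the constants $c^\mu_{\sigma,r}$ is needed at depth zero is also consistent with how the paper handles the normalization.
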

\begin{proof}
    For $\sigma \in [\bar\mcC]$, define $i_{\sigma,0} :\C[(T^\vee//W)^{[q]}] \longrightarrow C^{st}(\G_\sigma^F)$ as the composition 
\begin{equation*}
    \C[(T^\vee//W)^{[q]}] \xlongrightarrow{t^*_{\sigma,0}} \C[(T^\vee//W_\sigma)^{[q]}]\xlongrightarrow{\rho_{\sigma,0}} C^{st}(\G_\sigma^F)
\end{equation*}
For $\sigma \preceq \tau \in [\bar \mcC_m]$, it immediately follows that the following diagram commutes.
\begin{equation*}
    \begin{tikzcd}
      \C[(T^\vee//W)^{[q]}] \arrow{r}{t^*_{\sigma,0}} \arrow{rd}[swap]{t^*_{\tau,0}}  & \C[(T^\vee//W_\sigma)^{[q]}]\arrow{d}{\Res^\sigma_\tau}\\
     & \C[(T^\vee//W_\tau)^{[q]}]
    \end{tikzcd}
\end{equation*}
Combining the above diagram and \eqref{restrictionstabledepth0} proves the lemma. 
\end{proof}
\begin{lemma}
    For $\sigma \preceq \tau \in [\bar\mcC]$, the map $\tilde\iota_{\sigma,0}=\mu(G_{\sigma,0+})^{-1}\iota_{\sigma,0}:C^{st}(\G_\sigma^F) \rightarrow \M^0_\sigma$ fits into the following commutative diagram of algebra morphisms. 
    \begin{equation}\label{stabletoM0sigma}
        \begin{tikzcd}
          C^{st}(\G_\sigma^F) \arrow{r}{\tilde\iota_{\sigma,0}}  \arrow{d}[swap]{\res^{\G_{\sigma}}_{\G_{\tau}}}&\M^0_\sigma\arrow{d}{\phi^0_{\sigma,\tau}}\\
          C^{st}(\G_\tau^F) \arrow{r}{\tilde\iota_{\sigma,0}}&\M^0_\tau
        \end{tikzcd}
    \end{equation}
\end{lemma}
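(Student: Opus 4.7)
The proof follows the same strategy as the positive-depth analogue \eqref{stabletoMrsigma} in Section~\ref{section:stablefnstocenter}, with the depth-zero vanishing property of stable functions playing the role of its positive-depth counterpart. The plan is to compute $\phi^0_{\sigma,\tau}\circ \tilde\iota_{\sigma,0}(f) = \tilde\iota_{\sigma,0}(f) * \delta_{G_{\tau,0+}}$ pointwise for $f \in C^{st}(\G_\sigma^F)$, and identify it with $\tilde\iota_{\tau,0}\circ \res^{\G_\sigma}_{\G_\tau}(f)$.

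Given $f \in C^{st}(\G_\sigma^F)$, the function $\tilde\iota_{\sigma,0}(f) \in \M^0_\sigma$ is supported on $G_{\sigma,0}$, and since $G_{\tau,0+} \subset G_{\sigma,0}$, the convolution $\tilde\iota_{\sigma,0}(f) * \delta_{G_{\tau,0+}}$ is also supported in $G_{\sigma,0}$. Using the depth-zero coset decomposition $G_{\tau,0+} = \bigsqcup_{\bar u \in \U_{\sigma,\tau}^F} u\,G_{\sigma,0+}$, which is induced by the identification $\U_{\sigma,\tau}^F = G_{\tau,0+}/G_{\sigma,0+}$, a direct Haar-measure calculation gives, for $x \in G_{\sigma,0}$ with image $\bar x \in \G_\sigma^F$,
$$
\tilde\iota_{\sigma,0}(f) * \delta_{G_{\tau,0+}}(x) \;=\; \mu(G_{\tau,0+})^{-1}\sum_{\bar u \in \U_{\sigma,\tau}^F} f(\bar x \bar u^{-1}).
$$

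Next, the vanishing property (3) of stable functions (combined with the bijection $\bar u \mapsto \bar u^{-1}$ of $\U_{\sigma,\tau}^F$) forces this sum to vanish whenever $\bar x \notin \mbP_{\sigma,\tau}^F$; since the preimage of $\mbP_{\sigma,\tau}^F$ under $G_{\sigma,0} \twoheadrightarrow \G_\sigma^F$ is exactly $G_{\tau,0}$, the convolution is supported on $G_{\tau,0}$. For $x \in G_{\tau,0}$ with image $\bar l \in \G_\tau^F = G_{\tau,0}/G_{\tau,0+}$, the image $\bar x \in \mbP_{\sigma,\tau}^F$ decomposes uniquely as $\bar l \cdot \bar v$ with $\bar v \in \U_{\sigma,\tau}^F$; reparametrizing by $\bar w = \bar v \bar u^{-1}$ (a bijection of $\U_{\sigma,\tau}^F$) yields $\sum_{\bar u} f(\bar l \bar v \bar u^{-1}) = \sum_{\bar w} f(\bar l \bar w) = \res^{\G_\sigma}_{\G_\tau}(f)(\bar l)$, so
$$
\tilde\iota_{\sigma,0}(f) * \delta_{G_{\tau,0+}}(x) \;=\; \mu(G_{\tau,0+})^{-1}\res^{\G_\sigma}_{\G_\tau}(f)(\bar l) \;=\; \tilde\iota_{\tau,0}\bigl(\res^{\G_\sigma}_{\G_\tau}(f)\bigr)(x),
$$
which proves commutativity.

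For the algebra-morphism claim, $\tilde\iota_{\sigma,0}$ and $\tilde\iota_{\tau,0}$ are algebra maps by construction (the normalization $\mu(G_{\bullet,0+})^{-1}$ is chosen precisely so that convolution on $C(\G_\bullet^F)$ matches convolution on $\M^0_\bullet$), and $\res^{\G_\sigma}_{\G_\tau}$ restricts to an algebra map on stable functions by the previous diagram \eqref{restrictionstabledepth0} (the horizontal $\rho_{\bullet,0}$ are algebra isomorphisms and $\Res^\sigma_\tau$ is obviously an algebra map on complex function algebras). The established commutativity then forces $\phi^0_{\sigma,\tau}$ to send the image of $\tilde\iota_{\sigma,0}$ into the image of $\tilde\iota_{\tau,0}$ and to act as an algebra morphism there. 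The only substantive input is the vanishing via property (3); the remainder is bookkeeping with Haar-measure normalizations, directly parallel to the positive-depth proof of \eqref{stabletoMrsigma}.
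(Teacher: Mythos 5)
Your proof is correct, and it is essentially the depth-zero translate of the positive-depth computation the paper carries out for \eqref{stabletoMrsigma}: expand the convolution $\tilde\iota_{\sigma,0}(f)*\delta_{G_{\tau,0+}}$ as a sum over $\U_{\sigma,\tau}^F$-cosets, invoke the vanishing property (3) of stable functions to cut the support down to $G_{\tau,0}$, and identify what remains with $\tilde\iota_{\tau,0}\circ\res^{\G_\sigma}_{\G_\tau}(f)$. The paper itself only cites \cite[Lemma 5.1]{cb24} for this lemma, so your self-contained argument faithfully reconstructs what that reference (and the paper's own positive-depth analogue) does.
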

\begin{proof}
    This is just a rescaled version of \cite[Lemma 5.1]{cb24}. 
\end{proof}

\begin{lemma}\label{lemmaadjointTmodWtoM0sigma}
    Let $\sigma_1,\sigma_2\in [\Bar{\mcC}]$ and $n \in N_G(T)(k)$ such that $n\mcC=\mcC$ and $ n \sigma_1 \preceq \sigma_2$. Then, we have a commutative diagram
    \begin{equation*}
    \begin{tikzcd}
         \C[(T^\vee//W)^{[q]}]\arrow{r}{j_{\sigma_1,0}} \arrow{rd}{j_{\sigma_2,0}} & \M^0_{\sigma_1}\arrow{d}{\phi^0_{\sigma_1,\sigma_2,n}}\\
        & \M^0_{\sigma_2}
    \end{tikzcd}
\end{equation*}
\end{lemma}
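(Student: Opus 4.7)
The plan is to mirror the proof of Lemma \ref{lemmaadjointtmodWtoMrsigma} in the depth-zero setting, assembling three commutative squares: the $\Ad(n)$-compatibility of the stable-function construction, the restriction compatibility along $n\sigma_1 \preceq \sigma_2$ from Lemma \ref{lemTmodWtostable}, and the $\Ad(n)$-compatibility of the inclusion $\tilde\iota_{\sigma,0}$ into the Hecke-type algebra $\M^0_\sigma$. Recall that $\phi^0_{\sigma_1,\sigma_2,n}$ is defined as the composition $\phi^0_{n\sigma_1,\sigma_2}\circ \Ad(n)$, so the target factorization is exactly what we should try to produce.

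First I would note that since $n \in N_G(T)(k)$, the map $t_{\sigma_1,0}:(T^\vee//W_{\sigma_1})^{[q]}\to (T^\vee//W)^{[q]}$ and $t_{n\sigma_1,0}:(T^\vee//W_{n\sigma_1})^{[q]}\to(T^\vee//W)^{[q]}$ are intertwined by the isomorphism $\Ad(n)^*: (T^\vee//W_{n\sigma_1})^{[q]}\xrightarrow{\sim}(T^\vee//W_{\sigma_1})^{[q]}$, because after projection to $W$ the inner automorphism by $n$ becomes trivial. Pulling this back gives the commutativity of the left triangle in
\begin{equation*}
\begin{tikzcd}
\C[(T^\vee//W)^{[q]}]\arrow{r}{t^*_{\sigma_1,0}}\arrow[dr, "t^*_{n\sigma_1,0}"'] & \C[(T^\vee//W_{\sigma_1})^{[q]}]\arrow{d}{\wr} \arrow{r}{\rho_{\sigma_1,0}} & C^{st}(\G_{\sigma_1}^F)\arrow{d}{\wr}\\
 & \C[(T^\vee//W_{n\sigma_1})^{[q]}] \arrow{r}{\rho_{n\sigma_1,0}} & C^{st}(\G_{n\sigma_1}^F)
\end{tikzcd}
\end{equation*}
while commutativity of the right square is exactly diagram \eqref{adjnstabledepth0}. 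Hence $i_{\sigma_1,0}$ and $i_{n\sigma_1,0}$ agree modulo the isomorphism $\Ad(n^{-1})^*: C^{st}(\G_{\sigma_1}^F)\xrightarrow{\sim} C^{st}(\G_{n\sigma_1}^F)$.

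Next, applying Lemma \ref{lemTmodWtostable} to $n\sigma_1\preceq \sigma_2$ yields $\res^{\G_{n\sigma_1}}_{\G_{\sigma_2}}\circ i_{n\sigma_1,0} = i_{\sigma_2,0}$. Combining this with the previous step gives
\begin{equation*}
\res^{\G_{n\sigma_1}}_{\G_{\sigma_2}}\circ \Ad(n^{-1})^*\circ i_{\sigma_1,0} \;=\; i_{\sigma_2,0}.
\end{equation*}
Finally, the adjoint action of $n$ on $G(k)$ preserves Moy-Prasad filtrations and intertwines $\iota_{\sigma_1,0}$ with $\iota_{n\sigma_1,0}$, so it furnishes an algebra isomorphism $\Ad(n^{-1})^*:\M^0_{\sigma_1}\xrightarrow{\sim}\M^0_{n\sigma_1}$ fitting into a square
\begin{equation*}
\begin{tikzcd}
C^{st}(\G_{\sigma_1}^F)\arrow{r}{\tilde\iota_{\sigma_1,0}}\arrow{d}{\wr} & \M^0_{\sigma_1}\arrow{d}{\wr}\\
C^{st}(\G_{n\sigma_1}^F)\arrow{r}{\tilde\iota_{n\sigma_1,0}} & \M^0_{n\sigma_1}
\end{tikzcd}
\end{equation*}
Stacking this onto the already-established diagram \eqref{stabletoM0sigma} for $n\sigma_1\preceq \sigma_2$ and recalling $j_{\sigma,0}=\tilde\iota_{\sigma,0}\circ i_{\sigma,0}$ produces $\phi^0_{n\sigma_1,\sigma_2}\circ \Ad(n)\circ j_{\sigma_1,0} = j_{\sigma_2,0}$, which is precisely $\phi^0_{\sigma_1,\sigma_2,n}\circ j_{\sigma_1,0}=j_{\sigma_2,0}$.

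The argument is essentially bookkeeping; there is no new analytic or geometric input beyond the depth-zero statements already in place. The only point that requires a moment of care is the first one: that the embeddings $W_{\sigma_1}\hookrightarrow W$ and $W_{n\sigma_1}\hookrightarrow W$ identified via $\Ad(n)$ give the same map into $(T^\vee//W)^{[q]}$. This is true because $\Ad(n)$ acts on $W$ by an inner automorphism, which vanishes after passing to the GIT quotient by $W$ itself; once that is observed, the remaining squares glue together in the same pattern as in Lemma \ref{lemmaadjointtmodWtoMrsigma}.
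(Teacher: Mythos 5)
Your proposal is correct and follows essentially the same route as the paper: the same three ingredients (the $\Ad(n)$-compatibility diagram \eqref{adjnstabledepth0}, Lemma \ref{lemTmodWtostable} applied to $n\sigma_1\preceq\sigma_2$, and the square \eqref{stabletoM0sigma} stacked with the $\Ad(n^{-1})^*$ isomorphism of $\M^0$-algebras), glued together via $j_{\sigma,0}=\tilde\iota_{\sigma,0}\circ i_{\sigma,0}$ and the definition $\phi^0_{\sigma_1,\sigma_2,n}=\phi^0_{n\sigma_1,\sigma_2}\circ\Ad(n)$. Your explicit remark that $t_{\sigma_1,0}$ and $t_{n\sigma_1,0}\circ\Ad(n)^*$ agree because the inner automorphism by $n$ dies in $(T^\vee//W)^{[q]}$ is the one point the paper leaves implicit, and it is a correct justification.
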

\begin{proof}
     The proof uses the same ideas as the proof of Lemma \ref{lemmaadjointtmodWtoMrsigma}. Using \eqref{adjnstabledepth0} , we have 
    \begin{equation*}
         \begin{tikzcd}
       \C[(T^\vee//W)^{[q]} \arrow{r}{t^*_{\sigma_1,0}} \arrow[dr,"t^*_{n\sigma_1,0}"'] & \C[(T^\vee//W_{\sigma_1})^{[q]}]\arrow{d}{\wr} \arrow{r}{\rho_{\sigma_1,0}} & C^{st}(\G_{\sigma_1}^F) \arrow{d}{\wr} \\
        &\C[(T^\vee//W_{n\sigma_1})^{[q]}]\arrow{r}{\rho_{n\sigma_1,0}} & C^{st}(\G_{n\sigma_1}^F)
    \end{tikzcd}
    \end{equation*}
Then, applying  Lemma \ref{lemTmodWtostable} to $n\sigma_1\preceq \sigma_2$ and combining with the above diagram, we have the following.
\begin{equation*}
    \begin{tikzcd}
    &  C^{st}(\G_{\sigma_1}^F) \arrow{d}{\wr} \\
     \C[(T^\vee//W)^{[q]} \arrow{ur}{i_{\sigma_1,0}} \arrow[r,"i_{n\sigma_1,0}" ]  \arrow[rd, "i_{\sigma_2,0}" ] & C^{st}(\G_{n\sigma_1}^F) \arrow{d}{\res^{\G_{n\sigma_1}}_{\G_{\sigma_2}}} \\
     & C^{st}(\G_{\sigma_2}^F)
     \end{tikzcd}
\end{equation*}
Further, the adjoint action of $n$ gives us an isomorphism $\Ad(n^{-1})^*:\M^0_{\sigma_1} \xrightarrow{\simeq}\M^0_{n\sigma_1}$ and hence a commutative diagram 
   \begin{equation*}
        \begin{tikzcd}
          C^{st}(\G_{\sigma_1}^F) \arrow{r}{\tilde\iota_{\sigma_1,0}} \arrow{d}{\wr} &\M^0_{\sigma_1}\arrow{d}{\wr} \\
          C^{st}(\G_{n\sigma_1}^F) \arrow{r}{\tilde\iota_{n\sigma_1,0}}  \arrow{d}[swap]{\res^{\G_{n\sigma_1}}_{\G_{\sigma_2}}}&\M^0_{n\sigma_1}\arrow{d}{\phi^0_{n\sigma_1,\sigma_2}}\\
          C^{st}(\G_{\sigma_2}^F) \arrow{r}{\tilde\iota_{\sigma_2,0}}&\M^0_{\sigma_2}
        \end{tikzcd}
    \end{equation*}
    Since $j_{\sigma,r}= \tilde\iota_{\sigma,r}\circ i_{\sigma,r} $, combining the above two diagrams proves the lemma. 
\end{proof}

\begin{theorem}\label{thmTmodWto0center}
    There is an algebra homomorphism 
    \begin{equation*}
        \xi^0: \C[(T^\vee//W)^{[q]}\longrightarrow \ZZ^0(G)
    \end{equation*}
    such that for any depth-zero representation $(\pi,V)$ and $0\neq v \in V^{G_{\sigma,0+}} $, $\sigma \in [\bar\mcC]$, we have 
    \begin{equation}\label{depth0xi}
         \xi^0(\lambda)(v) = \sum_{x\in \G_{\sigma}^F}i_{\sigma,0}(\lambda)(x)\pi^{G_{\sigma,0+}}(x)v
    \end{equation}
    where $i_{\sigma,0}: \C[(T^\vee//W)^{[q]} \rightarrow C^{st}(\G_{\sigma}^F)$ is the map defined in Lemma \ref{lemTmodWtostable} and $\pi^{G_{\sigma,0+}}$ denotes the natural representation of $\G_{\sigma}^F$ on $V^{G_{\sigma,0+}}$. 
\end{theorem}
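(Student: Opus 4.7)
The plan is to mimic the strategy of Theorem \ref{thmtmodWtocenter} verbatim in the depth-zero setting. Using the inverse-limit description of $\ZZ^0(G)$ provided by Theorem \ref{thm: isominverselimit}, it suffices to assemble a compatible family of algebra morphisms $\{j_{\sigma,0}\}_{\sigma\in [\bar\mcC]}$ into the inverse system $\{\M^0_\sigma\}_{\sigma\in [\bar\mcC]}$ and then interpret the resulting map into $A^0(G)$ as the desired $\xi^0$.

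First I would combine Lemma \ref{lemTmodWtostable} with the commutative diagram \eqref{stabletoM0sigma}: for $\sigma \preceq \tau \in [\bar\mcC]$ this yields $\phi^0_{\sigma,\tau}\circ j_{\sigma,0} = j_{\tau,0}$, which is the first compatibility required for the inverse system. Next I would invoke Lemma \ref{lemmaadjointTmodWtoM0sigma} to obtain the second compatibility with the twisted morphisms $\phi^0_{\sigma_1,\sigma_2,n}$ associated to $n \in \mc N$ satisfying $n\mcC = \mcC$. Together these give an algebra morphism
\begin{equation*}
    \C[(T^\vee//W)^{[q]}] \longrightarrow A^0(G) = \lim_{\sigma\in [\bar\mcC]} \M^0_\sigma,
\end{equation*}
which I would compose with the isomorphism $[A^0]: A^0(G) \xrightarrow{\simeq} \ZZ^0(G)$ from Theorem \ref{thm: isominverselimit} to define $\xi^0$. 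Observe that each $j_{\sigma,0} = \tilde\iota_{\sigma,0}\circ \rho_{\sigma,0}\circ t^*_{\sigma,0}$ is a composition of algebra maps, and the unit $\mathbbm 1 \in \C[(T^\vee//W)^{[q]}]$ is sent to $\{\delta_{G_{\sigma,0+}}\}_{\sigma} = \delta_0 \in A^0(G)$; under $[A^0]$ this is the depth-zero projector, which simultaneously confirms that $\xi^0$ lands inside $\ZZ^0(G)$ and preserves identities.

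To verify the explicit formula on a depth-zero representation $(\pi, V)$ with $0 \neq v \in V^{G_{\sigma,0+}}$, I would use Remark \ref{remarkevalfrac}: since the $\sigma$-component of $\Psi^0(\xi^0(\lambda))$ is $j_{\sigma,0}(\lambda)$, we compute
\begin{equation*}
    \xi^0(\lambda)(v) = \xi^0(\lambda)(\delta_{G_{\sigma,0+}}(v)) = j_{\sigma,0}(\lambda)\cdot v.
\end{equation*}
Unfolding $j_{\sigma,0}(\lambda) = \mu(G_{\sigma,0+})^{-1}\iota_{\sigma,0}(i_{\sigma,0}(\lambda))$, which is supported on $G_{\sigma,0}$ and factors through $\G_\sigma^F$, the convolution $j_{\sigma,0}(\lambda)\cdot v = \int_{G_{\sigma,0}} j_{\sigma,0}(\lambda)(g)\,\pi(g)v\,d\mu(g)$ breaks up into $G_{\sigma,0+}$-cosets; the factor $\mu(G_{\sigma,0+})^{-1}$ cancels the measure of each coset and yields precisely $\sum_{x\in \G_\sigma^F}i_{\sigma,0}(\lambda)(x)\,\pi^{G_{\sigma,0+}}(x)v$.

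There is no substantive obstacle here beyond the bookkeeping already carried out in the positive-depth case: the normalization in $\tilde\iota_{\sigma,0}$ was arranged exactly so that $\mathbbm 1 \mapsto \delta_{G_{\sigma,0+}}$, which is what makes each $j_{\sigma,0}$ a unital algebra morphism and makes the evaluation formula drop out cleanly. Independence of the formula from the choice of $\sigma$ such that $V^{G_{\sigma,0+}}\neq 0$ is automatic because $\xi^0(\lambda) \in \ZZ^0(G)$ is a single endomorphism of $V$, expressed through different $\sigma$ via the compatibility built into $A^0(G)$.
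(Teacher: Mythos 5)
Your proposal is correct and follows essentially the same route as the paper: the paper's proof likewise combines Lemma \ref{lemTmodWtostable}, diagram \eqref{stabletoM0sigma}, and Lemma \ref{lemmaadjointTmodWtoM0sigma} to get a compatible system into $\{\M^0_\sigma\}$, defines $\xi^0$ via the limit description $A^0(G)\cong \ZZ^0(G)$, and obtains \eqref{depth0xi} by the same evaluation computation as in Theorem \ref{thmtmodWtocenter}. Your added remarks on the unit going to $\delta_0$ and the cancellation of $\mu(G_{\sigma,0+})^{-1}$ are consistent with the paper's normalization of $\tilde\iota_{\sigma,0}$.
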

\begin{proof}
    Using the previous lemmas, we see that we have a map from $\C[(T^\vee//W)^{[q]}$ to $\M^0_\sigma$ for each $\sigma \in [\bar \mcC]$ compatible with the inverse system maps $\phi^0_{\sigma,\sigma'}$. Hence, we have a map $\C[(T^\vee//W)^{[q]}\rightarrow \lim_{\sigma\in [\mcC]}\M^0_\sigma\cong \ZZ^0(G)$ and we define this to be $\xi^0$. Then, calculations similar to Theorem \ref{thmtmodWtocenter} give \eqref{depth0xi} and finishes the proof. 
\end{proof}
\subsection{Depth-zero Deligne-Lusztig parameters}\label{section:depth0DLparameters}
Let $S$ be a $k$-split maximal torus of $G$. For another $k$-split maximal torus $S'$, there exists $g \in G(k)$ such that $S'=\Ad_g(S)$, and the pulllback along the map $\Ad_g: S \rightarrow S'$ gives rise to a canonical isomorphism $(S')^\vee= X^*(S')\otimes \C^\times \rightarrow X^*(S)\otimes\C^\vee = S^\vee$ equivariant with respect to $[q]$. Let $W_S=W(G,S)$ denote the Weyl group of $G$ with respect to $S$. 
\begin{definition}
 We define the set of depth-zero Deligne-Lusztig parameters of $G(k)$ to be 
 \begin{equation*}
     \DL_0:= \lim_S (S^\vee//W_S)^{[q]}
 \end{equation*}
 where the limit is over all $k$-split maximal tori. 
\end{definition}

\begin{remark}
    Our definition is equivalent to the definition of $\mathrm{DL}_0$ in \cite[Section 7.1]{chendebackertsai}, when restricted to our setting. This is because we have assumed $G$ to be $k$-split. Further, since we had fixed a $k$-split maximal tori $T$, without loss of generality we can represent a depth zero Deligne-Lusztig parameter by an element in $(T^\vee//W)^{[q]}$, which we will do in some cases henceforth. So, we can equivalently define the set $(T^\vee//W)^{[q]}$ to be the set of depth-zero Deligne-Lusztig parameters, as done in \cite{cb24}.
\end{remark}
Let $(\pi, V)$ be a smooth irreducible complex representation of depth zero. Then, $z \in \ZZ^0(G)$ acts on $\pi$ via a scalar in $\C$, and composing $\xi^0$ with the evaluation map $\ZZ^0(G)\rightarrow \mathrm{End}(\pi)=\C$, we have a map 
\begin{equation*}
    \C[(T^\vee//W)^{[q]}] \xrightarrow{\xi^0}\ZZ^0(G)\rightarrow \mathrm{End}(\pi)=\C
\end{equation*}
and hence an element of $\C[(T^\vee//W)^{[q]}]$, denoted $\Theta_0(\pi)$ given by $\xi^0(\lambda)v=\lambda(\Theta_0(\pi))v$ for $v \in V$ and $\lambda \in \C[(T^\vee//W)^{[q]}]$. Thus, to each smooth irreducible representation of depth zero, we can attach a depth-zero Deligne-Lusztig parameter $\Theta_0(\pi)\in \mathrm{DL}_0$ which gives a map $\Theta_0:\Irr(G)_0\rightarrow \DL_0$. \par
Combining all the maps $\Theta_r$ for $r \in \Z_{(p)}\cap \Q_{\geq 0}$, we see that we have a map 
\begin{equation*}
    \Theta:\coprod_{r \in \Z_{(p)}\cap \Q_{\geq 0}} \Irr(G)_r \longrightarrow \coprod_{r \in \Z_{(p)}\cap \Q_{\geq 0}}\mathrm{DL}_r
\end{equation*}
with $\Theta(\pi) := \Theta_{\rho(\pi)}(\pi)\in \mathrm{DL}_{\rho(\pi)}$ for $\rho(\pi) \in \Z_{(p)}\cap \Q_{\geq 0}$, where $\rho(\pi)$ denotes the depth of $\pi$. Further, for $\rho(\pi) \in \Z_{(p)}\cap \Q_{> 0}$, $\Theta(\pi)\in \DL_{\rho(\pi)}^\circ$. Thus, we actually have a map 
\begin{equation}\label{DLparametersalltame}
     \Theta:\coprod_{r \in \Z_{(p)}\cap \Q_{\geq 0}} \Irr(G)_r \longrightarrow \DL_0\coprod \left(\coprod_{r \in \Z_{(p)}\cap \Q_{> 0}}\mathrm{DL}_r^\circ\right)
\end{equation}
\subsection{Deligne-Lusztig prameters attached to depth-zero Moy-Prasad types}
\begin{definition}
    A depth-zero Moy-Prasad type of $G(k)$ is a pair $(x,\mc X)$ where $x \in \B(G,k)$ and $\mc X$ is an irreducible cuspidal representation of $\G_x^F\cong G_{x,0}/G_{x,0+}$ inflated to the parahoric subgroup $G_{x,0}$. Let $\mathrm{MP}(0)$ denote the set of Moy-Prasad types of depth zero. 
\end{definition}
Two Moy-Prasad types of depth zero $(x,\mc X)$ and $(y,\mc Y)$ are said to be associates if there exists $g \in G(k)$ such that $G_{x,0}\cap G_{gy,0}$ surjects onto both $\G_x$ and $\G_{gy}$ and $\mc X$ is isomorphic to $\Ad(g)\mc Y$. A smooth irreducible representation of depth zero is said to contain a Moy-Prasad type $(x,\mc X)\in \MP(0)$ if the restriction $\res_{G_{x,0}}(\pi) $ of $\pi $
to the parahoric $G_{x,0}$ contains $\mc X$. From \cite[Theorem 5.2]{MP94}, we know that any smooth irreducible depth-zero representation $\pi$ contains a depth-zero Moy-Prasad type, and any two Moy-Prasad types contained $\pi$ are associates of each other. \paragraph{}

We recall briefly how to attach Deligne-Lusztig parameters to depth-zero Moy-Prasad types, following \cite[Section 5.4]{cb24} and \cite[Lemma 59]{chendebackertsai}. Let $(x,\mc X)$ be a depth-zero Moy-Prasad type, and let $S$ be a $k$-split maximal torus such that $x \in \A_S$. Then, $\mc X$ is the inflation of a irreducible cuspidal representation of $\G_x^F$. Using methods in \cite[Section 16]{lusztigtwelve} and identifying $X^*(S)$ and $X^*(\mathbb S)$, we can attach to $\mc X$ and element $\theta_{S,\mc X} \in (S^\vee//W_{x,S})^{[q]}$ where all the notations denote the ususal objects as defined earlier in this article and in \cite{chendebackertsai}. Let $\theta_{x,\mc X}$ denote the image of $\theta_{S,\mc X}$ along the map $(S^\vee//W_{x,S})^{[q]}\rightarrow (S^\vee//W_{S})^{[q]}\rightarrow \DL_0$ and $(x,\mc X) \mapsto \theta_{x,\mc X}$ gives us the desired map $\MP(0) \rightarrow \DL_0$. This is independent of the choice of apartment containing $x$, since all such apartments are $G_{x,0}$-conjugates. Equivalently, pick $g \in G(k)$ such that $gx \in \sA$. Then the image of $\theta_{T,\prescript{g}{}{}\mc X}\in (T^\vee//W_{gx})^{[q]}$ along  $(T^\vee//W_{gx})^{[q]} \rightarrow (T^\vee//W)^{[q]}$ gives the representative of the corresponding Deligne-Lusztig parameter in $(T^\vee//W)^{[q]}$ in accordance with the definition in \cite{cb24}. The fact that this is independent of the choice of $g$ can be shown similarly to \cite[Lemma 23]{chendebackertsai}. So, without loss of generality we can denote it by $\theta_{x,\mc X}$ as well, and we note that $\theta_{x,\mc X}=\theta _{gx,\prescript{g}{}{}\mc X}$. 
\begin{proposition}
    Let $(\pi,V)$ be a smooth irreducible depth-zero representation and let $(x,\mc X)$ be a depth-zero Moy-Prasad type contained in $(\pi,V)$. Then, $\theta_{x,\mc X}=\Theta_0(\pi)$. 
\end{proposition}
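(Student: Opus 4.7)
The plan is to mimic the proof of Proposition \ref{PropMP=DLpi}, but with the Fourier-transform calculation replaced by the gamma-function formalism from equation \eqref{gammafndefn} and by the defining property of the packet idempotents $f_{\theta'}$ produced by part (1) of the properties of stable functions. Since $\pi$ is $G(k)$-invariant and the Moy-Prasad type $(gx,\prescript{g}{}{}\mc X)$ is an associate of $(x,\mc X)$ containing the same representation, we may assume without loss of generality that $x\in\bar\mcC$. Choose then the unique facet $\sigma\in[\bar\mcC]$ with $x\in\sigma$, so that $G_{x,0}=G_{\sigma,0}$, $G_{x,0+}=G_{\sigma,0+}$, $\G_x=\G_\sigma$, and the construction recalled in Section \ref{section:depth0DLparameters} identifies $\theta_{x,\mc X}$ with the image of $\mc L_\sigma(\mc X)\in(T^\vee//W_\sigma)^{[q]}$ under $t_{\sigma,0}:(T^\vee//W_\sigma)^{[q]}\to(T^\vee//W)^{[q]}$.

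Since $(x,\mc X)$ is contained in $\pi$, pick $0\neq v\in V^{G_{\sigma,0+}}$ spanning a $\G_\sigma^F$-subrepresentation of $V^{G_{\sigma,0+}}$ isomorphic to $\mc X$ under $\pi^{G_{\sigma,0+}}$. Then Theorem \ref{thmTmodWto0center} yields
\[
\xi^0(\lambda)v \;=\; \sum_{y\in\G_\sigma^F} i_{\sigma,0}(\lambda)(y)\,\pi^{G_{\sigma,0+}}(y)v
\]
for every $\lambda\in\C[(T^\vee//W)^{[q]}]$, and applying the defining identity \eqref{gammafndefn} of the gamma function to $f=i_{\sigma,0}(\lambda)\in C^{st}(\G_\sigma^F)$ and to the irreducible $\mc X$ gives
\[
\xi^0(\lambda)v \;=\; \gamma_{i_{\sigma,0}(\lambda)}(\mc X)\,v.
\]
Comparing with $\xi^0(\lambda)v=\lambda(\Theta_0(\pi))v$, it suffices to show $\gamma_{i_{\sigma,0}(\lambda)}(\mc X)=\lambda(\theta_{x,\mc X})$ for all $\lambda$.

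By linearity, it is enough to verify this identity on the basis of characteristic functions $\mathbbm 1_\theta$, $\theta\in(T^\vee//W)^{[q]}$. We have $t_{\sigma,0}^*(\mathbbm 1_\theta)=\sum_{\theta'\mapsto\theta}\mathbbm 1_{\theta'}$, and by part (1) of the properties of stable functions, $\rho_{\sigma,0}(\mathbbm 1_{\theta'})=f_{\theta'}$ is the idempotent projector to the packet $\mc L_\sigma^{-1}(\theta')$, satisfying $\gamma_{f_{\theta'}}(\mc X)=1$ if $\mc L_\sigma(\mc X)=\theta'$ and $0$ otherwise. Hence
\[
\gamma_{i_{\sigma,0}(\mathbbm 1_\theta)}(\mc X)\;=\;\sum_{t_{\sigma,0}(\theta')=\theta}\gamma_{f_{\theta'}}(\mc X)\;=\;\mathbbm 1_\theta\!\bigl(t_{\sigma,0}(\mc L_\sigma(\mc X))\bigr)\;=\;\mathbbm 1_\theta(\theta_{x,\mc X}),
\]
which yields $\Theta_0(\pi)=\theta_{x,\mc X}$ as desired. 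The main subtlety is purely bookkeeping: identifying the map used in the construction of $\theta_{x,\mc X}$ with the composition $t_{\sigma,0}\circ\mc L_\sigma$ appearing implicitly in the definition of $i_{\sigma,0}$, and confirming that the passage from $x$ to the facet $\sigma$ containing it leaves both sides unchanged; the rest of the argument is then a direct unwinding of the definitions.
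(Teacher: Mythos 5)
Your proposal is correct and follows essentially the same route as the paper's own proof: reduce to $x\in\bar\mcC$, pick $v$ in a copy of $\mc X$ inside $V^{G_{\sigma,0+}}$, apply Theorem \ref{thmTmodWto0center}, and evaluate $\xi^0(\lambda)v$ via the gamma-function identity \eqref{gammafndefn} together with the definition of $\rho_{\sigma,0}$ and $t_{\sigma,0}$. Your explicit reduction to the characteristic functions $\mathbbm 1_\theta$ merely spells out the same computation the paper performs directly for general $\lambda$, so there is no substantive difference.
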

\begin{proof}
    Without loss of generality, we can assume $x \in \bar \mcC$. There exists $\sigma \in [\bar \mcC]$ with $x\in \sigma$ such that $G_{x,0+}=G_{\sigma,0+}$. Pick $v \in V^{G_{\sigma,0+}}$ such that $\pi^{G_{\sigma,0+}}(g)v= \mc X(g)v$ for $g \in \G_\sigma^F$. For $\lambda \in \C[(T^\vee//W)^{[q]}]$, we have 
    \[
    \lambda (\Theta_0 (\pi))v= \xi^0(\lambda)v = \sum_{g\in \G_{\sigma}^F}i_{\sigma,0}(\lambda)(g)\pi^{G_{\sigma,0+}}(g)v=\sum_{g\in \G_{\sigma}^F}i_{\sigma,0}(\lambda)(g)\mc X(g)v
    \]
    Using \eqref{gammafndefn} and the definition of $i_{\sigma,0}$, we have 
    \[
    \sum_{g\in \G_{\sigma}^F}i_{\sigma,0}(\lambda)(g)\mc X(g)v=\sum_{g\in \G_{\sigma}^F}\rho_{\sigma,o}\circ t^*_{\sigma,0}(\lambda)(g)\mc X(g)v= t^*_{\sigma,0}(\lambda)(\mc L_{\sigma}(\mc X))v= \lambda(\theta_{x,\mc X})v
    \]
    where the last step follows from the definition of $\theta_{x,\mc X}$ in the previous paragraph. Thus $ \lambda (\Theta_0 (\pi))=\lambda(\theta_{x,\mc X})$ for all $\lambda \in \C[(T^\vee//W)^{[q]}]$, and hence $\Theta_0(\pi)=\theta_{x,\mc X}$.
\end{proof}
\begin{remark}
    This gives an alternative proof of the fact that any two depth-zero Moy-Prasad types contained a smooth irreducible representation of depth zero have the same Deligne-Lusztig parameter attached to it. 
\end{remark}
\subsection{Restricted depth-zero parameters}
Let $\mf F$ denote a (geometric) Frobenius in $W_k$, and we denote its image in $W_k/I^{0+}_k$ by $\mf F$ as well. 
\begin{definition}\label{defn:depth0RPr}
    A depth-zero Langlands parameter is a continuous cocycle 
    $\phi:W_k/I^{0+}_k \rightarrow G^\vee$
    such that $\phi(\mf F)$ is semi-simple. A restricted depth-zero parameter is a continuous cocycle 
    $\varphi: I_k/I^{0+}_k\rightarrow G^\vee $
    which is a restriction from a depth-zero Langlands parameter. We denote by $\RP_0$ the set of $G^\vee$ conjugacy classes of restricted depth-zero parameters. 
\end{definition}
If we fix a Borel subgroup $T\subset B$, we get a based root datum of $G$ $\psi_0(G)=(X^*(T),\Delta,X_*(T),\Delta^\vee) $ where $\Delta$ is the set of positive simple roots determined by $B$. We fix a pinning $(G^\vee,B^\vee, T^\vee, e^\vee=\{x_\alpha\}_{\alpha\in \Delta}$, and the action of $W_k$ on $\psi_0(G)$ induces a $W_k$-action on $G^\vee$ denoted by $\mu_G: W_k \rightarrow \mathrm{Aut} ((G^\vee,B^\vee, T^\vee, e^\vee=\{x_\alpha\}_{\alpha\in \Delta}) \cong \mathrm{Aut}(\psi_0(G))$. If $G$ splits over a tamely ramified extension, the action $\mu_G$ of factors through $W_k/I^{0+}_k$ and hence we did not have to consider cocycles for restricted postive depth parameters. The set $\RP_0$ is independent of the choice of pinning since all pinnings are $G^\vee$-conjugate. Since we have assumed $G$ to be split over $k$, we immediately see that the action is trivial in our case and the depth-zero parameters (restricted or otherwise) are just homomprphisms $I_k/I^{0+}_k\rightarrow G^\vee $ and $W_k/I^{0+}_k \rightarrow G^\vee$ with required properties as mentioned in the definition. \par
 As per \cite[Lemma 62]{chendebackertsai}, there is a bijections $\RP_0\cong \DL_0$ and we get a map 
 \begin{equation*}
     \Irr(G)_0\xrightarrow{\Theta_0}\DL_0\xrightarrow{\simeq}\RP_0
 \end{equation*}
Further, the bijection $\RP_0\cong \DL_0$ depends on the choice of an isomorphism $\bar\F_q\simeq(\Q/\Z)_{p'}$, and making the same choice as in \ref{isom:F_qto Qmod Z} makes the composed map $\Irr(G)_0\rightarrow \RP_0$ independent of the choice.

\section{Decomposing the category of smooth representations}\label{section:decomposition}
\begin{definition}
    Let $\{\mathscr C_i\}_{i\in I}$ be a family of full subcategories of $R(G)$. We can decompose $R(G)$ as a product of full subcategories and write 
    \begin{equation*}
        R(G) = \prod_{i\in I} \mathscr C_i
    \end{equation*}
    if every $(\pi, V)$ decomposes as $V = \oplus_{i\in I}V_i$ with $V_i \in \mathscr C_i$ and for any $V_i \in \mathscr C_i$, $V_j \in \mathscr C_j$, we have $\mathrm{Hom}_{G(k)}(V_i,V_j)= 0$ if $i \neq j$.
\end{definition}
For $(\pi,V) \in R(G)$, let $\JH(\pi)$ (or $\JH(V)$) denote the set of (isomorphism classes of ) irreducible subquotients of $\pi$ (also called Jordan-H\"older factors of $\pi$). Let $\Irr(G) = \coprod_{\alpha \in A} S_\alpha$ be a partition of the set of (isomorphism classes of) smooth irreducible representations of $G(k)$. For $S_\alpha \subset \Irr(G)$, let $R(G)_{S_\alpha} $ denote the full subcategory of $R(G)$ defined as  $R(G)_{S_\alpha} := \{ (\pi,V)\in R(G)\:|\: \JH(\pi) \subseteq S_\alpha\}$. The subcategory $R(G)_{S_\alpha}$ is closed under the formation of subquotients, extensions and direct sums. For $(\pi, V)\in R(G)$, let $V_{S_\alpha}$ denote the sum of all $G(k)$-invariant subspaces of $V$ which lie in $R(G)_{S_\alpha}$. It is the unique maximal $G(k)$-subspace of $V$ in $R(G)_{S_\alpha}$. We immediately observe that $V_{S_\alpha} \cap V_{S_\alpha'}=\{0\}$ and $\mathrm{Hom}_{G(k)}(V_{S_\alpha},V_{S_\alpha'})=\{0\}$. For detailed proofs, check \cite[Section 16]{heyer2023smooth}.

\begin{definition}
   We say that $\{S_\alpha\}_{\alpha\in A}$ splits $(\pi,V)\in R(G)$ if $V$ can be written as a direct sum 
   \begin{equation*}
       V = \bigoplus_{\alpha \in A} V_{S_\alpha}
   \end{equation*}
   We say $\{S_\alpha\}_{\alpha\in A}$ splits $R(G)$ if it splits every $(\pi,V) \in R(G)$, i.e., if $R(G)$ decomposes into a product of full subcategories 
   \begin{equation*}
      R(G)= \prod_{\alpha \in A} R(G)_{S_\alpha}.
   \end{equation*}
\end{definition}

As mentioned in \cite[Remark 30]{chendebackertsai}, if $p$ does not divide the order of the absolute Weyl group of $G$, then \cite[Theorem 6.1]{fintzen2021types} implies that every non-degenerate positive depth Moy-Prasad type has depth $r \in \Z_{(p)}$. We henceforth impose that condition on the characteristic of the residue field, and hence for any $\pi \in \Irr(G)$, $\rho(\pi)\in \Z_{(p)}\cap \Q_{\geq 0}$. \par
For $p \nmid |W|$, \eqref{DLparametersalltame}, gives a map 
\begin{equation}\label{DL^tdefinition}
  \Theta:\Irr(G)=\coprod_{r \in \Z_{(p)}\cap \Q_{\geq 0}} \Irr(G)_r \longrightarrow \DL_0\coprod \left(\coprod_{r \in \Z_{(p)}\cap \Q_{> 0}}\mathrm{DL}_r^\circ\right)  =:\DL^t
\end{equation}
and hence we can partition the set of smooth irreducible representations of $G(k)$ into packets using the Deligne-Lusztig parameters. Let $\vartheta^r \in \DL_r$ for $ r \in \Z_{(p)}\cap \Q_{\geq 0}$ and $\Pi(\vartheta^r)= \{\pi \in \Irr(G)_r\:|\: \Theta_r(\pi)= \vartheta^r\}$. We have a partition  
\begin{equation}\label{DLpackets}
    \Irr(G) =\coprod_{\vartheta^0\in \DL_0}\Pi(\vartheta^0)\coprod \left( \coprod_{r \in \Z_{(p)}\cap \Q_{\geq 0} } \coprod_{\vartheta^r\in \DL^\circ_r} \Pi(\vartheta^r)\right)= \coprod_{\vartheta \in \DL^t}\Pi(\vartheta)
\end{equation}
For $\vartheta \in \DL^t$, let $R(G)_\vartheta$ denote the full subcategory of $R(G)$ such that $R(G)_\vartheta=\{ (\pi,V)\in R(G)\:|\: \JH(\pi)\subseteq \Pi(\vartheta)\}$ (instead of $R(G)_{\Pi(\vartheta)}$, to simplify notation). The aim of this section is to prove the following theorem.
\begin{theorem}\label{thm:decompositionDLpackets}
    We have a decomposition of $R(G)$ as a product of full subcategories 
    \begin{equation*}
        R(G) = \prod_{\vartheta\in \DL^t}R(G)_\vartheta
    \end{equation*}
\end{theorem}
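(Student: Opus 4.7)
The approach is to construct central idempotents $e_\vartheta \in \ZZ(G)$, one for each $\vartheta \in \DL^t$, using Theorems~\ref{thmtmodWtocenter} and~\ref{thmTmodWto0center}, and then combine them with Bernstein's decomposition of $R(G)$ into indecomposable blocks $R(G)_{\mf a}$.

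First, I would set $e_{\vartheta^0} := \xi^0(\mathbbm 1_{\vartheta^0}) \in \ZZ^0(G)$ for $\vartheta^0 \in \DL_0$, and $e_{\vartheta^r} := \xi^r(\mathbbm 1_\theta) \in \ZZ^r(G)$ for $\vartheta^r = \overline{(\nu,\theta)} \in \DL_r^\circ$, with $\xi^r$ built from $\nu$ as in Remark~\ref{remarkFrobstructure}. Independence of the representative is a direct calculation: rescaling $\nu \mapsto c\nu$ rescales the isomorphism $(\bg_E)_{\sigma,r}^* \xrightarrow{\sim} \bl_\sigma^*$ by $\bar c \in \bar\F_q^\times$, so $\chi_{\sigma,r,c\nu}^*(\mathbbm 1_\theta) = \chi_{\sigma,r,\nu}^*(\mathbbm 1_{j_{\bar c^{-1}}(\theta)})$, exactly matching the equivalence $(c\nu, \theta) \sim (\nu, j_{\bar c^{-1}}(\theta))$ defining $\DL_r$.

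Next, I would establish on every $\pi \in \Irr(G)$ the pointwise identity
\begin{equation*}
  e_\vartheta|_\pi = \delta_{\vartheta,\Theta(\pi)}\cdot \mathrm{id}_\pi.
\end{equation*}
For $\vartheta = \vartheta^r$ with $r>0$: if $\rho(\pi)>r$ then $e_{\vartheta^r} \in \ZZ^r(G)$ vanishes on $\pi$; if $\rho(\pi)<r$ the second regime of Theorem~\ref{thmtmodWtocenter} gives scalar $\mathbbm 1_\theta(\bar 0) = 0$ (as $\theta \ne \bar 0$); if $\rho(\pi)=r$ it is $\mathbbm 1_\theta(\Theta_r(\pi))$, which equals $\delta_{\vartheta,\Theta(\pi)}$ by non-triviality of $\Theta_r(\pi)$ (Remark~\ref{remark nontrivial and alternativeproof}). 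The depth-zero case is parallel. Since $\ZZ(G) \hookrightarrow \mathrm{Fun}(\Irr(G),\C)$ is injective, this identity implies the orthogonality $e_\vartheta e_{\vartheta'} = \delta_{\vartheta,\vartheta'}\, e_\vartheta$ in $\ZZ(G)$.

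Finally, I would invoke Bernstein's decomposition $R(G) = \prod_{\mf a} R(G)_{\mf a}$. Each $e_\vartheta$ restricts to a central idempotent of $\ZZ(G)_{\mf a}$; if irreducibles $\pi_1, \pi_2 \in R(G)_{\mf a}$ had distinct DL parameters, the pointwise formula would make $e_{\Theta(\pi_1)}|_{R(G)_{\mf a}}$ a non-trivial central idempotent, splitting $R(G)_{\mf a}$ non-trivially and contradicting indecomposability. Hence there is a well-defined $\vartheta_{\mf a} \in \DL^t$ common to all irreducibles in $R(G)_{\mf a}$; and for any $V_{\mf a} \in R(G)_{\mf a}$ the submodule $(1-e_{\vartheta_{\mf a}})V_{\mf a}$, whose Jordan--H\"older factors all lie in $\Pi(\vartheta_{\mf a})$ yet are simultaneously annihilated by $e_{\vartheta_{\mf a}}$, must have no irreducible subquotient at all and hence vanishes (since every nonzero smooth representation admits an irreducible subquotient). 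Thus $e_{\vartheta_{\mf a}}$ acts as identity on $R(G)_{\mf a}$. Setting $V_\vartheta := \bigoplus_{\vartheta_{\mf a}=\vartheta} V_{\mf a}$ then yields the desired decomposition $V = \bigoplus_\vartheta V_\vartheta$ with $V_\vartheta \in R(G)_\vartheta$, and the vanishing of $\Hom$ between distinct $R(G)_\vartheta$'s is formal.

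The main technical obstacle is the well-definedness check on equivalence classes in $\DL_r$, which must carefully pair the $\nu$-dependence of $\xi^r$ (per Remark~\ref{remarkFrobstructure}) against the scaling $j_c$ in the definition of $\DL_r$. Once this and the pointwise formula are in hand, the combinatorial conclusion via Bernstein's block decomposition is essentially formal.
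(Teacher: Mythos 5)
Your proposal is correct, and its core is the same as the paper's: the idempotents $e_\vartheta=\xi^r(\mbm 1_{\theta})$ together with the pointwise identity $e_\vartheta|_\pi=\delta_{\vartheta,\Theta(\pi)}\mathrm{Id}_\pi$ are exactly Proposition \ref{prop:projectorpackets}, and the passage from finite-length to arbitrary smooth representations is again mediated by Theorem \ref{Thmbernsteindecomposition}. Where you diverge is in how the Bernstein decomposition is used. The paper only extracts a finiteness statement from it: $\overline{\IS}(\pi)$ is finite, depth is constant under unramified twists and preserved by parabolic induction (\cite[Theorem 5.2]{MP96}), so only finitely many depths $r$ and hence (as each $\DL_r$ is finite) finitely many parameters $\vartheta$ can contribute to a given $V$, after which the orthogonal projectors are summed. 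You instead exploit the indecomposability of each block $R(G)_{\mf a}$: since $e_\vartheta$ restricts to a central idempotent of the block acting by $0$ or $1$ on irreducibles, indecomposability forces all irreducibles in a block to share one parameter $\vartheta_{\mf a}$, and the argument that $(1-e_{\vartheta_{\mf a}})V_{\mf a}$ has no irreducible subquotient shows $e_{\vartheta_{\mf a}}$ acts as the identity on the whole block, so $V_\vartheta=\bigoplus_{\vartheta_{\mf a}=\vartheta}V_{\mf a}$. This buys you a cleaner assembly that avoids invoking depth preservation under parabolic induction and the finiteness of $\DL_r$, and it yields the (stronger, not stated in the paper) observation that the Deligne--Lusztig parameter is constant on Bernstein blocks; the cost is reliance on the indecomposability clause of Bernstein's theorem and on the standard fact that every nonzero smooth representation has an irreducible subquotient. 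Your preliminary check that $e_{\vartheta^r}$ is independent of the representative $(\nu,\theta)$ is harmless but unnecessary for the theorem as formulated: the paper fixes $\nu_r$ (and the isomorphism $\bar\F_q^\times\simeq(\Q/\Z)_{p'}$) once and for all, so each class in $\DL^t$ has a canonical representative and the projectors are defined directly from it; if you do keep that check, be careful that the reduction $\bar c$ of $\nu_1/\nu_2$ need only lie in $\bar\F_q^\times$, so the compatibility with the $F$-fixed-point structures deserves a word.
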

In concrete terms, we want to show that for each $(\pi, V)\in R(G)$, we have $G(k)$-invariant subspaces $V_\vartheta \in R(G)_\vartheta$ such that $V = \oplus_{\vartheta \in \DL^t}V_\vartheta$ (i.e., the partition in \eqref{DLpackets} splits $R(G)$). We show this by producing projectors to $R(G)_\vartheta$ in the Bernstein center for each $\vartheta \in \DL^t$, and proving a finiteness condition. \paragraph{}

Let $X^*(G)= \mathrm{Hom}_k(G,\G_m)$ denote the group of $k$-rational characters of $G$. For $\chi \in X^*(G)$, considering it as a group homomorphism $G(k) \rightarrow k^\times$, $\rv(\chi(g))\in \Z$ for $g \in G(k)$. Let $G(k)^0$ denote the subgroup 
\begin{equation*}
     G(k)^0= \{ g \in G(k)\:|\: \rv(\chi(g))=0 \:\forall \: \chi \in X^*(G)\}
\end{equation*}
The subgroup $ G(k)^0$ is open, normal and contains all compact subgroups of $G(k)$. A character $\chi :G(k) \rightarrow \C^\times$ is called unramified if $G(k)^0$ lies in its kernel. We denote the group of unramified characters of $G(k)$ by $X^{ur}(G)$ and it can be identified with $\Hom(G(k)/G(k)^0, \C^\times)$. 
\begin{definition}
    We define a cuspidal pair (or cuspidal datum) of $G$ to be a pair $(L, \varrho)$ where $L$ is a $k$-Levi subgroup of $G$ and $\varrho $ is an irreducible supercuspidal representation of $L(k)$. 
\end{definition}
Two such pairs $(L, \varrho)$ and $(L',\varrho')$ are called associated if there exists $g\in G(k)$ such that $gLg^{-1}=L'$ and the map $\Ad(g) : L(k) \xrightarrow{\simeq} L'(k)$ induces an isomorphism $\prescript{g}{}{}\varrho \cong \varrho'$. We denote the $G(k)$-conjugacy class of $(L,\varrho)$ by $(L,\varrho)_G$, and let $\Omega(G)$ denote the set of equivalence classes of cuspidal pairs modulo association. From \cite[Proposition 1.7.2.1]{roche2009bernstein}, we see that for each $\pi \in \Irr(G)$, there exists a unique $(L,\varrho)_G\in \Omega(G)$ such that $\pi$ is isomorphic to a subquotient of $i^G_P(\varrho)$, where $P$ is a $k$-parabolic subgroup of $G$ with Levi component $L$ and $i^G_P$ denotes the normalized parabolic induction functor $i^G_P: R(L) \rightarrow R(G)$. Thus, the assignment $\pi \mapsto (L,\varrho)_G$ gives us a well defined surjective map $\CS: \Irr(G) \rightarrow \Omega(G)$, and $\CS(\pi)= (L,\varrho)_G$ is called the cuspidal support of $\pi$. \paragraph{}
We define another equivalence relation called inertial equivalence on the set of cuspidal data of $G$ in the following way: $(L,\varrho) \sim (L',\varrho')$ if there exists $g\in G(k)$ and $\omega \in X^{ur}(L') $ such that $\prescript{g}{}{}L=L'$, $\prescript{g}{}{}\varrho \cong \varrho' \otimes \omega$. We denote the inertial equivalence class of $(L,\varrho)$ by $ [L,\varrho]_G$. This is a coarser relation than association, and we denote the set of equivalence classes of cuspidal pairs modulo inertial equivalence by $\mf B(G)$. We denote by $\IS$ the composition 
\begin{equation*}
    \IS: \Irr(G) \xrightarrow{\CS} \Omega(G) \xtwoheadrightarrow{\Upsilon} \mf B(G)
\end{equation*}
and $\IS(\pi)= [L,\varrho]_G$ is called the inertial support of $\pi$, where $\pi $ is an irreducible subquotient of $i^G_{P'}(\varrho\otimes \omega)$ for some $k$-parabolic $P'$ with Levi component $L$ and $\omega \in X^{ur}(L)$. In fact, it is enough to take $P'=P$ (check \cite[Corollary 1.10.4.3]{roche2009bernstein}). \par

For $\mf a \in \mf B(G)$, let $\Omega(G)_{\mf a}= \Upsilon^{-1}(\mf a)$ and we have $\Omega(G) = \coprod_{\mf a \in \mf B(G)} \Omega(G)_{\mf a}$. The set $\Omega(G)$ has a natural structure of a complex algebraic variety, with connected components given by $\Omega(G)_{\mf a}$ (check \cite[Section 3.3]{haines}). We have an isomorphism $\ZZ(G) \xrightarrow{\simeq} \C[\Omega(G)]$ between the ring of regular functions on this variety and $\ZZ(G)$. 
\par
The set of inertial equivalence classes gives a partition of the $\Irr(G)$ given by $$\Irr(G)= \coprod_{\mf a \in \mf B(G)} \IS^{-1}(\mf a)$$ and we define $R(G)_{\mf a}:=\{ (\pi, V) \in R(G)\:|\: \JH(\pi) \subseteq \IS^{-1}(\mf a) \}$. This is an indecomposable full subcategory of $R(G)$, and we have the Bernstein decomposition theorem 
\begin{theorem}[Theorem 1.7.3.1 in \cite{roche2009bernstein}]\label{Thmbernsteindecomposition}
    We have a decomposition of $R(G)$ as a product of indecomposable full subcategories 
    \begin{equation*}
        R(G) = \prod_{\mf a \in \mf B(G)}R(G)_{\mf a}
    \end{equation*}
\end{theorem}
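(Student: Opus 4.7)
The plan is to prove the decomposition in three stages, matching Bernstein's classical strategy: (i) establish that the inertial support map $\IS:\Irr(G)\to\mf B(G)$ is well defined, (ii) verify $\Hom$-orthogonality across inertial classes, and (iii) construct central idempotents $e_{\mf a}\in\ZZ(G)$ projecting onto each $R(G)_{\mf a}$, which simultaneously yields the splitting and the indecomposability.

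For (i), I would invoke Jacquet's theorem: any $\pi\in\Irr(G)$ has a non-trivial Jacquet module for some parabolic $P=LU$ with $L$ a proper Levi (unless $\pi$ is itself supercuspidal), and iterating produces a cuspidal pair $(L,\varrho)$ with $\pi\hookrightarrow i_P^G(\varrho)$. Uniqueness up to association follows from the Bernstein–Zelevinsky geometric lemma, which describes the semisimplification of $r_{P'}^G\,i_P^G(\varrho)$ as a sum over double cosets in $W(L,L')$; comparing two cuspidal supports of $\pi$ forces them to lie in a common $G(k)$-orbit. This produces $\CS$, and $\IS=\Upsilon\circ\CS$ partitions $\Irr(G)$ into the classes $\IS^{-1}(\mf a)$.

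For (ii), it suffices to show $\Hom_{G(k)}(\pi_1,\pi_2)=0$ when $\pi_1,\pi_2$ are irreducible with $\IS(\pi_i)=\mf a_i$ and $\mf a_1\neq\mf a_2$. If $\pi_1\hookrightarrow i_P^G(\varrho_1)$ with $(L,\varrho_1)$ a representative of $\mf a_1$, then by Frobenius reciprocity any non-zero $G(k)$-map $\pi_1\to\pi_2$ extends to a non-zero map $i_P^G(\varrho_1)\to\pi_2$, which by second adjointness (Bernstein) gives a non-zero map $\varrho_1\to r_{\bar P}^G\pi_2$; since all Jordan–Hölder factors of $r_{\bar P}^G\pi_2$ have cuspidal support inertially equivalent to $\mf a_2$, this contradicts supercuspidality of $\varrho_1$ together with $\mf a_1\neq\mf a_2$. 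The vanishing then lifts from irreducibles to all of $R(G)_{\mf a_1}$ and $R(G)_{\mf a_2}$ since each object is filtered by its Jordan–Hölder factors.

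Step (iii) is the technical core. For each compact open subgroup $K\subset G(k)$, I would show that only finitely many $\mf a\in\mf B(G)$ satisfy $V_\pi^K\neq 0$ for some $\pi\in R(G)_{\mf a}$. Granted this finiteness, the Hecke algebra $\HH(G,K)=\delta_K*\HH(G)*\delta_K$ decomposes as a finite direct sum $\bigoplus_{\mf a}\HH(G,K)_{\mf a}$ of two-sided ideals (the component $\HH(G,K)_{\mf a}$ acts as the identity on $V^K$ for $V\in R(G)_{\mf a}$ and as zero on $V^K$ for $V\in R(G)_{\mf a'}$, $\mf a'\neq\mf a$, by (ii)); this yields central idempotents $e_{\mf a}^K\in\HH(G,K)$. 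Taking $K$ smaller refines these idempotents compatibly, and passing to the projective limit via the isomorphism $\ZZ(G)\xrightarrow{\simeq}\varprojlim_K\ZZ(G,K)$ produces $e_{\mf a}\in\ZZ(G)$. Then $V_{\mf a}:=e_{\mf a}V$ gives the desired decomposition $V=\bigoplus_{\mf a}V_{\mf a}$.

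The main obstacle is the finiteness statement that each compact open $K$ sees only finitely many Bernstein blocks. The standard route is to combine Bernstein's uniform admissibility with the structure of Jacquet modules of compactly induced representations, using that for a fixed $K$ a non-zero $V^K$ forces the cuspidal support $(L,\varrho)$ to satisfy a compactness constraint modulo $X^{ur}(L)$-twists. Finally, indecomposability of each $R(G)_{\mf a}$ follows by exhibiting a finitely generated progenerator (either via Bushnell–Kutzko types $(J,\lambda)$ giving $R(G)_{\mf a}\simeq\mathrm{Mod}\text{-}\HH(G,\lambda)$, or by Bernstein's compactly induced progenerators $\mathrm{c\text{-}ind}_{KL(k)^0}^{G(k)}(\tilde\varrho)$) whose endomorphism algebra is connected, so that $\Omega(G)_{\mf a}$ is a single irreducible variety and $\ZZ(R(G)_{\mf a})=\C[\Omega(G)_{\mf a}]$ admits no non-trivial idempotents.
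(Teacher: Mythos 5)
This statement is not proved in the paper at all: it is Bernstein's decomposition theorem, quoted verbatim from Roche's survey (Theorem 1.7.3.1 there), and the paper only uses it as input for the finiteness argument in Theorem \ref{thm:decompositionDLpackets}. So there is no in-paper proof to compare with; what you have written is a sketch of the standard Bernstein--Roche argument, and as a sketch it has the right overall shape: cuspidal support via Jacquet functors and the geometric lemma, orthogonality across inertial classes, central idempotents built from a finiteness statement for each compact open $K$, and indecomposability via progenerators and the irreducibility of each component $\Omega(G)_{\mf a}$ of the cuspidal-support variety.

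Two points deserve flagging. First, your step (ii) is both more complicated and slightly off as stated: for irreducibles with distinct inertial support the vanishing of $\Hom$ is automatic (they are non-isomorphic irreducibles), so the second-adjointness detour proves nothing new, and the lift to all of $R(G)_{\mf a_1},R(G)_{\mf a_2}$ cannot be justified by saying objects are ``filtered by their Jordan--H\"older factors'' (false for infinite length); the correct argument is that the image of a nonzero map would be a nonzero smooth representation all of whose irreducible subquotients lie in $\IS^{-1}(\mf a_1)\cap\IS^{-1}(\mf a_2)=\emptyset$, while every nonzero smooth representation has an irreducible subquotient. Second, and more seriously, in step (iii) the decomposition $\HH(G,K)=\bigoplus_{\mf a}\HH(G,K)_{\mf a}$ into two-sided ideals does not follow from $\Hom$-orthogonality ``by (ii)''; orthogonality between blocks does not by itself split the regular module. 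This is exactly where the hard content of Bernstein's proof sits: one needs (a) the finiteness statement that a fixed $K$ meets only finitely many inertial classes, which rests on uniform admissibility and the fact that supercuspidals of a fixed Levi with nonzero fixed vectors under a fixed compact open subgroup fall into finitely many $X^{ur}(L)$-orbits, and (b) a noetherian/finite-generation argument (finitely generated smooth representations have finitely generated $K$-invariants over $\HH(G,K)$) so that the splitting can be established first on finitely generated objects and then on the regular module, producing the idempotents $e^K_{\mf a}$ compatibly in $\varprojlim_K\ZZ(G,K)\cong\ZZ(G)$. Your sketch gestures at (a) but treats the ring decomposition as formal, which hides the main technical work of the theorem you are reproving.
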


We fix a choice of $\nu_m \in k^t$ such that $\nu_m$ lies in a finite tamely ramified extension of $k$ and $\rv(\nu_{1/m})= 1 /m $ for $m \in \Z_{>0}$ and $m \nmid p$. For $r= i/m \in \Z_{(p)}\cap \Q_{>0}$, define $\nu_r= (\nu_{1/m})^i$. Further, we fix an isomorphism $\bar\F_q^\times \simeq (\Q/\Z)_{p'}$. With these choices and the choice of a fixed $k$-split maximal torus $T$ of $G$, we have fixed the maps $\xi^r$ as defined in Theorem \ref{thmTmodWto0center} (for $r=0)$ and Theorem \ref{thmtmodWtocenter} (for $r>0$) for each $r \in \Z_{(p)}\cap \Q_{\geq 0}$. Using the maps $\xi^r$, we will construct projectors to $R(G)_\vartheta$ for $\vartheta \in \DL^t$, and use the Bernstein decomposition theorem to give a finiteness condition, thereby proving Theorem \ref{thm:decompositionDLpackets}. \par

Since we have fixed $\nu_r$ for each $r \in \Z_{(p)}\cap \Q_{>0}$, we can represent $\vartheta^r \in \DL_r$ by the pair $(\nu_r, \theta_{\nu_r})$, $\theta_{\nu_r}\in (\bt^*//W)^F$ in the equivalence class of $\vartheta^r$. Similarly, for any $\pi \in \Irr(G)_r$, $\Theta(\pi)$ can be represented by the pair $(\nu_r, \tilde\theta_{\nu_r}(\pi))$, as defined in Section \ref{section:positiveDLparameters}. With the fixed maximal torus $T$, $\DL_0$ can be identified with the set $(T^\vee//W)^{[q]}$. 

\begin{proposition}\label{prop:projectorpackets}
    For $r \in \Z_{(p)}\cap \Q_{>0}$, let $\vartheta^r=\overline{(\nu_r, \theta_{\nu_r})} \in \DL_r^\circ$ and $\mbm 1_{\theta_{\nu_r}} \in \C[(\bt^*//W)^F]$ denote the characteristic function of $\theta_{\nu_r} \in (\bt^*//W)^F\setminus\{\bar 0\}$. Then, $e_{\vartheta^r}= \xi^r( \mbm 1_{\theta_{\nu_r}})$  is an idempotent in $\ZZ^r(G)$ and acts as the projector to $R(G)_{\vartheta^r}$, i.e., for $(\pi,V)\in \Irr(G)$, 
    \begin{equation*}
        e_{\vartheta^r}|_V=\begin{cases}
        \mathrm{Id}_V, & \text{ if } \rho(\pi)= r \text{ and } \Theta(\pi)=\vartheta^r\\
        0, & \text{ otherwise}
    \end{cases}
    \end{equation*}
    For $r=0$,  let $\vartheta^0 \in  (T^\vee//W)^{[q]}\equiv \DL_0 $ and let $\mbm 1_{\vartheta^0} \in \C[(T^\vee//W)^{[q]}]$ similarly denote the characteristic function of $\vartheta^0 \in (T^\vee//W)^{[q]}$ . Then, $e_{\vartheta^0}=\xi^r(\mbm 1_{\vartheta^0}) $ is an idempotent in $\ZZ^0(G)$ and acts as the projector to $R(G)_{\vartheta^0}$. 
\end{proposition}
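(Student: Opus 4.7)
The plan is to verify the two defining properties of a projector: idempotence and the correct eigenvalues on irreducible representations. Idempotence is essentially automatic, since $\xi^r$ is an algebra homomorphism (Theorem \ref{thmtmodWtocenter}) and $\mbm 1_{\theta_{\nu_r}}$ is an idempotent in $\C[(\bt^*//W)^F]$ (being a characteristic function). Hence $e_{\vartheta^r} = \xi^r(\mbm 1_{\theta_{\nu_r}})$ is an idempotent in $\ZZ^r(G)$. Analogously, $e_{\vartheta^0} = \xi^0(\mbm 1_{\vartheta^0})$ is an idempotent in $\ZZ^0(G)$ by Theorem \ref{thmTmodWto0center}.

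For the eigenvalue computation in the positive depth case, I would split into four cases based on $(\pi,V) \in \Irr(G)$:
\begin{itemize}
    \item If $\rho(\pi) > r$, then $e_{\vartheta^r} \in \ZZ^r(G)$ acts by zero on $V$ by the Bernstein--Moy--Prasad decomposition $\ZZ(G) = \ZZ^r(G)\oplus \ZZ^{>r}(G)$.
    \item If $\rho(\pi) < r$, pick $0 \neq v \in V^{G_{\sigma,r+}}$ for some $\sigma \in [\bar{\mcC}_m]$; the last assertion of Theorem \ref{thmtmodWtocenter} gives $\xi^r(\lambda)v = \lambda(\bar 0)v$ for any $\lambda$. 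Setting $\lambda = \mbm 1_{\theta_{\nu_r}}$ and using $\theta_{\nu_r} \neq \bar 0$ (since $\vartheta^r \in \DL_r^\circ$), we conclude $\mbm 1_{\theta_{\nu_r}}(\bar 0) = 0$ and $e_{\vartheta^r}|_V = 0$.
    \item If $\rho(\pi) = r$ and $\Theta_r(\pi) \neq \vartheta^r$, then the representatives $\tilde\theta_{\nu_r}(\pi)$ and $\theta_{\nu_r}$ (for the same fixed $\nu_r$) are distinct points in $(\bt^*//W)^F$, so $\mbm 1_{\theta_{\nu_r}}(\tilde\theta_{\nu_r}(\pi)) = 0$, and by the defining formula $\xi^r(\lambda)v = \lambda(\tilde\theta_{\nu_r}(\pi))v$, we get $e_{\vartheta^r}|_V = 0$.
    \item If $\rho(\pi) = r$ and $\Theta_r(\pi) = \vartheta^r$, then $\tilde\theta_{\nu_r}(\pi) = \theta_{\nu_r}$, so $\mbm 1_{\theta_{\nu_r}}(\tilde\theta_{\nu_r}(\pi)) = 1$ and $e_{\vartheta^r}|_V = \mathrm{Id}_V$.
\end{itemize}

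The depth-zero case follows the same four-case argument, but is simpler: the cases $\rho(\pi) > 0$ (where $\ZZ^0(G)$ acts by zero) and $\rho(\pi) = 0$ (where we invoke Theorem \ref{thmTmodWto0center} to get $\xi^0(\lambda)v = \lambda(\Theta_0(\pi))v$) exhaust all possibilities, and one only needs to check whether $\Theta_0(\pi)$ equals $\vartheta^0$ or not. I do not foresee a genuine obstacle here: the proposition is essentially a formal consequence of the construction of $\xi^r$ and the definition of $\Theta_r$ as the character through which $\ZZ^r(G)$ acts on $\pi$ via $\xi^r$. The only subtle point worth emphasizing is that for $\rho(\pi) < r$, the element $e_{\vartheta^r}$ must act as zero rather than the identity, and this is precisely why $\vartheta^r \in \DL_r^\circ$ (i.e., $\theta_{\nu_r} \neq \bar 0$) is required in the hypothesis.
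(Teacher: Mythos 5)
Your proposal is correct and follows essentially the same case-by-case argument as the paper: idempotence from $\xi^r$ being an algebra map, vanishing on depth $>r$ via $\mathrm{Im}(\xi^r)\subset \ZZ^r(G)$, vanishing on depth $<r$ via the $\lambda(\bar 0)$ evaluation together with $\theta_{\nu_r}\neq \bar 0$, and the evaluation $\xi^r(\lambda)v=\lambda(\tilde\theta_{\nu_r}(\pi))v$ at depth exactly $r$, with the analogous (simpler) argument at depth zero. The only nitpick is in the $\rho(\pi)<r$ case: the last assertion of Theorem \ref{thmtmodWtocenter} is stated for $0\neq v\in V^{G_{\sigma,r}}$ rather than $V^{G_{\sigma,r+}}$, so you should take $v$ fixed by $G_{\sigma,r}$ (such $v$ exists since $\rho(\pi)<r$, and this suffices because the central element acts by a single scalar on the irreducible $V$).
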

\begin{proof}
    Let us first consider the case of $r \in \Z_{(p)}\cap \Q_{>0}$, and let $r \in \frac{1}{m}\Z$. Note that we only need to consider $\vartheta^r\in \DL^\circ_r$ because $\Theta(\pi)\in \DL^\circ_r$ for $\pi \in \Irr(G)_r$, as stated in Remark \ref{remark nontrivial and alternativeproof}.\par 
    Let $(\pi, V) \in \Irr(G)_{<r}$. Then, there exists $\sigma \in [\bar\mcC_m]$ such that $V^{G_{\sigma,r}}\ni v  \neq 0$. We have 
    \begin{equation*}
        e_{\vartheta^r}(v)= \xi^r(\mbm 1_{\theta_{\nu_r}})v= \mbm 1_{\theta_{\nu_r}}(\bar 0)v
    \end{equation*}
    where the last step follows from Theorem \ref{tmodWtostable}. Since $\theta_{\nu_r}\neq \bar 0$, we have $e_{\vartheta^r}(v)=0$ and hence $e_{\vartheta^r}|_V=0$ for $(\pi,V) \in \Irr(G)_{<r}$. Since the image of $\xi^r$ lies in $\ZZ^r(G)$, it immediately follows that $e_{\vartheta^r}|_V=0$ for $(\pi,V) \in \Irr(G)_{>r}$.  \par
    For $(\pi,V) \in \Irr(G)_r$, let $\sigma \in [\bar\mcC_m]$ be such that $\exists \:0\neq v \in V^{G_{\sigma,r+}}$. If $(\pi,V)\in \Pi(\vartheta^r) \subset \Irr(G)_r$, then $\Theta(\pi)= \overline{(\nu_r,\tilde \theta_{\nu_r}(\pi))}$ with $\tilde \theta_{\nu_r}(\pi)=\theta_{\nu_r}$ and 
    \begin{equation*}
         e_{\vartheta^r}(v)= \xi^r(\mbm 1_{\theta_{\nu_r}})v= \mbm 1_{\theta_{\nu_r}}(\tilde \theta_{\nu_r}(\pi))v=v
    \end{equation*}
    which shows that $e_{\vartheta^r}|_V= \Id_V$ for $(\pi,V)\in \Pi(\vartheta^r)$. If $(\pi,V)\in \Irr(G)_r \setminus\Pi(\vartheta^r)$, then $\tilde \theta_{\nu_r}(\pi)\neq \theta_{\nu_r}$ and we have 
     \begin{equation*}
         e_{\vartheta^r}(v)= \xi^r(\mbm 1_{\theta_{\nu_r}})v= \mbm 1_{\theta_{\nu_r}}(\tilde \theta_{\nu_r}(\pi))v=0. 
    \end{equation*}
    The fact that $e_{\vartheta^r}$ is an idempotent in $\ZZ^r(G)$ immediately follows since $\xi^r : \C[(\bt^*//W)^F] \rightarrow \ZZ^r(G)$ is an algebra morphism. This finishes the case of $r>0$. \par
    For $(\pi,V)\in \Irr(G)_0$, let $\sigma \in [\bar\mcC]$ be such that $\exists \:0\neq v \in V^{G_{\sigma,0+}}$. Let $\vartheta^0 \in \DL_0$. If $\pi \in \Pi(\vartheta^0)$, then $\Theta(\pi)=\vartheta^0$ and we have 
    \begin{equation*}
       e_{\vartheta^0}(v) = \xi^0(\mbm 1_{\vartheta^0})v= \mbm 1_{\vartheta^0}(\Theta(\pi))v= v
    \end{equation*}
    which shows $e_{\vartheta^0}|_V= \Id_V$ for $(\pi,V)\in \Pi(\vartheta^0)$. If $(\pi,V)\in \Irr(G)_0 \setminus\Pi(\vartheta^0)$, then $\Theta(\pi)\neq \vartheta^0$ and 
    \begin{equation*}
         e_{\vartheta^0}(v) = \xi^0(\mbm 1_{\vartheta^0})v= \mbm 1_{\vartheta^0}(\Theta(\pi))v= 0
    \end{equation*}
    The element $e_{\vartheta^0}(v) \in \ZZ^0(G)$ is an idempotent since $\xi^0$ is an algebra map. Thus, for $(\pi,V)\in \Irr(G)$ and $\vartheta \in \DL^t$
    \begin{equation*}
        e_{\vartheta}|_V=\begin{cases}
        \mathrm{Id}_V, & \text{ if }  \Theta(\pi)=\vartheta\\
        0, & \text{ otherwise}
    \end{cases}
    \end{equation*}
    which finishes the proof. 
\end{proof}
\begin{proposition}\label{prop:depth<r projector}
      For $r \in \Z_{(p)}\cap \Q_{>0}$, let $\bar 0^r=\overline{(\nu_r, \bar0)} \in \DL_r$ be the trivial Deligne-Lusztig parameter at depth-$r$ and $\mbm 1_{\bar 0} \in \C[(\bt^*//W)^F]$ denote the characteristic function of $\bar0 \in (\bt^*//W)^F$. Then, $e_{\bar 0^r}= \xi^r( \mbm 1_{\bar 0})$  is an idempotent in $\ZZ^r(G)$ and acts as the projector to $R(G)_{<r}$, i.e., for $(\pi,V)\in \Irr(G)$, 
    \begin{equation*}
        e_{\bar 0^r}|_V=\begin{cases}
        \mathrm{Id}_V, & \text{ if } \rho(\pi)< r \\
        0, & \text{ otherwise}
    \end{cases}
    \end{equation*}
\end{proposition}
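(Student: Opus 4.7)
The plan is to imitate the proof of Proposition \ref{prop:projectorpackets} and read off the action of $e_{\bar 0^r}$ on each $\pi\in\Irr(G)$ directly from the evaluation formulas in Theorem \ref{thmtmodWtocenter}. Since $\mbm 1_{\bar 0}$ is idempotent under pointwise multiplication and $\xi^r$ is an algebra homomorphism, $e_{\bar 0^r}=\xi^r(\mbm 1_{\bar 0})\in\ZZ^r(G)$ is automatically an idempotent, so the entire content is to compute $e_{\bar 0^r}|_V$ for each $(\pi,V)\in\Irr(G)$, splitting by depth.

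First, if $\rho(\pi)>r$, then $e_{\bar 0^r}|_V=0$ because $e_{\bar 0^r}\in\ZZ^r(G)$ and elements of the depth-$r$ center annihilate all irreducibles of strictly greater depth. Next, suppose $\rho(\pi)<r$ and choose $\sigma\in[\bar\mcC_m]$ and $0\neq v\in V^{G_{\sigma,r+}}$ (such $\sigma$ exists since $\rho(\pi)<r$). The second assertion of Theorem \ref{thmtmodWtocenter} gives
\[
e_{\bar 0^r}(v)=\xi^r(\mbm 1_{\bar 0})(v)=\mbm 1_{\bar 0}(\bar 0)\,v=v,
\]
so by irreducibility $e_{\bar 0^r}|_V=\Id_V$.

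Finally, for $\rho(\pi)=r$, choose $\sigma\in[\bar\mcC_m]$ and $0\neq v\in V^{G_{\sigma,r+}}$. By definition of $\tilde\theta_{\nu_r}(\pi)\in(\bt^*//W)^F$ (cf.\ Section \ref{section:positiveDLparameters}), we have $\xi^r(\lambda)(v)=\lambda(\tilde\theta_{\nu_r}(\pi))\,v$ for every $\lambda\in\C[(\bt^*//W)^F]$. Specializing to $\lambda=\mbm 1_{\bar 0}$ yields
\[
e_{\bar 0^r}(v)=\mbm 1_{\bar 0}\bigl(\tilde\theta_{\nu_r}(\pi)\bigr)\,v.
\]
The crux is then that $\tilde\theta_{\nu_r}(\pi)\neq\bar 0$; granting this, $\mbm 1_{\bar 0}(\tilde\theta_{\nu_r}(\pi))=0$ and $e_{\bar 0^r}|_V=0$.

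The only real point to argue is the non-vanishing $\tilde\theta_{\nu_r}(\pi)\neq\bar 0$ at depth exactly $r\in\Z_{(p)}\cap\Q_{>0}$, and this is exactly the content recorded in Remark \ref{remark nontrivial and alternativeproof}: any $\pi\in\Irr(G)_r$ contains a non-degenerate Moy-Prasad type $(x,\X)$ of depth $r$ (by \cite{MP94}), Proposition \ref{PropMP=DLpi} identifies $\Theta_r(\pi)$ with $\iota_x(\X)$, and \cite[Lemma 33]{chendebackertsai} ensures $\iota_x(\X)$ is non-trivial precisely because $(x,\X)$ is non-degenerate and $r\in\Z_{(p)}$. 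Thus $\Theta_r(\pi)\in\DL_r^{\circ}$, which forces $\tilde\theta_{\nu_r}(\pi)\neq\bar 0$ and closes the argument. No additional technical obstruction arises beyond citing these already-established facts.
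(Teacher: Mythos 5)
Your argument is essentially the paper's own proof: split $\Irr(G)$ by depth, use that $\xi^r$ is an algebra map for idempotency, kill depth $>r$ by membership in $\ZZ^r(G)$, use the second evaluation statement of Theorem \ref{thmtmodWtocenter} for depth $<r$, and use the non-triviality of $\tilde\theta_{\nu_r}(\pi)$ (Remark \ref{remark nontrivial and alternativeproof}, via Proposition \ref{PropMP=DLpi} and non-degeneracy of Moy--Prasad types) for depth exactly $r$. One small correction: in the depth $<r$ case the second assertion of Theorem \ref{thmtmodWtocenter} is stated for $0\neq v\in V^{G_{\sigma,r}}$, not merely $v\in V^{G_{\sigma,r+}}$; a vector in $V^{G_{\sigma,r+}}\setminus V^{G_{\sigma,r}}$ need not satisfy $\xi^r(\lambda)v=\lambda(\bar 0)v$ without a further argument about degenerate characters. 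Since $\rho(\pi)<r$ guarantees $V^{G_{\sigma,r}}\neq 0$ for a suitable $\sigma\in[\bar\mcC_m]$ (as $G_{\sigma,r}\subseteq G_{x,\rho(\pi)+}$ for $x\in\sigma$), choosing $v$ there and invoking centrality of $e_{\bar 0^r}$ closes this gap, and the rest of your proof stands as written.
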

\begin{proof}
     Let $(\pi, V) \in \Irr(G)_{<r}$ for $r \in \frac{1}{m}\Z$. Then, there exists $\sigma \in [\bar\mcC_m]$ such that $V^{G_{\sigma,r}}\ni v  \neq 0$. We have 
    \begin{equation*}
        e_{\bar 0^r}(v)= \xi^r(\mbm 1_{\bar 0})v= \mbm 1_{\bar 0}(\bar 0)v=v
    \end{equation*}
    where the last step follows from Theorem \ref{tmodWtostable}. Since the image of $\xi^r$ lies in $\ZZ^r(G)$, it immediately follows that $e_{\vartheta^r}|_V=0$ for $(\pi,V) \in \Irr(G)_{>r}$.  \par
    For $(\pi,V) \in \Irr(G)_r$, let $\sigma \in [\bar\mcC_m]$ be such that $\exists \:0\neq v \in V^{G_{\sigma,r+}}$. Then $\Theta(\pi)= \overline{(\nu_r,\tilde \theta_{\nu_r}(\pi))}\neq \bar0^r$ with $\tilde \theta_{\nu_r}(\pi)\neq \bar0$, and 
    \begin{equation*}
         e_{\bar0^r}(v)= \xi^r(\mbm 1_{\bar0})v= \mbm 1_{\bar0}(\tilde \theta_{\nu_r}(\pi))v=0
    \end{equation*}
    which shows that $e_{\bar0^r}|_V= 0 $ for $(\pi,V)\in \Irr(G)$.
\end{proof}

Now, we prove the main theorem of the section. 
\begin{proof}[Proof of Theorem \ref{thm:decompositionDLpackets}]
    Let $(\pi, V)\in R(G)$ and $\vartheta \in \DL^t$. Since $e_{\vartheta}\in \ZZ(G)$ is an idempotent, it is immediate that it projects onto a  $G(k)$-invariant subspace of $V$, and we define it to be $V_\vartheta := e_{\vartheta}(V)$. From Proposition \ref{prop:projectorpackets}, we see that $V_\vartheta\in R(G)_\vartheta$ and it is the unique maximal $G(k)$-subspace of $V\in R(G)_\vartheta$. It is immediate that $V_\vartheta \cap V_{\vartheta'}=\{0\}$ for $\vartheta\neq \vartheta' \in \DL^t$ and hence $V_\vartheta \oplus V_{\vartheta'} \subseteq V $ , and $\Hom_{G(k)}(V_1,V_1')=0$ if $V_1\in R(G)_\vartheta$, $V_1'\in R(G)_{\vartheta'}$. Note that this is enough to decompose representations of finite length, i.e., we have proved that $\{\Pi(\vartheta )\}_{\vartheta \in \DL^t}$ splits any representation of finite length. To extend this to all smooth representations, we need a finiteness condition. \par
    Using Theorem \ref{Thmbernsteindecomposition}, we know that we can write $V$ as a direct sum 
    \begin{equation*}
        V = \bigoplus_{\mf a \in \mf B(G)} V_{\mf a}
    \end{equation*}
    where $V_{\mf a}$ is the unique maximal $G(k)$-subspace of $V$ in $R(G)_{\mf a}$. Then, the set 
    $$\overline{\IS}(\pi)= \{ \mf a \in \mf B(G)\:|\: V_{\mf a} \neq 0\} $$
    is a finite set. For a $k$-Levi subgroup $L$ of $G$, we denote the depth of $\varrho \in \Irr(L)$ by $\rho_L(\varrho)$.   If $\varsigma\in \Irr(G) $ is an irreducible subquotient of $V_{\mf a}$ for $\mf a = [L,\varrho]_G$, then we know that $\varsigma $ is isomorphic to a subquotient of $i_P^G(\varrho \otimes \omega)$ for some $\omega \in X^{ur}(L)$ and $P$ a $k$-parabolic subgroup of $G$ with Levi component $L$. Since $L(k)^0$ contains all compact subgroups of $L(k)$, $\rho_L(\varrho)= \rho_L(\sigma \otimes \omega)$ for any $\omega \in X^{ur}(L)$. From \cite[Theorem 5.2]{MP96}, we know that depth is preserved under parabolic induction and hence $\rho(\varsigma)= \rho_L(\varrho)$. Since $\overline{\IS}(\pi)$ is finite, the set 
    $$\bar\rho(\pi)=\{ \rho(\varrho)\:|\: \mf a = [L,\varrho]_G \in \overline{\IS}(\pi)\}$$
    is a finite set consisting of elements in $\Z_{(p)}\cap \Q_{\geq 0}$. If $\vartheta^r \in \DL_r$ for $r \not \in \bar\rho(\pi)$, we can immediately observe that $V_{\vartheta^r} = e_{\vartheta^r}(V)=0$, and hence we only need to consider finite number of depths. Further note that $\DL_r$ is finite for $r \in \Z_{(p)}\cap \Q_{\geq 0}$. This gives the necessary finiteness condition and we see that 
    \begin{equation*}
        V = \bigoplus_{r \in \bar\rho(\pi)}\bigoplus_{\vartheta^r \in \DL_r} V_{\vartheta^r}
    \end{equation*}
    Since $(\pi, V)\in R(G)$ was arbitrary, we have that any $(\pi, V)\in R(G)$ can be written as a direct sum 
    \begin{equation*}
        V= \bigoplus_{\vartheta\in \DL^t}V_{\vartheta}
    \end{equation*}
    which finshes the proof of the theorem. 
    \end{proof}

Consider the idempotent elements $e_r :=\sum_{\vartheta^r \in \DL_r^\circ}e_{\vartheta^r}$ for $r\in \Z_{(p)}\cap \Q_{>0}$ and $e_0=\sum_{\vartheta^0 \in \DL_0}e_{\vartheta^0}=[A_{\delta_0}]$. Then, for $(\pi,V)\in R(G)$, $e_r(V)\subset V$ is a subrepresentation which has irreducible subquotients of only depth $r$. Further, the projector $e_{\bar0^r} $ constructed in Proposition \ref{prop:depth<r projector} projects onto the depth$<r$ part. Let $R(G)_r=\{(\pi,V)\in R(G)\:|\: \JH(\pi)\subset \Irr(G)_r\}$ and $R(G)_{<r}$ be defined similarly.  Then we have a decomposition $R(G)_{\leq r}=R(G)_r\oplus R(G)_{<r}$ via the projectors that we defined. Let us define $R(G)_{\bar 0^r}:=R(G)_{<r}$ consistent with the notation $R(G)_{\vartheta^r}$ for $\vartheta^r\in \DL_r^0$ with the corresponding projector $e_{\vartheta^r}$. Further, using the bijection $\DL_r \rightarrow \RP_r$, we can parametrize these subcategories by $\varphi^r \in \RP_r$. Let $\tilde\Theta_r$ denotes the composition $\Irr(G)_r\xrightarrow{\Theta}\DL_r\xrightarrow{\simeq}\RP_r$ for $r \in \Z_{(p)}\cap\Q_{\geq 0}$, $\RP_r^\circ$ denote the non-trivial restricted Langlands parameters for positive depth and $\RP^t$ be defined similarly to $\DL^t$ in \eqref{DL^tdefinition}. Then, we have a map $\tilde \Theta:\Irr(G) \rightarrow \RP^t$ and we can equivalently denote $R(G)_{\vartheta},\: \vartheta \in \DL^t$ by $R(G)_{\varphi},\:\varphi\in \RP^t$ if $\vartheta \mapsto \varphi$ via the bijection $\DL_r \xrightarrow{\simeq}\RP_r$ described in \cite{chendebackertsai}. Then we have the following decompositions as a corollary of Theorem \ref{thm:decompositionDLpackets} and the previous propositions:
\begin{corollary}\label{corollary:decomprestricted and <r}
 We have a decomposition of $R(G)$ as a product of full subcategories 
    \begin{equation*}
        R(G) = \prod_{\varphi\in \RP^t}R(G)_\varphi
    \end{equation*}
    Further, we have a decomposition for $r \in \Z_{(p)}\cap \Q_{>0}$
    \begin{equation*}
        R(G)_r=\bigoplus_{\varphi^r\in \RP_r^\circ}R(G)_{\varphi^r}=\bigoplus_{\vartheta_r\in \DL_r^\circ}R(G)_{\vartheta^r}
    \end{equation*}
    and the following decomposition for $r \in \Z_{(p)}\cap \Q_{\geq0}$
    \begin{equation*}
        R(G)_{\leq r}=\bigoplus_{\varphi^r\in \RP_r}R(G)_{\varphi^r}=\bigoplus_{\vartheta_r\in \DL_r}R(G)_{\vartheta^r}
    \end{equation*}
    where the subcategory corresponding to the trivial depth-$r$ parameter for positive depths contains the representations with all their irreducible subquotients in $\Irr(G)_{<r}$.
\end{corollary}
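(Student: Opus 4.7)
The statement is essentially a reorganisation of Theorem \ref{thm:decompositionDLpackets} combined with the bijection $\DL_r \xrightarrow{\simeq} \RP_r$ recorded in the excerpt (from Lemma 45 and Lemma 62 of \cite{chendebackertsai} for positive and zero depth respectively), together with an inclusion of the projector $e_{\bar0^r}$ constructed in Proposition \ref{prop:depth<r projector} to account for the trivial depth-$r$ parameter. So the plan is to first dispatch the full decomposition of $R(G)$ as a direct translation via the bijection, and then carve out $R(G)_{\leq r}$ and $R(G)_r$ using the idempotents already constructed.

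For the first decomposition, the plan is to note that the bijection $\DL_r \xrightarrow{\simeq} \RP_r$ (which has been fixed compatibly with the isomorphism $\bar\F_q^\times \simeq (\Q/\Z)_{p'}$ used to define both sides) assembles into a bijection $\DL^t \xrightarrow{\simeq} \RP^t$. Under this identification, the packet $\Pi(\vartheta)$ coincides with $\tilde\Theta^{-1}(\varphi)$ when $\vartheta \mapsto \varphi$, by construction of $\tilde\Theta$ as the composition of $\Theta$ with the bijection. Hence $R(G)_\vartheta$ and $R(G)_\varphi$ are literally the same full subcategory, and Theorem \ref{thm:decompositionDLpackets} immediately yields $R(G) = \prod_{\varphi \in \RP^t} R(G)_\varphi$.

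For the decomposition of $R(G)_r$ with $r \in \Z_{(p)} \cap \Q_{>0}$, I would use Remark \ref{remark nontrivial and alternativeproof}, which records that $\Theta(\pi) \in \DL_r^\circ$ for every $\pi \in \Irr(G)_r$. Combined with Proposition \ref{prop:projectorpackets}, the idempotents $\{e_{\vartheta^r}\}_{\vartheta^r \in \DL_r^\circ}$ are mutually orthogonal, their sum acts as the identity on $R(G)_r$, and each projects onto $R(G)_{\vartheta^r}$; this gives the direct sum decomposition. The finiteness required to split arbitrary (not necessarily finite-length) smooth representations follows from the same Bernstein-component finiteness argument already used in the proof of Theorem \ref{thm:decompositionDLpackets}, restricted to representations whose irreducible subquotients have depth exactly $r$. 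Translating via $\DL_r^\circ \xrightarrow{\simeq} \RP_r^\circ$ gives the restricted Langlands parameter version.

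For the decomposition of $R(G)_{\leq r}$ with $r \in \Z_{(p)} \cap \Q_{\geq 0}$, the $r=0$ case is the decomposition of $R(G)_0$ already handled. For $r>0$, I would add the single idempotent $e_{\bar 0^r} \in \ZZ^r(G)$ from Proposition \ref{prop:depth<r projector}, which is mutually orthogonal to all $e_{\vartheta^r}$ for $\vartheta^r \in \DL_r^\circ$ and projects precisely onto $R(G)_{<r}$. Setting $R(G)_{\bar 0^r} := R(G)_{<r}$, indexed by the trivial parameter $\bar 0^r \in \DL_r \setminus \DL_r^\circ$, one obtains $R(G)_{\leq r} = \bigoplus_{\vartheta^r \in \DL_r} R(G)_{\vartheta^r}$; the restricted Langlands version follows from the bijection $\DL_r \xrightarrow{\simeq} \RP_r$, which sends $\bar 0^r$ to the trivial restricted parameter. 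There is no genuine obstacle here since all projectors and decompositions have been established in the preceding sections; the only modest bookkeeping point is checking that the bijection $\DL_r \cong \RP_r$ matches trivial-to-trivial, which is immediate from Definition \ref{defn:positiveRPr} and the construction in Section \ref{section: Parameters attached to repns of positive depth} (the trivial parameter $\bar 0$ in $(\bt^*//W)^F$ yields the additive character $\tilde\psi \circ 0$, whose associated depth-$r$ restriction to $I_k^r/I_k^{r+}$ is trivial).
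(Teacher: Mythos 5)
Your proposal is correct and follows essentially the same route as the paper, which derives the corollary directly from Theorem \ref{thm:decompositionDLpackets}, the projectors of Propositions \ref{prop:projectorpackets} and \ref{prop:depth<r projector}, the definition $R(G)_{\bar 0^r}:=R(G)_{<r}$, and the bijection $\DL_r\simeq\RP_r$, exactly as you outline. The only minor simplification available (and implicit in the paper) is that for fixed $r$ no Bernstein-component finiteness is needed, since $\DL_r$ is finite and the idempotents $e_{\bar 0^r}+\sum_{\vartheta^r\in\DL_r^\circ}e_{\vartheta^r}$ sum to the depth-$r$ projector $\xi^r(\mathbbm{1})=[A_{\delta_r}]$.
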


  \section{Some conjectures and comments}\label{section:conjectures}  
Most of the statements in this section are conjectural. Based on the results that we have proved and some existing conjectures about Langlands parameters and stable center, we make some predictions about the elements in $\ZZ^r(G)$ that we constructed as the images of the maps in Theorems \ref{thmtmodWtocenter} and \ref{thmTmodWto0center}. \par

We assume the characteristic of the residue field of $k$ is sufficiently large, atleast $p \nmid |W|$. Let $\phi: WD_k=W_k\ltimes\C\rightarrow \prescript{L}{}{}G$ be a Langlands parameter for $G(k)$. We have a notion of depth of a Langlands parameter, which we denote by $\rho(\phi)$. 
\begin{equation*}
    \rho(\phi)=\text{min}\{ r \in \Q_{\geq 0}\:|\: \phi|_{I_k^{r+}}\text{ is trivial }\}
\end{equation*}
Let $\Pi(\phi)$ denote the $L$-packet corresponding to $\phi$ and $\pi \in \Pi(\phi)$. The relation between $\rho(\pi)$ and $\rho(\phi)$ has been studied quite a bit, and under some assumptions (like the $G$ being split over tamely ramified extension and some conditions on the residue field), it has been conjectured that depth is preserved, i.e., $\rho(\pi)=\rho(\phi)$ for any $\pi \in \Pi(\phi)$ (check \cite[Conjecture 52]{chendebackertsai}). Depth-preservation for local Langlands under certain conditions is known in several cases like the tamely ramified tori (\cite[Section 7.10]{Yutori}), unitary groups (\cite{oi2023depthquasispliclassical},\cite{Oi2021Depthnon-quasi}), $\mathrm{GSp}_4$ (\cite{ganapathy2015GSP4}) and inner forms of $\GL_n$ (\cite{aubert2016depth}). \par

Assuming conjectural depth preservation basically implies that the restriction $\phi|_{W_k}$ of a Langlands parameter $\phi_\pi$ of $\pi \in \Irr(G)$ to $W_k$  factors through $\phi_\pi|_{W_k}: W_k/I_k^{\rho(\pi)+}\rightarrow G^\vee $. Further, the conjectures 47 and 65 (and some other conjectures which follow from them in Section 6) from \cite{chendebackertsai} suggest that the restriction of a Langlands parameter $\phi_\pi$ of $\pi $ to $I_k^{\rho(\pi)}$ is $G^\vee $ conjugate to the restricted Langlands parameter attached to if via the map $\Irr(G)_r \rightarrow \DL_r \xrightarrow[]{\simeq}\RP_r$ for $\rho(\pi)=r\in \Z_{(p)}\cap \Q_{\geq 0} $ described in this work and \cite{chendebackertsai}. These conjectures suggest that the packets $\Pi(\vartheta)$ for $\vartheta \in \DL^t$ that we constructed in the previous section are unions of $L$-packets conjectured by the Local Langlands correspondence. In fact, they are actually unions of infinitesimal classes as descibed in \ref{section: Bernstein center}. \par

We fix a choice of $\nu_m \in k^t$  and in isomorphism $\bar\F_q^\times \simeq (\Q/\Z)_{p'}$ as in Section \ref{section:decomposition}, which fixes the maps $\xi^r$ as described in Theorems \ref{thmTmodWto0center} and \ref{thmtmodWtocenter} for $r \in \Z_{(p)}\cap \Q_{\geq 0}$, and consider the elements in $\ZZ(G)$ which are in the image of $\xi^r$. If $z \in \mathrm{Im}(\xi^r)$ and $\pi_1,\pi_2 \in \Irr(G)$ such that $\Theta(\pi_1)=\Theta(\pi_2)$, then $f_z(\pi_1)=f_z(\pi_2)$ with $f_z$ as defined in \ref{section: Bernstein center}. In particular, $z \in \ZZ(G)$ acts by the same constant on all irreducible representations contained in an infinitesimal class, since they are contained in $\Pi(\vartheta)$ for some $\vartheta \in \DL^t$. Note that this is much coarser. Hence, the conjectural description of the stable center $\ZZ^{st}(G)$ suggests the following conjecture:
\begin{conjecture}
    Let $\ZZ^{st,r}(G)=\ZZ^{st}(G)\cap \ZZ^r(G)$ for $r \in \Z_{(p)}\cap \Q_{\geq 0}$ and $\xi^r$ be the maps described in Theorems \ref{thmtmodWtocenter} and \ref{thmTmodWto0center}. Then, $\mathrm{Im}(\xi^r) \subset \ZZ^{st,r}(G) $.
\end{conjecture}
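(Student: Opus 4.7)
The plan is to reduce the conjecture to a statement about stable orbital integrals and then leverage the explicit description of $\mathrm{Im}(\xi^r)$ from Theorems \ref{thmtmodWtocenter} and \ref{thmTmodWto0center}. Since $\xi^r$ lands in $\ZZ^r(G)$ by construction, the only content is stability of $\nu_{\xi^r(\lambda)}$. As $\C[(\bt^*//W)^F]$ (respectively $\C[(T^\vee//W)^{[q]}]$) is spanned by characteristic functions of points, it suffices to show that each projector $e_\vartheta$, $\vartheta\in\DL^t$, constructed in Proposition \ref{prop:projectorpackets} defines a stable distribution. Concretely, given $f\in C_c^\infty(G)$ which is unstable, one must show $\nu_{e_\vartheta}(f)=0$; equivalently, using the description $\nu_{e_\vartheta}*f = e_\vartheta\cdot f$, that $e_\vartheta \cdot f$ remains unstable (this is the ``very stable'' formulation, and by Varma's results can be expected to coincide with stability in the quasi-split case).

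First I would handle the depth-zero case. Here the projector $e_{\vartheta^0}=\xi^0(\mathbbm{1}_{\vartheta^0})$ can be written, via Theorem \ref{thmTmodWto0center} and the limit description, as an Euler-Poincar\'e sum of $G(k)$-equivariantly assembled stable functions on the reductive quotients $\G_\sigma^F$. The key input is that the functions $\rho_{\sigma,0}(\mathbbm{1}_{\theta})\in C^{st}(\G_\sigma^F)$ obtained from semisimple conjugacy classes in the complex dual are precisely the finite-group analogues of stable characters, so their orbital integrals on topologically unipotent elements at $\G_\sigma^F$ are stable. By the method of Bezrukavnikov-Kazhdan-Varshavsky (using affine Springer fibers and constructible sheaves on loop groups), such Euler-Poincar\'e sums of stable functions on parahoric quotients give rise to stable distributions on $G(k)$; thus $e_{\vartheta^0}\in\ZZ^{st,0}(G)$. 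This step reduces to (essentially) a theorem in \cite{BKV} once one checks that our stability notion via $\mc L_\sigma$ is the one pulled back from the dual group.

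For positive depth $r\in\Z_{(p)}\cap\Q_{>0}$, the strategy is to pass to the Lie algebra via the Moy-Prasad isomorphism $G_{\sigma,r}/G_{\sigma,r+}\cong \bg_{\sigma,r}^F$ and use the Fourier transform theory of Section \ref{subsection: Fouriertransform}. Concretely, the function $\iota_{\sigma,r}(i_{\sigma,r}(\lambda))$ on $G_{\sigma,r}$ corresponds under Moy-Prasad to a stable function on $\bg_{\sigma,r}^F$ whose Fourier transform factors through the $(\mbL_\sigma$-invariant) characteristic polynomial map $\chi_{\sigma,r}\colon(\bg_{\sigma,r}^F)^*\to(\bt^*//W_\sigma)^F$. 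Using Waldspurger-style local character expansions and a limit formula (in the style of Kim-Murnaghan, Adler-Spice), one can express stable orbital integrals of $e_{\vartheta^r}$ on regular semisimple elements of depth $\leq r$ as orbital integrals of $\bar f_\sigma = \mcF_{(\bg_{\sigma,r}^F)^*}(\chi_{\sigma,r}^*\mathbbm{1}_{\theta_{\nu_r}})$ on the Moy-Prasad quotients. Since such Fourier transforms are manifestly $\mbL_\sigma^F$-invariant and, by DeBacker-Kazhdan type homogeneity, stability on the quotient (via factoring through the GIT quotient by $\mbL_\sigma$) transports to stability on $G(k)$, the conclusion would follow.

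The main obstacle is the last step: upgrading stability on the finite quotient $\bg_{\sigma,r}^F$ to stability on $G(k)$ across all regular semisimple elements, not only those topologically close to the identity in $G_{\sigma,r}$. Two subtleties complicate this. First, at rational depth $r=i/m$ the ``finite group'' in play is the twisted Levi $\mbL_\sigma=(\G_E)_\sigma$ living over the tame extension $E$, and one must track its $\F_q$-rational structure (Remark \ref{remarkFrobstructure}) through the stable conjugacy matching; transfer factors and Kazhdan-Varshavsky's endoscopic descent for loop groups should provide the correct bookkeeping. Second, passing from stability of an individual $h_\sigma$ to stability of the Euler-Poincar\'e sum $[A_h^\Sigma]$, and then to the limit $[A_h]$, requires compatibility of the stable orbital integral with the vanishing property \eqref{vanishingequation} along unipotent radicals $\bu_{\sigma,\tau,r}^F$; this is the positive-depth analogue of the stability of $E_0$ proved via perverse sheaves on affine flag varieties in \cite{BKV}, and extending that geometric framework to fractional depths (perhaps using the sheaves on twisted affine Grassmannians suggested at the end of Section \ref{section:conjectures}) is where the essential new work lies.
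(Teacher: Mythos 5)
This statement is a \emph{conjecture} in the paper: the authors do not prove it, and Section \ref{section:conjectures} only offers evidence, namely that $\xi^r(\mathbbm 1)$ is the depth-$r$ projector $[A_{\delta_r}]$, whose stability is proved in \cite{BKV2}, that the depth-zero projector is treated geometrically in \cite{BKV}, and that the problem reduces to stability of the finitely many packet projectors $e_\vartheta$ of Proposition \ref{prop:projectorpackets}. Your proposal correctly rediscovers this reduction (linearity of $\xi^r$ plus the spanning of $\C[(\bt^*//W)^F]$ by characteristic functions), but it does not close the conjecture, and two of its steps are genuinely gapped rather than merely technical.

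First, in the depth-zero case you assert that ``by the method of Bezrukavnikov--Kazhdan--Varshavsky \dots such Euler--Poincar\'e sums of stable functions on parahoric quotients give rise to stable distributions,'' and conclude $e_{\vartheta^0}\in\ZZ^{st,0}(G)$. What \cite{BKV} actually establishes (and what \cite{BKV2} establishes at depth $r$) is stability of the \emph{total} projector $E_0$ (resp.\ $E_r$), i.e.\ the sum over all parameters $\vartheta^0$; stability of the individual projectors $e_{\vartheta^0}$ attached to a single Deligne--Lusztig parameter is a strictly finer statement and is exactly the open content of the (refined) depth-zero stable center conjecture — it does not follow formally from the cited work, and the finite-group stability of $\rho_{\sigma,0}(\mathbbm 1_\theta)$ on $\G_\sigma^F$ does not by itself transport to stability of the assembled distribution on $G(k)$. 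Second, at positive rational depth your argument invokes Waldspurger-style expansions, Kim--Murnaghan/Adler--Spice limit formulas, DeBacker-type homogeneity, and unproven compatibilities of transfer factors with the twisted Levi $\mbL_\sigma$ over $E$ and with the Euler--Poincar\'e/limit construction of $[A_h]$; as you yourself note, ``the essential new work'' of extending the geometric framework to fractional depths remains to be done. So the proposal should be read as a plausible strategy consistent with the authors' suggested approach, not as a proof; the statement remains open in the paper and in your write-up.
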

By slight abuse of notation, we denote by $\mathbbm 1 $ the complex function in both $\C[(\bt^*//W)^F]$ and $\C[(T^\vee//W)^{[q]}]$ which takes the value $1$ at all points. Then, the idempotent element $\xi^r(\mbm 1)\in \ZZ^r(G)$ is exactly the depth-$r$ projector $[A_{\delta_r}]$ as described in \cite{BKV2}. The depth-$r$ projector was shown to be stable in \cite{BKV2}. A geometric approach was used in \cite{BKV} to show stability of the depth-zero projector. These provide some further evidence and possible approaches to prove the conjecture. Note that we would only need to prove that the projectors $e_\vartheta$ for $\vartheta \in \DL^t$ as described Proposition \ref{prop:projectorpackets} are stable.

\printbibliography

\bigskip
\textsc{School of Mathematics, University of Minnesota, Vincent Hall, Minneapolis, MN-55455}\\
\textit{E-mail address}: \texttt{bhatt356@umn.edu} 

\medskip

\textsc{School of Mathematics, University of Minnesota, Vincent Hall, Minneapolis, MN-55455}\\
\textit{E-mail address}: \texttt{chenth@umn.edu}

\end{document}